\definecolor{darkgreen}{rgb}{0,0.45,0}
\definecolor{darkred}{rgb}{0.75,0,0}
\definecolor{darkblue}{rgb}{0,0,0.6}
\newcommand{\one}{\mathbbm{1}}
\theoremstyle{plain}
\newtheorem{theorem}{Theorem}[section]
\newtheorem{lemma}[theorem]{Lemma}
\newtheorem{prop}[theorem]{Proposition}
\newtheorem{corollary}[theorem]{Corollary}
\newtheorem{thm}[theorem]{Theorem}
\newtheorem{conj}[theorem]{Conjecture}
\newtheorem{thmx}{Theorem}
\theoremstyle{remark}
\newtheorem{example}[theorem]{Example}
\newtheorem{remark}[theorem]{Remark}
\theoremstyle{definition}
\newtheorem{definition}[theorem]{Definition}
\numberwithin{equation}{section}
\newcommand{\Z}{\mathbb Z}
\newcommand{\A}{\mathbb A}
\newcommand{\C}{\mathbb C}
\newcommand{\G}{\mathbb G}
\newcommand{\RR}{\mathbb R}
\newcommand{\iso}{\cong}
\newcommand{\X}{\mathbb X}
\newcommand{\GL}{\operatorname{GL}}
\newcommand{\SL}{\operatorname{SL}}
\newcommand{\PU}{\operatorname{PU}}
\newcommand{\U}{\operatorname{U}}
\newcommand{\Hom}{\operatorname{Hom}}
\renewcommand{\O}{\mathcal O}
\newcommand{\g}{\mathfrak g}
\newcommand{\PGL}{\operatorname{PGL}}
\newcommand{\Stab}{\operatorname{Stab}}
\newcommand{\Aut}{\operatorname{Aut}}
\newcommand{\Nm}{\operatorname{Nm}}
\newcommand{\R}{\operatorname{R}}
\newcommand{\id}{\operatorname{id}}
\newcommand{\Spin}{\operatorname{Spin}}
\newcommand{\coker}{\operatorname{coker}}
\newcommand{\End}{\operatorname{End}}
\newcommand{\adj}{\operatorname{adj}}
\newcommand{\Ind}{\operatorname{Ind}}
\newcommand{\disc}{\operatorname{disc}}
\newcommand{\cusp}{\operatorname{cusp}}
\newcommand{\res}{\operatorname{res}}
\newcommand{\ra}{\ensuremath{\displaystyle\mathop{\rightarrow}}}
\newcommand{\ang}[1]{\langle #1\rangle}
\title{Global long root $A$-packets for $\mathsf{G}_2$: the dihedral case}
\author[P. Baki\'{c}]{Petar Baki\'{c}}
\address{Department of Mathematics\\University of Utah\\155 S 1400 E\\Salt Lake City, UT 84112}
\email{bakic@math.utah.edu}
\author[A. Horawa]{Aleksander Horawa}
\address{Mathematical Institute\\University of Bonn\\
Endenicher Allee 60 \\ 53115 Bonn, Germany}
\email{horawa@math.uni-bonn.de}
\author[S. D. Li-Huerta]{Siyan Daniel Li-Huerta}
\address{Department of Mathematics\\Massachusetts Institute of Technology\\77 Massachusetts Avenue\\Cambridge, MA 02139}
\email{sdlh@mit.edu}
\author[N. Sweeting]{Naomi Sweeting}
\address{Department of Mathematics\\Princeton University\\Fine Hall, Washington Road\\Princeton, NJ 08540}
\email{naomiss@math.princeton.edu}
\date{}
\begin{document}

\maketitle

\begin{abstract}
Cuspidal automorphic representations $\tau$ of $\PGL_2$ correspond to global long root $A$-parameters for $\mathsf{G}_2$. Using an exceptional theta lift between $\PU_3$ and $\mathsf{G}_2$, we construct the associated global $A$-packet and prove the Arthur multiplicity formula for these representations when $\tau$ is dihedral and satisfies some technical hypotheses. We also prove that this subspace of the discrete automorphic spectrum forms a full near equivalence class. Our construction yields new examples of quaternionic modular forms on $\mathsf{G}_2$.
\end{abstract}

\setcounter{tocdepth}{1}
\tableofcontents

\section{Introduction}

The Langlands philosophy predicts that automorphic representations should be classified in terms of Galois representations. More specifically, for a connected reductive group $G$ over a number field $F$, Arthur \cite{Arthur, Arthur1, Arthur2} has proposed a description of its discrete automorphic spectrum $\mathcal{A}_{\disc}(G)$ in terms of \emph{global $A$-parameters}
$$\psi \colon L_F \times \SL_2(\C) \to {^L} G,$$
where $L_F$ denotes the conjectural Langlands group of $F$, and ${^L} G$ denotes the Langlands $L$-group of $G$. For every place $v$ of $F$, one expects to associate with $\psi_v\coloneqq\psi|_{L_{F_v}\times\SL_2(\C)}$ a finite set $\Pi(\psi_v)$ of representations of $G(F_v)$, and Arthur gives a conjectural formula for how representations whose component at $v$ lies in $\Pi(\psi_v)$ contribute to the discrete automorphic spectrum. This is the associated \emph{global $A$-packet}.

We focus on the case where $G$ is the split simple group over $F$ of type $\mathsf{G}_2$. Then ${^L}G=\widehat{G}(\C)$, where $\widehat{G}$ is a simple group over $\C$ that is also of type $\mathsf{G}_2$. The possibilities for $\psi|_{\SL_2(\C)}$ correspond to nilpotent orbits in~$\widehat{G}$, so when $\psi|_{\SL_2(\C)}$ is nontrivial, there are four options:
\begin{itemize}
    \item \emph{regular orbit}: then $\psi|_{L_F}$ must be trivial, and the associated automorphic representations are trivial,
    \item \emph{subregular orbit}: in this case, the Arthur multiplicity formula is completely resolved in \cite{gan2002cubic} and \cite{Gan:Mult_formula_for_cubic},
    \item \emph{short root orbit}: in this case, the Arthur multiplicity formula is  completely resolved in \cite{GanGurevich:Non-tempered},
    \item \emph{long root orbit}: this case is the subject of our paper.
\end{itemize}
Assume now that $\psi|_{\SL_2(\C)}$ corresponds to the long root orbit. Then the centralizer of $\psi|_{\SL_2(\C)}$ in $\widehat{G}(\C)$ equals the subgroup $\SL_2(\C)$ associated with a short root in $\mathsf{G}_2$, so $\psi|_{L_F}$ corresponds to a tempered global $A$-parameter $L_F\rightarrow\SL_2(\C)$. Under the Tannakian definition of $L_F$, this corresponds to an irreducible cuspidal automorphic representation $\tau$ of $\PGL_2$ over $F$. We henceforth use $\tau$ to index long root global $A$-parameters, to avoid using the conjectural $L_F$.

In this paper, we address the case where $\tau$ is \emph{dihedral} (cf.~\S \ref{ss:dihedral}). Write $K/F$ for the associated quadratic extension and $\chi:K^\times\backslash\A_K^\times\rightarrow S^1$ for the associated character, where $\chi|_{\A_F^\times}$ must equal the quadratic character corresponding to $K/F$ in order for the automorphic induction $\tau$ of $\chi$ to descend to $\PGL_2$.

For every place $v$ of $F$, in \S\ref{sec:AMF_and_HPS} we associate with $\tau_v$ a finite set $\Pi(\tau_v)$ of representations of $G(F_v)$, using work of Alonso--He--Ray--Roset \cite{AHRR23} when $v$ is nonarchimedean. The set $\Pi(\tau_v)$ consists of one irreducible representation $\{\pi_v^+\}$ when $v$ splits in $K$ or $\chi_v^2=1$, and two irreducible representations $\{\pi^+_v,\pi^-_v\}$ otherwise. Write $\mathcal{A}_\tau(G)\subseteq\mathcal{A}_{\disc}(G)$ for the sum of all irreducible automorphic representations that are \emph{nearly equivalent} to $(\pi^+_v)_v$, i.e. whose component at $v$ is isomorphic to $\pi^+_v$ for cofinitely many $v$.

Our first main result is the Arthur multiplicity formula for these global $A$-packets.

\begin{thmx}[Theorem \ref{thm:actualA}.(3)]\label{thmA}
    Assume that
    \begin{enumerate}
        \item $L(\textstyle\frac12, \chi)$ is nonzero,
        \item $K/F$ is unramified at all places of $F$ above $2$,
        \item $F$ is totally real and, for all archimedean places $v$ of $F$ where $K_v=\RR\times\RR$, we have $\chi_v(-1) = 1$.
    \end{enumerate}
    Then we have an isomorphism of $G(\A_F)$-representations
    \begin{align}\label{AMF} \mathcal{A}_\tau(G)\cong\bigoplus\limits_{(\epsilon_v)_v}\bigotimes_v\!'\,\pi^{\epsilon_v}_v,
    \end{align}where $(\epsilon_v)_v$ runs over sequences in $\{\pm1\}$ indexed by places $v$ of $F$ such that
    \begin{itemize}
        \item $\epsilon_v=+1$ when $v$ splits in $K$ or $\chi_v^2=1$,
        \item $\prod_v\epsilon_v = \epsilon(\textstyle\frac12, \chi^3)$.
    \end{itemize}
Moreover, $\bigotimes'_v\pi_v^{\epsilon_v}$ is not cuspidal if and only if every $\epsilon_v$ equals $+1$ and $L(\textstyle\frac12,\chi^3)$ is nonzero.
\end{thmx}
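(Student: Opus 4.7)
The plan is to realize $\mathcal{A}_\tau(G)$ via the global exceptional theta correspondence between $\PU_3$ and $\mathsf{G}_2$, with $\PU_3$ the projective unitary group attached to $K/F$. First, I would construct the source: the pair $(K/F,\chi)$ determines a non-tempered global $A$-packet on $\PU_3$, whose local components and global multiplicities are controlled by the endoscopic classification for $U_3$. The hypothesis $L(\tfrac12,\chi)\neq 0$ is what ensures that this packet admits automorphic members, as it is the central $L$-value appearing in the associated multiplicity formula on $\PU_3$; moreover that classification produces a global sign constraint of the form $\prod_v \delta_v = \epsilon(\tfrac12,\chi)$ on the signs indexing its automorphic members.

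Next, I would establish the local theta correspondence. At every place $v$, I would exhibit a bijection between the local Vogan packet on $\PU_3(F_v)$ and $\Pi(\tau_v)$, compatible with the local theta correspondence coming from the minimal representation of the ambient exceptional group. The nonarchimedean side uses \cite{AHRR23}, with the hypothesis that $K/F$ is unramified above $2$ presumably needed to invoke that work at dyadic primes. The archimedean hypotheses (that $K$ is totally real and $\chi$ is totally even) ensure that the archimedean theta lifts land in the quaternionic discrete series of $\mathsf{G}_2(\RR)$ that the local packets are expected to contain.

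With the local correspondence in hand, for each global member $\sigma$ of the $\PU_3$-packet the global theta lift $\theta(\sigma)$ should be either zero or irreducible of the predicted form $\bigotimes'_v\pi_v^{\epsilon_v}$. A Rallis-type inner product formula for the $(\PU_3,\mathsf{G}_2)$ dual pair then relates the Petersson norm of $\theta(\sigma)$ to the central $L$-value $L(\tfrac12,\chi^3)$ times a product of local factors; the appearance of $\chi^3$ (rather than $\chi$) is the key functorial shift characteristic of the long root orbit. Non-vanishing of those local factors is automatic except in the ``boundary case'' where every $\epsilon_v=+1$, in which case the theta lift degenerates to the residue of an Eisenstein series on $\mathsf{G}_2$; this accounts for the non-cuspidal part of $\mathcal{A}_\tau(G)$ and its non-vanishing precisely when $L(\tfrac12,\chi^3)\neq 0$.

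The final step is to convert the $\PU_3$ sign condition into the $\mathsf{G}_2$ condition $\prod_v\epsilon_v = \epsilon(\tfrac12,\chi^3)$: each local bijection of the second step shifts $\delta_v$ to $\epsilon_v$ by a local epsilon factor coming from the local theta correspondence, and the product of these local shifts should reconstruct $\epsilon(\tfrac12,\chi^3)/\epsilon(\tfrac12,\chi)$, turning the $\PU_3$ constraint into the one predicted by Arthur. I expect this bookkeeping to be the main obstacle: precisely identifying each local shift and verifying that their global product equals the correct ratio of epsilon factors is what bridges the endoscopic classification on $\PU_3$ to the Arthur multiplicity formula on $\mathsf{G}_2$, and it is where essentially all the arithmetic content of the theorem resides.
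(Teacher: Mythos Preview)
Your proposal has the right skeleton but several concrete gaps that would prevent it from going through.

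\textbf{The $\PU_3$ sign is already $\epsilon(\tfrac12,\chi^3)$, not $\epsilon(\tfrac12,\chi)$.} Rogawski's multiplicity formula for the Howe--Piatetski-Shapiro packet on $\PU_3$ (Theorem \ref{thm:globalHPSU3}.(1)) gives the constraint $\prod_v\epsilon_v=\epsilon(\tfrac12,\chi^3)$ directly, and the local exceptional theta lift sends $\sigma_v^\epsilon$ to $\pi_v^\epsilon$ with \emph{no} sign shift. So the ``main obstacle'' you identify---the bookkeeping converting $\epsilon(\tfrac12,\chi)$ to $\epsilon(\tfrac12,\chi^3)$---is based on a false premise and is not where the work lies. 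Relatedly, the hypothesis $L(\tfrac12,\chi)\neq0$ has nothing to do with the existence of automorphic members of the $\PU_3$ packet; those exist unconditionally. It enters instead in the non-vanishing of the theta lift and in the reverse direction.

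\textbf{The non-vanishing mechanism is not a Rallis inner product.} The paper does not prove $\theta(\sigma)\neq0$ via an inner product formula with $L(\tfrac12,\chi^3)$. Instead it computes generic Fourier coefficients of $\theta(\sigma)$ along the Heisenberg parabolic of $G$, identifies them with torus periods of $\sigma$ along $T_E\hookrightarrow\PU_3$ for cubic \'etale $E/F$, and then evaluates those periods via a classical $\U_1$--$\U_1$ seesaw and Yang's theorem; the relevant $L$-value is $L(\tfrac12,\chi)^2 L(\tfrac12,\tau\otimes\omega_{F'/F})$, and one chooses $F'$ via Friedberg--Hoffstein. The dyadic hypothesis (2) is used in this root-number analysis (\S\ref{ss:rootnumberspadic}), not to invoke \cite{AHRR23}.

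\textbf{You are missing the reverse containment entirely.} Your proposal only shows that theta lifts from $\PU_3$ land in the right place; it says nothing about why every cuspidal $\pi$ in $\mathcal{A}_\tau(G)$ arises this way. This is Theorem \ref{thmC} and occupies all of \S\ref{s:liftback}: one lifts $\pi$ back to $\PU_3$, proves cuspidality of $\theta(\pi)$ by relating its constant terms to the $S_3\times G$ theta lift of \cite{gan2002cubic}, and proves non-vanishing via a seesaw with the $T_E\times G_E$ dual pair, a Siegel--Weil formula identifying $\theta_E(\one)$ with a residue of an Eisenstein series on $\Spin_8^E$, and Segal's integral representation of the twisted standard $L$-function of $\pi$. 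The pole of $\zeta_F^S(s)$ at $s=1$, together with $L(\tfrac12,\chi)\neq0$, forces the residue to be nonzero. Without this half, you have only one inclusion in \eqref{AMF}.
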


\begin{remark} In general, a global $A$-packet is a subspace of a near equivalence class, and this inclusion can be strict \cite[Theorem B]{Bla94}. However, in our setting Theorem \ref{thmA} shows that this inclusion is an equality.
\end{remark}
\begin{remark}\label{rem:assumptions}
We expect Theorem \ref{thmA} to hold without assumptions (1)--(3). In our paper, we explain how to remove assumption (3) if we knew certain Howe duality results for \emph{exceptional theta lifts} over archimedean fields (Conjecture \ref{conj:G2HPSarch}). Removing assumption (2) amounts to computing certain local root numbers over $2$-adic fields (Remark \ref{rmk:assumption_at_2}). While we expect Theorem \ref{thmA} to hold without assumption (1) if we replace $\epsilon(\textstyle\frac12,\chi^3)$ with $\epsilon(\textstyle\frac12,\chi)\epsilon(\textstyle\frac12,\chi^3)$, our method crucially relies on assumption (1), as we explain below.
\end{remark}
Let us discuss the proof of Theorem \ref{thmA}. The non-cuspidal part follows from work of H. Kim \cite{kim1996residual} and \v{Z}ampera \cite{zampera1997residual}, so we focus on the cuspidal part. Since $\tau$ is dihedral, the associated global $A$-parameter $\psi$ factors through a subgroup $\SL_3(\C)\rtimes\Z/2\Z$ of $\widehat{G}(\C)$, which is the Langlands $L$-group of $\PU_3$. The resulting global $A$-parameter of $\PU_3$ is associated with certain cuspidal automorphic representations whose existence was first observed by Howe--Piatetski-Shapiro \cite[p.~315]{HPS79}. Hence Langlands functoriality suggests that we should construct our global $A$-packet on $G$ by lifting these \emph{global Howe--Piatetski-Shapiro $A$-packets} on $\PU_3$. Indeed, there is a dual reductive pair $G\times\PU_3\hookrightarrow\widetilde{G}$, where $\widetilde{G}$ is the quasi-split adjoint group of type $\mathsf{E}_6$ with respect to $K/F$. By using an analogue of theta functions for $\widetilde{G}$, one obtains an \emph{exceptional theta lift} $\theta$ between $G$ and $\PU_3$.

We now describe the Howe--Piatetski-Shapiro $A$-packets in more detail. For every place $v$ of $F$, one can associate with $\chi_v$ a finite set $\Pi(\chi_v)$ of representations of $\PU_3(F_v)$: it consists of one irreducible representation $\{\sigma_v^+\}$ when $v$ splits in $K$, one irreducible representation $\{\sigma_v^-\}$ when $\PU_3(F_v)$ is compact, and two irreducible representations $\{\sigma_v^+,\sigma_v^-\}$ otherwise. By appropriately choosing the unitary group $\PU_3$, any sequence $(\epsilon_v)_v$ as in Theorem \ref{thmA} yields an irreducible cuspidal automorphic representation $\sigma\coloneqq\bigotimes'_v\sigma_v^{\epsilon_v}$ appearing in $\mathcal{A}_{\disc}(\PU_3)$ with multiplicity one. There is flexibility in choosing $\PU_3$, and for our arguments it will be important to ensure that, when $v$ is archimedean, $\PU_3(F_v)$ is compact exactly when $\epsilon_v=-1$; otherwise, the lift $\theta(\sigma)$ of $\sigma$ to $G$ will vanish for local reasons.

Our next main result is that $\theta(\sigma)$ is isomorphic to $\bigotimes'_v\pi_v^{\epsilon_v}$.
\begin{thmx}[Corollary \ref{cor:thetacuspidal}, Theorem \ref{cor:non-vanishing}]\label{thmB}
Assume that
\begin{enumerate}
    \item $L(\frac12,\chi)$ is nonzero,
    \item $K/F$ is unramified at all places of $F$ above $2$.
    \item $F$ is totally real and, for all archimedean places $v$ of $F$ where $K_v=\RR\times\RR$, we have $\chi_v(-1) = 1$.
\end{enumerate}
Then $\theta(\sigma)$ is cuspidal and isomorphic to $\bigotimes'_v\pi_v^{\epsilon_v}$.
\end{thmx}

To prove Theorem \ref{thmB}, we start by proving that $\theta(\sigma)$ is cuspidal. We relate the constant terms of $\theta(\sigma)$ to the theta lift for a smaller dual reductive pair $\PU_3\times\GL_2\hookrightarrow\widetilde{M}$, where $\widetilde{M}$ is the Levi subgroup of the Heisenberg parabolic of $\widetilde{G}$. At places of $F$ that split in $K$, this becomes a classical type II dual pair, so the resulting \emph{mini-theta lift} is well-understood by work of M\'{\i}nguez \cite{Min08}.

Next, we turn to showing that $\theta(\sigma)$ is nonzero. We prove that generic Fourier coefficients of $\theta(\sigma)$ with respect to the Heisenberg parabolic of $G$ vanish if and only if certain \emph{torus periods} of $\sigma$ vanish. Because $\sigma$ can be described as a classical theta lift from $\U_1$ to $\U_3$, we can use a seesaw argument to express this torus period\footnote{A generalization of this torus period problem, both locally and globally, is studied in work of Borade--Franzel--Girsch--Yao--Yu--Zelingher \cite{groupA}.} as a classical theta lift between two other $1$-dimensional unitary groups. By work of T. Yang \cite{Yan97}, the latter is nonzero if and only if the associated local theta lifts are nonzero and a certain $L$-value is nonzero. We find such a torus period by carefully analyzing certain local root numbers, using assumption (1), and appealing to work of Friedberg--Hoffstein \cite{FH95}. Altogether, this shows that $\theta(\sigma)$ is nonzero.

Finally, to prove that $\theta(\sigma)\cong\bigotimes'_v\pi_v^{\epsilon_v}$, we use local-global compatibility to reduce this to Howe duality results for local exceptional theta lifts and identifying $\theta(\sigma_v^{\epsilon_v})\cong\pi_v^{\epsilon_v}$ for all places $v$ of $F$. When $v$ is nonarchimedean, the Howe duality results are due to Gan--Savin \cite{gan2021howe} and Baki\'{c}--Savin \cite{bakic2021howe}, and the identification $\theta(\sigma_v^{\epsilon_v})=\pi_v^{\epsilon_v}$ holds by construction. When $v$ is archimedean, we use results of Huang--Pand\v{z}i\'{c}--Savin \cite{NDPC}, Loke--Savin \cite{LokeSavin}, and Baki\'{c}--Loke--Savin \cite{BLS25}, but they do not cover all cases; this is why we need assumption (3).

By appropriately varying the unitary group $\PU_3$ and using Theorem \ref{thmB}, we show that the left hand side of \eqref{AMF} contains the right hand side. Our last main result implies the reverse containment, which concludes the proof of Theorem \ref{thmA}.
\begin{thmx}[Proposition \ref{prop:thetabackfull}]\label{thmC}
Assume that
\begin{enumerate}
    \item $L(\textstyle\frac12,\chi)$ is nonzero.
\end{enumerate}
Then the cuspidal part of $\mathcal{A}_\tau(G)$ lies in $\sum_{\PU_3}\theta(\sigma)$, where $\PU_3$ runs over unitary groups associated with $3$-dimensional Hermitian spaces for $K/F$, and $\sigma$ runs over irreducible cuspidal representations in the global Howe--Piatetski-Shapiro $A$-packet.
\end{thmx}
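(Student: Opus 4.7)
The plan is to show that every cuspidal $\pi\subseteq\mathcal{A}_\tau(G)$ is contained in the exceptional theta lift $\theta(\sigma)$ for some form of $\PU_3$ and some cuspidal $\sigma$ in its global HPS $A$-packet. The strategy is to produce such a $\sigma$ as the \emph{backward} exceptional theta lift of $\pi$: if I can verify that for an appropriate form of $\PU_3$ this backward lift is nonzero, cuspidal, and sits in the HPS packet attached to $\chi$, then Howe duality reciprocity will deliver $\pi\subseteq\theta(\theta(\pi))$.

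\emph{Choice of form.} At each place $v$, the local exceptional Howe dualities of Gan--Savin \cite{gan2021howe} and Baki\'{c}--Savin \cite{bakic2021howe} (nonarchimedean), together with Loke--Savin \cite{LokeSavin} and Huang--Pand\v{z}i\'{c}--Savin \cite{NDPC} (archimedean, in the cases they cover), attach to $\pi_v$ a unique candidate $\sigma_v^{\epsilon_v}\in\Pi(\chi_v)$ and a local sign $\epsilon_v\in\{\pm1\}$. The near-equivalence hypothesis forces $\epsilon_v=+1$ at cofinitely many $v$, so $(\sigma_v^{\epsilon_v})_v$ determines a near-equivalence class on some form of $\PU_3$. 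I choose the form so that $\PU_3(F_v)$ is compact precisely at those archimedean $v$ where $\epsilon_v=-1$; this removes the only potential local obstruction to non-vanishing of $\theta(\pi_v)$.

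\emph{Cuspidality.} The constant terms of $\theta(\pi)$ along the unique proper parabolic of $\PU_3$ are computed, by the same mini-theta-lift analysis used for Theorem \ref{thmB}, as a type~II theta lift applied to constant terms of $\pi$ along the Heisenberg parabolic of $G$. Cuspidality of $\pi$ makes these vanish, so $\theta(\pi)$ lies in the cuspidal spectrum of $\PU_3$.

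\emph{Non-vanishing and conclusion.} The principal obstacle is to show $\theta(\pi)\neq 0$ globally. Mirroring the forward argument of Theorem \ref{thmB}, I would relate generic Fourier coefficients of $\theta(\pi)$ along the unipotent radical of the parabolic of $\PU_3$ to a torus period on $\pi$ via the seesaw obtained from the dual pair $G\times\PU_3\hookrightarrow\widetilde{G}$, and reduce to a classical theta lift between one-dimensional unitary groups. T.~Yang's formula \cite{Yan97} expresses its non-vanishing in terms of local theta lifts and a central $L$-value, and the Friedberg--Hoffstein twisting input \cite{FH95} together with the local root number analysis of Theorem \ref{thmB} yields a twist of $\chi$ making this $L$-value nonzero, using the hypothesis $L(\tfrac12,\chi)\neq 0$ in exactly the same place as before. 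Once $\sigma\coloneqq\theta(\pi)$ is known to be a nonzero cusp form, local-global compatibility with the local HPS packets $\Pi(\chi_v)$ identifies $\sigma$ as belonging to the global HPS $A$-packet for $\chi$ on the chosen form of $\PU_3$. Howe duality at each place then gives $\pi\subseteq\theta(\sigma)$, completing the containment $\mathcal{A}_\tau(G)_{\cusp}\subseteq\sum_{\PU_3}\theta(\sigma)$; the main difficulty is global non-vanishing, which reuses and slightly extends the period / $L$-value analysis developed for Theorem \ref{thmB}.
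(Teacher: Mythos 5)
Your proposal contains genuine gaps in both the cuspidality and the non-vanishing steps, and in each case the missing ingredient is not a variant of the forward argument but a structurally different input.

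\textbf{Cuspidality.} You claim the constant term of $\theta(\pi)$ along the Borel of $\PU_3$ is a mini-theta lift applied to constant terms of $\pi$ along the Heisenberg parabolic of $G$, which would vanish by cuspidality of $\pi$. This is not how the geometry goes. In the paper's proof of Theorem~\ref{thm:liftbackcuspidal}, the relevant parabolic $\widetilde{B}'$ of $\widetilde{G}$ has $\widetilde{B}' \cap (G\times G'_J) = G\times B'$ with \emph{all of} $G$ sitting inside the Levi $\widetilde{T}'$, so the constant term produces the pairing $\int_{[G]}\theta(\varphi)_{\widetilde{V}'}(g)\overline{f(g)}\dd g$ of $\pi$ \emph{itself} (not a degenerate piece of it) against the constant term of $\Omega$. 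This is the $\mathsf{G}_2$-period of a minimal-representation theta lift to the twisted $\Spin_8$ associated with $F\times K$, i.e.\ the cubic dual pair $S_3\times G\hookrightarrow S_3\ltimes\Spin_8$ of Gan--Gurevich--Jiang, and its vanishing is \emph{not} automatic. It requires Gan's result (\cite{Gan:Mult_formula_for_cubic}) that this pairing is nonzero only if $\pi$ lies in a specific near equivalence class, which one must separately argue differs from $\mathcal{A}_\tau(G)$. The remaining orbit terms vanish by cuspidality and a stabilizer-invariance argument (Corollary~\ref{cor:trivUaction}), but the "trivial orbit" piece is the substantive obstruction and your proposal does not address it.

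\textbf{Non-vanishing.} You propose to reduce a torus period of $\theta(\pi)$ on $\PU_3$ to a classical $\U_1\times\U_1$ theta lift and apply Yang's formula, mirroring Theorem~\ref{thmB}. But the reduction to a $\U_1\times\U_1$ theta lift in the forward direction crucially used that $\sigma$ is itself a \emph{classical} theta lift from $\U_1$, giving a seesaw inside a symplectic group. In the backward direction $\pi$ on $\mathsf{G}_2$ has no such description, and there is no classical seesaw available. The paper's actual mechanism is entirely different: the seesaw $(G_E\times\underline{\Aut}(J))$ vs.\ $(G\times H_C)$ inside $\underline{\Aut}(\widetilde{\mathfrak g})$ (with $G_E$ a quasi-split $\Spin_8$) converts the torus period of $\theta_J(\pi)$ into the Petersson pairing $\langle\pi,\theta_C(\one)\rangle$, and then a Siegel--Weil formula (Proposition~\ref{prop:siegelweil}) identifies $\theta_C(\one)$ with a residue of a degenerate Eisenstein series on $G_E$, so that Segal's doubling integral \cite{segal2017family} turns the period into the residue at $s=1$ of $L^S(s,\pi,\omega,\mathrm{Std})$. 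That residue is shown to be nonzero precisely because $L(\frac12,\chi)\neq 0$. None of these inputs (the $T_E\times G_E$ dual pair, the Siegel--Weil formula, the doubling integral) appear in your outline.

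A further issue: you propose choosing the form of $\PU_3$ a priori from local exceptional Howe duality, which at archimedean places is only known conditionally (Conjecture~\ref{conj:G2HPSarch}). The paper's Theorem~\ref{thmC} carries no archimedean hypothesis; the form of $\PU_3$ is instead produced \emph{a posteriori} from whichever term of the Siegel--Weil sum is nonzero, and division-algebra unitary groups are ruled out via Rogawski's classification (Theorem~\ref{thm:globalHPSU3}.(3)). Your choice-of-form step would thus weaken the statement even if the other steps were repaired.
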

To prove Theorem \ref{thmC}, we study the lift $\theta(\pi)$ to $\PU_3$ of irreducible cuspidal subrepresentations $\pi$ of $\mathcal{A}_\tau(G)$. First, we prove that $\theta(\pi)$ is cuspidal by relating its constant terms to the theta lift for the dual reductive pair $S_3\times G\hookrightarrow S_3\ltimes\Spin_8$ considered by Gan--Gurevich--Jiang \cite{gan2002cubic}. By work of Gan \cite{Gan:Mult_formula_for_cubic}, the latter yields a near equivalence class not equal to $\mathcal{A}_\tau(G)$, and we prove that this implies that $\theta(\pi)$ is cuspidal.

Next, we turn to showing that $\theta(\pi)$ is nonzero for some appropriately chosen $\PU_3$. It suffices to show that certain torus periods of $\theta(\pi)$ are nonzero. We use a seesaw argument to express these torus periods as an integral of $\pi$ with $\theta_E(\one)$, where $\theta_E$ denotes the theta lift for the dual reductive pair $T_E\times\Spin^E_8\hookrightarrow\widetilde{G}$ considered by Gan--Savin \cite{gan2021twisted}, and $\one$ denotes the constant functions on $T_E$. Then, we prove a \emph{Siegel--Weil formula} for $\theta_E(\one)$, which identifies it with the residue of a certain Eisenstein series on $\Spin^E_8$. By work of A. Segal \cite{segal2017family}, integrating the latter with $\pi$ yields the residue of a twisted standard $L$-function for $\pi$. We use assumption (1) to show that this residue is nonzero. Altogether, this shows that $\theta(\pi)$ is nonzero.

Finally, local-global compatibility indicates that every irreducible subrepresentation of $\theta(\pi)$ is nearly equivalent to $\bigotimes'_v\sigma_v^+$. Therefore the description of the discrete automorphic spectrum $\mathcal{A}_{\disc}(\PU_3)$ given by Rogawski \cite{rogawski1990automorphic, rogawski1992multiplicity} implies that they lie in the global Howe--Piatetski-Shapiro $A$-packet. From this, we deduce that $\pi$ lies in the image of said global $A$-packet under $\theta$.

\subsection*{Outline}
In \S\ref{s:lifts}, we introduce our $G\times\PU_3$ theta lift. In \S\ref{s:PU3}, we gather facts about automorphic representations of $\PU_3$ and compute certain torus periods of them. In \S\ref{sec:AMF_and_HPS}, we state Arthur's multiplicity formula for long root $A$-packets on $G$, specialize to the case of Theorem \ref{thmA}, and define the relevant local $A$-packets. In \S\ref{sec:cuspidality}, we prove that the images of Howe--Piatetski-Shapiro representations under $\theta$ are cuspidal. In \S\ref{sec:non-vanishing}, we prove that they are nonzero, which completes the proof of Theorem \ref{thmB}. In \S\ref{s:liftback}, we prove Theorem \ref{thmC}, and we conclude by putting everything together to prove Theorem \ref{thmA}.

\subsection*{Notation}
For an affine algebraic group $G$ over a number field $F$, write $[G]$ for the quotient space $G(F)\backslash G(\A_F)$. When $G$ is connected reductive, write $\mathcal{A}(G)$ for the space of automorphic forms of $G$ as in \cite[p.~228]{gan2002cubic}; in particular, $\mathcal{A}(G)$ is actually a representation of $G(\A_F)$. By an \emph{automorphic representation of $G(\A_F)$}, we mean a subrepresentation of $\mathcal{A}(G)$. When $G$ is connected semisimple and $?$ lies in $\{\disc,\cusp,\res\}$, write $\mathcal{A}_?(G)$ for the automorphic representation of $G(\A_F)$ given by the smooth vectors of $L^2_?([G])$.

For nonarchimedean local fields $F$, we write $\varpi_F$ for a choice of uniformizer. We normalize class field theory by sending uniformizers to geometric Frobenii.

\subsection*{Acknowledgments}
This work began at the 2022 Arizona Winter School, and we thank the organizers for coordinating this event and providing a wonderful work environment. We are extremely thankful to Wee Teck Gan for suggesting this problem, as well as for comments on an earlier draft. We also thank Wee Teck Gan and Gordan Savin for many helpful conversations about this project; our intellectual debt to them should be clear. Finally, we thank Aaron Landesman for some useful discussions.

The first-named author was partially supported by the Croatian Science Foundation under the project number HRZZ-IP-2022-10-4615, the second-named author was partially supported by UK Research and Innovation grant MR/V021931/1, and the third- and fourth-named authors were partially supported by NSF Grants \#DMS2303195 and \#DGE1745303, respectively.

\section{The $\mathsf{G}_2\times\PU_3$ theta lifts}\label{s:lifts}
In this section, we introduce the main players of our paper: the dual pair $\mathsf{G}_2\times\PU_3$ inside the quasi-split adjoint form of $\mathsf{E}_6$, as well as the associated (exceptional) theta lift. We set up the necessary notation and recall the basic facts we will need. Nothing in this section is new.

\subsection{Freudenthal--Jordan algebras}

Let $F$ be a field of characteristic 0, and let $(J,\circ)$ be a Freudenthal algebra over $F$ in the sense of \cite[\S37.C]{KMRT}. Write $N_J:J\rightarrow F$ for the norm form, $(-,-,-):J^3\rightarrow F$ for its associated symmetric trilinear form, and $T_J:J\rightarrow F$ for the trace form. We use the non-degenerate bilinear form $(x,y)\mapsto T_J(x\circ y)$ to identify $J\cong J^*$. For any $x$ in $J$, write $x^\#$ in $J$ for its adjoint in the sense of \cite[\S38]{KMRT}.

Let $K$ be a quadratic \'etale $F$-algebra.
\begin{example}\label{exmp:J}
Let $B$ be a $9$-dimensional central simple algebra over $K$, and let $\iota:B\rightarrow B$ be an involution of the second kind for $K/F$. Then the subspace $J\subseteq B$ of $\iota$-fixed points is naturally a Freudenthal algebra over $F$ \cite[p.~513]{KMRT}. Write $G'_J\coloneqq\underline{\Aut}(J)^\circ$ for the connected automorphism group of $J$ over $F$.

When $B$ is isomorphic to $\mathrm{M}_3(K)$, our $\iota$ is of the form $x\mapsto e\prescript{t}{}{\overline{x}}e^{-1}$ for some invertible Hermitian matrix $e$ in $\mathrm{M}_3(K)$. Note that $x\mapsto xe$ identifies $J$ with the subspace $\mathfrak{h}_3\subseteq\mathrm{M}_3(K)$ of Hermitian matrices, and under this identification,
\begin{itemize}
    \item the identity in $J$ is $e$,
    \item $x\circ y$ is given by $\frac12(xe^{-1}y+ye^{-1}x)$,
    \item $N_J(x)$ is given by $\det(xe^{-1})$,
    \item $T_J(x)$ is given by $\tr(xe^{-1})$,
    \item $x^\#$ equals $\det(e^{-1})e\adj(x)e$, where $\adj(x)$ denotes the adjugate matrix of $x$.
\end{itemize}
Moreover, $G'_J$ corresponds to the projective unitary group $\PU_3$ over $F$ associated with the Hermitian space induced by $e$, where $g$ in $\PU_3$ acts on $\mathfrak{h}_3$ via $x\mapsto gx\prescript{t}{}{\overline{g}}$.

Without changing the Hermitian space induced by $e$, we may assume that $e$ lies in $\mathrm{M}_3(F)$. Then $\prescript{t}{}{e}=e$, and $x\mapsto\overline{x}$ induces automorphisms $c$ of $G'_J$ and $J$ over $F$ such that the full automorphism group $\underline{\Aut}(J)$ of $J$ over $F$ is given by $G'_J\rtimes c^{\Z/2}$ \cite[p.~515]{KMRT}.
\end{example}

\subsection{A quasi-split adjoint form of $\mathsf{E}_6$}\label{ss:E6} Using $J$, one can construct a Lie algebra $\widetilde\g_J$ over $F$ as follows. As an $F$-vector space, set $\widetilde\g_J \coloneqq (\mathfrak{sl}_3 \oplus \mathfrak{l}_J) \oplus (V \otimes J) \oplus (V^\ast \otimes J^*)$, where $V$ denotes the standard representation of $\mathfrak{sl}_3$ over $F$, and $\mathfrak{l}_J$ denotes the Lie subalgebra
$$\mathfrak{l}_J \coloneqq \{a \in \End(J) \ | \ (a x, y, z) + (x, ay, z) + (x, y, az) = 0 \text{ for all }x,y,z \in J \}\subseteq\End(J).$$
For a complete description of the Lie bracket in $\widetilde\g_J$, consult \cite[p.~138]{rumelhart1997minimal}. Recall from \cite[p.~178]{rumelhart1997minimal} that
\begin{itemize}
    \item When $J$ is $1$-dimensional over $F$ (i.e. isomorphic to $F$), $\widetilde\g_J=\widetilde\g_F$ is split of type $\mathsf{G}_2$,
    \item When $J$ is $3$-dimensional over $F$ (i.e. isomorphic to a cubic \'etale $F$-algebra $E$), $\widetilde\g_J=\widetilde\g_E$ is the quasi-split form of $\mathsf{D}_4$ with respect to $E$ over $F$.
\end{itemize}
Henceforth, assume that $J$ is of the form considered at the end of Example \ref{exmp:J}. Then $\widetilde\g\coloneqq\widetilde\g_J$ is the quasi-split form of $\mathsf{E}_6$ with respect to $K$ over $F$ \cite[Proposition 7]{rumelhart1997minimal}. In particular, $\widetilde\g$ depends only on $K$, and its connected automorphism group $\widetilde{G}\coloneqq\underline{\Aut}(\widetilde\g)^\circ$ is the quasi-split adjoint form of $\mathsf{E}_6$ with respect to $K$ over $F$.

\subsection{Dual pairs in $\widetilde{G}$}\label{ss:dualpairs}We will consider the following semisimple subgroups of $\widetilde{G}$. The natural action of $\underline{\Aut}(J)$ on $\widetilde\g$ induces an injective morphism $G'_J\hookrightarrow\widetilde{G}$ of groups over $F$, and the image of the Lie algebra $\g'_J$ of $G'_J$ under this map equals $\{a\in\mathfrak{l}_J\mid a(e)=0\}\subseteq\widetilde\g$.

Write $\g$ for $\widetilde\g_F$, and write $G\coloneqq\underline{\Aut}(\g)$ for its automorphism group over $F$. By \S\ref{ss:E6}, $G$ is the connected split simple group of type $\mathsf{G}_2$ over $F$. Now \cite[Proposition 6.2]{gan2021twisted} shows that the inclusion $\g\subseteq\widetilde\g$ induces an injective morphism $G\hookrightarrow\widetilde{G}$ of groups over $F$.

By looking at the Lie algebras, one checks that the images of $G$ and $G'_J$ in $\widetilde{G}$ are mutual connected centralizers. Since $G$ and $G'_J$ are both adjoint, this yields an injective morphism $G\times G'_J\hookrightarrow\widetilde{G}$ of groups over $F$. This is our main dual pair of interest.

\subsection{Root spaces and simple roots}\label{ss:roots} The inclusion $\mathfrak{sl}_3\subseteq\g$ induces an injective morphism $\SL_3\hookrightarrow G$ of groups over $F$. Write $T$ for the diagonal maximal subtorus of $\SL_3$, whose image in 
$G$ is also a maximal subtorus. For any root $\delta$ of $G$ with respect to $T$, write $\mathfrak{n}_\delta$ (respectively $\widetilde{\mathfrak{n}}_\delta$) for the associated subspace of $\g$ (respectively $\widetilde\g$). Similarly, write $N_\delta$ (respectively $\widetilde{N}_\delta$) for the associated unipotent subgroup of $G$ (respectively $\widetilde{G}$) over $F$, which we identify with $\mathfrak{n}_\delta$ (respectively $\widetilde{\mathfrak{n}}_\delta$) as varieties over $F$ via the exponential morphism. Because $G'_J$ commutes with $G$, we see that conjugation by $G'_J$ preserves $\widetilde{N}_\delta$.

These root spaces are computed as follows \cite[\S6.3]{gan2021twisted}. When $\delta$ is a long root, we have $\mathfrak{n}_\delta = \widetilde{\mathfrak{n}}_\delta=\ang{e_{ij}}$, where $1\leq i\neq j\leq 3$ and $e_{ij}$ denotes the standard basis vector in $\mathfrak{sl}_3$. When $\delta$ is a short root, then $\mathfrak{n}_\delta$ equals either $\ang{e_i\otimes e}$ or $\ang{e_i^*\otimes e^*}$, where $1\leq i\leq 3$ and $e_i$ denotes the standard basis vector in $V$. Moreover, $\widetilde{\mathfrak{n}}_\delta$ equals the corresponding $e_i\otimes J$ or $e_i^*\otimes J^*$. In particular, we identify $\widetilde{\mathfrak{n}}_\delta$ with $J$ when $\delta$ is a short root, and for any root $\delta$ we identify $\mathfrak{n}_\delta$ with $F$.

We fix simple roots $\{\alpha,\beta\}$ of $G$ with respect to $T$ such that $\mathfrak{n}_\alpha=\ang{e_2\otimes e}$ and $\mathfrak{n}_\beta=\ang{e_{12}}$.

\subsection{Heisenberg parabolics}\label{ss:heisenberg}
We describe the Heisenberg parabolic of $\widetilde{G}$ as follows. Write $\mathfrak{n}_3\subseteq\mathfrak{sl}_3$ for the subalgebra of strictly upper-triangular matrices over $F$. Write $\widetilde P$ for the parabolic subgroup of $\widetilde{G}$ that admits a Levi factor $\widetilde{M}$ with Lie algebra equal to $(\mathfrak{t}\oplus\mathfrak{l}_J)\oplus(e_2\otimes J)\oplus(e_2^*\otimes J^*)$ and whose unipotent radical $\widetilde{N}$ has Lie algebra $\widetilde{\mathfrak{n}}$ equal to 
\begin{align*}
\mathfrak{n}_3\oplus(e_1\otimes J)\oplus(e_3^*\otimes J^*) = \widetilde{\mathfrak{n}}_\beta\oplus\widetilde{\mathfrak{n}}_{\alpha+\beta}\oplus\widetilde{\mathfrak{n}}_{2\alpha+\beta}\oplus\widetilde{\mathfrak{n}}_{3\alpha+\beta}\oplus\widetilde{\mathfrak{n}}_{3\alpha+2\beta}
\end{align*}
\cite[\S6.4]{gan2021twisted}. Note that the center $Z$ of $\widetilde{N}$ has Lie algebra $\mathfrak{z}$ equal to $\widetilde{\mathfrak{n}}_{3\alpha+2\beta}$, and the quotient $\widetilde{N}/Z$ is also abelian. Hence the exponential morphism $\widetilde{N}/Z\cong\widetilde{\mathfrak{n}}_\beta\oplus\widetilde{\mathfrak{n}}_{\alpha+\beta}\oplus\widetilde{\mathfrak{n}}_{2\alpha+\beta}\oplus\widetilde{\mathfrak{n}}_{3\alpha+\beta}$ is an isomorphism of groups over $F$. We use the Killing form to identify $(\widetilde{\mathfrak{n}}/\mathfrak{z})^*$ with $\widetilde\X\coloneqq\widetilde{\mathfrak{n}}_{-3\alpha-\beta}\oplus\widetilde{\mathfrak{n}}_{-2\alpha-\beta}\oplus\widetilde{\mathfrak{n}}_{-\alpha-\beta}\oplus\widetilde{\mathfrak{n}}_{-\beta}$.

Under our identifications from \S\ref{ss:roots}, $\widetilde\X$ corresponds to $F\times J\times J\times F$. Note that $\widetilde{M}$ naturally acts on $\widetilde\X$. Write $\O_{\min}$ for the $\widetilde{M}$-orbit of $(0,0,0,1)$ in $\widetilde\X$, which is a locally closed subvariety of $\widetilde\X$ over $F$.

\begin{prop}[{\cite[Proposition 8.1]{gan2021twisted}}]\label{prop:min_orbit}
The orbit $\O_{\min}$ equals the locus of $(a,x,y,d)$ in $\widetilde\X$ such that
\begin{align*}
(a,x,y,d)\neq0,\,x^\#=ay,\,y^\#=dx,\mbox{ and }l(x)\circ l^*(y)=ad\mbox{ for all }l\mbox{ in }L_J(F),
\end{align*}
where $L_J\subseteq\GL_J$ is the subgroup of linear maps that preserve $N_J$, and $l^*$ denotes the dual action.
\end{prop}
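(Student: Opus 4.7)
Since the statement is quoted verbatim from \cite[Proposition 8.1]{gan2021twisted}, the plan is a direct verification of the two containments.

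For the inclusion $\O_{\min}\subseteq\{\text{locus}\}$, I would first check the base point: at $(a,x,y,d)=(0,0,0,1)$ the conditions all hold trivially, since $x^\#=0=ay$, $y^\#=0=dx$, and $l(x)\circ l^*(y)=0=ad$. Next I would verify that each condition cuts out an $\widetilde{M}$-stable subvariety of $\widetilde\X$. This reduces to understanding the $\widetilde{M}$-action on $\widetilde\X\cong F\times J\times J\times F$: the subgroup $L_J\subseteq\widetilde{M}$ acts by $(a,x,y,d)\mapsto(a,l(x),l^*(y),d)$, the central $\GL_1\subseteq\widetilde{M}$ scales the four factors by appropriate weights, and the remaining root groups coming from $(e_2\otimes J)\oplus(e_2^*\otimes J^*)$ mix the four factors through the Jordan product $\circ$, the sharp map $\#$, and the trace form $T_J$. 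The invariance of $x^\#=ay$ and $y^\#=dx$ under these last operators is the substantive computation, relying on the identities $x^{\#\#}=N_J(x)\,x$ and the linearization of $N_J$ familiar from Freudenthal algebras.

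For the reverse inclusion, I would stratify the locus by whether $d\neq0$, $d=0$ but $a\neq0$, or $a=d=0$. When $d\neq0$, the central $\GL_1$ normalizes $d$ to $1$; then $y^\#=x$ pins down $x$, and applying $\#$ again together with the Freudenthal identity $y^{\#\#}=N_J(y)\,y$ and $x^\#=ay$ forces $a=N_J(y)$. The resulting one-parameter family $(N_J(y),y^\#,y,1)$ is the image of $(0,0,0,1)$ under an explicit element of the root group associated to $e_2^*\otimes J^*$. The case $d=0$, $a\neq0$ is handled symmetrically using the $e_2\otimes J$ root group. The remaining case $a=d=0$ reduces to $x^\#=0$, $y^\#=0$, and $l(x)\circ l^*(y)=0$ for all $l$; here I would invoke transitivity of $L_J$ on the rank-$1$ locus of $J$ (a classical fact for Freudenthal algebras) to normalize $x$, use the orthogonality condition to constrain $y$, and then translate by elements of the other root groups of $\widetilde{M}$ to reach the $\widetilde{M}$-orbit of $(0,0,0,1)$.

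The main obstacle is the case $a=d=0$: one must unpack the orthogonality condition $l(x)\circ l^*(y)=ad$ carefully and use the combined action of $L_J$ together with the root groups of $\widetilde{M}$ coming from $(e_2\otimes J)\oplus(e_2^*\otimes J^*)$ to establish transitivity. This invokes nontrivial structure theory of the Freudenthal algebra $J$ together with the prehomogeneous orbit structure on $\widetilde\X$, which is precisely what Gan--Savin develop in \cite{gan2021twisted}; in practice, after verifying the base point and the $\widetilde{M}$-equivariance of the defining equations, I would simply cite their proof rather than reproduce the full case analysis.
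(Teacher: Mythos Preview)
The paper gives no proof of this proposition at all: it is stated as a direct citation of \cite[Proposition 8.1]{gan2021twisted} and immediately used. Your proposal is therefore more than the paper provides---you sketch a plausible two-inclusion argument (base-point check, $\widetilde{M}$-invariance of the equations, then a stratified transitivity argument for the reverse inclusion) before conceding that the hard $a=d=0$ case should simply be cited from Gan--Savin. Since you end up invoking the same reference the paper does, your approach is consistent with the paper's; the additional outline you give is reasonable context but not something the paper itself supplies.
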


We similarly describe the Heisenberg parabolic of $G$ as follows. Write $P$ for $\widetilde{P}\cap G$, which is a parabolic subgroup of $G$ that admits a Levi factor $M\coloneqq\widetilde{M}\cap G$. Note that we have a unique isomorphism $\GL_2\ra^\sim M$ such that the standard coordinates on strictly upper-triangular matrices and $N_\alpha$ agree, which we use to identify $M$ with $\GL_2$. The unipotent radical $N$ of $P$ equals $\widetilde{N}\cap G$ and hence has Lie algebra $\mathfrak{n}$ equal to
\begin{align*}
\mathfrak{n}_3\oplus(\ang{e_1\otimes e})\oplus(\ang{e_3^*\otimes e^*}) = \mathfrak{n}_\beta\oplus\mathfrak{n}_{\alpha+\beta}\oplus\mathfrak{n}_{2\alpha+\beta}\oplus\mathfrak{n}_{3\alpha+\beta}\oplus\mathfrak{n}_{3\alpha+2\beta}.
\end{align*}
Note that the center of $N$ equals $Z$. As above, the exponential morphism  $N/Z\cong\mathfrak{n}_\beta\oplus\mathfrak{n}_{\alpha+\beta}\oplus\mathfrak{n}_{2\alpha+\beta}\oplus\mathfrak{n}_{3\alpha+\beta}$ is an isomorphism of groups over $F$, and the Killing form identifies $(\mathfrak{n}/\mathfrak{z})^*$ with
\begin{align*}
\X\coloneqq\mathfrak{n}_{-3\alpha-\beta}\oplus\mathfrak{n}_{-2\alpha-\beta}\oplus\mathfrak{n}_{-\alpha-\beta}\oplus\mathfrak{n}_{-\beta}.
\end{align*}

Write $p:\widetilde\X\rightarrow\X$ for the map obtained by dualizing $\mathfrak{n}/\mathfrak{z}\hookrightarrow\widetilde{\mathfrak{n}}/\mathfrak{z}$.
\begin{lemma}\label{lem:p}
Under our identifications in \S\ref{ss:roots}, $p$ corresponds to the map
\begin{align*}
\id\times T_J\times T_J\times \id:F\times J\times J\times F\rightarrow F\times F\times F\times F.
\end{align*}
\end{lemma}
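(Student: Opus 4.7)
The plan is to decompose $p$ into its four root-space components and compute each one separately. Since both $\widetilde{\mathfrak{n}}/\mathfrak{z}$ and $\mathfrak{n}/\mathfrak{z}$ split as direct sums over the four roots $\delta\in\{\beta,\alpha+\beta,2\alpha+\beta,3\alpha+\beta\}$, and the inclusion $\iota\colon\mathfrak{n}/\mathfrak{z}\hookrightarrow\widetilde{\mathfrak{n}}/\mathfrak{z}$ respects this decomposition, the dual $p$ does too. Here I use that the Killing form on $\widetilde\g$ pairs $\widetilde{\mathfrak{n}}_\delta$ non-degenerately with $\widetilde{\mathfrak{n}}_{-\delta}$ and vanishes between $\widetilde{\mathfrak{n}}_\delta$ and $\widetilde{\mathfrak{n}}_{-\delta'}$ for $\delta\neq\delta'$, so the identification $(\widetilde{\mathfrak{n}}/\mathfrak{z})^*\cong\widetilde\X$ (and its analogue for $\mathfrak{n}/\mathfrak{z}$) is itself the direct sum of the individual pairings of $\widetilde{\mathfrak{n}}_\delta$ with $\widetilde{\mathfrak{n}}_{-\delta}$.

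For the two long roots $\delta\in\{\beta,3\alpha+\beta\}$, \S\ref{ss:roots} gives $\mathfrak{n}_\delta=\widetilde{\mathfrak{n}}_\delta$, with both identified with $F$ via the same basis vector $e_{12}$ or $e_{23}$; dually, $\mathfrak{n}_{-\delta}=\widetilde{\mathfrak{n}}_{-\delta}\cong F$. Hence the corresponding components of $p$ are the identity maps $F\to F$, producing the two $\id$ factors in the claim.

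For the two short roots $\delta\in\{\alpha+\beta,2\alpha+\beta\}$, under the identifications of \S\ref{ss:roots} the inclusion $\mathfrak{n}_\delta\hookrightarrow\widetilde{\mathfrak{n}}_\delta$ becomes $F\cdot e\hookrightarrow J$, $\lambda\mapsto\lambda e$. The Killing-form pairing $\widetilde{\mathfrak{n}}_\delta\times\widetilde{\mathfrak{n}}_{-\delta}\to F$, once both sides are identified with $J$, is exactly the trace pairing $(x,y)\mapsto T_J(x\circ y)$ that defines $J\cong J^*$. Therefore the $\delta$-component of $p$ sends $y\in J$ to the functional $\lambda\mapsto\lambda T_J(e\circ y)=\lambda T_J(y)$ on $F\cdot e$, using that $e$ is the identity of $(J,\circ)$; under $\mathfrak{n}_{-\delta}\cong F$ this is the map $y\mapsto T_J(y)$. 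Assembling the four components yields $p=\id\times T_J\times T_J\times\id$.

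The main thing to pin down is matching the restriction of the Killing form to $\widetilde{\mathfrak{n}}_\delta\times\widetilde{\mathfrak{n}}_{-\delta}$ with the trace pairing $T_J(x\circ y)$ on the nose, with no spurious scalar. This amounts to verifying that the identifications of $\widetilde{\mathfrak{n}}_\delta$ with $J$ fixed in \S\ref{ss:roots} are normalized consistently with the $T_J(x\circ y)$-identification of $J$ with $J^*$. It can be checked by a direct computation using the bracket on $(V\otimes J)\oplus(V^*\otimes J^*)\subset\widetilde\g$ from \cite[p.~138]{rumelhart1997minimal}, or simply read off from the parallel setup in \cite[\S6.4]{gan2021twisted}.
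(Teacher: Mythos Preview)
Your argument is correct and is genuinely different from the paper's proof. The paper simply cites the split case from \cite[(4.7)]{RealAndGlobal} and then invokes Galois descent for general $K$. You instead give a direct root-by-root computation that works uniformly over any $F$, making no appeal to the split case or to descent. Your approach has the advantage of being self-contained and of explaining \emph{why} the formula comes out as it does (the $T_J$ factors arise precisely because the short-root inclusion is $\lambda\mapsto\lambda e$ and $e$ is the Jordan identity); the paper's approach is terser and offloads the computation to the literature.

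The one soft spot you have already flagged yourself: matching the Killing-form pairing on $\widetilde{\mathfrak{n}}_\delta\times\widetilde{\mathfrak{n}}_{-\delta}$ with the trace pairing $(x,y)\mapsto T_J(x\circ y)$ without a stray scalar. This is not automatic, since the Killing form on $\widetilde\g$ carries its own normalization, and in principle the long-root and short-root scalars could differ. In practice the identifications $\widetilde{\mathfrak{n}}_{-\delta}\cong J$ in \S\ref{ss:roots} already pass through $J^*\cong J$ via the trace form, so the issue collapses to a single overall constant which cancels when you compare with the analogous identification on the $\g$ side; alternatively, one checks it on a single pair of basis vectors using Rumelhart's bracket formulas. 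Either way this is a routine verification, and your pointer to \cite{rumelhart1997minimal} and \cite[\S6.4]{gan2021twisted} is adequate.
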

\begin{proof}
When $K$ is split, this is \cite[(4.7)]{RealAndGlobal}. The general case follows by Galois descent.
\end{proof}
Finally, note that $\widetilde{P}\cap(G\times G'_J)=P\times G'_J$.

\subsection{Other parabolics}\label{ss:parabolics}

Identify $\mathfrak{gl}_2$ with the subalgebra of $\mathfrak{sl}_3$ consisting of matrices concentrated in the upper-left $2\times2$ and lower-right $1\times1$ entries. Write $\widetilde{Q}$ for the parabolic subgroup of $\widetilde{G}$ that admits a Levi factor $\widetilde{L}$ with Lie algebra equal to $\mathfrak{gl}_2\oplus\mathfrak{l}_J$ and whose unipotent radical $\widetilde{U}$ has Lie algebra
\begin{align*}
(\ang{e_{13},e_{23}})\oplus(\ang{e_1,e_2}\otimes J)\oplus(e_3^*\otimes J^*) = \widetilde{\mathfrak{n}}_{\alpha}\oplus\widetilde{\mathfrak{n}}_{\alpha+\beta}\oplus\widetilde{\mathfrak{n}}_{2\alpha+\beta}\oplus\widetilde{\mathfrak{n}}_{3\alpha+\beta}\oplus\widetilde{\mathfrak{n}}_{3\alpha+2\beta}
\end{align*}
\cite[\S6.5]{gan2021twisted}. Write $Q$ for $\widetilde{Q}\cap G$, which is a parabolic subgroup of $G$ that admits a Levi factor $L\coloneqq\widetilde{L}\cap G$. Note that the injective morphism of groups $\GL_2\hookrightarrow\SL_3$ induced by $\mathfrak{gl}_2\subseteq\mathfrak{sl}_3$ yields an isomorphism $\GL_2\ra^\sim L$. The unipotent radical $U$ of $Q$ equals $\widetilde{U}\cap G$ and hence has Lie algebra
\begin{align*}
(\ang{e_{13},e_{23}})\oplus(\ang{e_1,e_2}\otimes e)\oplus(\ang{e_3^*\otimes e^*}) = \mathfrak{n}_\alpha\oplus\mathfrak{n}_{\alpha+\beta}\oplus\mathfrak{n}_{2\alpha+\beta}\oplus\mathfrak{n}_{3\alpha+\beta}\oplus\mathfrak{n}_{3\alpha+2\beta}.
\end{align*}
Note that $\widetilde{Q}\cap (G\times G'_J)=Q\times G'_J$.

Write $B$ for $P\cap Q$, which is a Borel subgroup of $G$ containing $T$. Write $V$ for the unipotent radical of $B$.

\subsection{Local theta lifts}\label{ss:localtheta}
In this subsection, assume that $F$ is a local field. Write $\Omega$ for the minimal representation of $\widetilde{G}(F)$ in the sense of \cite[Definition 3.6]{gan2005minimal} or \cite[Definition 4.6]{gan2005minimal}, which is an irreducible smooth representation of $\widetilde{G}(F)$. When $F$ is nonarchimedean and $K/F$ is unramified, note that $\widetilde{G}$ and hence $\Omega$ is unramified \cite[Corollary 7.4]{gan2005minimal}.

Let $\psi:F\rightarrow S^1$ be a nontrivial unitary character. For any $X$ in $\widetilde\X$, write $\psi_X:\widetilde{N}(F)\rightarrow S^1$ for the unitary character given by $\widetilde{n}\mapsto\psi(\langle X,\widetilde{n}\rangle)$, where $\langle-,-\rangle$ denotes the Killing form.
\begin{prop}\label{prop:minorbit}
Assume that $F$ is nonarchimedean. Then there exists a natural $\widetilde{M}(F)$-equivariant injection $\Omega_{Z(F)}\hookrightarrow C^\infty(\O_{\min}(F))$ such that
\begin{enumerate}
    \item the image of $\Omega_{Z(F)}$ contains $C^\infty_c(\O_{\min}(F))$,
    \item for all $X$ in $\O_{\min}(F)$, postcomposing with evaluation at $X$ induces the unique nonzero $\C$-linear map $\Omega_{\widetilde{N}(F),\psi_X}\rightarrow\C$ up to scaling,
    \item when $K/F$ is unramified, the image of any unramified vector in $\Omega$ is supported on $\O_{\min}(F)\cap\widetilde\X(\O)$,
    \item for nonzero $X$ in $\widetilde\X(F)-\O_{\min}(F)$, we have $\Omega_{N(F),\psi_X}=0$.
\end{enumerate}
\end{prop}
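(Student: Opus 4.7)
The plan is to perform Fourier analysis along the abelian quotient $\widetilde{N}/Z$ and exploit the characterization of $\Omega$ as the minimal representation of $\widetilde{G}(F)$. Because $\widetilde{N}(F)/Z(F)$ is abelian with Pontryagin dual (relative to $\psi$ and the Killing form) naturally identified with $\widetilde{\X}(F)$, the Jacquet module $\Omega_{Z(F)}$ is a smooth representation of $\widetilde{M}(F) \ltimes \widetilde{N}(F)/Z(F)$. Standard theory for smooth representations of $p$-adic abelian groups then yields a canonical $\widetilde{M}(F)$-equivariant map $\Omega_{Z(F)} \to C^\infty(\widetilde{\X}(F))$ sending $v$ to the function $X \mapsto \bar{v}$, where $\bar{v}$ denotes the image of $v$ in $\Omega_{\widetilde{N}(F), \psi_X}$ (which, at $X \in \O_{\min}(F)$, we will identify with a subspace of $\C$ via part (2)).

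With this embedding in hand, the most substantive step is part (4): for nonzero $X$ outside $\O_{\min}(F)$, the twisted Jacquet module $\Omega_{\widetilde{N}(F), \psi_X}$ (and a fortiori $\Omega_{N(F), \psi_X}$) vanishes. The plan is to deduce this from the defining property of the minimal representation, namely that its wavefront set equals the closure of the minimal nilpotent orbit in $\widetilde{\g}^*$. Under the Killing-form identification $(\widetilde{\mathfrak{n}}/\mathfrak{z})^* \cong \widetilde{\X}$, a nonzero $X \in \widetilde{\X}(F)$ giving a nontrivial twisted Jacquet module must be nilpotent and lie in the closure of the minimal orbit; a case analysis matching such nilpotent elements against the explicit description of $\O_{\min}$ in Proposition \ref{prop:min_orbit} forces $X \in \O_{\min}(F)$. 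This is essentially what is carried out in the construction of $\Omega$ in \cite{gan2005minimal} and \cite{gan2021twisted}. Combined with the fact that $\Omega$ has no $\widetilde{N}(F)$-invariants (so that the image also vanishes at $X = 0$), this yields that the image of $\Omega_{Z(F)}$ in $C^\infty(\widetilde{\X}(F))$ actually lies in $C^\infty(\O_{\min}(F))$, establishing the injection claimed in part (1).

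Parts (1)'s containment of $C^\infty_c(\O_{\min}(F))$ and (2) then fall out together. Uniqueness in (2) reduces, via transitivity of $\widetilde{M}(F)$ on $\O_{\min}(F)$ (Proposition \ref{prop:min_orbit}), to the single point $X_0 = (0,0,0,1)$; at this base point, the stabilizer of $\psi_{X_0}$ acts on $\Omega_{Z(F)}$ in a Heisenberg-type fashion and the uniqueness of the $\psi_{X_0}$-equivariant functional is the standard multiplicity-one statement along the minimal orbit. Granted uniqueness together with the nonvanishing of the functional at each $X \in \O_{\min}(F)$ (which follows from $\Omega_{Z(F)} \neq 0$ and the support analysis from part (4)), the $\widetilde{M}(F)$-equivariance and $\widetilde{N}(F)/Z(F)$-action promote a single nonzero compactly supported image to arbitrary elements of $C^\infty_c(\O_{\min}(F))$.

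For the unramified assertion (3), the spherical vector $v_0 \in \Omega^{\widetilde{G}(\O)}$ maps to an $\widetilde{M}(\O)$-invariant function on $\O_{\min}(F)$; an explicit computation of its Fourier coefficients along $\widetilde{N}$, as performed in the split case in \cite{gan2005minimal}, shows that its support lies in $\widetilde{\X}(\O) \cap \O_{\min}(F)$, and this descends to the quasi-split case when $K/F$ is unramified. The main obstacle in this whole plan is part (4), i.e. the wavefront-set input: everything else is structural, but identifying exactly which $X$ support nontrivial twisted Jacquet quotients requires genuine nilpotent-orbit information about $\widetilde{G}$. Fortunately, this reduces to the orbit calculation already encoded in Proposition \ref{prop:min_orbit} together with the minimal-representation results of Gan--Savin and Kazhdan--Savin.
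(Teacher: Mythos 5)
Your plan is essentially a detailed unpacking of the proofs in \cite{gan2005minimal} and \cite{gan2021twisted} that the paper simply cites: the paper's actual proof records only the technical observation $\O_{\min}(F)\cap\widetilde\X(\O_F)=\bigcup_{n\geq0}\varpi_F^n\cdot\O_{\min}(\O_F)$ (needed to make the cited support statement apply at the level of integral points) and then defers parts (1)--(3) to pp.~2057--2058 of \cite{gan2021twisted} and part (4) plus the uniqueness in (2) to \cite[Proposition~11.5]{gan2005minimal}. So the structure of your argument --- abelian Fourier analysis along $\widetilde{N}/Z$, the wavefront-set/nilpotent-orbit characterization to cut the support down to $\O_{\min}$, transitivity of $\widetilde{M}(F)$ on $\O_{\min}(F)$ to reduce uniqueness to a single base point, and equivariance to populate $C_c^\infty(\O_{\min}(F))$ --- is exactly the route those references take.

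One inferential slip worth flagging: you assert that vanishing of $\Omega_{\widetilde{N}(F),\psi_X}$ gives ``a fortiori'' vanishing of $\Omega_{N(F),\psi_X}$. This is backwards. Since $N\subset\widetilde{N}$, the twisted coinvariants $\Omega_{\widetilde{N}(F),\psi_X}$ are a quotient of $\Omega_{N(F),\psi_X}$, so it is $\Omega_{N(F),\psi_X}=0$ that implies $\Omega_{\widetilde{N}(F),\psi_X}=0$, not the reverse. In fact the literal statement of part (4) with $N$ cannot be right as printed: for any nonzero $X$ with $p(X)=0$ (e.g.\ $X=(0,j,0,0)$ with $T_J(j)=0$ and $j$ of rank $\geq 2$), the restriction $\psi_X|_{N(F)}$ is trivial while $\Omega_{N(F)}\neq 0$. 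The $N$ there should almost certainly be $\widetilde N$, which is exactly what your wavefront-set argument produces and is what Proposition~\ref{prop:FE_of_min} actually uses. So your derivation of the correct ($\widetilde N$) statement is sound; just drop the ``a fortiori'' remark, as it doesn't follow and isn't needed.
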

\begin{proof}
Note that $\O_{\min}(F)\cap\widetilde\X(\O_F)=\bigcup\limits_{n=0}^\infty\varpi_F^n\cdot\O_{\min}(\O_F)$, so parts (1)--(3) follow from \cite[p.~2057-2058]{gan2021twisted}, except for the uniqueness in part (2). Part (4) and the uniqueness in part (2) are \cite[Proposition 11.5]{gan2005minimal}.
\end{proof}

We use $\Omega$ to define our local theta lift.
\begin{definition}\label{defn:localtheta}
Let $\sigma$ be an irreducible smooth representation of $G'_J(F)$. The maximal $\sigma$-isotypic quotient of $\Omega|_{G'_J(F)}$ can be written as $\sigma\otimes\Theta(\sigma)$ for some $\C$-vector space $\Theta(\sigma)$, and since $G'_J$ commutes with $G$, our $\Theta(\sigma)$ is naturally a smooth representation of $G(F)$. Write $\theta(\sigma)$ for the maximal semisimple unitarizable quotient of $\Theta(\sigma)$.

One can swap the roles of $G$ and $G'_J$: for any irreducible smooth representation $\pi$ of $G(F)$, write $\Theta(\pi)$ and $\theta(\pi)$ for the analogous smooth representations of $G'_J(F)$.
\end{definition}

\subsection{Global theta lifts}\label{ss:globaltheta}
In this subsection, assume that $F$ is a number field. For every place $v$ of $F$, write $\Omega_v$ for the minimal representation of $\widetilde{G}(F_v)$ as in \S\ref{ss:localtheta}, and write $\Omega$ for the restricted tensor product $\bigotimes'_v\Omega_v$. There exists a canonical $\widetilde{G}(\A_F)$-equivariant embedding $\theta:\Omega\hookrightarrow \mathcal{A}_{\disc}(\widetilde{G})$ \cite[\S14.3]{gan2021twisted}.

Let $\psi:F\backslash\A_F\rightarrow S^1$ be a nontrivial unitary character. For any $X$ in $\widetilde\X$, write $\psi_X:[\widetilde{N}/Z]\rightarrow S^1$ for the unitary character given by $\widetilde{n}\mapsto\psi(\langle X,\widetilde{n}\rangle)$.
\begin{prop}\label{prop:FE_of_min}
For any $\varphi$ in $\Omega$, we have $\theta(\varphi)_Z=\theta(\varphi)_{\widetilde{N}}+\displaystyle\sum_{X\in\O_{\min}(F)}\theta(\varphi)_{\widetilde{N},\psi_X}$.
\end{prop}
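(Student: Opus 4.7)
The plan is to expand $\theta(\varphi)_Z$ as a Fourier series on the compact abelian adelic quotient $[\widetilde{N}/Z]$, and then to show, using Proposition \ref{prop:minorbit}(4), that only the characters $\psi_X$ with $X \in \O_{\min}(F) \cup \{0\}$ can contribute.

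First I would fix $\varphi \in \Omega$ and $g \in \widetilde{G}(\A_F)$, and study the function $n \mapsto \theta(\varphi)_Z(ng)$. Since $\theta(\varphi)$ is an automorphic form, it is left-invariant under $\widetilde{N}(F)$; since $Z \subseteq \widetilde{N}$ is abelian and we have already integrated along it, it is also left-invariant under $Z(\A_F)$. Thus it descends to a smooth function on the compact abelian group $[\widetilde{N}/Z]$. The Killing form together with $\psi$ identifies the Pontryagin dual of $[\widetilde{N}/Z]$ with $\widetilde{\X}(F)$: the character $\psi_X$ attached to $X \in \widetilde{\X}(F)$ is trivial on $Z(\A_F)$ because $X$ is orthogonal to $\mathfrak{z}$ by construction. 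Standard Fourier inversion on compact abelian groups therefore gives an absolutely convergent expansion
\[
\theta(\varphi)_Z(ng) \;=\; \sum_{X \in \widetilde{\X}(F)} c_X(g)\,\psi_X(n), \qquad c_X(g) = \int_{[\widetilde{N}/Z]} \theta(\varphi)_Z(n'g)\,\overline{\psi_X(n')}\,dn'.
\]

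Next I would unfold the double integral defining $c_X(g)$: using that $\psi_X$ is trivial on $Z(\A_F)$,
\[
c_X(g) = \int_{[\widetilde{N}/Z]}\!\int_{[Z]} \theta(\varphi)(zn'g)\,dz\,\overline{\psi_X(n')}\,dn' = \int_{[\widetilde{N}]} \theta(\varphi)(n'g)\,\overline{\psi_X(n')}\,dn' = \theta(\varphi)_{\widetilde{N},\psi_X}(g).
\]
Setting $n = 1$ then gives $\theta(\varphi)_Z(g) = \sum_{X \in \widetilde{\X}(F)} \theta(\varphi)_{\widetilde{N},\psi_X}(g)$. The term $X = 0$ is exactly $\theta(\varphi)_{\widetilde{N}}(g)$, and the terms with $X \in \O_{\min}(F)$ form the sum appearing in the statement.

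It remains to kill the contribution of every $X \in \widetilde{\X}(F) \setminus (\O_{\min}(F) \cup \{0\})$. For such $X$, the assignment $\varphi \mapsto \theta(\varphi)_{\widetilde{N},\psi_X}(g)$ is an $(\widetilde{N}(\A_F), \psi_X)$-equivariant linear functional on $\Omega = \bigotimes'_v \Omega_v$, and therefore factors through the restricted tensor product $\bigotimes'_v \Omega_{v,\widetilde{N}(F_v),\psi_X}$. Choose any nonarchimedean place $v$ of $F$: since $\O_{\min}$ is defined over $F$, the element $X$ remains nonzero in $\widetilde{\X}(F_v)$ and still fails to lie in $\O_{\min}(F_v)$. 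Proposition \ref{prop:minorbit}(4) then forces $\Omega_{v,\widetilde{N}(F_v),\psi_X} = 0$ (as it is a quotient of $\Omega_{v,N(F_v),\psi_X}$), so the global functional vanishes and $\theta(\varphi)_{\widetilde{N},\psi_X}(g) = 0$. The main subtlety is the local-to-global factorization argument and absolute convergence of the Fourier series; the former is standard given that $\Omega$ is an irreducible admissible representation with a global embedding into automorphic forms, and the latter is classical Fourier analysis on $[\widetilde{N}/Z]$ once we know $\theta(\varphi)_Z$ is smooth in the $n$-variable.
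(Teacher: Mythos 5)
Your proposal is correct and matches the paper's argument, which is stated in a single line ("apply Proposition \ref{prop:minorbit}.(4) at any nonarchimedean place"). You have simply filled in the Fourier expansion on $[\widetilde{N}/Z]$ and the local-to-global vanishing step that the paper leaves implicit, including the correct observation that $\Omega_{v,\widetilde{N}(F_v),\psi_X}$ is a quotient of $\Omega_{v,N(F_v),\psi_X}$ so that the vanishing in Proposition \ref{prop:minorbit}.(4) (stated for $N$) applies.
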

\begin{proof}
This follows from applying Proposition \ref{prop:minorbit}.(4) to any nonarchimedean place of $F$.
\end{proof}

Note that $\varphi\mapsto\theta(\varphi)_{\widetilde{N},\psi_{X}}(1)$ yields an element $\ell$ of $\Hom_{\widetilde{N}(\A_F)}(\Omega,\psi_{X})$.
\begin{lemma}\label{lem:stabilizerinvariance}
If $\Stab_{G'_J}X$ satisfies weak approximation, then $(\Stab_{G'_J}X)(\A_F)$ acts trivially on $\ell$.
\end{lemma}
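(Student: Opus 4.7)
The plan is first to establish invariance of $\ell$ under the $F$-rational stabilizer by a direct change-of-variables computation, and then to bootstrap to the adelic stabilizer via continuity and weak approximation.

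First I would record the two structural facts used throughout. Since $\mathfrak{g}'_J \subseteq \mathfrak{l}_J$ sits inside the Lie algebra of the Levi $\widetilde{M}$ described in \S\ref{ss:heisenberg}, we have $G'_J \subseteq \widetilde{M}$, so $G'_J$ normalizes $\widetilde{N}$ (and $Z$), and conjugation by $g \in G'_J(F)$ preserves the Tamagawa measure on $\widetilde{N}(\A_F)$ (the modulus character lives in $F^\times$, which is trivial on the adeles by the product formula), hence also the induced measure on $[\widetilde{N}]$. Second, by $\operatorname{Ad}$-invariance of the Killing form, for any $g \in G'_J$ and $\widetilde{n} \in \widetilde{N}$ one has
\[
\psi_X(g^{-1}\widetilde{n}g) \;=\; \psi\bigl(\langle X, \operatorname{Ad}(g^{-1})\widetilde{n}\rangle\bigr) \;=\; \psi\bigl(\langle \operatorname{Ad}(g)X, \widetilde{n}\rangle\bigr) \;=\; \psi_{g\cdot X}(\widetilde{n}),
\]
where $g \cdot X$ denotes the (co)adjoint action of $G'_J \subseteq \widetilde{M}$ on $\widetilde{\X}$ appearing in Proposition \ref{prop:min_orbit}. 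In particular, $\psi_X$ is preserved by conjugation by any $g \in \Stab_{G'_J}X$.

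For $g \in (\Stab_{G'_J}X)(F)$, unwinding the definitions and using $\widetilde{G}(\A_F)$-equivariance of $\theta$:
\[
\ell(g^{-1}\varphi) \;=\; \int_{[\widetilde{N}]} \theta(\varphi)(\widetilde{n} g^{-1}) \, \overline{\psi_X(\widetilde{n})}\, d\widetilde{n} \;=\; \int_{[\widetilde{N}]} \theta(\varphi)(g\widetilde{n}g^{-1})\, \overline{\psi_X(\widetilde{n})}\, d\widetilde{n},
\]
where the second equality uses left $\widetilde{G}(F)$-invariance of $\theta(\varphi)$ applied to $\widetilde{n}g^{-1} = g^{-1}(g\widetilde{n}g^{-1})$. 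Substituting $\widetilde{n}' = g\widetilde{n}g^{-1}$, applying the character computation above, and using $g\cdot X = X$ yields $\ell(g^{-1}\varphi) = \ell(\varphi)$. Thus $(\Stab_{G'_J}X)(F)$ acts trivially on $\ell$.

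The final step is to propagate this to $(\Stab_{G'_J}X)(\A_F)$. Given $g \in (\Stab_{G'_J}X)(\A_F)$ and a smooth vector $\varphi \in \Omega$, there is a compact open subgroup $U \subseteq \widetilde{G}(\A_F^\infty)$ fixing $\varphi$. Weak approximation for $\Stab_{G'_J}X$ supplies a rational element $\gamma \in (\Stab_{G'_J}X)(F)$ arbitrarily close to $g$ at the archimedean places and inside $gU$ at a prescribed finite set of nonarchimedean places, and hence such that $\gamma^{-1}\varphi$ approximates $g^{-1}\varphi$ in the smooth Fréchet topology on $\Omega$. Since $\ell$ is continuous on $\Omega$ (as $[\widetilde{N}]$ is compact and $\varphi' \mapsto \theta(\varphi')$ is continuous into $\mathcal{A}(\widetilde{G})$), the identity $\ell(\gamma^{-1}\varphi) = \ell(\varphi)$ passes to the limit, giving $\ell(g^{-1}\varphi) = \ell(\varphi)$.

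The computational heart of the argument (rational invariance) is routine once the commutator formula for $\psi_X$ is in hand; the main care needed is in the last step, to spell out precisely which form of weak approximation is being used and to verify the continuity of $\ell$ — routine, but the one place where one must be genuinely attentive.
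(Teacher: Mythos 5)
Your computation of rational invariance is correct and usefully explicit: for $\gamma \in (\Stab_{G'_J}X)(F)$, automorphy of $\theta(\varphi)$, measure preservation under conjugation, and the identity $\psi_X(\gamma^{-1}\widetilde{n}\gamma) = \psi_{\gamma\cdot X}(\widetilde{n})$ give $\ell(\gamma^{-1}\varphi)=\ell(\varphi)$, which is the step the paper states without detail.

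The bootstrap to $(\Stab_{G'_J}X)(\A_F)$, however, has a genuine gap. You assert that weak approximation produces $\gamma \in (\Stab_{G'_J}X)(F)$ with $\gamma^{-1}\varphi$ approximating $g^{-1}\varphi$ in $\Omega$, but weak approximation controls $\gamma$ only at the archimedean places and a prescribed \emph{finite} set of nonarchimedean ones. At the cofinitely many remaining finite places $v$, the smooth vectors carry the discrete topology, so $\gamma_v^{-1}\varphi_v = g_v^{-1}\varphi_v$ forces $\gamma_v^{-1}g_v$ into the stabilizer of $\varphi_v$ — a condition weak approximation does not give you. In fact $(\Stab_{G'_J}X)(F)$ is \emph{discrete} in $(\Stab_{G'_J}X)(\A_F)$, so it is never dense; what your argument secretly needs is strong approximation, which fails for the stabilizers actually arising here (they are tori or torus-by-unipotent, cf.\ Lemmas \ref{lemma:stabilizer_of_v} and \ref{lem:G'orbitsaretori}, and a torus like $\R^1_{K/F}\G_m$ does not satisfy strong approximation).

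The paper closes this gap with a purely local input at every finite place: Proposition \ref{prop:minorbit}.(2) and (4) give that $\Hom_{\widetilde{N}(F_v)}(\Omega_v,\psi_{v,X})$ is at most $1$-dimensional for nonarchimedean $v$, and \cite[Proposition 11.7.(ii)]{gan2005minimal} shows $(\Stab_{\widetilde{M}}X)(F_v)$ acts on this line through an algebraic character of $\widetilde{M}$; since $G'_J$ is semisimple it has no nontrivial algebraic characters, so $(\Stab_{G'_J}X)(F_v)$ acts trivially. This yields invariance of $\ell$ under the full finite adeles $(\Stab_{G'_J}X)(\A_F^\infty)$. Weak approximation is invoked only to handle the archimedean factor: the continuous function $\ell_\varphi(g')=\theta(\varphi)_{\widetilde{N},\psi_X}(g'^{-1})$ is invariant under both $(\Stab_{G'_J}X)(\A_F^\infty)$ and $(\Stab_{G'_J}X)(F)$, and these generate a dense subgroup of $(\Stab_{G'_J}X)(\A_F)$ precisely by weak approximation at the archimedean places. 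You should replace your bootstrap with this local $1$-dimensionality argument; your rational-invariance computation can then serve as the verification that $\ell_\varphi$ is $(\Stab_{G'_J}X)(F)$-invariant.
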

\begin{proof}
Proposition \ref{prop:minorbit}.(2) and Proposition \ref{prop:minorbit}.(4) show that $\Hom_{\widetilde{N}(F_v)}(\Omega_v,\psi_{v,X})$ is at most $1$-dimensional for every nonarchimedean place $v$ of $F$, and the action of $(\Stab_{\widetilde{M}}X)(F_v)$ on $\Hom_{\widetilde{N}(F_v)}(\Omega_v,\psi_{v,X})$ factors through an algebraic character of $\widetilde{M}$ over $F_v$ \cite[Proposition 11.7.(ii)]{gan2005minimal}. Since $G'_J$ has no nontrivial algebraic characters, this implies that $(\Stab_{G'_J}X)(F_v)$ acts trivially on $\Hom_{\widetilde{N}(F_v)}(\Omega_v,\psi_{v,X})$.

For any $g'$ in $(\Stab_{G'_J}X)(\A_F)$, note that $g'\cdot\ell$ equals the linear functional $\varphi\mapsto\theta(\varphi)_{\widetilde{N},\psi_{X}}(g'^{-1})$. So fixing $\varphi$ and varying $g'$ yields a continuous function $\ell_\varphi:(\Stab_{G'_J}X)(\A_F)\ra\mathbb{C}$. The above shows that $\ell_\varphi$ is invariant under $(\Stab_{G'_J})(\A_F^\infty)$, and we see that $\ell_\varphi$ is also invariant under $(\Stab_{G'_J}X)(F)$. Since $\Stab_{G'_J}X$ satisfies weak approximation, this implies that $\ell_\varphi$ is constant, which yields the desired result.
\end{proof}

We now define global theta lifts. For any $\varphi$ in $\Omega$ and $f$ in $\mathcal{A}_{\cusp}(G'_J)$, consider the function 
\begin{align*}
    \theta(\varphi,f):[G]\rightarrow\C\mbox{ given by }g\mapsto \int_{[G'_J]} \theta(\varphi)(gg') \overline{f(g')}\dd{g'}.
\end{align*}
The above integral converges because $f$ is rapidly decreasing and $\theta(\varphi)$ is of moderate growth. Since $\theta(\varphi)$ and $f$ are automorphic forms, we see that $\theta(\varphi,f)$ is also an automorphic form.
\begin{definition}\label{defn:globaltheta}
Let $\sigma$ be an irreducible cuspidal automorphic representation of $G'_J(\A_F)$. Write $\theta(\sigma)$ for the space of functions on $[G]$ spanned by $\theta(\varphi,f)$ as $\varphi$ runs over $\Omega$ and $f$ runs over $\sigma$. Note that $\theta(\sigma)$ is an automorphic representation of $G(\A_F)$.

One can swap the roles of $G$ and $G'_J$: for any irreducible cuspidal automorphic representation $\pi$ of $G(\A_F)$, write $\theta(\pi)$ for the analogous automorphic representation of $G'_J(\A_F)$.
\end{definition}

We always have the following general compatibility between local and global theta lifts.
\begin{prop}\label{prop:local_global_comp} Let $\pi$ be an irreducible smooth quotient of $\theta(\sigma)$. Then for every place $v$ of $F$, the local component $\pi_v$ is a quotient of $\theta(\sigma_v)$ as smooth representations of $G(F_v)$.
\end{prop}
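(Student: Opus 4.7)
The plan is a standard local-global compatibility argument that exploits how the global theta lift factors through a restricted tensor product of local big theta lifts. First, I would establish that since $\Omega = \bigotimes'_v \Omega_v$ and $\sigma = \bigotimes'_v \sigma_v$ as restricted tensor products of irreducible smooth representations, the maximal $\sigma$-isotypic quotient of $\Omega|_{G'_J(\A_F)}$ is naturally isomorphic to $\sigma \otimes \bigl(\bigotimes'_v \Theta(\sigma_v)\bigr)$, where $\Theta(\sigma_v)$ is the local big theta lift from Definition \ref{defn:localtheta}. I would then set $\Theta(\sigma) := \bigotimes'_v \Theta(\sigma_v)$, the global multiplicity space for $\sigma$ in $\Omega$, which is a smooth $G(\A_F)$-representation.

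Next, the defining integral formula
$$\theta(\varphi, f)(g) = \int_{[G'_J]} \theta(\varphi)(gg') \overline{f(g')} \, dg'$$
shows that for fixed $f \in \sigma$, the assignment $\varphi \mapsto \theta(\varphi, f)$ depends only on the image of $\varphi$ in the maximal $\sigma$-isotypic quotient of $\Omega|_{G'_J(\A_F)}$, since the integral annihilates any part of $\Omega$ whose restriction to $G'_J(\A_F)$ is non-$\sigma$-isotypic. Pairing out $\sigma$ therefore produces a $G(\A_F)$-equivariant surjection $\Theta(\sigma) \twoheadrightarrow \theta(\sigma)$. Composing with the given surjection $\theta(\sigma) \twoheadrightarrow \pi$ and writing $\pi = \bigotimes'_v \pi_v$ by Flath's theorem, a standard argument (fix spherical vectors at all but one place and vary $v$) produces, for each place $v$, a nonzero $G(F_v)$-equivariant surjection $\Theta(\sigma_v) \twoheadrightarrow \pi_v$. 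Since $\pi_v$ is irreducible, this surjection must factor through the maximal semisimple quotient $\theta(\sigma_v)$ of $\Theta(\sigma_v)$, which is the desired conclusion.

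The main technical point is the restricted tensor product factorization $\Theta(\sigma) \cong \bigotimes'_v \Theta(\sigma_v)$ of the global multiplicity space. This is a standard feature of theta correspondences, but it relies on the existence of a canonical unramified line in $\Theta(\sigma_v)$ at almost every place $v$, which here follows from the unramified nature of $\Omega_v$ whenever $K/F$ is unramified at $v$ (\S\ref{ss:localtheta}), together with the fact that $\sigma_v$ is unramified at almost all such places. Once this factorization is in place, every other step is formal.
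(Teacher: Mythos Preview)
Your approach is correct and reaches the same conclusion, but the paper's proof is more streamlined and avoids the technical point you flag at the end. Rather than first establishing a restricted tensor product factorization $\Theta(\sigma)\cong\bigotimes'_v\Theta(\sigma_v)$ of the global multiplicity space (which, as you note, requires exhibiting a canonical unramified line in each local $\Theta(\sigma_v)$), the paper works directly with the irreducible target $\sigma\otimes\pi$. Concretely, it observes that $\varphi\otimes f\mapsto\theta(\varphi,f)$ gives a $(G\times G'_J)(\A_F)$-equivariant surjection $\Omega\otimes\overline\sigma\twoheadrightarrow\theta(\sigma)$, identifies $\overline\sigma\cong\sigma^\vee$ via the Petersson pairing, and thereby obtains a nonzero element of
\[
\Hom_{(G\times G'_J)(\A_F)}(\Omega,\sigma\otimes\pi)=\textstyle\bigotimes'_v\Hom_{(G\times G'_J)(F_v)}(\Omega_v,\sigma_v\otimes\pi_v).
\]
Since $\sigma\otimes\pi$ is already an irreducible restricted tensor product, this Hom factorization is immediate and needs no auxiliary structure on the $\Theta(\sigma_v)$. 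From a nonzero local Hom one reads off that $\sigma_v\otimes\pi_v$ is a $\sigma_v$-isotypic quotient of $\Omega_v$, hence $\pi_v$ is a quotient of $\Theta(\sigma_v)$, and irreducibility of $\pi_v$ forces the map to factor through $\theta(\sigma_v)$. The upshot: your argument front-loads a structural statement about $\Theta(\sigma)$ that is true but not needed; the paper bypasses it by moving $\sigma$ to the target side before factoring.
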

\begin{proof}
Note that $\varphi\otimes f\mapsto\theta(\varphi,f)$ yields a $(G\times G_J')(\A_F)$-equivariant surjection $\Omega\otimes\overline{\sigma}\twoheadrightarrow\theta(\sigma)$, where $G'_J(\A_F)$ acts diagonally on $\Omega\otimes\overline{\sigma}$ and trivially on $\theta(\sigma)$. The Petersson inner product identifies $\overline{\sigma}$ with $\sigma^\vee$, so altogether we get a $(G\times G'_J)(\A_F)$-equivariant surjection $\Omega\otimes\sigma^\vee\twoheadrightarrow\pi$. This induces a $(G\times G'_J)(\A_F)$-equivariant surjection $\Omega\twoheadrightarrow\sigma\otimes\pi$ and hence a nonzero element of
\begin{align*}
\Hom_{(G\times G'_J)(\A_F)}(\Omega,\sigma\otimes\pi) = \textstyle\bigotimes_v\Hom_{(G\times G'_J)(F_v)}(\Omega_v,\sigma_v\otimes\pi_v).
\end{align*}
In particular, $\sigma_v\otimes\pi_v$ is a $\sigma_v$-isotypic quotient of $\Omega_v|_{G'_J(F_v)}$. Therefore $\pi_v$ is a quotient of $\Theta(\sigma_v)$, and because $\pi_v$ is irreducible, this factors through $\theta(\sigma_v)$, as desired.
\end{proof}

\section{Automorphic representations of $\PU_3$}\label{s:PU3}
In this section, we provide various results on $\mathcal{A}_{\disc}(\PU_3)$ to feed into our exceptional theta lift. While a complete description of $\mathcal{A}_{\disc}(\PU_3)$ is given by Rogawski \cite{rogawski1990automorphic, rogawski1992multiplicity}, he describes Howe--Piatetski-Shapiro $A$-packets in terms of \emph{endoscopy}. To compute certain torus periods of Howe--Piatetski-Shapiro $A$-packets, we instead need the description originally suggested by Howe--Piatetski-Shapiro \cite[p.~315]{HPS79} in terms of \emph{unitary group theta lifts}. When $\PU_3$ is quasi-split, this was carried out by Gelbart--Rogawski \cite{GR91}. In the literature, we could not find the general case that we need, so we provide a proof here.

Using this description of Howe--Piatetski-Shapiro $A$-packets, we prove a criterion for certain torus periods of them to not vanish. For a generalization of this result, see work of Borade--Franzel--Girsch--Yao--Yu--Zelingher \cite{groupA}.

\subsection{The relation with $\U_3$}\label{ss:U3}
Let $e$ in $\mathrm{M}_n(K)$ be an invertible Hermitian matrix, and write $\U_n$ for the unitary group over $F$ associated with the Hermitian space induced by $e$. Identify the center of $\U_n$ with the norm-$1$ torus $\R^1_{K/F}\mathbb{G}_m$ via the diagonal morphism, and write $\PU_n$ for the quotient $\U_n/\R^1_{K/F}\G_m$.
\begin{lemma}\label{lem:oddunitary}
Assume that $n$ is odd. Then the map $\U_n(F)\rightarrow\PU_n(F)$ is surjective.
\end{lemma}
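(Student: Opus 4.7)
The key idea is that the obstruction to surjectivity of $\U_n(F) \to \PU_n(F)$ lives in a Galois cohomology group, and when $n$ is odd it is simultaneously killed by $2$ and by $n$, hence vanishes. Explicitly, the center $Z(\U_n)$ is the norm-one torus $H \coloneqq \R^1_{K/F}\G_m$, and the long exact sequence of Galois cohomology attached to
\[ 1 \to H \to \U_n \to \PU_n \to 1 \]
identifies surjectivity of $\U_n(F) \to \PU_n(F)$ with the vanishing of the connecting map $\delta : \PU_n(F) \to H^1(F, H)$. So my goal is to show $\delta = 0$ when $n$ is odd.

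First I would compute $H^1(F,H)$ and observe it has exponent $2$. The short exact sequence $1 \to H \to \R_{K/F}\G_m \xrightarrow{\Nm} \G_m \to 1$, combined with Shapiro's lemma and Hilbert~$90$ (which give $H^1(F, \R_{K/F}\G_m) = H^1(K, \G_m) = 0$), yields the identification $H^1(F, H) \cong F^\times / \Nm_{K/F}(K^\times)$. Since $\Nm_{K/F}(a) = a^2$ for $a \in F^\times$, this quotient is annihilated by $2$.

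Next I would show $\delta$ is killed by $n$. The point is that the $n$th power endomorphism of $H$ factors as
\[ H \xrightarrow{\iota} \U_n \xrightarrow{\det} H, \]
where $\iota$ is the central inclusion: indeed $\det(zI_n) = z^n$ for $z \in H$. Applying $H^1(F,-)$ gives $n_* = \det_* \circ \iota_*$ on $H^1(F, H)$. Since $\iota_* \circ \delta = 0$ by exactness of the original long exact sequence, it follows that $n_* \circ \delta = 0$, i.e.\ $n \cdot \delta(g) = 0$ for every $g \in \PU_n(F)$.

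Finally, since $n$ is odd, $\gcd(2, n) = 1$, so B\'ezout combined with the previous two paragraphs forces $\delta(g) = 0$ for every $g$, which gives the desired surjectivity. I do not anticipate any real obstacle here; the only slightly nontrivial observation is the factorization of the $n$th power map through the determinant, and everything else is routine Galois cohomology. (As a sanity check: when $K = F \times F$ is split, $H = \G_m$ and $H^1(F, H) = 0$ by Hilbert~$90$, so the result holds for all $n$ in that case.)
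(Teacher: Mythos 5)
Your proof is correct and takes essentially the same approach as the paper: both rest on the observation that $\det$ restricted to the central $\R^1_{K/F}\G_m$ is the $n$-th power map, together with the fact that $H^1(F,\R^1_{K/F}\G_m)\cong F^\times/\Nm_{K/F}(K^\times)$ is $2$-torsion, and then conclude from $\gcd(2,n)=1$. The only cosmetic difference is in the packaging: the paper phrases the coprimality step as showing that $H^1(\det)\circ H^1(\iota)=\mathrm{id}$ (hence $H^1(\iota)$ is injective, so the connecting map has trivial image by exactness), whereas you apply $\iota_*\circ\delta=0$ directly and then invoke B\'ezout to kill $\delta$.
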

\begin{proof}
When restricted to $\R^1_{K/F}\G_m\subseteq\U_n$, the determinant morphism $\U_n\rightarrow\R^1_{K/F}\mathbb{G}_m$ equals the $n$-th power morphism. Since $n$ is odd and $H^1(F,\R^1_{K/F}\G_m)\cong F^\times/\Nm_{K/F}(K^\times)$ is $2$-torsion, the composition
\begin{align*}
H^1(F,\R^1_{K/F}\G_m)\rightarrow H^1(F,\U_n)\rightarrow H^1(F,\R^1_{K/F}\G_m)
\end{align*}
equals the identity, so $H^1(F,\R^1_{K/F}\G_m)\rightarrow H^1(F,\U_n)$ is injective. From here, the exactness of
\begin{align*}
\U_n(F)\ra\PU_n(F)\ra H^1(F,\R^1_{K/F}\G_m)\rightarrow H^1(F,\U_n)
\end{align*}
yields the desired result.
\end{proof}
Henceforth, assume that $n$ is odd. When $F$ is a local field, Lemma \ref{lem:oddunitary} lets us identify smooth representations of $\PU_n(F)$ with smooth representations of $\U_n(F)$ on which the center acts trivially. Similarly, when $F$ is a number field, Lang's lemma combined with Lemma \ref{lem:oddunitary} lets us identify automorphic representations of $\PU_n(\A_F)$ with automorphic representations of $\U_n(\A_F)$ on which the center acts trivially.

\subsection{Subgroups of $\PU_3$}\label{ss:subtori}
Henceforth, assume that $n=3$. When the Hermitian space induced by $e$ is isotropic, we may assume that $e$ is anti-diagonal. Write $B'$ for the upper-triangular Borel subgroup of $\U_3$ over $F$, and write $T'$ for the diagonal maximal subtorus of $\U_3$ over $F$. Identify $(\R_{K/F}\mathbb{G}_m)\times(\R_{K/F}^1\mathbb{G}_m)$ with $T'$ via the morphism $(\alpha,\beta)\mapsto\mathrm{diag}(\alpha,\beta,\overline{\alpha}^{-1})$.

Next, we describe some more maximal subtori of $\U_3$ over $F$. Let $E$ be a cubic \'etale $F$-algebra, and write $L$ for $E\otimes K$. Let $\lambda$ be in $E^\times/\Nm_{L/E}(L^\times)$, and consider the $1$-dimensional Hermitian space $L_\lambda$ for $L/E$ induced by $\lambda$. By postcomposing the Hermitian pairing with $\tr_{L/K}$, we view $L_\lambda$ as a $3$-dimensional Hermitian space for $K/F$.

Assume that $L_\lambda$ is isomorphic to the Hermitian space induced by $e$. Fix such an isomorphism, which induces an embedding $L\hookrightarrow\mathrm{M}_3(K)$ of $K$-algebras with involution (equivalently, an embedding $i:E\hookrightarrow J$ of $F$-algebras). Write $T_E$ for the torus $\coker(\R^1_{K/F}\mathbb{G}_m\ra\R_{E/F}(\R^1_{L/E}\mathbb{G}_m))$, and note that any such $i:E\hookrightarrow J$ induces an injective morphism $i:T_E\hookrightarrow\PU_3$ of groups over $F$.

We interpret the above construction in terms of Galois cohomology as follows.
\begin{lemma}\label{lem:embeddingsGaloiscohom}
The disjoint union
\begin{align*}
\coprod_{(B,\iota)}G'_J(F)\backslash\{F\mbox{-algebra embeddings }i:E\hookrightarrow J\},
\end{align*}
where $(B,\iota)$ runs over $9$-dimensional central simple algebras over $K$ equipped with an involution of the second kind for $K/F$ and $J=B^{\iota=1}$, is naturally in bijection with $H^1(F,T_E)$. Under this identification, the map
\begin{align*}
E^\times/\Nm_{L/E}(L^\times)\cong H^1(E,\R^1_{L/E}\G_m)\rightarrow H^1(F,T_E)
\end{align*}
corresponds to the assignment $\lambda\mapsto(i:E\hookrightarrow J)$ described above, and the boundary map
\begin{align*}
H^1(F,T_E)\rightarrow H^2(F,\R^1_{K/F}\G_m)\cong\ker(\Nm_{K/F}:H^2(K,\G_m)\rightarrow H^2(F,\G_m))
\end{align*}
corresponds to the assignment $(i:E\hookrightarrow B^{\iota=1})\mapsto B$.
\end{lemma}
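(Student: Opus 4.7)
The plan is to identify the disjoint union with $H^1(F,T_E)$ via standard Galois descent from a reference triple, and then translate the two explicit maps through cocycle calculations. First I would fix a reference by taking $L = E\otimes_F K$ as a rank-$3$ commutative $K$-algebra, setting $B_0 = \End_K(L)\cong\mathrm{M}_3(K)$, letting $\iota_0$ be the involution adjoint to the Hermitian form $\langle x,y\rangle_0 = \tr_{L/K}(x\bar y)$ (so $\iota_0|_L$ is the $L/E$-Galois involution and $L^{\iota_0=1}=E\subseteq J_0$), and letting $i_0\colon E\hookrightarrow J_0$ be the multiplication embedding; this corresponds to $\lambda=1$. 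Over $\bar F$ every triple $(B,\iota,i)$ becomes isomorphic to this reference, since the underlying $(B,\iota)$ is split and any $(E\otimes_F\bar F)$-algebra embedding into $\mathrm{M}_3(\bar F)$ exhibits its image as a maximal étale subalgebra, all such being $\mathrm{PGL}_3(\bar F)$-conjugate.

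The next step is to compute the stabilizer of $i_0$ in $G'_{J_0}=\PU_3$. By Skolem--Noether the centralizer of $i_0(E)$ in $B_0$ coincides with the centralizer of the maximal commutative subalgebra $L$, which is $L$ itself. Intersecting with $\U_3 = \{g\in B_0^\times : g\iota_0(g)=1\}$ yields the norm-one subgroup $\{g\in L^\times : g\bar g=1\} = \R_{E/F}\R^1_{L/E}\G_m$, and passing to the quotient by $Z(\U_3) = \R^1_{K/F}\G_m$ gives $T_E$. Combined with the previous paragraph, standard twisting descent produces the asserted canonical bijection between the disjoint union and $H^1(F,T_E)$.

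For the first explicit identification, I would produce a descent cocycle for $(B_\lambda,\iota_\lambda,i_\lambda)$. Choosing $\ell\in L_{\bar F}^\times$ with $\Nm_{L/E}(\ell)=\lambda$, multiplication $\phi_\ell\colon x\mapsto \ell x$ is an isometry $(L_\lambda)_{\bar F}\to (L_1)_{\bar F}$ that intertwines the multiplication embeddings of $E$, so the associated descent cocycle is $\sigma\mapsto \phi_\ell^{-1}\sigma(\phi_\ell)=\phi_{\sigma(\ell)/\ell}$. Galois-invariance of $\lambda\in E^\times$ forces $\Nm_{L/E}(\sigma(\ell)/\ell)=1$, so the cocycle lands in $\R^1_{L/E}\G_m$; it is precisely the image of $\lambda$ under the coboundary of $1\to\R^1_{L/E}\G_m\to \R_{L/E}\G_m\xrightarrow{\Nm}\G_m\to 1$ over $E$, and via Shapiro's lemma it realizes the class of $\lambda$ in $E^\times/\Nm_{L/E}(L^\times)$.

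For the second identification, I would use the commutative diagram of short exact sequences
\begin{align*}
1 &\to \R^1_{K/F}\G_m \to \R_{E/F}\R^1_{L/E}\G_m \to T_E \to 1,\\
1 &\to \R^1_{K/F}\G_m \to \U_3 \to \PU_3 \to 1,
\end{align*}
whose middle vertical is induced by $L\hookrightarrow B_0$ and whose right vertical is the stabilizer inclusion $T_E\hookrightarrow\PU_3$. The top coboundary factors through the forgetful map $H^1(F,T_E)\to H^1(F,\PU_3)$ sending $(B,\iota,i)$ to $(B,\iota)$, and the bottom coboundary is the standard Brauer-class map sending a degree-$3$ central simple $K$-algebra with involution of the second kind to its class in $H^2(F,\R^1_{K/F}\G_m)\subseteq \mathrm{Br}(K)$. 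The main obstacle I anticipate is normalization bookkeeping in the third paragraph: verifying that the cocycle $\sigma(\ell)/\ell$ corresponds to $\lambda$ rather than $\lambda^{-1}$ under the Shapiro identification $H^1(F,\R_{E/F}\R^1_{L/E}\G_m)\cong H^1(E,\R^1_{L/E}\G_m)\cong E^\times/\Nm_{L/E}(L^\times)$, but this is formal after unwinding the boundary maps.
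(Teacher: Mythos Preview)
Your approach is correct and in fact more direct than the paper's. The paper does not argue by fixing a reference triple and computing its automorphism group; instead it rewrites $T_E$ as the kernel of a norm map between quotient tori, then quotes \cite[Lemma 4.5]{gan2021twisted} to identify $H^1(F,T_E)$ with isomorphism classes of rank-$2$ $E$-twisted composition algebras having quadratic invariant $K$, and finally invokes the Springer construction \cite[Theorem (38.6)]{KMRT} to match those with the disjoint union. The explicit maps are obtained by ``tracing through identifications.''

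Your route avoids both external references. The stabilizer computation you sketch (centralizer of $L$ in $B_0$ is $L$, intersect with $\U_3$, mod out the center) is exactly the argument that reappears later in the paper as Lemma~\ref{lem:G'orbitsaretori}, so you are essentially front-loading a calculation the paper defers. Your explicit cocycle for $\lambda$ and the commutative ladder of short exact sequences for the boundary map are standard and transparent; the paper's route through twisted composition algebras buys compatibility with the framework of \S\ref{ss:TCA} (where these algebras resurface in the $H_C\times G_E$ dual pair), but for the lemma in isolation your argument is cleaner. The sign ambiguity you flag in the Shapiro identification is genuine but harmless, since replacing $\lambda$ by $\lambda^{-1}$ does not change its class in $E^\times/\Nm_{L/E}(L^\times)$ once you note that $\lambda\cdot\bar\lambda = \lambda^2$ lies in the image of the norm when $\lambda\in E^\times$ is Hermitian; more directly, both $\ell$ and $\ell^{-1}$ yield the same $T_E$-torsor.
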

\begin{proof}
Consider the torus
\begin{align*}
\ker(\Nm_{K/F}:(\R_{L/F}\G_m)/(\R_{K/F}\G_m)\rightarrow(\R_{E/F}\G_m)/\G_m)
\end{align*}
over $F$. The natural inclusion $\R_{E/F}(\R^1_{L/E}\G_m)\subseteq\R_{L/F}\G_m$ induces an isomorphism from $T_E$ to the above torus, so \cite[Lemma 4.5]{gan2021twisted} shows that $H^1(F,T_E)$ is naturally in bijection with $L$-isomorphism classes of rank-$2$ $E$-twisted composition algebras with quadratic invariant $K$. The Springer construction \cite[Theorem (38.6)]{KMRT} shows that the latter are naturally in bijection with the above disjoint union, as desired. Finally, tracing through these identifications yields the last sentence.
\end{proof}

\subsection{The $\U_1\times\U_3$ theta lifts}\label{ss:unitarytheta}
In this subsection, assume that $F$ is a local field or a number field. Write
\begin{align*}
C_F\coloneqq\begin{cases}
F^\times & \mbox{if }F\mbox{ is local,}\\
F^\times\backslash\A_F^\times & \mbox{if }F\mbox{ is a number field.}
\end{cases}
\end{align*}
Let $\chi:C_K\ra S^1$ be a \emph{conjugate-symplectic} unitary character, i.e. $\chi|_{C_F}$ equals the character $\omega:C_F\ra\{\pm1\}$ associated with $K/F$. In particular, $\chi(\overline{k})=\chi(k)^{-1}$ for all $k$ in $C_K$. Let $\epsilon_1$ be in $F^\times/\Nm_{K/F}(K^\times)$, and write $\U_1$ for the unitary group over $F$ associated with the Hermitian space induced by $\epsilon_1$. Let $\delta$ in $K^\times$ satisfy $\tr_{K/F}\delta=0$. 

Consider the $F$-algebra
\begin{align*}
R_F\coloneqq\begin{cases}
F & \mbox{if }F\mbox{ is local,} \\
\A_F & \mbox{if }F\mbox{ is a number field.}
\end{cases}
\end{align*}
For any irreducible smooth representation $\rho$ of $\U_1(R_F)$ (cuspidal automorphic when $F$ is a number field), write $\theta(\rho)$ for the theta lift of $\rho$ with respect to
\begin{itemize}
    \item the symplectic space $W$ over $F$ associated with $\delta$ and the Hermitian spaces induced by $\epsilon_1$ and $e$,
    \item the Weil representation of $\mathrm{Mp}_W(R_F)$ associated with $\psi$,
    \item the lifting of $(\U_1\times\U_3)(R_F)\rightarrow\mathrm{Sp}_W(R_F)$ to $\mathrm{Mp}_W(R_F)$ associated with $\psi$ and $(\chi,\chi^3)$ \cite[Theorem 3.1]{Kud87}.
\end{itemize}
For unitary group theta lifts, we have the following strengthening of Proposition \ref{prop:local_global_comp}.
\begin{prop}\label{prop:unitarythetalocalglobal}
Assume that $F$ is a number field, and assume that $K$ is a field. If $\theta(\rho_v)$ is nonzero for every place $v$ of $F$ such that $\U_{3,F_v}$ is anisotropic, then $\theta(\rho)$ is an irreducible discrete automorphic representation of $\U_3(\A_F)$. Moreover, $\theta(\rho)_v$ is isomorphic to $\theta(\rho_v)$.
\end{prop}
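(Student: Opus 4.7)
The proof plan combines local Howe duality for the type I dual pair $\U_1\times\U_3$, a global non-vanishing argument via the Rallis inner product formula, and Rogawski's classification of $\mathcal{A}_{\disc}(\U_3)$.

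First, I would record the local input: at every place $v$, Howe duality for the unitary dual pair $\U_1\times\U_3$ (classical in the archimedean case, and now available in full generality in the nonarchimedean case) ensures that $\theta(\rho_v)$ is either zero or an irreducible smooth representation of $\U_3(F_v)$. I would then upgrade the hypothesis to local non-vanishing at \emph{every} place. When $v$ splits in $K$, the dual pair reduces to the type II pair $\GL_1\times\GL_3$ and non-vanishing of the mini-theta lift of a character is a special case of Mínguez's results \cite{Min08}. When $v$ is nonarchimedean and nonsplit, $\U_{3,F_v}$ is automatically quasi-split, and the pair $\U_1\times\U_3$ lies in the stable range, so $\theta(\rho_v)\neq 0$ follows from standard first-occurrence arguments. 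At archimedean nonsplit places, either $\U_{3,F_v}$ is isotropic over $\RR$ and the stable-range argument again applies, or it is anisotropic and the hypothesis applies.

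Second, I would establish that the global lift $\theta(\rho)$ is non-zero. The plan is to apply the Rallis inner product formula for the pair $\U_1\times\U_3$ with splitting character $(\chi,\chi^3)$, essentially due to T. Yang \cite{Yan97}: this formula expresses the Petersson inner product $\langle\theta(\varphi,f),\theta(\varphi,f)\rangle$ as a product of explicit local zeta integrals and a central $L$-value attached to $\chi$ and the automorphic character $\rho$. Local non-vanishing of the zeta integrals at every place is guaranteed by the previous step, and in the $\U_1\times\U_3$ setting the relevant $L$-factor is of abelian type, hence non-vanishing (being positive). Consequently $\theta(\rho)\neq 0$, and it is square-integrable by the standard decay properties of the theta kernel for this pair, placing it in $\mathcal{A}_{\disc}(\U_3)$.

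Third, I would apply local-global compatibility and multiplicity one to conclude. By Proposition \ref{prop:local_global_comp}, any irreducible subrepresentation $\pi\subseteq\theta(\rho)$ has each $\pi_v$ a quotient of $\theta(\rho_v)$, and Howe duality from the first step forces $\pi_v\cong\theta(\rho_v)$. Thus all irreducible constituents of $\theta(\rho)$ are abstractly isomorphic to $\bigotimes'_v\theta(\rho_v)$. Finally, Rogawski's description of $\mathcal{A}_{\disc}(\U_3)$ in \cite{rogawski1990automorphic,rogawski1992multiplicity} implies that this isotypic piece has multiplicity exactly one in the discrete spectrum, so $\theta(\rho)$ is itself irreducible with $\theta(\rho)_v\cong\theta(\rho_v)$.

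The main obstacle I expect is the global non-vanishing step, specifically setting up the Rallis inner product formula cleanly for the $\U_1\times\U_3$ pair with the twist $(\chi,\chi^3)$ and identifying precisely which $L$-value appears. Once the formula is in hand, the passage from local non-vanishing to global non-vanishing should be automatic because the pertinent $L$-factor for $\U_1\times\U_3$ is abelian; the work is in assembling the formula and confirming no additional hypothesis is needed.
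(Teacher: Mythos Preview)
Your approach is essentially the same as the paper's: reduce to showing $\theta(\rho)$ is nonzero and discrete, then conclude irreducibility and local-global compatibility via local Howe duality. The paper organizes things slightly differently. It splits on whether the \emph{global} $\U_3$ is quasi-split or anisotropic: in the quasi-split case it simply cites Gelbart--Rogawski \cite[Proposition 3.4.1]{GR91}, which already gives both non-vanishing and discreteness; in the anisotropic case cuspidality is automatic, and non-vanishing is obtained from Yamana's Rallis inner product formula \cite{Yam14} rather than Yang \cite{Yan97} (Yang treats $\U_1\times\U_1$). For the local zeta integrals appearing in Yamana's criterion, the paper does not use your stable-range argument at quasi-split places but instead globalizes $\rho_v$ to a quasi-split situation and applies the already-established quasi-split case; at anisotropic archimedean places it uses compactness of $S^1\times S^1$ to invert a local seesaw identity from \cite{Yam14}, so that non-vanishing of $\theta(\rho_v)$ directly gives non-vanishing of the local integral.

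The one place where your sketch is genuinely thin is the claim that $\theta(\rho)$ is square-integrable ``by standard decay properties of the theta kernel.'' For quasi-split $\U_3$ this is not automatic and is exactly what \cite{GR91} supplies; you should cite something here rather than assert it. For irreducibility and local-global compatibility, the paper cites \cite[Proposition 1.2]{GRS93} together with local Howe duality \cite{GT16,How89} directly, rather than invoking Rogawski's multiplicity one; either route works.
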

\begin{proof}
If we can prove that $\theta(\rho)$ is nontrivial discrete, then the desired results would follow \cite[Proposition 1.2]{GRS93} from local Howe duality \cite[Theorem 1.1(iii)]{GT16}, \cite[Theorem 1]{How89}. So let us focus on proving that $\theta(\rho)$ is nontrivial discrete.

For quasi-split $\U_3$, this is \cite[Proposition 3.4.1]{GR91}. 
For anisotropic $\U_3$, our $\theta(\rho)$ is automatically cuspidal, and we will use the Rallis inner product formula as proved by Yamana \cite{Yam14} to show non-vanishing. More precisely, by \cite[Corollary 10.1(1)]{Yam14}, \cite[Lemma 10.1(2)]{Yam14}, and \cite[(9.1)]{Yam14}, our $\theta(\rho)$ is nonzero if and only if, for every place $v$ of $F$, the corresponding local zeta integral at $v$ (as in \cite[p.~671]{Yam14}) is nonzero.

When $\U_{3,F_v}$ is quasi-split, we can globalize $\U_{3,F_v}$ and $\rho_v$ to a quasi-split $\U_3$ as in the proof of Proposition \ref{prop:HPStheta}, and applying the above discussion to this globalization shows that the local zeta integral at $v$ is nonzero. When $\U_{3,F_v}$ is anisotropic, the local zeta integral at $v$ is taken over the compact group $S^1\times S^1$. Hence the local seesaw identity used to prove \cite[Lemma 8.6.(1)]{Yam14} also proves its converse, i.e. the non-vanishing of $\theta(\rho_v)$ implies that the local zeta integral at $v$ is nonzero. This concludes the desired result.
\end{proof}
\begin{remark}
When $v$ is nonarchimedean, $\U_{3,F_v}$ is quasi-split. Hence the assumption in Proposition \ref{prop:unitarythetalocalglobal} that $\theta(\rho_v)$ is non-zero only needs to be checked when $v$ is archimedean. In our cases of interest, we give explicit conditions for this in Proposition \ref{prop:archHPStheta} below.
\end{remark}

Write $\epsilon_3$ in $F^\times/\Nm_{K/F}(K^\times)$ for the discriminant of the $3$-dimensional Hermitian space induced by $e$. When $F$ is local, we identify $F^\times/\Nm_{K/F}(K^\times)$ with its image under $\omega$.

\subsection{Local $A$-packets over $p$-adic fields}\label{ss:U3HPSpadic} In this subsection, assume that $F$ is a nonarchimedean local field. Then the Hermitian space induced by $e$ is isotropic. Consider the character $\varrho':T'(F)\rightarrow\C^\times$ given by
\begin{align*}
(\alpha,\beta)\mapsto\chi(\alpha)\chi(\beta)^{-1}\lvert\alpha\rvert^{1/2},
\end{align*}
and write $\sigma^+$ for the unique irreducible quotient of the normalized parabolic induction $i_{B'}^{\U_3}\varrho'$. Since $\varrho'$ is trivial on the center of $\U_3(F)$, we see that $i^{\U_3}_{B'}\varrho'$ and hence $\sigma^+$ is too, so $\sigma^+$ descends to an irreducible smooth representation of $\PU_3(F)$. Note that $\sigma^+$ is not tempered.

When $K$ is a field, write $\xi:(\U_2\times\U_1)(F)\ra S^1$ for the unitary character given by $(h_2,h_1)\mapsto\chi(h_1)^{-1}$, and write $\sigma^-$ for the irreducible cuspidal representation of $\U_3(F)$ defined as $\pi^s(\xi)$ in \cite[Proposition 13.13.(d)]{rogawski1990automorphic}.

We describe $\sigma^\pm$ in terms of theta lifts as follows.

\begin{prop}\label{prop:HPStheta}
The theta lift $\theta(\one)$ of the trivial representation $\one$ is isomorphic to $\sigma^\epsilon$, where $\epsilon$ in $\{\pm1\}$ is
\begin{align*}
\epsilon_1\cdot\epsilon_3\cdot\epsilon(\textstyle\frac12,\chi^3,\psi(\tr_{K/F}(\delta-))).
\end{align*}
\end{prop}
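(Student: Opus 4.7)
My plan is to establish the proposition in three stages: first, I would show that $\theta(\one)$ is nonzero and identify it as a member of the Howe--Piatetski-Shapiro A-packet $\{\sigma^+,\sigma^-\}$; second, I would verify a base case and match it against the stated formula; third, I would propagate to the general case by independently analyzing the three parameters $\epsilon_1$, $\epsilon_3$, and $\delta$.

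For the first stage, since $\U_1$ is anisotropic and the pair $(\U_1,\U_3)$ is in the stable range, standard non-vanishing for unitary theta lifts gives $\theta(\one)\neq 0$. To place $\theta(\one)$ in the packet, I would compute its local Langlands parameter via the Gan--Ichino compatibility of the $\U_m\times\U_n$ theta correspondence with the LLC: under the conjugate-symplectic normalization by $\chi$, the trivial representation $\one$ of $\U_1(F)$ has parameter $\chi$, so the theta lift to $\U_3$ has parameter $\chi\oplus\chi\oplus\chi^{-2}$, which is precisely the Howe--Piatetski-Shapiro A-parameter. By Howe duality \cite{GT16}, $\theta(\one)$ is irreducible, so it lies in $\{\sigma^+,\sigma^-\}$.

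For the second stage, I would handle the base case $\epsilon_1=\epsilon_3=1$ with $\delta$ chosen so that $\epsilon(\tfrac12,\chi^3,\psi(\tr_{K/F}(\delta-)))=1$: by Gelbart--Rogawski, $\theta(\one)\cong\sigma^+$, matching the stated formula. For the third stage, I would track three independent transformations. Replacing $\delta$ by $a\delta$ for $a\in F^\times$ multiplies the epsilon factor by $\chi^3(a)=\omega(a)$ (standard epsilon-factor transformation under change of additive character), while on the theta side this same shift twists the Kudla splitting $(\U_1\times\U_3)(F)\to\mathrm{Mp}_W(F)$ in \cite{Kud87} by the quadratic character $\omega(a)$, thus swapping $\sigma^+$ and $\sigma^-$ exactly when $\omega(a)=-1$. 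Replacing $\epsilon_1$ by $-\epsilon_1$ substitutes the Hermitian space on $\U_1$ by its other form; the Harris--Kudla--Sweet dichotomy for $\U_1\times\U_3$ then swaps the member of the packet the lift hits. Replacing $\epsilon_3$ by $-\epsilon_3$ corresponds to rescaling the Hermitian form on $\U_3$, which likewise modifies the splitting by $\omega$ applied to the discriminant change and swaps the two packet members.

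The main obstacle is the careful bookkeeping of signs through the splitting $(\U_1\times\U_3)(F)\to\mathrm{Mp}_W(F)$ of \cite{Kud87}, which depends simultaneously on $\psi$, $\chi$, $\delta$, and the discriminants $\epsilon_1$, $\epsilon_3$. Matching the Gelbart--Rogawski base case to the stated formula and verifying that each of the three independent sign flips contributes exactly the factor claimed requires explicit computation of these normalizations. Once these bookkeeping checks are done, the three multiplicative contributions combine to give the formula $\epsilon=\epsilon_1\cdot\epsilon_3\cdot\epsilon(\tfrac12,\chi^3,\psi(\tr_{K/F}(\delta-)))$.
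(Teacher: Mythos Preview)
Your approach is a plausible local strategy but differs substantially from the paper's proof, and as you yourself acknowledge, the sign bookkeeping through the Kudla splitting is left as an unresolved obstacle rather than carried out. That makes this a sketch rather than a proof: the entire content of the proposition is precisely which sign occurs, so ``the three multiplicative contributions combine to give the formula'' is the whole statement, not a step that can be deferred.

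The paper takes a different and more efficient route. First, it treats the split case $K=F\times F$ separately (where $\epsilon=+1$ automatically and the result is due to M\'inguez), which you do not address. For $K$ a field, rather than tracking sign changes under variations of $\epsilon_1,\epsilon_3,\delta$, the paper \emph{globalizes}: it realizes the local situation as the component at one place of a global quasi-split $\U_3$, applies Gelbart--Rogawski's global result to conclude $\theta(\one)\cong\sigma^\epsilon$ for some $\epsilon$, and then pins down $\epsilon$ by a single local criterion. Namely, $\epsilon=-1$ iff $\theta(\one)$ is supercuspidal, which by the tower property happens iff the theta lift of $\one$ to the $1$-dimensional Hermitian space of discriminant $\epsilon_3$ vanishes; Rogawski's epsilon dichotomy \cite[Prop.~3.4]{rogawski1992multiplicity} says this occurs exactly when $\epsilon_1\cdot\epsilon_3\cdot\epsilon(\tfrac12,\chi^3,\psi(\tr_{K/F}(\delta-)))=-1$. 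The dependence on $\delta$ and $\psi$ is then handled in one line by scaling equivariance.

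What the paper's approach buys is that all the delicate splitting normalizations are absorbed into a single citation of Rogawski's dichotomy, rather than re-derived. Your approach, if completed, would give a self-contained local argument independent of globalization, but you would essentially be re-proving that dichotomy. A smaller point: your claimed $L$-parameter $\chi\oplus\chi\oplus\chi^{-2}$ for the packet does not have trivial central character (its determinant restricted to $F^\times$ is $\omega^0\cdot\omega=\omega\neq 1$ unless $K$ splits), so it cannot descend to $\PU_3$; you should double-check the parameter computation before relying on it.
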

\begin{proof}
When $K$ is split, then $\epsilon=+1$, and this follows from \cite[Th\'eor\`eme 1]{Min08}. So assume that $K$ is a field. Using Krasner's lemma, one can construct a quadratic extension $\mathbf{K}/\mathbf{F}$ of number fields with a place $v$ of $\mathbf{F}$ such that $\mathbf{K}_v/\mathbf{F}_v$ is identified with $K/F$. By the Hasse principle for Hermitian spaces, there exists $\boldsymbol{\epsilon}_1$ in $\mathbf{F}^\times/\Nm_{\mathbf{K}/\mathbf{F}}(\mathbf{K}^\times)$ and an invertible Hermitian matrix $\mathbf{e}$ in $\mathrm{M}_3(\mathbf{K})$ such that the resulting Hermitian spaces for $\mathbf{K}/\mathbf{F}$ are isomorphic at $v$ to the ones induced by $\epsilon_1$ and $e$, respectively, and that the Hermitian space induced by $\mathbf{e}$ is isotropic.

Let $\boldsymbol{\chi}:\mathbf{K}^\times\backslash\A_{\mathbf{K}}^\times\ra S^1$ be as in Lemma \ref{lem:globalizecharacter} below, let $\boldsymbol\delta$ in $\mathbf{K}^\times$ satisfy $\tr_{\mathbf{K}/\mathbf{F}}(\boldsymbol{\delta})=0$, and let $\boldsymbol\psi:\mathbf{F}\backslash\A_{\mathbf{F}}\ra S^1$ be a nontrivial unitary character. For now, assume that $\delta$ is the image of $\boldsymbol\delta$ in $K$ and that $\psi$ is $\boldsymbol\psi_v$. Since $\U_1(F)$ is compact, the injective homomorphism $\U_1(F)\hookrightarrow\U_1(\mathbf{F})\backslash\U_1(\A_{\mathbf{F}})$ is a closed embedding, so Pontryagin duality yields a unitary character $\boldsymbol\rho:\U_1(\mathbf{F})\backslash\U_1(\A_{\mathbf{F}})\ra S^1$ such that $\boldsymbol\rho_v$ is trivial. 

Consider the global theta lift $\theta(\boldsymbol{\rho})$ as in \S\ref{ss:unitarytheta} with respect to our globalization. Its local component $\theta(\boldsymbol{\rho})_v$ is isomorphic to $\theta(\boldsymbol\rho_v)=\theta(\one)$ by Proposition \ref{prop:unitarythetalocalglobal}, but this local component is also isomorphic to $\sigma^\epsilon$ for some $\epsilon$ in $\{\pm1\}$ by \cite[Theorem 3.4(a)]{GR91}. Therefore $\theta(\one)\cong\sigma^\epsilon$.

To determine $\epsilon$, note that it is equivalent to determine whether $\theta(\one)$ is cuspidal. Now $\theta(\one)$ is cuspidal if and only if the local theta lift of $\one$ to the unitary group associated with the $1$-dimensional Hermitian space for $K/F$ induced by $\epsilon_3$ is zero \cite[(ch.3,IV,4)]{MVW87}, and the latter occurs if and only if $\epsilon_1\cdot\epsilon_3\cdot\epsilon(\textstyle\frac12,\chi^3,\psi(\tr_{K/F}(\delta-)))=-1$ \cite[Prop. 3.4]{rogawski1992multiplicity}. This yields the desired result for this specific $\delta$ and $\psi$.

In general, $\delta$ and $\psi$ are $F$-multiples of ones arising above. Hence the equivariance of the Weil representation of $(\U_1\times\U_3)(F)$ and $\epsilon(\textstyle\frac12,\chi^3,\psi(\tr_{K/F}(\delta-)))$ under scaling implies the result for general $\delta$ and $\psi$.
\end{proof}

\begin{lemma}\label{lem:globalizecharacter}
There exists a conjugate-symplectic unitary character $\boldsymbol{\chi}:\mathbf{K}^\times\backslash\A_{\mathbf{K}}^\times\rightarrow S^1$ satisfying $\boldsymbol\chi_v=\chi$.
\end{lemma}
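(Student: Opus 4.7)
The plan is to construct $\boldsymbol\chi$ directly by extending a suitable continuous unitary character of a closed subgroup of $C_\mathbf{K}\coloneqq\mathbf{K}^\times\backslash\A_\mathbf{K}^\times$ to all of $C_\mathbf{K}$, using Pontryagin duality for locally compact abelian groups. Write $\boldsymbol\omega:C_\mathbf{F}\to\{\pm1\}$ for the quadratic Hecke character of $\mathbf{F}$ associated with $\mathbf{K}/\mathbf{F}$, so that $\boldsymbol\omega_v$ equals the local character $\omega$ of $F^\times$ associated with $K/F$; the conjugate-symplectic hypothesis on $\chi$ is exactly the statement that $\chi|_{F^\times}=\omega$.

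The key computation is to identify the intersection, inside $C_\mathbf{K}$, of the images of $\A_\mathbf{F}^\times$ and of the local factor $\mathbf{K}_v^\times$ at $v$. Given $a\in\A_\mathbf{F}^\times$, $y\in\mathbf{K}_v^\times$, and $k\in\mathbf{K}^\times$ with $a=y\cdot k$, comparing components at any place $w\ne v$ of $\mathbf{K}$ gives $a_w=k$ in $\mathbf{K}_w^\times$, and $a_w\in\mathbf{F}_{w_0}^\times$ (where $w_0$ is the place of $\mathbf{F}$ below $w$). Hence $k\in\mathbf{F}^\times$ and $y=a/k\in\A_\mathbf{F}^\times\cap\mathbf{K}_v^\times=\mathbf{F}_v^\times$, so the intersection is precisely the image of $\mathbf{F}_v^\times$. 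Let $H\subseteq C_\mathbf{K}$ denote the image of $\A_\mathbf{F}^\times\cdot\mathbf{K}_v^\times\subseteq\A_\mathbf{K}^\times$, and define $\eta:H\to S^1$ by $\eta(a\cdot y)=\boldsymbol\omega(a)\chi(y)$. This is well-defined because any relation $a/a'=y'/y$ lies in $\mathbf{F}_v^\times$, on which $\boldsymbol\omega$ and $\chi$ both restrict to $\omega$, and it is continuous because $\boldsymbol\omega$ and $\chi$ are.

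To conclude, I would invoke Pontryagin duality: the continuous unitary character $\eta$ of the closed subgroup $H\subseteq C_\mathbf{K}$ extends to a continuous unitary character $\boldsymbol\chi:C_\mathbf{K}\to S^1$, which by construction satisfies $\boldsymbol\chi|_{C_\mathbf{F}}=\boldsymbol\omega$ (so $\boldsymbol\chi$ is conjugate-symplectic) and $\boldsymbol\chi_v=\chi$. The most delicate point is verifying that $H$ is closed in $C_\mathbf{K}$, or equivalently that $\A_\mathbf{F}^\times\cdot\mathbf{K}_v^\times\cdot\mathbf{K}^\times$ is closed in $\A_\mathbf{K}^\times$; this follows from the restricted-product structure of the adeles (each local factor $\mathbf{F}_{w_0}^\times\subseteq\mathbf{K}_w^\times$ is closed) together with the discreteness of $\mathbf{K}^\times$ in $\A_\mathbf{K}^\times$.
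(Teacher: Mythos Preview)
Your approach is correct and is a one-step variant of the paper's two-step argument. The paper first extends $\boldsymbol\omega$ from the closed subgroup $C_{\mathbf F}\subseteq C_{\mathbf K}$ to obtain \emph{some} conjugate-symplectic $\boldsymbol\phi$, and then corrects it at $v$: the error $\boldsymbol\phi_v\chi^{-1}$ is trivial on $F^\times$, so it defines a character of the compact group $K^\times/F^\times$, which the paper observes embeds as a closed (because compact) subgroup of $C_{\mathbf K}/C_{\mathbf F}$ and can therefore be extended. Your version collapses these two extensions into a single one from the subgroup $H$ generated by $C_{\mathbf F}$ and $\mathbf K_v^\times$, at the cost of having to compute the intersection (which you do correctly).

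The one genuine soft spot is your justification that $H$ is closed. Appealing to ``the restricted-product structure\dots\ together with the discreteness of $\mathbf K^\times$'' does not by itself show that $\mathbf K^\times\cdot\A_{\mathbf F}^\times\cdot\mathbf K_v^\times$ is closed in $\A_{\mathbf K}^\times$: a discrete subgroup times a closed subgroup need not be closed in general. The clean argument is the paper's: since $C_{\mathbf F}$ is closed in $C_{\mathbf K}$, it suffices to show that the image of $H$ in $C_{\mathbf K}/C_{\mathbf F}$ is closed; but that image is exactly the image of $\mathbf K_v^\times/\mathbf F_v^\times=K^\times/F^\times$, which is compact (fitting in a short exact sequence with $\O_K^\times/\O_F^\times$ and $\Z/e\Z$), hence closed. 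So the key topological input is the same compactness fact the paper uses; once you supply it, your proof is complete.
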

\begin{proof}
Because $\mathbf{F}^\times\backslash\A_{\mathbf{F}}^\times\hookrightarrow\mathbf{K}^\times\backslash\A_{\mathbf{K}}^\times$ is a closed embedding, Pontryagin duality yields a conjugate-symplectic unitary character $\boldsymbol{\phi}:\mathbf{K}^\times\backslash\A_{\mathbf{K}}^\times\rightarrow S^1$. Therefore $\boldsymbol\phi_v\chi^{-1}$ is trivial on $F^\times\subseteq K^\times$. Now the homomorphism $K^\times/F^\times\ra \mathbf{K}^\times\backslash\A_{\mathbf{K}}^\times/\A_{\mathbf{F}}^\times$ is injective, and since $K^\times/F^\times$ is compact, it is a closed embedding. Pontryagin duality again yields a unitary character $\boldsymbol\gamma:\mathbf{K}^\times\backslash\A_{\mathbf{K}}^\times/\A_{\mathbf{F}}^\times\ra S^1$ such that $\boldsymbol\gamma_v$ equals $\boldsymbol{\phi}_v\chi^{-1}$. Hence $\boldsymbol{\chi}\coloneqq\boldsymbol{\phi\gamma}^{-1}$ satisfies the desired properties.
\end{proof}

\begin{corollary}
When $K$ is a field, $\sigma^-$ has trivial central character. Consequently, $\sigma^-$ descends to an irreducible smooth representation of $\PU_3(F)$.
\end{corollary}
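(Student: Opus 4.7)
The cleanest route is via $L$-packets. By the construction in \S\ref{ss:U3HPSpadic} following Rogawski, the pair $\{\sigma^+,\sigma^-\}$ constitutes the Howe--Piatetski-Shapiro $L$-packet of $\U_3(F)$ attached to $\chi$. Since members of any $L$-packet for $\U_3$ over a $p$-adic field share a common central character (both $\sigma^+$ and $\sigma^-$ correspond to the same $L$-parameter, whose restriction to $Z(\U_3)$ dictates the central character), and $\sigma^+$ was just shown to have trivial central character via its parabolic induction description (using that $\varrho'|_{Z(\U_3)(F)}=1$), the same must hold for $\sigma^-$. This yields the descent to $\PU_3(F)$.

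An alternative, more intrinsic route uses Proposition \ref{prop:HPStheta} directly. Since $K$ is a field, the quotient $F^\times/\Nm_{K/F}(K^\times)$ has order $2$, so as $\epsilon_1$ varies, the sign $\epsilon=\epsilon_1\cdot\epsilon_3\cdot\epsilon(\tfrac12,\chi^3,\psi(\tr_{K/F}(\delta-)))$ takes both values in $\{\pm1\}$, realizing both $\sigma^+$ and $\sigma^-$ as theta lifts $\theta(\one)$ from $\U_1(F)$ for appropriate choices of the Hermitian space $V_1$. One would then argue that the central character of $\theta(\one)$ on $Z(\U_3)(F)=\R^1_{K/F}\G_m(F)$ is independent of the choice of $V_1$, since the action of $z\in Z(\U_3)(F)$ on the Weil representation $W=V_1\otimes_K V_3$ and its lift through Kudla's splitting by $\chi^3$ depend only on $z$, $\chi^3$, $\psi$, and $V_3$ together with the integer $\dim_K V_1=1$. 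Since $\one$ has trivial central character on $\U_1(F)$, it follows that $\sigma^+$ and $\sigma^-$ share a central character, and the conclusion is as before.

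The main obstacle is the invocation of either an external fact (constancy of central characters across an $L$-packet, attributable to Rogawski) or a technical independence claim (in the theta-lift approach). The former is the cleaner citation; the latter requires a careful analysis of how Kudla's splittings transport under the natural $\U_3$-equivariant identification between the two Weil representations attached to $V_1^{(\pm)}\otimes_K V_3$, whose symplectic forms differ by an $F^\times$-scalar corresponding to $\epsilon_1$. I would opt for the $L$-packet approach, which makes the corollary almost immediate from the construction.
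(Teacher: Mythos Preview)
Both of your routes are valid, but the paper's argument is a cleaner version of your second approach. The paper simply chooses $\epsilon_1$ so that $\epsilon=-1$, invokes Proposition~\ref{prop:HPStheta} to identify $\sigma^-\cong\theta(\one)$, and then observes in one line that the particular choice of lifting characters $(\chi,\chi^3)$ makes the $\U_1\times\U_3$ theta correspondence \emph{preserve} central characters. Since $\one$ has trivial central character, so does $\theta(\one)=\sigma^-$, and the proof is done---there is no need to compare $\sigma^+$ with $\sigma^-$, nor to analyze how the Weil representation changes with $V_1$.

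Your detour through ``$\sigma^+$ and $\sigma^-$ share a central character, and $\sigma^+$ is known to be trivial'' works, but it is unnecessary once you recognize that the lifting data $(\chi,\chi^3)$ was set up in \S\ref{ss:unitarytheta} precisely so that central characters are preserved under $\theta$. Your $L$-packet approach is also correct and self-contained given Rogawski's results, but it imports more machinery than needed; the paper prefers to stay within the theta-lift framework it has already built.
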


\begin{proof}
Choose $\epsilon_1$ such that $\epsilon_1\cdot\epsilon_3\cdot\epsilon(\textstyle\frac12,\chi^3,\psi(\tr_{K/F}(\delta-)))=-1$. Then Proposition \ref{prop:HPStheta} shows that $\sigma^-$ is isomorphic to $\theta(\one)$. Our choice of lifting characters $(\chi,\chi^3)$ ensures that $\theta(-)$ preserves central characters.
\end{proof}

We also use Proposition \ref{prop:HPStheta} to compute the following torus periods of $\sigma^\pm$.
\begin{prop}\label{prop:HPStorus}
Let $i:E\hookrightarrow J$ be a $\PU_3(F)$-conjugacy class of $F$-algebra embeddings, and let $\epsilon$ be in $F^\times/\Nm_{K/F}(K^\times)$. Suppose that $\Hom_{i(T_E)(F)}(\sigma^\epsilon,\one)$ is nonzero. Then under the identifications of Lemma \ref{lem:embeddingsGaloiscohom}, $i$ corresponds to the image of
\begin{align*}
\lambda=\epsilon(\textstyle\frac12,\chi\circ\Nm_{L/K},\psi(\tr_{L/F}(\delta-)))\in E^\times/\Nm_{L/E}(L^\times)
\end{align*}
under the natural map $E^\times/\Nm_{L/E}(L^\times)\rightarrow H^1(F,T_E)$, and $\epsilon$ equals the image of
\begin{align*}
\Nm_{E/F}(\lambda)\cdot\Delta_{E/F}\cdot[-1]\cdot\epsilon(\textstyle\frac12,\chi^3,\psi(\tr_{K/F}(\delta-)))\in F^\times/\Nm_{K/F}(K^\times),
\end{align*}
where $\Delta_{E/F}$ in $F^\times/(F^\times)^2$ denotes the discriminant of $E/F$, and $[-]$ denotes the map $F^\times\ra F^\times/\Nm_{K/F}(K^\times)$. Moreover, $\Hom_{i(T_E)(F)}(\sigma^\epsilon,\one)$ is $1$-dimensional for this choice of $i$ and $\epsilon$.
\end{prop}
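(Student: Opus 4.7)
The strategy is to combine Proposition \ref{prop:HPStheta}, which realizes $\sigma^\epsilon$ as a theta lift of the trivial representation from a $1$-dimensional unitary group over $F$, with a seesaw identity exploiting the decomposition of $V$ as a $1$-dimensional Hermitian space $L_\lambda$ over $L/E$. By Proposition \ref{prop:HPStheta}, we may pick $\epsilon_1\in F^\times/\Nm_{K/F}(K^\times)$ with $\sigma^\epsilon\cong\theta(\one_{\U(\epsilon_1)})$, so that $\epsilon=\epsilon_1\cdot\epsilon_3\cdot\epsilon(\tfrac12,\chi^3,\psi(\tr_{K/F}(\delta\cdot)))$. The seesaw will reduce the $T_E$-period of $\sigma^\epsilon$ to a theta correspondence between $1$-dimensional unitary groups over $E$, whose non-vanishing is controlled by an explicit local root-number criterion.

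The embedding $i:E\hookrightarrow J$ identifies $V$ with $L_\lambda$ via $\tr_{L/K}$ and identifies $T_E$ with the image of $\R_{E/F}\U(L_\lambda)_{L/E}$ in $\PU(V)_{K/F}$. Meanwhile, the $1$-dim $K/F$-Hermitian space associated to $\epsilon_1$ base changes to an $L/E$-Hermitian space, and $\U(\epsilon_1)$ embeds naturally into $\R_{E/F}\U(\epsilon_1\otimes_FE)_{L/E}$. The dual pairs $\U(V)_{K/F}\times\U(\epsilon_1)$ and $\R_{E/F}(\U(L_\lambda)\times\U(\epsilon_1\otimes_FE))_{L/E}$ have the same image in $\mathrm{Sp}(W)$ over $F$, giving the seesaw
\begin{center}
\begin{tikzcd}
\R_{E/F}\U(\epsilon_1\otimes_FE)_{L/E}\ar[d,dash]\ar[dr,dash] & \U(V)_{K/F}\ar[d,dash]\\
\U(\epsilon_1)\ar[ur,dash] & T_E.
\end{tikzcd}
\end{center}
The associated seesaw identity yields
$$\Hom_{T_E(F)}(\sigma^\epsilon,\one)\cong\Hom_{\U(\epsilon_1)(F)}\bigl(\theta_E(\one_{T_E})|_{\U(\epsilon_1)(F)},\one\bigr),$$
where $\theta_E$ is the $E$-theta lift between the two $1$-dim unitary groups for $L/E$, with splitting character $\chi\circ\Nm_{L/K}$ and additive character $\psi\circ\tr_{E/F}$.

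The theta correspondence between two $1$-dim unitary groups over a local field is completely explicit (see \cite{Min08}): $\theta_E(\one)$ is nonzero if and only if
$$\lambda\cdot\epsilon_1=\epsilon(\tfrac12,\chi\circ\Nm_{L/K},\psi(\tr_{L/F}(\delta\cdot)))\quad\text{in }E^\times/\Nm_{L/E}(L^\times)$$
(using $\psi\circ\tr_{E/F}\circ\tr_{L/E}=\psi\circ\tr_{L/F}$), and when nonzero $\theta_E(\one)$ is a character of the norm-one torus that restricts trivially to $\U(\epsilon_1)(F)$ by a direct Weil representation computation. For $i$ to give an embedding into our specific $J$ (with $B\cong\mathrm{M}_3(K)$), Lemma \ref{lem:embeddingsGaloiscohom} forces $\lambda$ to lie in the image of $F^\times/\Nm_{K/F}K^\times\to E^\times/\Nm_{L/E}L^\times$. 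Since $\Nm_{E/F}$ composed with this map equals multiplication by $[E:F]=3$, which is the identity on the $2$-torsion groups, the map is injective, and combined with the displayed equation this forces $\epsilon_1=1$ in $F^\times/\Nm_{K/F}K^\times$ and $\lambda=\epsilon(\tfrac12,\chi\circ\Nm_{L/K},\psi(\tr_{L/F}(\delta\cdot)))$. Multiplicity one for the $1$-dim theta correspondence then gives $\dim\Hom_{i(T_E)(F)}(\sigma^\epsilon,\one)=1$.

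Substituting $\epsilon_1=1$ into the expression from Proposition \ref{prop:HPStheta} gives $\epsilon=\epsilon_3\cdot\epsilon(\tfrac12,\chi^3,\psi(\tr_{K/F}(\delta\cdot)))$. A direct Gram-matrix computation of the trace Hermitian form $(x,y)\mapsto\tr_{L/K}(\lambda x\bar{y})$ in an $F$-basis of $E$ identifies the discriminant $\epsilon_3$ of $V$ as $[-1]\cdot\Nm_{E/F}(\lambda)\cdot\Delta_{E/F}$ in $F^\times/\Nm_{K/F}K^\times$, the factor $[-1]$ coming from the standard sign convention for the discriminant of a rank-$3$ Hermitian form; this yields the claimed formula. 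The main obstacle is the careful tracking of normalizations throughout the seesaw: inductivity of epsilon factors under the change of additive character $\psi\leadsto\psi\circ\tr_{E/F}$, the sign conventions for Hermitian discriminants, and the verification that the character $\theta_E(\one)$ restricts trivially to $\U(\epsilon_1)(F)$ whenever the non-vanishing condition is satisfied.
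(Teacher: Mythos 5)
Your overall strategy is the paper's: realize $\sigma^\epsilon$ as a $\U_1\times\U_3$ theta lift of $\one$ via Proposition \ref{prop:HPStheta}, set up the seesaw with the $1$-dimensional $L/E$-Hermitian spaces $L_{\epsilon_1}$ and $L_\lambda$, and read off the criterion from the theta correspondence for pairs of $1$-dimensional unitary groups. The issue is in the step where you try to pin down $\epsilon_1$ and $\lambda$.

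You assert that Lemma \ref{lem:embeddingsGaloiscohom} ``forces $\lambda$ to lie in the image of $F^\times/\Nm_{K/F}K^\times\to E^\times/\Nm_{L/E}L^\times$,'' and from this (plus injectivity of that map) conclude $\epsilon_1=1$. This is not right. What Lemma \ref{lem:embeddingsGaloiscohom} gives, via the exact sequence
\begin{align*}
H^1(F,\R^1_{K/F}\G_m)\xrightarrow{\ f\ } H^1(F,\R_{E/F}\R^1_{L/E}\G_m)\xrightarrow{\ g\ } H^1(F,T_E)\ \to\ H^2(F,\R^1_{K/F}\G_m),
\end{align*}
is that the class of $i$ in $H^1(F,T_E)$ has trivial boundary (equivalently $B\cong\mathrm{M}_3(K)$), hence lies in $\operatorname{im}(g)$, and that a representative $\lambda\in E^\times/\Nm_{L/E}(L^\times)$ is determined only up to $\ker(g)=\operatorname{im}(f)$. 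It does \emph{not} say $\lambda\in\operatorname{im}(f)$; that would force $[i]=0$, which need not hold. Consequently there is no reason $\epsilon_1$ should equal $1$: in fact $\epsilon_1$ is dictated by the given $\epsilon$ and $\epsilon_3$ via $\epsilon=\epsilon_1\cdot\epsilon_3\cdot\epsilon(\frac12,\chi^3,\cdot)$ and is generally nontrivial.

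The correct way through (and what the paper does) is to leave $\epsilon_1$ alone. The seesaw gives $\lambda=\epsilon_1\cdot\epsilon(\frac12,\chi\circ\Nm_{L/K},\cdot)$ as an identity in $E^\times/\Nm_{L/E}(L^\times)$, and then one observes two cancellations: for the identification of $i$, one uses $g(\epsilon_1)=0$ since $\epsilon_1\in\operatorname{im}(f)=\ker(g)$, so $g(\lambda)=g\bigl(\epsilon(\frac12,\chi\circ\Nm_{L/K},\cdot)\bigr)$; and for the formula for $\epsilon$, one uses $\Nm_{E/F}(\epsilon_1)=\epsilon_1^{3}=\epsilon_1$ together with $\epsilon_1^2=1$, so that in $\epsilon=\epsilon_1\cdot\epsilon_3\cdot\epsilon(\frac12,\chi^3,\cdot)=\epsilon_1\cdot\Nm_{E/F}(\lambda)\cdot\Delta_{E/F}\cdot[-1]\cdot\epsilon(\frac12,\chi^3,\cdot)$ the extra factor of $\epsilon_1$ cancels against the $\epsilon_1$ inside $\Nm_{E/F}(\lambda)$. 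Your final displayed formulas are the right ones, but they come out because of this cancellation, not because $\epsilon_1=1$; as written the derivation has a genuine gap there.
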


\begin{proof}
By Lemma \ref{lem:embeddingsGaloiscohom} and the exactness of
\begin{align*}
H^1(F,\R^1_{K/F}\G_m)\ra H^1(F,\R_{E/F}\R^1_{L/E}\G_m)\ra H^1(F,T_E)\ra H^2(F,\R^1_{K/F}\G_m),
\end{align*} we see that our $\PU_3(F)$-conjugacy class of $F$-algebra embeddings $i: E\hookrightarrow J$ is induced by some $\lambda$ in $E^\times/\Nm_{L/E}(L^\times)$, and $\lambda$ is unique up to the image of $F^\times/\Nm_{K/F}(K^\times)$ in $E^\times/\Nm_{L/E}(L^\times)$.

Fix such a $\lambda$ for our $i$, and 
choose $\epsilon_1$ such that $\epsilon_1\cdot\epsilon_3\cdot\epsilon(\textstyle\frac12,\chi^3,\psi(\tr_{K/F}(\delta-)))=\epsilon$.
The analogue of \cite[(2.17)]{kudla1984seesaw} for unitary groups yields a seesaw of dual pairs
\begin{center}
    \begin{tikzcd}
     \R_{E/F}\R^1_{L/E}\G_m \ar[rd, no head] \ar[d, no head] & \U_3 \ar[ld, no head] \ar[d, no head]\\
    \U_1 & \R_{E/F}\R^1_{L/E}\G_m
    \end{tikzcd}
\end{center}
in $\mathrm{Sp}_W$ over $F$, where the top-left corresponds to the $1$-dimensional Hermitian space $L_{\epsilon_1}$ for $L/E$ induced by the image of $\epsilon_1$ in $E^\times/\Nm_{L/E}(L^\times)$, the bottom-right corresponds to the $1$-dimensional Hermitian space $L_\lambda$ for $L/E$, and the right morphism recovers $i:T_E\hookrightarrow\PU_3$ after quotienting by $\R^1_{K/F}\G_m$.

Consider the lifting of
\begin{align*}
(\R_{E/F}\R^1_{L/E}\G_m\times\R_{E/F}\R^1_{L/E}\G_m)(F)\rightarrow\mathrm{Sp}_W(F)
\end{align*}
to $\mathrm{Mp}_W(F)$ associated with $\psi\circ\tr_{E/F}$ and $(\chi\circ\Nm_{L/K},\chi\circ\Nm_{L/K})$ \cite[Theorem 3.1]{Kud87}, and for any irreducible smooth representation $\phi$ of $(\R^1_{L/E}\G_m)(E)$, write $\theta(\phi)$ for the resulting theta lift. Since $\Nm_{L/K}(k)=k^3$ for all $k$ in $K^\times$, we see that this lifting is compatible with our aforementioned lifting of $(\U_1\times\U_3)(F)\ra\mathrm{Sp}_W(F)$ to $\mathrm{Mp}_W(F)$. Therefore the above diagram yields the seesaw identity
\begin{align*}
\Hom_{(\R^1_{L/E}\G_m)(E)}(\theta(\rho),\phi) = \Hom_{\U_1(F)}(\theta(\phi),\rho),
\end{align*}
where we can use $\theta$ instead of $\Theta$ since $\rho$ and $\phi$ are cuspidal \cite[(ch.3,IV,4)]{MVW87}.

 By specializing the seesaw identity to $\rho=\one$ and $\phi=\one$, Proposition \ref{prop:HPStheta} yields $\Hom_{(\R^1_{L/E}\G_m)(E)}(\sigma^\epsilon,\one) = \Hom_{\U_1(F)}(\theta(\phi),\one)$. Since this is nonzero, $\theta(\phi)$ is nonzero, which implies that $\lambda$ equals
 \begin{align*}
\epsilon_1\cdot\epsilon(\textstyle\frac12,\chi\circ\Nm_{L/K},\psi(\tr_{L/F}(\delta-)))
 \end{align*}
 \cite[Prop. 3.4]{rogawski1992multiplicity}. Note that the corresponding $i$ also equals the image of $\epsilon(\textstyle\frac12,\chi\circ\Nm_{L/K},\psi(\tr_{L/F}(\delta-)))$ in $H^1(F,T_E)$, as desired.

By choosing an $F$-basis of $E$, one shows that the discriminant of $L_\lambda$ as a $3$-dimensional Hermitian space for $K/F$ equals the image of $\Nm_{E/F}(\lambda)\cdot\Delta_{E/F}\cdot[-1]$ in $F^\times/\Nm_{K/F}(K^\times)$. Hence $\epsilon_3=\Nm_{E/F}(\lambda)\cdot\Delta_{E/F}\cdot[-1]$, so $\epsilon$ has the desired form. Finally, our choice of lifting characters $(\chi\circ\Nm_{L/K},\chi\circ\Nm_{L/K})$ ensures that $\theta(\phi)=\phi$, so $\Hom_{\U_1(F)}(\theta(\phi),\one)$ is indeed $1$-dimensional.
\end{proof}

\subsection{Local $A$-packets over $\RR$ and $\C$}\label{ss:U3HPSarch} In this subsection, assume that $F$ is an archimedean local field. When the Hermitian space induced by $e$ is isotropic, proceeding as in the start of \S\ref{ss:U3HPSpadic} yields an irreducible smooth representation $\sigma^+$ of $\PU_3(F)$, via the Langlands quotient of $\varrho':T'(F)\ra\C^\times$ with respect to $B'$. Note that $\sigma^+$ is not tempered.

When $K$ is a field, note that there exists an odd integer $N$ such that $\chi(z)=(z/\sqrt{z\overline{z}})^N$ for all $z$ in $\C^\times$. For quasi-split $\U_3$, write $\sigma^-$ for the irreducible discrete series representation of $\U_3(F)$ defined as $\pi^s(\xi)$ on \cite[p.~178]{rogawski1990automorphic}, where $\xi:(\U_2\times\U_1)(F)\ra S^1$ denotes the unitary character $(h_2,h_1)\mapsto\chi(h_1)^{-1}$, and $\U_2$ denotes the quasi-split unitary group with respect to $K$ over $F$. Our $\sigma^-$ is a Jordan--H\"older constituent of $i^{\U_3}_{B'}\varrho'$ \cite[p.~176]{rogawski1990automorphic}, so $\sigma^-$ descends to an irreducible discrete series representation of $\PU_3(F)$.

For anisotropic $\U_3$, write $\sigma^-$ for the irreducible discrete series representation of $\U_3(F)$ with highest weight $(\textstyle\frac{N-1}2,\frac{N-1}2,-N+1)$ when $N$ is positive and $(-N-1,\textstyle\frac{N+1}2,\frac{N+1}2)$ when $N$ is negative. Note that $\sigma^-$ descends to an irreducible discrete series representation of $\PU_3(F)$, and recall that $\sigma^-$ is isomorphic to the $F_\varphi$ defined on \cite[p.~243]{rogawski1990automorphic}, where $\varphi$ is associated with $\xi$ as in \cite[p.~177]{rogawski1990automorphic}.

We describe $\sigma^\pm$ in terms of theta lifts as follows.
\begin{prop}\label{prop:archHPStheta}
Write $\epsilon$ for $\epsilon_1\cdot\epsilon_3\cdot\epsilon(\textstyle\frac12,\chi^3,\psi(\tr_{K/F}(\delta-)))$ in $\{\pm1\}$. If $\epsilon=+1$ and $\U_3$ is anisotropic, then the theta lift $\theta(\one)$ of the trivial representation $\one$ is zero. Otherwise, $\theta(\one)$ is isomorphic to $\sigma^\epsilon$.
\end{prop}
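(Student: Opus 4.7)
The plan is to parallel the globalization argument of Proposition \ref{prop:HPStheta}, with separate treatment for the anisotropic case where no globalization to an isotropic setting is available. First, if $K$ is split then $\U_1\cong\G_m$ and $\U_3\cong\GL_3$, and the assertion (with $\epsilon=+1$) follows from the archimedean case of \cite{Min08} exactly as in the previous proof. So assume $F=\RR$ and $K=\C$.

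When $\U_3$ is quasi-split at our place, I would argue as in Proposition \ref{prop:HPStheta}. Using Krasner's lemma, construct a CM extension $\mathbf{K}/\mathbf{F}$ with a distinguished archimedean place $v$ identified with $\C/\RR$, and—crucially—choose the global Hermitian space $\mathbf{e}$ so that $\U_{3,\mathbf{F}}$ is quasi-split at \emph{every} archimedean place. This is permitted by the Hasse principle since there is no archimedean obstruction beyond the prescribed datum at $v$. Globalize $\chi,\epsilon_1,\delta,\psi$ using Lemma \ref{lem:globalizecharacter} and standard arguments, and take the trivial character of $\U_1(\A_{\mathbf{F}})$ (which is unobstructed since $\U_1$ is compact at archimedean places). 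Because the globalized $\U_3$ has no anisotropic places, the hypothesis of Proposition \ref{prop:unitarythetalocalglobal} holds automatically, so the global theta lift is a nonzero discrete automorphic representation whose local component at $v$ is $\theta(\one)$. By \cite[Theorem 3.4(a)]{GR91} this local component lies in the archimedean HPS packet $\{\sigma^+,\sigma^-\}$, and the sign is pinned down by cuspidality as in Proposition \ref{prop:HPStheta}: $\theta(\one)\cong\sigma^\epsilon$ exactly as claimed, with the dependence on $\delta,\psi$ following from the equivariance of the Weil representation and the root number under scaling.

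When $\U_3$ is anisotropic at our place, $\U_3(F)$ is compact. Here I would compute $\theta(\one)$ directly using the explicit Kashiwara--Vergne realization of the archimedean Weil representation $\omega_\psi$ of $\U_1\times\U_3$ with lifting characters $(\chi,\chi^3)$. Writing $\chi(z)=(z/\sqrt{z\overline{z}})^N$, the decomposition of $\omega_\psi|_{\U_1(F)}$ into $S^1$-characters is explicit, and the trivial character appears if and only if a parity/sign condition involving $N$ and the signatures of the Hermitian spaces for $\epsilon_1,e$ is satisfied. Matching this condition against the local root number $\epsilon(\tfrac12,\chi^3,\psi(\tr_{K/F}(\delta-)))$ identifies the nonvanishing locus as exactly $\epsilon=-1$. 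When the lift is nonzero, the $\one$-isotypic component of $\omega_\psi|_{\U_1(F)}$ is an irreducible representation of compact $\U_3(F)$ whose highest weight, read off from the Kashiwara--Vergne formula, matches that of $\sigma^-$ on \cite[p.~243]{rogawski1990automorphic}, giving $\theta(\one)\cong\sigma^-$.

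The main obstacle is the bookkeeping in the anisotropic case: one must carefully track the twist by the lifting characters $(\chi,\chi^3)$ inside the Kashiwara--Vergne model and verify that the parity condition controlling occurrence of $\one$ in $\omega_\psi|_{\U_1(F)}$ agrees on the nose with $\epsilon_1\cdot\epsilon_3\cdot\epsilon(\tfrac12,\chi^3,\psi(\tr_{K/F}(\delta-)))$. This is largely a normalization check, but it is where essentially all the work lies—once this identification is made, the identification with $\sigma^-$ by highest weight is immediate, and the quasi-split subcase is handled cleanly by the globalization above.
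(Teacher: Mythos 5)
Your plan has the right architecture (split vs. quasi-split vs. anisotropic) and the globalization step in the quasi-split case is sound, but there are several real gaps.

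\textbf{Split case.} There is no archimedean case of M\'inguez's theorem in \cite{Min08}; that result is about type II dual pairs over $p$-adic fields. For $K$ split archimedean the theta lift is a $(\G_m\times\GL_3)$-pair over $\RR$ or $\C$, and the relevant references are Moeglin's Proposition~III.9 (for $F=\RR$) and Adams--Barbasch Proposition~1.4(1) (for $F=\C$). This is a concrete citation error, not a stylistic one.

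\textbf{Quasi-split sign determination.} Your globalization to a CM field with $\U_{3,\mathbf{F}}$ quasi-split at every archimedean place is fine (the Hasse principle allows signature $(2,1)$ or $(1,2)$ at each archimedean place, covering both discriminants, so no archimedean obstruction arises). But ``the sign is pinned down by cuspidality as in Proposition~\ref{prop:HPStheta}'' is the gap. The $p$-adic argument distinguishes $\sigma^+$ from $\sigma^-$ by testing whether the local lift is \emph{supercuspidal}, via the tower property in \cite[(ch.3,IV,4)]{MVW87} and the $\U_1\times\U_1$ nonvanishing criterion of \cite[Prop.~3.4]{rogawski1992multiplicity}. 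Archimedean representations are never supercuspidal, so this criterion does not port over verbatim; the correct archimedean distinction is discrete series vs.\ Langlands quotient. You could make a tower-property-style argument work (first occurrence at $\U_1$ versus $\U_3$, together with the archimedean $\U_1\times\U_1$ dichotomy via the root number as in \cite[Prop.~2.1]{GGP12}), but you would need to state and cite the archimedean tower property explicitly. The paper instead sidesteps this entirely: it writes $\theta(\one)$ as a cohomological induction via Ichino's Theorem~4.1 and applies the Vogan--Zuckerman criterion to decide when it is discrete series, which is cleaner because it never invokes a tower property.

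\textbf{Anisotropic case.} Your Kashiwara--Vergne / joint-harmonics decomposition plan is a genuinely different route from the paper, which again uses Ichino's Theorem~6.1 for vanishing and Theorem~4.1 (to compute the infinitesimal character) for the identification. The Kashiwara--Vergne approach is legitimate in principle and has the pedagogical merit of producing the highest weight directly, but you have not actually performed the computation and you acknowledge that is where all the work lies --- so as submitted, this branch is a plan rather than a proof. If you wish to pursue it, the bookkeeping on the lifting characters $(\chi,\chi^3)$ must be matched against the root-number formula $\epsilon(\frac12,\chi^3,\psi(\tr_{K/F}(\delta-)))=\mathrm{sgn}(N)$ from \cite[Prop.~2.1]{GGP12} after specializing $\delta=i$, $\psi(x)=e^{-2\pi ix}$ as the paper does.
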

\begin{proof}
When $K$ is split, we have $\epsilon=+1$. For $F=\RR$ the result follows from \cite[Proposition III.9]{Moe89}, and for $F=\C$ it follows from \cite[Proposition 1.4.(1)]{AB95}. So assume that $K$ is a field. The end of the proof of Proposition \ref{prop:HPStheta} shows that it suffices to prove the desired statement for a single choice of $\delta$ and $\psi$, so take $\delta=i$ and $\psi(x)=e^{-2\pi ix}$. Then $\epsilon(\textstyle\frac12,\chi^3,\psi(\tr_{K/F}(\delta-)))$ equals the sign of $N$ \cite[Proposition 2.1]{GGP12}.

 For quasi-split $\U_3$, the globalization argument from the proof of Proposition \ref{prop:HPStheta} shows that $\theta(\one)$ is isomorphic to $\sigma^\epsilon$ for some $\epsilon$ in $\{\pm1\}$. To determine $\epsilon$, note that it is equivalent to determine whether $\theta(\one)$ is discrete series. By using \cite[Theorem 4.1]{Ich22} to write $\theta(\one)$ as a cohomological induction and applying the criterion from \cite[p.~58]{VZ84}, we see that $\theta(\one)$ is discrete series if and only if $\epsilon_1\cdot\epsilon_3\cdot\epsilon(\textstyle\frac12,\chi^3,\psi(\tr_{K/F}(\delta-)))=-1$. This yields the desired result.

 For anisotropic $\U_3$, the vanishing result follows from \cite[Theorem 6.1]{Ich22}. The non-vanishing result follows from using \cite[Theorem 4.1]{Ich22} to explicitly compute the infinitesimal character of $\theta(\one)$.
\end{proof}

We use Proposition \ref{prop:archHPStheta} to compute the following torus periods of $\sigma^\pm$.
\begin{prop}\label{prop:archHPStorus}
Let $i:E\hookrightarrow J$ be a $\PU_3(F)$-conjugacy class of $F$-algebra embeddings, and let $\epsilon$ be in $F^\times/\Nm_{K/F}(K^\times)$. If $\U_3$ is anisotropic, assume that $\epsilon=-1$. Suppose that $\Hom_{i(T_E)(F)}(\sigma^\epsilon,\one)$ is nonzero. Then under the identifications of Lemma \ref{lem:embeddingsGaloiscohom}, $i$ corresponds to the image of
\begin{align*}
\lambda=\epsilon(\textstyle\frac12,\chi\circ\Nm_{L/K},\psi(\tr_{L/F}(\delta-)))\in E^\times/\Nm_{L/E}(L^\times)
\end{align*}
under the natural map $E^\times/\Nm_{L/E}(L^\times)\rightarrow H^1(F,T_E)$, and $\epsilon$ equals the image of
\begin{align*}
\Nm_{E/F}(\lambda)\cdot\Delta_{E/F}\cdot[-1]\cdot\epsilon(\textstyle\frac12,\chi^3,\psi(\tr_{K/F}(\delta-)))\in F^\times/\Nm_{K/F}(K^\times).
\end{align*}
Moreover, $\Hom_{i(T_E)(F)}(\sigma^\epsilon,\one)$ is $1$-dimensional for this choice of $i$ and $\epsilon$.
\end{prop}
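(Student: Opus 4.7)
The plan is to mimic the proof of Proposition \ref{prop:HPStorus} verbatim, with Proposition \ref{prop:HPStheta} replaced by its archimedean analogue Proposition \ref{prop:archHPStheta}, and with a corresponding archimedean non-vanishing criterion for $1$-dimensional unitary theta lifts in place of \cite[Prop. 3.4]{rogawski1992multiplicity}.

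First, by Lemma \ref{lem:embeddingsGaloiscohom} and exactness of
$$H^1(F,\R^1_{K/F}\G_m)\rightarrow H^1(F,\R_{E/F}\R^1_{L/E}\G_m)\rightarrow H^1(F,T_E)\rightarrow H^2(F,\R^1_{K/F}\G_m),$$
the given $\PU_3(F)$-conjugacy class $i$ is induced by some $\lambda \in E^\times/\Nm_{L/E}(L^\times)$, uniquely determined modulo the image of $F^\times/\Nm_{K/F}(K^\times)$. Fix such a $\lambda$ and choose $\epsilon_1 \in F^\times/\Nm_{K/F}(K^\times)$ so that $\epsilon_1\cdot\epsilon_3\cdot\epsilon(\tfrac12,\chi^3,\psi(\tr_{K/F}(\delta-))) = \epsilon$; here the additional hypothesis that $\epsilon=-1$ when $\U_3$ is anisotropic is exactly what ensures Proposition \ref{prop:archHPStheta} produces $\theta(\one)\cong\sigma^\epsilon$ rather than zero.

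Next, I would erect the same seesaw of dual pairs as in the proof of Proposition \ref{prop:HPStorus}, with the same choice of lifting characters $(\chi\circ\Nm_{L/K},\chi\circ\Nm_{L/K})$ at the bottom level, and invoke the seesaw identity at $\rho=\one$, $\phi=\one$ to obtain
$$\Hom_{(\R^1_{L/E}\G_m)(E)}(\sigma^\epsilon,\one) \;=\; \Hom_{\U_1(F)}(\theta(\one),\one),$$
where on the right $\theta(\one)$ denotes the archimedean theta lift from $\U(L_\lambda)$ to $\U(L_{\epsilon_1})$, both $1$-dimensional unitary groups over $E$. The nonvanishing hypothesis therefore forces $\theta(\one)\neq 0$. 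Applying the archimedean nonvanishing criterion for such $1$-dimensional unitary theta lifts then pins down
$$\lambda = \epsilon_1\cdot\epsilon(\tfrac12,\chi\circ\Nm_{L/K},\psi(\tr_{L/F}(\delta-)))\in E^\times/\Nm_{L/E}(L^\times),$$
after which the computation of the discriminant $\epsilon_3 = \Nm_{E/F}(\lambda)\cdot\Delta_{E/F}\cdot[-1]$ and hence of $\epsilon$ goes through exactly as in the $p$-adic case. The choice of lifting characters forces $\theta(\phi)=\phi$ for characters $\phi$, giving $1$-dimensionality.

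The main obstacle is justifying the archimedean non-vanishing criterion for $1$-dimensional unitary theta lifts over $E$, which is the archimedean counterpart of \cite[Prop. 3.4]{rogawski1992multiplicity}. This is place-by-place: at archimedean places $w$ of $E$ where $L_w/E_w$ is split, the dual pair is type II and the lift is automatically nonzero; at places where $L_w/E_w = \mathbb{C}/\mathbb{R}$, one has a theta correspondence between two compact $\U(1)$'s, which can be analyzed explicitly via the archimedean Weil representation, and the resulting sign condition matches the local epsilon factor exactly as in Rogawski's computation. One could alternatively globalize as in Proposition \ref{prop:HPStheta} to transport the criterion from the already-known $p$-adic statement, but the direct archimedean argument is short enough that I would prefer to give it explicitly.
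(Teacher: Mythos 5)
Your overall strategy matches the paper's: mimic the proof of Proposition~\ref{prop:HPStorus} with Proposition~\ref{prop:archHPStheta} in place of Proposition~\ref{prop:HPStheta} and an archimedean non-vanishing criterion for $1$-dimensional unitary theta lifts in place of \cite[Prop.\ 3.4]{rogawski1992multiplicity}. The paper cites \cite[Theorem 6.1]{Ich22} for the latter; your suggestion of a direct compact $\U(1)\times\U(1)$ analysis (or a globalization) is a reasonable stand-in, and you correctly identify the role of the $\epsilon=-1$ hypothesis when $\U_3$ is anisotropic.

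There is one genuine gap. In the $p$-adic seesaw identity one replaces $\Theta$ by $\theta$ because $\rho$ and $\phi$ are cuspidal, citing \cite[(ch.3,IV,4)]{MVW87}; this argument is inherently nonarchimedean and does not transfer. Your proposal silently carries over the identity
$\Hom_{(\R^1_{L/E}\G_m)(E)}(\sigma^\epsilon,\one) = \Hom_{\U_1(F)}(\theta(\one),\one)$
without justifying why the full theta lift $\Theta$ may be replaced by its maximal semisimple quotient $\theta$ on both sides. The paper supplies the missing archimedean argument: the relevant unitary group theta lifts are either in the stable range (\cite[Theorem A]{LM15}) or involve only anisotropic groups, so $\Theta=\theta$ in both cases. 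Without this step, the seesaw as stated is not literally available; you should insert this justification before invoking the identity.
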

\begin{proof}
Use the proof of Proposition \ref{prop:HPStorus} with the following modifications:
\begin{itemize}
    \item in the seesaw identity, we can use $\theta$ instead of $\Theta$ since our unitary group theta lifts are either in the stable range \cite[Theorem A]{LM15} or only involve anisotropic groups,
    \item use Proposition \ref{prop:archHPStheta} instead of Proposition \ref{prop:HPStheta},
    \item use \cite[Theorem 6.1]{Ich22} instead of \cite[Prop. 3.4]{rogawski1992multiplicity}.\qedhere
\end{itemize}
\end{proof}

\subsection{Automorphic representations}\label{ss:PU3HPS} In this subsection, assume that $F$ is a number field, and assume that $K$ is a field. For every place $v$ of $F$ and $\epsilon$ in $\{\pm1\}$, write $\sigma^\epsilon_v$ for the irreducible smooth representation of $\PU_3(F_v)$ associated with $\chi_v$ as in \S\ref{ss:U3HPSpadic} or \S\ref{ss:U3HPSarch} whenever it is defined.

Note that $\sigma^+_v$ is defined for cofinitely many $v$. Write $\mathcal{A}_\chi(\PU_3)\subseteq\mathcal{A}_{\disc}(\PU_3)$ for the sum of all irreducible subrepresentations of $\mathcal{A}_{\disc}(\PU_3)$ that are nearly equivalent to $(\sigma^+_v)_v$.
\begin{theorem}\label{thm:globalHPSU3}\hfill
\begin{enumerate}
    \item We have an isomorphism of $\PU_3(\A_F)$-representations
\begin{align*}
\mathcal{A}_\chi(\PU_3) \cong \bigoplus_{(\epsilon_v)_v}\bigotimes_v\!'\,\sigma^{\epsilon_v}_v,
\end{align*}
where $(\epsilon_v)_v$ runs over sequences in $\{\pm1\}$ indexed by places $v$ of $F$ such that
\begin{itemize}
    \item $\sigma^{\epsilon_v}_v$ is defined for all $v$,
    \item $\epsilon_v=+1$ for cofinitely many $v$,
    \item $\prod_v\epsilon_v=\epsilon(\textstyle\frac12,\chi^3)$.
\end{itemize}
 \item Let $\sigma$ be an irreducible discrete automorphic representation of $\PU_3(\A_F)$, and assume that there exists a nonarchimedean place $v$ of $F$ such that
\begin{itemize}
    \item $v$ splits in $K$ (and hence $\PU_{3,F_v}\cong\PGL_3$),
    \item $\sigma_v$ is unramified,
    \item the Satake parameter $S_v$ of $\sigma_v$ has an eigenvalue of norm $\geq q_v^{1/2}$.
\end{itemize}
Then $\sigma$ lies in $\mathcal{A}_{\chi'}(\PU_3)$ for some conjugate-symplectic unitary character $\chi':K^\times\backslash\A_K^\times\ra S^1$.
\item Let $G'_{\mathrm{in}}$ be an inner form of $\PU_3$ over $F$, and assume there exists an irreducible discrete automorphic representation of $G'_{\mathrm{in}}(\A_F)$ satisfying the condition in part (2). Then $G'_{\mathrm{in}}$ is associated with a $3$-dimensional Hermitian space for $K/F$.
\item Let $(\epsilon_v)_v$ be a sequence as in part (1). Then $\bigotimes'_v\sigma_v^{\epsilon_v}$ is not cuspidal if and only if every $\epsilon_v$ equals $+1$ and $L(\textstyle\frac12,\chi^3)$ is nonzero.
\end{enumerate}
\end{theorem}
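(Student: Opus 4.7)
The plan is to construct the summands in (1) via the global $\U_1\times\U_3$ theta lift, using the local identifications of Propositions~\ref{prop:HPStheta} and~\ref{prop:archHPStheta} and the globalization of Proposition~\ref{prop:unitarythetalocalglobal}, and then to bound the total contribution to $\mathcal{A}_\chi(\PU_3)$ by invoking Rogawski's endoscopic classification of the discrete spectrum of $\U_3$ from \cite{rogawski1990automorphic,rogawski1992multiplicity}. Parts~(2)--(4) follow from this classification together with a residual spectrum computation on the Heisenberg parabolic of $\U_3$.

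For the construction half of~(1), fix $(\epsilon_v)_v$ with $\prod_v \epsilon_v = \epsilon(\tfrac12,\chi^3)$. I would first choose a $3$-dimensional Hermitian space $V_3$ for $K/F$ such that the associated $\PU_3(F_v)$ is compact exactly when $v$ is archimedean, $K_v$ is a field, and $\epsilon_v=-1$; this is possible by the Hasse principle for Hermitian forms, since archimedean signatures can be prescribed freely. Writing $\epsilon_3\in F^\times/\Nm_{K/F}(K^\times)$ for the discriminant of $V_3$, the global product formula for $\epsilon$-factors, together with $\prod_v\epsilon_{3,v}=1$ and the hypothesis $\prod_v\epsilon_v=\epsilon(\tfrac12,\chi^3)$, implies that the sequence
\[
\bigl\{\epsilon_v\cdot\epsilon_{3,v}\cdot\epsilon(\tfrac12,\chi_v^3,\psi_v(\tr_{K/F}(\delta-)))^{-1}\bigr\}_v
\]
has trivial product, hence arises from some $\epsilon_1\in F^\times/\Nm_{K/F}(K^\times)$ via the Hasse principle for $1$-dimensional Hermitian spaces. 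Forming the global theta lift of the trivial representation of the corresponding $\U_1$ with lifting characters $(\chi,\chi^3)$, Propositions~\ref{prop:HPStheta} and~\ref{prop:archHPStheta} identify the local theta lifts $\theta(\one_v)$ with $\sigma_v^{\epsilon_v}$, with non-vanishing at archimedean places ensured by the choice of $V_3$. Proposition~\ref{prop:unitarythetalocalglobal} then promotes this to an irreducible discrete automorphic representation of $\U_3(\A_F)$ isomorphic to $\bigotimes'_v\sigma_v^{\epsilon_v}$; our choice of lifting characters preserves trivial central character, so it descends to $\PU_3(\A_F)$.

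For the reverse inclusion and multiplicity one in~(1), I would invoke Rogawski's classification: the near equivalence class $\mathcal{A}_\chi(\PU_3)$ is precisely the global endoscopic packet attached to $\chi$, and the multiplicity formula of \cite{rogawski1992multiplicity} gives the sign condition $\prod_v\epsilon_v=\epsilon(\tfrac12,\chi^3)$ with multiplicity one. The identification of Rogawski's local endoscopic packet members with the theta-lift representations $\sigma_v^\pm$ is classical at nonarchimedean places and at quasi-split archimedean places by \cite{GR91}; at anisotropic archimedean places, a globalization argument in the spirit of Lemma~\ref{lem:globalizecharacter} matches the local members by transferring the sign structure from a quasi-split global theta lift. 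Part~(2) reduces to Rogawski's observation that the non-tempered part of $\mathcal{A}_{\disc}(G')$ for any inner form $G'$ of $\U_3$ is exhausted by global endoscopic packets; the Satake hypothesis at a split $v$ guarantees non-temperedness, placing $\sigma$ in some $\mathcal{A}_{\chi'}(\PU_3)$. Part~(3) follows from the same input, since these endoscopic packets only exist on inner forms arising from Hermitian spaces for $K/F$ (the theta construction above would otherwise have no source). For part~(4), the representation $\bigotimes'_v\sigma_v^+$ is the Langlands quotient of the globally induced representation from the Heisenberg parabolic of $\U_3$, and lies in the residual spectrum precisely when the associated Eisenstein series has a pole at $s=\tfrac12$, which occurs iff $L(\tfrac12,\chi^3)\neq 0$; any sequence with some $\sigma_v^-$ component is cuspidal, because $\sigma_v^-$ is square-integrable (supercuspidal at nonarchimedean $v$, discrete series at archimedean $v$) and so cannot arise as a local component of an Eisenstein residue from a proper parabolic.

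The main obstacle will be the endoscopic matching at anisotropic archimedean places, which is not explicitly in the literature; the globalization strategy should handle this, but requires some care to ensure the $\pm$ labels are transferred consistently across constructions.
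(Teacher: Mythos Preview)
Your proposal is broadly correct but considerably more elaborate than the paper's argument, and there is one point of confusion worth flagging.

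The paper's proof is essentially a direct appeal to Rogawski: the inclusion $\bigoplus_{(\epsilon_v)_v}\bigotimes'_v\sigma^{\epsilon_v}_v\subseteq\mathcal{A}_\chi(\PU_3)$ is read off from \cite[Theorems 1.1 and 1.2]{rogawski1992multiplicity} for quasi-split and anisotropic $\PU_3$ respectively, and equality plus part~(2) come from Rogawski's description \cite[p.~202]{rogawski1990automorphic} of the full discrete spectrum together with the Jacquet--Shalika bound \cite[Corollary (2.5)]{JS81}. Part~(3) is then \cite[Theorem 14.6.3]{rogawski1990automorphic}, and part~(4) is \cite[p.~396--397]{rogawski1992multiplicity}. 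No theta construction is invoked in this proof; the local $\sigma_v^\pm$ in \S\ref{ss:U3HPSarch} are \emph{defined} to agree with Rogawski's packet members, so no separate matching is required. Your worry about ``endoscopic matching at anisotropic archimedean places'' is therefore a non-issue.

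Your route instead reconstructs the forward inclusion via the global $\U_1\times\U_3$ theta lift and Propositions~\ref{prop:HPStheta}, \ref{prop:archHPStheta}, \ref{prop:unitarythetalocalglobal}. This works, but note a slip: you write that you ``choose a $3$-dimensional Hermitian space $V_3$'' so that $\PU_3(F_v)$ is compact exactly at the archimedean $v$ with $\epsilon_v=-1$. In the theorem $\PU_3$ is \emph{fixed}, and the condition ``$\sigma_v^{\epsilon_v}$ is defined for all $v$'' already forces $\epsilon_v=-1$ wherever $\PU_3(F_v)$ is anisotropic. With $V_3$ fixed, your product computation for $\epsilon_1$ goes through verbatim, and Proposition~\ref{prop:archHPStheta} gives nonvanishing at anisotropic places because $\epsilon_v=-1$ there; so the argument survives, but you should not present $V_3$ as a free choice. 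Your Eisenstein-series argument for part~(4) is correct in outline and amounts to what Rogawski records on \cite[p.~396--397]{rogawski1992multiplicity}. The payoff of your approach is that it makes the theta origin of these packets explicit (which is used elsewhere in the paper, e.g.\ in Proposition~\ref{prop:globalHPStorus}); the payoff of the paper's approach is brevity, since Rogawski has already done the heavy lifting.
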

\begin{proof}
First, we tackle part (1) and part (2). Now $\mathcal{A}_\chi(\PU_3)$ contains $\bigoplus_{(\epsilon_v)_v}\bigotimes_v'\sigma^{\epsilon_v}_v$ for quasi-split $\PU_3$ by \cite[Theorem 1.1]{rogawski1992multiplicity} and for anisotropic $\PU_3$ by \cite[Theorem 1.2]{rogawski1992multiplicity}. Rogawski's description \cite[p.~202]{rogawski1990automorphic} of $\mathcal{A}_{\disc}(\PU_3)$ shows that this containment is an equality, and combined with \cite[Corollary (2.5)]{JS81}, this description also implies part (2). Finally, part (3) follows from part (2) and \cite[Theorem 14.6.3]{rogawski1990automorphic}, and part (4) is \cite[p.~396--397]{rogawski1992multiplicity}.
\end{proof}

Let $i:E\hookrightarrow J$ be a $\PU_3(F)$-conjugacy class of $F$-algebra embeddings. For any irreducible cuspidal automorphic representation $\sigma$ of $\PU_3(\A_F)$, consider the $\C$-linear map $\mathcal{P}_i:\sigma\ra\C$ given by
\begin{align*}
f\mapsto\int_{[i(T_E)]}f(t')\dd{t'},
\end{align*}
which converges because $f$ is rapidly decreasing.

\begin{prop}\label{prop:globalHPStorus}
Let $(\epsilon_v)_v$ be a sequence as in Theorem \ref{thm:globalHPSU3}.(1), and write $\sigma$ for $\bigotimes'_v\sigma_v^{\epsilon_v}$. Assume that $\sigma$ is cuspidal and $\R^1_{L/E}\G_m$ is anisotropic. Then $\mathcal{P}_i$ is nonzero if and only if $L(\textstyle\frac12,\Ind_K^F\chi\otimes\Ind_E^F\one)$ is nonzero and, for every place $v$ of $F$, our $i_v$ and $\epsilon_v$ satisfy the conditions in Proposition \ref{prop:HPStorus} or Proposition \ref{prop:archHPStorus}.
\end{prop}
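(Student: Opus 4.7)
The plan is to globalize $\sigma$ as a unitary theta lift from $\U_1$ and then apply the same seesaw used in the proofs of Propositions \ref{prop:HPStorus} and \ref{prop:archHPStorus} globally, converting $\mathcal{P}_i$ into a period of a $\U_1\times\U_1$ theta lift over $E$ to which Yang's formula \cite{Yan97} applies directly. First I would construct the globalization: since $\prod_v\epsilon_v=\epsilon(\tfrac12,\chi^3)$ while $\prod_v\epsilon_{3,v}=1$ by the product formula, the tuple
\[
\left(\epsilon_v\cdot\epsilon_{3,v}^{-1}\cdot\epsilon(\tfrac12,\chi_v^3,\psi_v(\tr_{K/F}(\delta\,-)))^{-1}\right)_v\in\prod_v F_v^\times/\Nm_{K_v/F_v}(K_v^\times)
\]
has trivial product, so it arises from a global class $\epsilon_1\in F^\times/\Nm_{K/F}(K^\times)$. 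Let $\U_1$ be the corresponding $1$-dimensional unitary group, which is anisotropic since $K$ is a field. Propositions \ref{prop:HPStheta}, \ref{prop:archHPStheta}, and \ref{prop:unitarythetalocalglobal} then identify the global theta lift of the trivial character $\one$ from $\U_1(\A_F)$ to $\PU_3(\A_F)$ with a model of $\sigma$.

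Next I would invoke the global seesaw
\begin{center}
\begin{tikzcd}
\R_{E/F}\R^1_{L/E}\G_m \ar[rd, no head] \ar[d, no head] & \U_3 \ar[ld, no head] \ar[d, no head] \\
\U_1 & \R_{E/F}\R^1_{L/E}\G_m
\end{tikzcd}
\end{center}
inside $\mathrm{Sp}_W$, whose right edge descends exactly to $i:T_E\hookrightarrow\PU_3$. The compactness of $[T_E]$, which follows from the anisotropy of $\R^1_{L/E}\G_m$ (assumed) and of $\R^1_{K/F}\G_m$ (since $K$ is a field), legitimizes an absolutely convergent unfolding and yields, for Schwartz--Bruhat $\varphi\in\Omega$,
\[
\mathcal{P}_i(\theta(\varphi;\one)) \ = \ \mathrm{vol}([T_E])\cdot\int_{[\U_1]} \theta_E(\varphi;\one)(h_1)\,dh_1,
\]
where $\theta_E$ denotes the theta lift for the horizontal pair at the bottom of the seesaw. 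Restricting scalars from $F$ to $E$ turns the right-hand side into the standard period on $[\U_1(L/E)]$ of the theta lift of the trivial character from $\U_1(L/E)$, associated to the $1$-dimensional Hermitian spaces $L_{\epsilon_1}$ and $L_\lambda$.

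Finally I would apply T.~Yang's formula \cite{Yan97}, according to which this $\U_1\times\U_1$ theta period is nonzero if and only if every local theta lift on the bottom pair is nonzero and the central value $L(\tfrac12,\chi\circ\Nm_{L/K})$ is nonzero. The Mackey--Frobenius identity
\[
\Ind_K^F\chi\otimes\Ind_E^F\one \ = \ \Ind_L^F(\chi\circ\Nm_{L/K}),
\]
which is valid because $[E:F]=3$ and $[K:F]=2$ force $E\cap K=F$, then identifies this with $L(\tfrac12,\Ind_K^F\chi\otimes\Ind_E^F\one)$. To match the local non-vanishing conditions, I would observe that the local version of the same seesaw used in Propositions \ref{prop:HPStorus} and \ref{prop:archHPStorus} identifies non-vanishing of the local theta lift at $v$ with non-vanishing of $\Hom_{i(T_E)(F_v)}(\sigma_v^{\epsilon_v},\one)$, and those local Propositions in turn characterize the latter precisely by the explicit root-number conditions on $i_v$ and $\epsilon_v$ quoted in the statement. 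The main obstacle I anticipate is the careful bookkeeping of splittings and lifting characters: one must check that the global splitting of $(\U_1\times\U_3)(\A_F)\to\mathrm{Sp}_W(\A_F)$ associated with $(\chi,\chi^3)$ and $\psi$ restricts at each $v$ to exactly the local splittings underlying the local Propositions, so that the local factors emerging from Yang's formula genuinely coincide with those dictated by the local torus period computations.
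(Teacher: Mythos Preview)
Your proposal is correct and follows essentially the same approach as the paper: globalize $\sigma$ as $\theta(\one)$ from $\U_1$ via the Hasse principle on $\epsilon_1$, apply the global version of the same seesaw used in Propositions \ref{prop:HPStorus} and \ref{prop:archHPStorus}, and invoke Yang's theorem to obtain the $L$-value and local conditions. The only cosmetic discrepancy is the volume factor in your seesaw identity (the passage from $\R_{E/F}\R^1_{L/E}\G_m$ to $T_E$ contributes the volume of $[\R^1_{K/F}\G_m]$, not of $[T_E]$), and your explicit Mackey--Frobenius rewriting of the $L$-function is a step the paper leaves implicit in its citation of \cite{Yan97}.
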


\begin{proof}
For every place $v$ of $F$, choose $\epsilon_{1,v}$ to be $\epsilon_{3,v}\cdot\epsilon(\textstyle\frac12,\chi_v^3,\psi_v(\tr_{K_v/F_v}(\delta-)))\cdot\epsilon_v$. Our assumptions imply
\begin{align*}
\textstyle\prod_v\epsilon_{1,v} = (\prod_v\epsilon_{3,v})\epsilon(\frac12,\chi^3)(\prod_v\epsilon_v) = +1\cdot\epsilon(\frac12,\chi^3)^2=+1,
\end{align*}
so the Hasse principle yields an $\epsilon_1$ in $F^\times/\Nm_{K/F}(K^\times)$ whose image in $F_v^\times/\Nm_{K_v/F_v}(K^\times_v)$ equals $\epsilon_{1,v}$ for all $v$. Then Proposition \ref{prop:unitarythetalocalglobal} along with Proposition \ref{prop:HPStheta} or Proposition \ref{prop:archHPStheta} show that the global theta lift $\theta(\one)$ as in \S\ref{ss:unitarytheta} with respect to $\epsilon_1$ equals $\sigma$.

Consider the global analogue of the seesaw from the proof of Proposition \ref{prop:HPStorus}, as well as the lifting of
\begin{align*}
(\R_{E/F}\R^1_{L/E}\G_m\times\R_{E/F}\R^1_{L/E}\G_m)(\A_F)\rightarrow\mathrm{Sp}_W(\A_F)
\end{align*}
to $\mathrm{Mp}_W(\A_F)$ associated with $\psi\circ\tr_{E/F}$ and $(\chi\circ\Nm_{L/K},\chi\circ\Nm_{L/K})$ \cite[Theorem 3.1]{Kud87}. For any irreducible cuspidal automorphic representation $\phi$ of $(\R^1_{L/E}\G_m)(\A_E)$, write $\theta(\phi)$ for the resulting theta lift. Note that $\theta(\phi)$ converges because we assume $\R^1_{L/E}\G_m$ is anisotropic.

For the constant functions $1$ on $[\U_1]$ and $[\R^1_{L/E}\G_m]$ and any $\varphi$ in the Weil representation, our seesaw yields
\begin{align*}
\int_{[i(\R^1_{L/E}\G_m)]}\theta(\varphi,1)(t')\dd{t'} = \int_{[\U_1]}\theta(\varphi,1)(u)\dd{u}.
\end{align*}
Write $\rho=\one$, and take $\phi=\one$. By \cite[Theorem 0.4]{Yan97}, our $\theta(\phi)$ is nonzero if and only if $L(\textstyle\frac12,\Ind^F_K\chi\otimes\Ind^F_E\one)$ is nonzero and, for every place $v$ of $F$, our $i_v$ and $\epsilon_v$ satisfy the conditions in Proposition \ref{prop:HPStorus} or Proposition \ref{prop:archHPStorus}. Our choice of lifting characters ensures that $\theta(\phi)=\phi$, and this corresponds to the right hand side of the seesaw identity. Because $\theta(\rho)=\sigma$, the left hand side of the seesaw identity corresponds to $\mathcal{P}_i$, as desired.
\end{proof}
For a generalization of Proposition \ref{prop:globalHPStorus}, see \cite[Theorem 5.3]{groupA}.

\section{Automorphic representations of $\mathsf{G}_2$}\label{sec:AMF_and_HPS}
We begin this section by stating Arthur's multiplicity formula \cite[Conjecture 8.1]{Arthur} in the setting of global long root $A$-parameters $\tau$ for $\mathsf{G}_2$. Next, we specialize to the case where $\tau$ is dihedral, and we state Theorem \ref{thmA}. We then define the relevant local $A$-packets over $p$-adic fields, which are defined by applying our exceptional theta lift to the local $A$-packets for $\PU_3$ considered in \S\ref{ss:U3HPSpadic}; this amounts to work of Alonso--He--Ray--Roset \cite{AHRR23}.
 
Afterwards, we define the relevant local $A$-packets over archimedean fields \emph{without} using theta lifts. We instead formulate a conjecture for how our exceptional theta lift relates them to the local $A$-packets for $\PU_3$ considered in \S\ref{ss:U3HPSarch}. Finally, we give evidence for our conjecture stemming from the Vogan philosophy as well as stemming from global considerations, and we verify our conjecture in some cases.
 
\subsection {Global long root $A$-parameters for $\mathsf{G}_2$}\label{ss:global_long_root} In this subsection, assume that $F$ is a number field. Recall from \S\ref{ss:dualpairs} that $G$ is the connected split group of type $\mathsf{G}_2$ over $F$. Then ${^L}G=\widehat{G}(\C)$, where $\widehat{G}$ is a connected simple group over $\C$ that is also of type $\mathsf{G}_2$.

Fix a maximal subtorus $\widehat{T}$ of $\widehat{G}$ over $\C$. Since there exists a unique pair of orthogonal short and long roots in $\mathsf{G}_2$ up to the Weyl action, we obtain a well-defined morphism $\SL_{2,\operatorname{short}}\times\SL_{2,\operatorname{long}}\ra\widehat{G}$ of groups over $\C$ up to $\widehat{G}(\C)$-conjugation.

Let $\tau$ be an irreducible cuspidal automorphic representation of $\PGL_2(\A_F)$. For all nonarchimedean places $v$ of $F$ where $\tau_v$ is unramified, write $T_v$ for the Satake parameter of $\tau_v$, i.e. the image of geometric Frobenius under the corresponding unramified representation $W_{F_v}\ra\SL_2(\C)$. Consider the semisimple conjugacy class in $\widehat{G}(\C)$ given by the image of $(T_v,\mathrm{diag}(q_v^{-1/2},q_v^{1/2}))$ under $\SL_{2,\operatorname{short}}\times\SL_{2,\operatorname{long}}\ra\widehat{G}$, and write $\pi_v^+$ for the corresponding irreducible unramified representation of $G(F_v)$.

Note that $\pi_v^+$ is defined for cofinitely many $v$. Write $\mathcal{A}_\tau(G)\subseteq\mathcal{A}_{\disc}(G)$ for the sum of all irreducible $G(\A_F)$-subrepresentations $\pi$ of $\mathcal{A}_{\disc}(G)$ that are nearly equivalent to $(\pi^+_v)_v$.

In this setting, Arthur's multiplicity formula \cite[Conjecture 8.1]{Arthur} is equivalent to the following statement. 
\begin{conj}[Arthur's multiplicity formula]\label{conj:AMF_long_root_G2}
For every place $v$, there exists a finite set $\Pi(\tau_v)$ of irreducible smooth representations of $G(F_v)$ depending only on $\tau_v$ such that
\begin{enumerate}
\item $\Pi(\tau_v)$ consists of two elements $\{\pi^+_v,\pi^-_v\}$ when $\tau_v$ is discrete series and one element $\{\pi^+_v\}$ otherwise,
\item For all nonarchimedean places $v$ of $F$ where $\tau_v$ is unramified, $\pi^+_v$ agrees with the irreducible unramified representation of $G(F_v)$ defined above,
\item We have an isomorphism of $G(\A_F)$-representations
\begin{align*}
\mathcal{A}_\tau(G)\cong\bigoplus_{(\epsilon_v)_v}\bigotimes_v\!'\,\pi^{\epsilon_v}_v,
\end{align*}
where $(\epsilon_v)_v$ runs over sequences in $\{\pm1\}$ indexed by places $v$ of $F$ such that
\begin{itemize}
    \item $\epsilon_v=+1$ when $\tau_v$ is not discrete series,
    \item $\prod_v\epsilon_v = \epsilon(\textstyle\frac12,\tau,\mathrm{Sym}^3)$.
\end{itemize}
\end{enumerate}
\end{conj}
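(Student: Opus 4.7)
The plan is to construct the global $A$-packet by transferring the Howe--Piatetski-Shapiro $A$-packets on $\PU_3$ (as described in Section \ref{s:PU3}) via the exceptional theta lift for the dual pair $G \times \PU_3 \hookrightarrow \widetilde{G}$. Since $\tau$ is dihedral, it arises from a character $\chi$ of $K^\times\backslash\A_K^\times$, so the global $A$-parameter $\psi$ factors through the subgroup $\SL_3(\C) \rtimes \Z/2\Z \subset \widehat{G}(\C)$, which is the $L$-group of $\PU_3$. This suggests defining the local $A$-packets $\Pi(\tau_v) = \{\pi_v^+, \pi_v^-\}$ as theta lifts of the HPS representations $\sigma_v^\pm$ from \S\ref{ss:U3HPSpadic} and \S\ref{ss:U3HPSarch}, and then proving that $\mathcal{A}_\tau(G)$ is the image of the corresponding HPS $A$-packets on $\PU_3$ (summed over inner forms) under the global theta lift $\theta$.

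For the forward inclusion, fix a valid sign sequence $(\epsilon_v)_v$. By selecting the Hermitian form on $\PU_3$ to match the archimedean signs, Theorem \ref{thm:globalHPSU3} produces an irreducible cuspidal automorphic representation $\sigma = \bigotimes'_v \sigma_v^{\epsilon_v}$ of $\PU_3(\A_F)$. I would then show that $\theta(\sigma)$ is cuspidal, nonzero, and isomorphic to $\bigotimes'_v \pi_v^{\epsilon_v}$. Cuspidality should follow by computing constant terms along $P$ and $Q$ and relating them to the smaller dual pair $\PU_3 \times \GL_2 \hookrightarrow \widetilde{M}$, which at split places is a classical type II pair understood by M\'{\i}nguez. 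For non-vanishing, by Proposition \ref{prop:FE_of_min} and Proposition \ref{prop:min_orbit} it suffices to exhibit a nonzero generic Fourier coefficient along $\widetilde{N}$; this reduces via a seesaw argument to non-vanishing of a torus period $\mathcal{P}_i$ of $\sigma$, which Proposition \ref{prop:globalHPStorus} expresses in terms of $L(\tfrac12, \Ind_K^F \chi \otimes \Ind_E^F \one)$. Hypothesis (1), combined with Friedberg--Hoffstein, lets us choose a cubic \'etale $E$ making this $L$-value nonzero while arranging the local root number conditions. Identification with $\bigotimes'_v \pi_v^{\epsilon_v}$ then follows from local-global compatibility (Proposition \ref{prop:local_global_comp}) together with Howe duality for the local exceptional theta lifts, treated case-by-case at nonarchimedean and archimedean places.

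For the reverse inclusion, take an irreducible cuspidal $\pi \subseteq \mathcal{A}_\tau(G)$ and consider its lift $\theta(\pi)$ to $\PU_3$ for varying Hermitian forms. Cuspidality of $\theta(\pi)$ should be proved by a tower argument relating its constant terms to the dual pair $S_3 \times G \hookrightarrow S_3 \ltimes \Spin_8$ of Gan--Gurevich--Jiang, whose image lies in a different near equivalence class by the work of Gan. Non-vanishing of $\theta(\pi)$ for some $\PU_3$ should reduce, via a seesaw, to showing that a pairing of $\pi$ with $\theta_E(\one)$ is nonzero, where $\theta_E$ is the lift for the dual pair $T_E \times \Spin_8^E$. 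A Siegel--Weil formula identifying $\theta_E(\one)$ with a residue of an Eisenstein series on $\Spin_8^E$ would, combined with A.\ Segal's integral representation, express this pairing as a residue of a twisted standard $L$-function for $\pi$, which is nonzero by hypothesis (1). Rogawski's classification of $\mathcal{A}_{\disc}(\PU_3)$ combined with Theorem \ref{thm:globalHPSU3} then forces any such $\theta(\pi)$ into an HPS $A$-packet.

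The main obstacles I anticipate are, first, the non-vanishing steps in both directions, which require delicate interplay between torus periods, residues of $L$-functions, and local root numbers (this is where hypothesis (1) is unavoidable in the approach); and second, the archimedean Howe duality statements needed to identify $\theta(\sigma_v^{\epsilon_v}) \cong \pi_v^{\epsilon_v}$ at real and complex places, which in the literature of Loke--Savin and Huang--Pand\v{z}i\'{c}--Savin are not available in full generality and force the technical hypothesis (3) or ($3'$). The non-cuspidal summand of the formula, by contrast, should follow directly from the residual spectrum results of H.\ Kim and \v{Z}ampera and does not require the theta-lift machinery.
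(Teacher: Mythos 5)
Your proposal matches the paper's approach essentially step for step: transfer the Howe--Piatetski-Shapiro $A$-packets on $\PU_3$ via the exceptional theta lift, establish cuspidality via the mini-theta lift and the Gan--Gurevich--Jiang dual pair, establish non-vanishing via Fourier coefficients and torus periods (combined with Friedberg--Hoffstein and a Siegel--Weil formula on $\Spin_8^E$), and handle the residual contribution via Kim and \v{Z}ampera. Note that this only establishes the conjecture under the hypotheses of Theorem~\ref{thm:actualA}.(3), not in full generality, a limitation you correctly flag; one small formulation difference is that the paper defines the archimedean members $\pi_v^\pm$ of $\Pi(\tau_v)$ directly (as a Langlands quotient and an explicit discrete series, \S\ref{ss:G2_packets_arch}) rather than as theta lifts, precisely because archimedean Howe duality is only conjectural (Conjecture~\ref{conj:G2HPSarch}), whereas you phrase the definition as ``theta lifts of $\sigma_v^\pm$'' at all places.
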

For a more detailed explanation of why this is equivalent to Arthur's formulation, consult \cite[Section 4.4]{AHRR23}.

\subsection{The dihedral case}\label{ss:dihedral}
In this subsection, assume that $F$ is a number field. Henceforth, assume that $\tau$ is \emph{dihedral}, i.e. that $\tau\otimes\omega=\tau$ for the character $\omega:F^\times\backslash\A_F^\times\ra\{\pm1\}$ associated with some quadratic extension $K/F$. Then $\tau$ equals the automorphic induction of a unitary character $\chi:K^\times\backslash\A_K^\times\ra S^1$ \cite[Th\'eor\`eme 3]{Hen12}, the condition that $\tau$ is cuspidal is equivalent to $\chi$ not being self-conjugate, and the condition that $\tau$ has trivial central character is equivalent to $\chi$ being conjugate-symplectic with respect to $K/F$. One computes that $\epsilon(\textstyle\frac12,\tau,\mathrm{Sym}^3)=\epsilon(\frac12,\chi)\cdot\epsilon(\frac12,\chi^3)$.

Let $v$ be a place of $F$. Since $\tau_v$ is the local automorphic induction of $\chi_v$ \cite[Th\'eor\`eme 4]{Hen12}, we see that $\tau_v$ is not discrete series if and only if $v$ splits in $K$ or $\chi_v$ is self-conjugate. Because $\chi_v$ is conjugate-symplectic with respect to $K_v/F_v$, $\chi_v$ being self-conjugate is equivalent to $\chi_v^2=1$.

We will prove the following cases of Conjecture \ref{conj:AMF_long_root_G2} in the dihedral setting; this is our Theorem \ref{thmA}.
\begin{theorem}\label{thm:actualA}
For every place $v$, we define a finite set $\Pi(\tau_v)$ of irreducible smooth representations of $G(F_v)$ depending only on $\tau_v$ such that 
\begin{enumerate}
\item $\Pi(\tau_v)$ consists of one element $\{\pi^+_v\}$ when $v$ splits in $K$ or $\chi_v^2=1$ and two $\{\pi^+_v,\pi^-_v\}$ otherwise,
\item For all nonarchimedean places $v$ of $F$ where $\tau_v$ is unramified, $\pi^+_v$ agrees with the irreducible unramified representation of $G(F_v)$ defined in \S\ref{ss:global_long_root},
\item Assume that $L(\textstyle\frac12,\chi)$ is nonzero, $K_v/F_v$ is unramified at every place $v$ of $F$ above $2$, $K$ is totally real, and $\chi_v:\RR^\times\rightarrow S^1$ satisfies $\chi_v(-1)=1$ at every archimedean place $v$ of $F$. Then we have an isomorphism of $G(\A_F)$-representations
\begin{align*}
\mathcal{A}_\tau(G)\cong\bigoplus_{(\epsilon_v)_v}\bigotimes_v\!'\,\pi^{\epsilon_v}_v,
\end{align*}
where $(\epsilon_v)_v$ runs over sequences in $\{\pm1\}$ indexed by places $v$ of $F$ such that
\begin{itemize}
    \item $\epsilon_v=+1$ when $v$ splits in $K$ or $\chi_v^2=1$,
    \item $\prod_v\epsilon_v = \epsilon(\frac12,\chi^3)$.
\end{itemize}
Moreover, $\bigotimes'_v\pi_v^{\epsilon_v}$ is not cuspidal if and only if every $\epsilon_v$ equals $+1$ and $L(\textstyle\frac12,\chi^3)$ is nonzero.
\end{enumerate}
\end{theorem}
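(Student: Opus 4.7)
The plan is to establish parts (1) and (2) by definition, and to prove part (3) by separating non-cuspidal and cuspidal contributions. For parts (1) and (2): at nonarchimedean $v$ I would set $\Pi(\tau_v)\coloneqq\theta(\Pi(\chi_v))$, applying the local exceptional theta lift to the HPS packet $\{\sigma_v^\pm\}$ of \S\ref{ss:U3HPSpadic}; that this yields irreducible representations with the claimed cardinality is the main output of \cite{AHRR23}, which uses the local Howe duality of \cite{gan2021howe, bakic2021howe}. At archimedean $v$ I would define $\Pi(\tau_v)$ directly, not via theta. The unramified compatibility in (2) follows from a Satake parameter calculation for the unramified theta correspondence at split and inert unramified places. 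The non-cuspidal contribution to part (3) is handled by \cite{kim1996residual, zampera1997residual}: the unique residual representation nearly equivalent to $(\pi_v^+)_v$ arises as a residue of a Heisenberg Eisenstein series built from $\tau$, is nonzero if and only if $L(\textstyle\frac12,\chi^3)\neq 0$, and equals $\bigotimes_v'\pi_v^+$ by an unramified computation.

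For the cuspidal part of \eqref{AMF}, I would prove the two containments. For RHS $\subseteq$ LHS, fix $(\epsilon_v)_v$ as in the theorem. Since $K$ is totally real, every archimedean place of $F$ splits in $K$, forcing $\epsilon_v=+1$ there and making hypothesis ($3^\prime$) of Theorem \ref{thmB} automatic (using $\chi_v(-1)=1$). The Hasse principle provides a $3$-dimensional Hermitian space for $K/F$ whose local discriminants match (via Propositions \ref{prop:HPStheta} and \ref{prop:archHPStheta}) the signs $\epsilon_v$; the product condition $\prod_v\epsilon_v=\epsilon(\textstyle\frac12,\chi^3)$ ensures that these local discriminants satisfy the global Hasse invariant constraint. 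Theorem \ref{thm:globalHPSU3}.(1) then yields a cuspidal $\sigma=\bigotimes_v'\sigma_v^{\epsilon_v}$ on the associated $\PU_3$, and Theorem \ref{thmB} identifies $\theta(\sigma)\cong\bigotimes_v'\pi_v^{\epsilon_v}$ as a nonzero cuspidal subrepresentation of $\mathcal{A}_\tau(G)$. For the reverse containment, any irreducible cuspidal $\pi\subseteq\mathcal{A}_\tau(G)$ lies in $\theta(\sigma)$ for some HPS-type $\sigma$ on some $\PU_3$ by Theorem \ref{thmC}. Proposition \ref{prop:local_global_comp} combined with the local Howe duality of \cite{gan2021howe, bakic2021howe, LokeSavin, NDPC} (whose archimedean hypotheses are met under assumption (3)) forces each $\pi_v\cong\pi_v^{\epsilon_v}$, and the global sign condition together with multiplicity one both descend from Theorem \ref{thm:globalHPSU3}.

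The main obstacle is almost entirely absorbed into Theorems \ref{thmB} and \ref{thmC}: assumptions (1)--(3) enter through Theorem \ref{thmB}, with (1) supplying the non-vanishing $L$-value input to Friedberg--Hoffstein that drives the torus-period calculation, (2) controlling the root numbers above $2$, and (3) ensuring that the archimedean Howe duality results apply. Within the present assembly, the most delicate bookkeeping is the Hasse-principle matching in the RHS $\subseteq$ LHS step, which reduces to the product formula for $\epsilon$-factors combined with the explicit local root number descriptions of Propositions \ref{prop:HPStheta} and \ref{prop:archHPStheta}; a small additional care is needed to check that the near equivalence class picks out the right inner form of $\PU_3$ when invoking Theorem \ref{thmC}, which is where Theorem \ref{thm:globalHPSU3}.(3) is used.
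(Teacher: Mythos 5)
Your proposal matches the paper's architecture: define packets via exceptional theta at nonarchimedean places and directly at archimedean places, peel off the residual contribution via Kim and \v{Z}ampera, and establish the cuspidal isomorphism by two containments routed through $\PU_3$ theta (Theorem \ref{thmB} for one direction, Theorem \ref{thmC} for the other). This is exactly the paper's proof (encoded in Propositions~\ref{prop:allthetalifts} and~\ref{prop:thetabackfull} together with the closing computation of~\S\ref{ss:AMF}). Two small points deserve attention.

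First, in your ``RHS $\subseteq$ LHS'' step you write that Theorem~\ref{thm:globalHPSU3}.(1) ``yields a cuspidal $\sigma$.'' That theorem only produces $\sigma$ in $\mathcal{A}_{\disc}(\PU_3)$; cuspidality is governed by Theorem~\ref{thm:globalHPSU3}.(4), and it fails precisely for the all-$+1$ sequence when $L(\frac12,\chi^3)\neq 0$. Since Corollary~\ref{cor:thetacuspidal} and Theorem~\ref{cor:non-vanishing} assume $\sigma$ cuspidal, your index set for the cuspidal containment must explicitly exclude that single sequence; it is recovered on the other side as the residual representation. The paper builds this exclusion directly into the index set of Proposition~\ref{prop:allthetalifts}.

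Second, your Hasse-principle step is slightly misplaced. Under assumption~(3) every archimedean place splits in $K$, so there is only one $\PU_3$ up to isomorphism at each such place (namely $\PGL_3$), and the choice of global Hermitian space does not affect the group; the matching of ``local discriminants to the signs $\epsilon_v$'' plays no role here. The Hasse-principle application in the paper is instead inside Proposition~\ref{prop:globalHPStorus}, where one chooses the auxiliary rank-one Hermitian space (the sign $\epsilon_1$ for $\U_1$) used to compute torus periods, and the constraint $\prod_v \epsilon_v = \epsilon(\frac12,\chi^3)$ enters there via the product formula for $\epsilon_{1,v}$. The choice of archimedean form of $\PU_3$ only becomes a genuine global-existence question under the weaker hypothesis $(3')$, where you may have real places inert in $K$; under~(3) it is vacuous. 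Neither point changes the validity of the overall assembly, but both need to be stated carefully for the argument to close.
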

\begin{proof}[Proof outline]
We define $\Pi(\tau_v)$ in \S\ref{ss:G2_packets_nonarch} and \S\ref{ss:G2_packets_arch}. They depend only on $\tau_v$ by Lemma \ref{lem:packetindependence}, and they satisfy part (1) by construction. Part (2) follows from Theorem \ref{thm:groupB}. Finally, part (3) is proved at the end of \S\ref{ss:AMF}.
\end{proof}

\subsection{Local $A$-packets over $p$-adic fields} 
\label{ss:G2_packets_nonarch}
In this subsection, assume that $F$ is a nonarchimedean local field. Recall from \S\ref{ss:unitarytheta} that $\chi:K^\times\ra S^1$ is a unitary character that is conjugate-symplectic, and recall from Example \ref{exmp:J} that $G'_J$ is the connected automorphism group of the Freudenthal algebra $J$ over $F$ associated with $(\mathrm{M}_3(K),\iota)$. Write $\tau$ for the automorphic induction of $\chi$ as in \cite[Definition 3.7]{HH95}, which descends to an irreducible smooth representation of $\PGL_2(F)$ because $\chi$ is conjugate-symplectic.

Recall from \S\ref{ss:U3HPSpadic} the irreducible smooth representations $\sigma^{\pm}$ of $G'_J(F)$ associated with $\chi$ whenever they are defined, and recall from Definition \ref{defn:localtheta} the exceptional theta lift $\theta(-)$. Write $\pi^+$ for $\theta(\sigma^+)$. When $K$ is a field and $\chi^2\neq1$, write $\pi^-$ for $\theta(\sigma^-)$.
\begin{lemma}\label{lem:packetindependence}
Our $\pi^+$ depends only on $\tau$. When $K$ is a field and $\chi^2\neq1$, the same is true for $\pi^-$.
\end{lemma}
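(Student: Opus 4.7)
The task is to show that $\pi^+$, and $\pi^-$ when defined, depend only on $\tau$ and not on the specific pair $(K,\chi)$ inducing $\tau$. My strategy is to enumerate the ambiguity in recovering $(K,\chi)$ from $\tau$ and to check invariance of the construction under each such ambiguity.

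For a fixed $K$, the characters $\chi$ and $\chi^{-1}=\bar\chi$ automorphically induce to the same $\tau$; when $K$ is a field and $\chi^2\neq 1$, they are the only two such characters, and $K$ itself is pinned down as the unique quadratic extension of $F$ satisfying $\tau\otimes\omega_{K/F}\cong\tau$. So for part (2) it suffices to check that $\sigma^+$ and $\sigma^-$ are invariant under the swap $\chi\leftrightarrow\chi^{-1}$. Both representations lie in a common Howe--Piatetski-Shapiro $A$-packet whose $A$-parameter has $L_F$-part $\chi\oplus\one\oplus\chi^{-1}$ (viewing $\chi$ as a character of $W_K$ via class field theory) and subregular unipotent part, which is manifestly invariant under $\chi\leftrightarrow\chi^{-1}$. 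Concretely, for $\sigma^+$ one sees that the inducing character $\varrho'$ goes under this swap to a $W(\U_3,T')$-conjugate of itself, so that the Langlands quotient of $i_{B'}^{\U_3}\varrho'$ is unchanged; for $\sigma^-$, one invokes Rogawski's endoscopic characterization \cite[Proposition 13.13]{rogawski1990automorphic} via $\xi(h_2,h_1)=\chi(h_1)^{-1}$, which is invariant in the same sense using the conjugate-symplectic relation $\chi(\bar h_1)=\chi(h_1)^{-1}$.

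For part (1), an additional ambiguity arises when $\chi^2=1$: the dihedral $\tau$ may be automorphically induced from characters of different quadratic extensions $K'/F$, giving theta-lift constructions inside different quasi-split forms of $\mathsf{E}_6$. To address this, I plan to pin down $\pi^+$ intrinsically through its Langlands parameter, using the local Howe duality results of Gan--Savin \cite{gan2021howe} and Baki\'c--Savin \cite{bakic2021howe}. These identify the parameter of $\theta(\sigma^+)$ as the image of the parameter of $\sigma^+$ under the short-root embedding $\SL_3(\C)\rtimes\Gal(K/F)\hookrightarrow\widehat{G}$ into the $\mathsf{G}_2$-dual group, and the resulting parameter on $\widehat{G}$ factors through the Langlands parameter of $\tau$ together with the long-root unipotent orbit, both intrinsic to $\tau$. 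Since the packet containing $\pi^+$ has size one whenever $\chi^2=1$, the parameter determines $\pi^+$ uniquely.

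The main obstacle is this last step: extracting from the Howe-duality statements a description of $\theta(\sigma^+)$ that is insensitive to the $K$-dependence of the ambient $\widetilde{G}$. For unramified $\tau_v$ this reduces to directly matching Satake parameters, and more generally a Jacquet-module computation along the Heisenberg parabolic $P$ (via Proposition \ref{prop:minorbit}) should identify $\pi^+$ as the Langlands quotient of a standard module on $G$ whose inducing data is built directly from $\tau$, bypassing the specific choice of $(K,\chi)$ entirely.
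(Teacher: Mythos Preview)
Your argument for the $\chi\leftrightarrow\chi^{-1}$ invariance has a genuine gap: the representations $\sigma^\epsilon_\chi$ and $\sigma^\epsilon_{\chi^{-1}}$ of $\PU_3(F)$ are \emph{not} isomorphic when $\chi^2\neq 1$. The Weyl-conjugacy claim for $\varrho'$ already fails: the nontrivial relative Weyl element acts on $T'(F)\cong K^\times\times K^1$ by $(\alpha,\beta)\mapsto(\bar\alpha^{-1},\beta)$, taking $\varrho'_\chi$ to $(\alpha,\beta)\mapsto\chi(\alpha)\chi(\beta)^{-1}\lvert\alpha\rvert^{-1/2}$ (using $\chi(\bar\alpha^{-1})=\chi(\alpha)$), not to $\varrho'_{\chi^{-1}}(\alpha,\beta)=\chi(\alpha)^{-1}\chi(\beta)\lvert\alpha\rvert^{1/2}$. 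What actually holds is $\varrho'_{\chi^{-1}}=\varrho'_\chi\circ c$, where $c$ is the outer automorphism of $\U_3$ given by entrywise conjugation; hence $\sigma^\epsilon_{\chi^{-1}}\cong\sigma^\epsilon_\chi\circ c$, and a Jacquet-module comparison along $B'$ shows this is not isomorphic to $\sigma^\epsilon_\chi$. The same objection applies to your $\sigma^-$ argument, since $\xi_{\chi^{-1}}=\xi_\chi^{-1}\neq\xi_\chi$.

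The paper's proof exploits exactly this outer twist. Extending the exceptional theta correspondence to the disconnected group $\underline{\Aut}(J)=G'_J\rtimes c^{\Z/2}$ yields $\theta(\sigma\circ c)\cong\theta(\sigma)$ for every irreducible $\sigma$ of $G'_J(F)$. Combined with $\sigma^\epsilon_{\bar\chi}\cong\sigma^\epsilon_\chi\circ c$, verified via the explicit $\U_1\times\U_3$ Weil representation and Proposition~\ref{prop:HPStheta}, this gives $\pi^\epsilon_{\bar\chi}\cong\pi^\epsilon_\chi$. The paper does not address varying $K$, which is fixed by the ambient global setup; your idea of pinning down $\pi^+$ as the Langlands quotient of $i_Q^G\tau$ is correct but is precisely Theorem~\ref{thm:groupB}, and it does not handle $\pi^-$.
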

\begin{proof}
Let $\epsilon$ be in $\{\pm1\}$. Since $\sigma^\epsilon$ depends on $\chi$, a priori $\pi^\epsilon=\theta(\sigma^\epsilon)$ depends on $\chi$. The only character that also automorphically induces to $\tau$ is the conjugate of $\chi$ \cite[Corollary 5.2]{HH95}, so we must show that replacing $\chi$ by its conjugate leaves $\pi^\epsilon$ unchanged.

Recall from Example \ref{exmp:J} that $\underline{\Aut}(J)$ equals $G'_J\rtimes c^{\Z/2}$. By extending our exceptional theta lift to $G\times\underline{\Aut}(J)$, we see that $\theta(\sigma\circ c)\cong\theta(\sigma)$ for all irreducible smooth representations $\sigma$ of $G'_J(F)$ \cite[Remark 3.2]{AHRR23}. Hence it suffices to show that $\sigma^\epsilon\circ c$ is isomorphic to the $\sigma^\epsilon$ associated with the conjugate of $\chi$.

Recall from Proposition \ref{prop:HPStheta} that $\sigma^\epsilon$ equals the unitary group theta lift of $\rho=\one$ as in \S\ref{ss:unitarytheta} using
\begin{align*}
\epsilon_1=\epsilon\cdot\epsilon_3\cdot\epsilon(\textstyle\frac12,\chi^3,\psi(\tr_{K/F}(\delta-))).
\end{align*} The explicit formula for the lifting of $(\U_1\times\U_3)(F)$ to $\mathrm{Mp}_W(F)$ \cite[Theorem 3.1]{Kud87} shows that applying $-\circ c$ to this unitary group theta lift is isomorphic to the unitary group theta lift as in in \S\ref{ss:unitarytheta} after replacing $\chi$ by its conjugate. Finally, since the conjugate of $\chi$ equals $\chi^{-1}$, this leaves $\epsilon(\textstyle\frac12,\chi^3,\psi(\tr_{K/F}(\delta-)))$\,---\,and hence $\epsilon_1$\,---\,unchanged. Applying Proposition \ref{prop:HPStheta} again, we conclude that $\sigma^\epsilon\circ c$ is isomorphic to the $\sigma^\epsilon$ associated with the conjugate of $\chi$, as desired.
\end{proof}

Recall from \S\ref{ss:parabolics} that $Q$ is a parabolic subgroup of $G$ with Levi subgroup $L$ identified with $\GL_2$.
\begin{thm}\label{thm:groupB}
The representation $\pi^+$ is isomorphic to the unique irreducible quotient of $i_Q^G(\tau\abs{-}^{1/2})$. When $K$ is a field and $\chi^2\neq1$, $\pi^-$ is irreducible and tempered. When $K$ is a field and $\chi^2=1$, $\theta(\sigma^-)$ is zero.
\end{thm}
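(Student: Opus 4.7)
The plan is to handle the three claims in sequence, combining an explicit analysis of the Jacquet modules of the minimal representation $\Omega$ with Howe duality for the exceptional theta correspondence of Gan--Savin \cite{gan2021howe} and Baki\'{c}--Savin \cite{bakic2021howe}, together with the realization of $\sigma^\pm$ as classical $\U_1\times\U_3$ theta lifts from Proposition \ref{prop:HPStheta}.

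For the first claim, the strategy is to show that the Jacquet module $\Theta(\sigma^+)_U$ along the unipotent radical $U$ of $Q\subseteq G$ admits $\tau$ (appropriately twisted) as an $L(F)\cong\GL_2(F)$-quotient. Since $\sigma^+$ is the Langlands quotient of $i_{B'}^{\U_3}\varrho'$ with $\varrho'(\alpha,\beta)=\chi(\alpha)\chi(\beta)^{-1}|\alpha|^{1/2}$, one accesses $\Theta(\sigma^+)_U$ by restricting $\Omega$ along the Levi $\widetilde{L}$ of $\widetilde{Q}$, whose Lie algebra $\mathfrak{gl}_2\oplus\mathfrak{l}_J$ contains the dual pair $\GL_2\times\PU_3$. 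This reduces the computation to a ``mini-theta lift'' for $\GL_2\times\PU_3$, which at places splitting in $K$ becomes a classical Type II dual pair controlled by M\'{\i}nguez \cite{Min08}; the non-split case follows by Galois descent. The output is a nonzero $(L\times G'_J)(F)$-equivariant map from $\Theta(\sigma^+)_U$ onto $\tau\otimes\sigma^+$ (up to the modulus character), so Frobenius reciprocity yields a nonzero $G(F)$-map $\theta(\sigma^+)\rightarrow i_Q^G\tau$. Howe duality forces $\theta(\sigma^+)$ to be irreducible, and since $\tau$ is generic tempered, $i_Q^G\tau$ has a unique irreducible quotient (its Langlands quotient); matching Langlands data forces $\theta(\sigma^+)$ to be that quotient.

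For the second claim, when $K$ is a field and $\chi^2\neq 1$, the representation $\sigma^-$ is supercuspidal by Rogawski's construction, so Howe duality gives that $\theta(\sigma^-)=\Theta(\sigma^-)$ is either zero or irreducible. To establish non-vanishing and temperedness I would exploit the description $\sigma^-\cong\theta(\one)$ via the $\U_1\times\U_3$ theta lift and run a seesaw inside a larger ambient group that expresses the $(G\times\PU_3)$-lift of $\sigma^-$ in terms of a classical $(\U_1\times\U_1)$ theta lift paired against a period on $G$; choosing the auxiliary Hermitian data suitably gives nonvanishing. Temperedness then follows from a Jacquet module analysis parallel to part (1), which excludes any non-tempered Langlands constituent. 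For the third claim, $\chi^2=1$ forces $\chi$ to be quadratic, and the corresponding $A$-parameter of $\PU_3$ degenerates. Running the same seesaw, the obstruction to non-vanishing becomes a local $\epsilon$-factor condition of the type studied in \cite{Yan97,rogawski1992multiplicity}, which fails precisely in the quadratic case and hence forces $\theta(\sigma^-)=0$.

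The main obstacle is the Jacquet module computation underlying part (1): descending from the minimal representation $\Omega$ to an explicit classical Type II theta lift on the Levi $\widetilde{L}$ must be executed compatibly with the non-split structure of $\PU_3$ and matched against the Langlands data of $i_Q^G\tau$ with the correct modulus twist. Once this identification is in place, the Howe duality theorems of \cite{gan2021howe,bakic2021howe} and the Langlands classification close out the argument uniformly across all three parts.
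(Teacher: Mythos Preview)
The paper does not prove this theorem at all: its proof is a direct citation of \cite[Theorem 4.10, Theorem 4.12]{AHRR23}. So the relevant question is whether your sketch is a plausible reconstruction of what that reference does.

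For the first claim your strategy is essentially correct and is indeed the standard one: analyze the Jacquet module $\Omega_{\widetilde{U}}$ along the parabolic $\widetilde{Q}$, identify the resulting mini-theta correspondence for $L\times G'_J$, and use Frobenius reciprocity. One point needs care: Frobenius reciprocity from a quotient $\Theta(\sigma^+)_U\twoheadrightarrow\tau$ produces a nonzero map $\Theta(\sigma^+)\to i_Q^G\tau$, so $\theta(\sigma^+)$ is a \emph{subquotient} of $i_Q^G\tau$, not a priori the Langlands quotient. You still need an argument (e.g., non-temperedness of $\theta(\sigma^+)$, or a computation showing the relevant exponent occurs with multiplicity one in $\Theta(\sigma^+)_U$) to pin down which constituent you land on.

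Your approach to the second and third claims has a genuine gap. You propose a seesaw ``inside a larger ambient group'' linking the exceptional $G\times\PU_3$ lift to a classical $\U_1\times\U_1$ lift via a ``period on $G$'', and then invoke the $\epsilon$-factor dichotomy of \cite{Yan97,rogawski1992multiplicity}. But those references govern the classical $\U_1\times\U_1$ theta dichotomy, and there is no ambient group in which both the exceptional pair $G\times\PU_3\hookrightarrow\widetilde{G}$ and a classical unitary pair sit as compatible dual pairs, so no such seesaw is available. The actual arguments in \cite{AHRR23} for non-vanishing of $\theta(\sigma^-)$ when $\chi^2\neq1$ and vanishing when $\chi^2=1$ proceed instead by a direct local analysis: one computes the twisted Jacquet modules of $\Omega$ along the Heisenberg parabolic (cf.\ Proposition~\ref{prop:minorbit}) and relates $\Hom_{G'_J(F)}(\Omega,\sigma^-)$ to $T_E$-invariant functionals on $\sigma^-$ via the orbit structure of $\O_{\min}$, which is exactly the local shadow of the period calculation in \S\ref{sec:non-vanishing}. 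The $\chi^2=1$ vanishing then comes from the fact that $\sigma^-$ coincides in that case with a representation already accounted for by a different constituent, not from an $\epsilon$-factor obstruction of the type you cite.
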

\begin{proof}
When $K$ is split, this is \cite[Theorem 4.12]{AHRR23}. When $K$ is a field, this is \cite[Theorem 4.10]{AHRR23}.
\end{proof}

\subsection{Local $A$-packets over $\RR$ and $\C$} 
\label{ss:G2_packets_arch}

In this subsection, assume that $F$ is an archimedean local field. Recall from \S\ref{ss:unitarytheta} that $\chi:K^\times\ra S^1$ is a unitary character that is conjugate-symplectic. Write $\tau$ for the automorphic induction of $\chi$ as in \cite[3.5]{Hen10}, which descends to an irreducible smooth representation of $\PGL_2(F)$ because $\chi$ is conjugate-symplectic. Write $\pi^+$ for the unique irreducible quotient of $i^G_Q(\tau\abs{-}^{1/2})$, which evidently only depends on $\tau$.

When $K$ is a field, recall from \S\ref{ss:U3HPSarch} that $\chi(z)=(z/\sqrt{z\overline{z}})^N$ for some odd integer $N$. In particular, $\chi^2\neq1$. Use the standard realization of $\mathsf{G}_2$ in $\{(a,b,c)\in\RR^3\mid a+b+c=0\}$ with simple roots $(1,-1,0)$ and $(-1,2,-1)$, and write $\pi^-$ for the irreducible discrete series representation of $G(F)$ with Harish-Chandra parameter $(\textstyle\frac{\abs{N}+1}2,\frac{\abs{N}-1}2,-\abs{N})$. Since conjugating $\chi$ corresponds to negating $N$, we see that $\pi^-$ only depends on $\tau$.

Recall from \S\ref{ss:U3HPSarch} the irreducible smooth representations $\sigma^\pm$ of $G'_J(F)$ associated with $\chi$ whenever they are defined. Recall from \S\ref{ss:localtheta} the minimal representation $\Omega$ of $\widetilde{G}(F)$, and recall from Definition \ref{defn:localtheta} the exceptional theta lift $\theta(-)$.
\begin{conj}\label{conj:G2HPSarch}\hfill
\begin{enumerate}
    \item $\theta(\sigma^+)$ is isomorphic to $\pi^+$, and $\Hom_{(G\times G'_J)(F)}(\Omega,\sigma^+\otimes\pi^+)$ is $1$-dimensional,
    \item When $K$ is a field and $G'_J$ is anisotropic, $\theta(\sigma^-)$ is isomorphic to $\pi^-$, and $\Hom_{(G\times G'_J)(F)}(\Omega,\sigma^-\otimes\pi^-)$ is $1$-dimensional,
    \item When $K$ is a field and $G'_J$ is quasi-split, $\theta(\sigma^-)$ is zero.
\end{enumerate}
\end{conj}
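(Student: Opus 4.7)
The plan is to establish the three parts of the conjecture by combining explicit computations for the archimedean exceptional theta correspondence with the branching of the minimal representation $\Omega$ of $\widetilde{G}(F)$ under $G(F)\times G'_J(F)$. In all three parts, the identification of $\theta(\sigma^{\pm})$ is pinned down by matching infinitesimal characters and (for discrete series cases) minimal $K$-types, while the multiplicity-one statements follow from either an archimedean local Howe duality input or from direct branching analysis.

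For part (1), I would exploit the fact that $\sigma^+$ is by construction the Langlands quotient of $i_{B'}^{\U_3}\varrho'$. A see-saw argument, together with the archimedean analogue of the mixed model for $\Omega$ afforded by the Heisenberg parabolic (the archimedean counterpart of Proposition \ref{prop:minorbit}), should reduce the computation of $\Hom_{G'_J(F)}(\Omega,\sigma^+)$ to a theta lift of the character $\varrho'$ for a much smaller dual pair inside the Levi $\widetilde{M}$. This realizes $\theta(\sigma^+)$ as a subquotient of $i_Q^G\tau$, and since $\pi^+$ is characterized by definition as its unique irreducible quotient, infinitesimal character considerations and exactness of Jacquet functors should isolate $\theta(\sigma^+)\cong\pi^+$. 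The one-dimensionality of the $(G\times G'_J)(F)$-equivariant Hom space would then follow from exactness of parabolic induction combined with the multiplicity-one statement for the smaller classical theta lift, which is essentially due to M\'{\i}nguez \cite{Min08}.

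For part (2), where $G'_J(F)$ is compact, the $\sigma^-$-isotypic component of $\Omega|_{G'_J(F)}$ can be accessed directly by branching $\Omega$ through the maximal compact subgroup. The plan is to use the explicit $K$-type decompositions obtained by Loke--Savin \cite{LokeSavin} and Huang--Pand\v{z}i\'{c}--Savin \cite{NDPC} to extract the $\sigma^-$-isotypic piece, and to match its lowest $K$-type with the known lowest $K$-type of the quaternionic discrete series $\pi^-$ with Harish-Chandra parameter $(\tfrac{|N|+1}{2},\tfrac{|N|-1}{2},-|N|)$. Since discrete series are determined by their lowest $K$-type, this pins down $\theta(\sigma^-)\cong\pi^-$; irreducibility and multiplicity one should drop out of the branching, since $G'_J(F)$-isotypic components already decompose completely as $G(F)$-modules. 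For part (3), where $G'_J$ is quasi-split but $\sigma^-$ is still discrete series, I would aim to prove vanishing by ruling out all possible targets. One route is a see-saw through the Levi $\GL_2\subseteq M$ of the Heisenberg parabolic of $G$, reducing a hypothetical nonzero quotient to a classical theta lift that contradicts the archimedean Howe duality predictions, using Ichino's cohomological induction realization \cite[Theorem 4.1]{Ich22} of $\sigma^-$ to pin down the infinitesimal characters that may occur.

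The main obstacle is precisely part (3) and, to a lesser extent, the fine $K$-type accounting in part (2): archimedean exceptional theta lifts resist the clean orbit-model computations available in the $p$-adic setting, and the current results of Loke--Savin and Huang--Pand\v{z}i\'{c}--Savin do not cover all combinations of $(K_v, G'_J, \chi_v)$ needed, as already flagged in Remark \ref{rem:assumptions}. Part (3) is especially delicate because proving vanishing requires excluding every candidate target rather than identifying a single one, so the argument must be driven by a structural obstruction (infinitesimal character incompatibility, $K$-type support of $\Omega$, or a compact-versus-quasi-split see-saw) rather than by explicit construction.
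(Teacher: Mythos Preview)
The statement you are attempting to prove is labeled a \emph{Conjecture} in the paper, and the paper does not claim a full proof of it. What the paper actually establishes is Proposition~\ref{prop:G2HPSarchcases}, which verifies only two partial cases: part~(2) in full, by direct citation of \cite[Theorem~5.2]{NDPC}; and part~(1) only in the split case $F=\RR$, $K=\RR\times\RR$ with $\chi(-1)=1$, where both $\sigma^+$ and $\pi^+$ are spherical with the same infinitesimal character, so the result follows from \cite[p.~6359]{LokeSavin} and \cite[Theorem~8.1]{Li}. Part~(3) receives no local proof whatsoever; the paper offers only global evidence via Theorem~\ref{cor:non-vanishing}.

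Your plan for part~(2) is essentially the paper's: invoke the $K$-type results of Huang--Pand\v{z}i\'{c}--Savin. Your plan for part~(1), however, is more ambitious than what the paper attempts. The paper's special-case argument relies on both representations being spherical, which sidesteps any see-saw or mixed-model analysis; your proposed reduction via the Heisenberg Levi to a classical theta lift is not carried out in the paper and would require an archimedean analogue of the orbit-model machinery that, as you yourself note, is not presently available in the literature. For part~(3), your sketch (ruling out targets via infinitesimal characters or a see-saw obstruction) is plausible in spirit but is not a proof: the paper explicitly flags this as open, and your own final paragraph correctly identifies that proving vanishing is structurally harder than identifying a nonzero lift. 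In short, you have not found a gap in the paper's reasoning---rather, you are proposing to prove more than the paper does, and the parts of your plan that go beyond Proposition~\ref{prop:G2HPSarchcases} remain programmatic rather than complete.
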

\begin{remark}
Assume that $K$ is a field. One motivation for Conjecture \ref{conj:G2HPSarch} comes from the \emph{Vogan philosophy}, which suggests that one should consider disjoint unions of packets over pure inner forms (\emph{Vogan packets}). For example, unitary group theta lifts induce bijections between discrete series Vogan packets when the groups have the same rank \cite[\S4]{Pra00}.

The pure inner forms of $G'_J$ are $\PU_{3,0}$ and $\PU_{2,1}$, and the pure inner forms of $G$ are $G$ and its unique compact form $G_c$. Write $\sigma^\pm_{2,1}$ for the irreducible smooth representations of $\PU_{2,1}(\RR)$ associated with $\chi$ as in \S\ref{ss:U3HPSarch}, and write $\sigma^-_{3,0}$ for the irreducible smooth representation of $\PU_{3,0}(\RR)$ associated with $\chi$ as in \S\ref{ss:U3HPSarch}. Write $\pi^-_c$ for the irreducible finite-dimensional representation of $G_c(\RR)$ with highest weight $(\textstyle\frac{|N|-3}2,\frac{|N|-3}2,-|N|+3)$, where we assume $\abs{N}\geq3$ for simplicity.

With any inner form $G'_{\mathrm{in}}$ of $G'_J$ and $G_{\mathrm{in}}$ of $G$, one can associate an inner form $\widetilde{G}_{\mathrm{in}}$ of $\widetilde{G}$ along with a dual pair $G'_{\mathrm{in}}\times G_{\mathrm{in}}\hookrightarrow\widetilde{G}_{\mathrm{in}}$ that recovers our main dual pair of interest when $G_{\mathrm{in}}=G$ \cite[p.~270]{GanLokePaulSavin}. Using the minimal representation of $\widetilde{G}_{\mathrm{in}}(\RR)$, one can similarly form an exceptional theta lift $\theta(-)$ between $G'_{\mathrm{in}}$ and $G_{\mathrm{in}}$.

Since $G'_J$ and $G$ have the same rank, we expect the exceptional theta lifts to induce a bijection between the Vogan packets $\{\sigma^+_{2,1},\sigma^-_{2,1},\sigma^-_{3,0}\}$ and $\{\pi^+,\pi^-,\pi_c^-\}$. One always expects $\sigma^+_{2,1}$ to correspond to $\pi^+$. By combining the $K$-type decomposition of the minimal representation \cite[p.~278--279]{GanLokePaulSavin} with the branching laws in \cite{McGovern}, one can show that $\theta(\pi_c^-)=\sigma^-_{2,1}$.\footnote{We omit the details since this is logically unnecessary for our results.} Hence we expect $\theta(\sigma_{3,0}^-)=\pi^-$ and $\theta(\sigma_{2,1}^-)=0$, which is precisely Conjecture \ref{conj:G2HPSarch}. The situation is summarized in Figure \ref{fig:Howe_duality_C/R}.
\begin{figure}[h]
    \begin{tikzpicture}[scale = 1.4]
        \draw (0,0) node {\underline{$G'_J$}};
    
        \draw (-0.8, -1) node {$\PU_{2,1}$};
        \draw (-0.8, -2) node {$\PU_{3,0}$};
        \draw (0, -0.7) node {$\sigma^+_{2,1}$};
        \draw (0, -1.3) node {$\sigma^-_{2,1}$};
        \draw (0, -2) node {$\sigma^-_{3,0}$};

        \draw (4,0) node {\underline{$\mathsf{G}_2$}};
        
        \draw (4.8,-1) node {$G$};
        \draw (4.8,-2) node {$G_c$};
        \draw (4, -0.7) node {$\pi^+$};
        \draw (4, -1.3) node {$\pi^-$};
        \draw (4, -2) node {$\pi^-_c$};

        \draw[dotted, blue, ->] (0.5, -0.7) -- (3.5, -0.7);
        \draw[dotted, blue, ->] (0.5, -1.3) -- (1.5, -1.3);
        \draw (1.8, -1.3) node {$0$};

        \draw[dashed, orange, ->] (0.5, -1.4) to (3.5, -1.9);
        \draw (2.8, -2.1) node {};

        \draw[darkgreen, ->] (0.5, -1.9) to (3.5, -1.4);
        \draw (1.2, -2.1) node {};
    \end{tikzpicture}
    \caption{An illustration of the expected bijection between the Vogan packets $\{\sigma^+_{2,1},\sigma^-_{2,1},\sigma^-_{3,0}\}$ and $\{\pi^+,\pi^-,\pi^-_c\}$. The dotted blue arrows are the theta correspondence for $\PU_{2,1} \times G$ and indicate parts (1) and (3) of Conjecture \ref{conj:G2HPSarch}. The dashed orange arrow is the theta correspondence for $\PU_{2,1}\times G_c$; it would follow from Howe duality, since we know that $\theta(\pi^-_c)=\sigma^-_{2,1}$. The green arrow is the theta correspondence for $\PU_{3,0} \times G$ and indicates part (2) of Conjecture \ref{conj:G2HPSarch}, which holds by \cite[Theorem 5.2]{NDPC}.} 
    \label{fig:Howe_duality_C/R}
\end{figure}
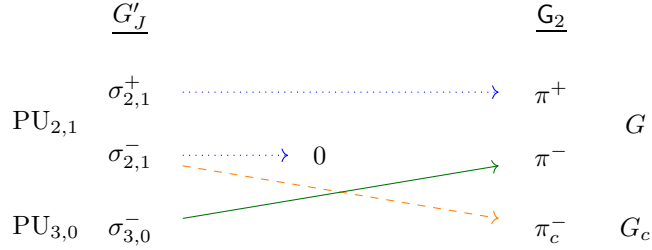
\end{remark}
We can verify Conjecture \ref{conj:G2HPSarch} in the following cases.
\begin{prop}\label{prop:G2HPSarchcases}\hfill
\begin{enumerate}
    \item When $F=\RR$ and $K=\RR\times\RR$, Conjecture \ref{conj:G2HPSarch} holds for $\chi:\RR^\times\ra S^1$ satisfying $\chi(-1)=1$,
    \item When $K$ is a field, Conjecture \ref{conj:G2HPSarch} holds.
\end{enumerate}
\end{prop}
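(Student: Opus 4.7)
The plan for part~(2) is to invoke \cite[Theorem 5.2]{NDPC} directly, as anticipated in Figure~\ref{fig:Howe_duality_C/R}: Huang--Pand\v{z}i\'c--Savin compute the exceptional theta lift from the compact form $\PU_{3,0}(\RR)$ to split $\mathsf{G}_2(\RR)$ and identify every finite-dimensional $\sigma^-$ (indexed by the odd integer $N$ attached to $\chi$) with the quaternionic discrete series $\pi^-$. The $1$-dimensionality of $\Hom_{(G\times G'_J)(\RR)}(\Omega,\sigma^-\otimes\pi^-)$ is essentially automatic in this case: since $\sigma^-$ is finite-dimensional, the maximal $\sigma^-$-isotypic quotient of $\Omega|_{G'_J(\RR)}$ has the form $\sigma^-\otimes\Theta(\sigma^-)$, and \cite[Theorem 5.2]{NDPC} identifies $\pi^-$ as the irreducible quotient $\theta(\sigma^-)$ occurring with multiplicity one.

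For part~(1), assume $F=\RR$ and $K=\RR\times\RR$, so that $G'_J=\PGL_3$. Writing $\chi(a,b)=\mu(a/b)$ for a unitary character $\mu:\RR^\times\rightarrow S^1$, the condition $\chi(-1)=1$ forces $\mu=\lvert\cdot\rvert^{it}$ for some $t\in\RR$, so the automorphic induction $\tau$ is a (generically irreducible) unitary principal series of $\PGL_2(\RR)$. In this setup, $\pi^+$ is the Langlands quotient of $i_Q^G\tau$, and $\sigma^+$ is the Langlands quotient of the principal series $i_{B'}^{\PGL_3}\varrho'$ with the explicit inducing character $\varrho'$ from \S\ref{ss:U3HPSarch}. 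My plan is to appeal to Loke--Savin \cite{LokeSavin}, who study the archimedean theta correspondence for the split dual pair $\PGL_3(\RR)\times\mathsf{G}_2(\RR)\hookrightarrow\mathsf{E}_6(\RR)$ via an explicit realisation of the minimal representation $\Omega$. Their computations determine $\theta$ for a family of principal series and their Langlands quotients on the $\PGL_3(\RR)$ side; combined with a matching of infinitesimal characters and Langlands parameters on the two sides, this pins down $\theta(\sigma^+)\cong\pi^+$ by ruling out any other irreducible constituent of $i_Q^G\tau$.

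The main obstacle will be the $\Hom$-space assertion. Existence of a nonzero element of $\Hom_{(G\times G'_J)(\RR)}(\Omega,\sigma^+\otimes\pi^+)$ is immediate from $\theta(\sigma^+)\cong\pi^+$, but $1$-dimensionality is an archimedean Howe duality statement that is notoriously more delicate for non-tempered Langlands quotients than for the finite-dimensional setting of part~(2). I would hope to extract it from Loke--Savin's explicit realisation of $\Omega$ (for instance as a suitable degenerate induction), which should render the $\PGL_3(\RR)$-isotypic quotients of $\Omega$ transparent enough that multiplicity-one is visible from the construction. Failing this, the backup plan is to combine the known $K$-type decomposition of $\Omega$ with a Casselman--Wallach-type comparison of globalisations to bound the multiplicity from above. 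This reliance on explicit archimedean computation is precisely why the restrictive hypotheses $F=\RR$, $K=\RR\times\RR$, $\chi(-1)=1$ are imposed in part~(1), and more broadly why Conjecture~\ref{conj:G2HPSarch} cannot yet be verified in full generality.
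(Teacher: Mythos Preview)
Your treatment of part~(2) matches the paper's: it simply cites \cite[Theorem 5.2]{NDPC}, and the multiplicity-one statement is indeed part of what that result provides in the compact-form setting.

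For part~(1), however, you are missing the key simplification that drives the paper's argument. The hypothesis $\chi(-1)=1$ is not merely a technical restriction: it forces $\chi(x)=\lvert x\rvert^{it}$, which means $\tau$, hence $\sigma^+$ and $\pi^+$, are all \emph{spherical} representations (they each have a $K$-fixed vector). The paper records explicitly that $\sigma^+$ and $\pi^+$ are spherical with the common infinitesimal character $(it+\tfrac12,\,it-\tfrac12,\,-2it)$. Once this is in hand, the identification $\theta(\sigma^+)\cong\pi^+$ and the $1$-dimensionality of $\Hom_{(G\times G'_J)(\RR)}(\Omega,\sigma^+\otimes\pi^+)$ follow from the spherical correspondence for the minimal representation, for which the paper cites \cite[p.~6359]{LokeSavin} together with \cite[Theorem 8.1]{Li}. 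The latter reference handles exactly the multiplicity question you were worried about, in the spherical setting.

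Your proposed route---matching Langlands parameters, ruling out other constituents of $i_Q^G\tau$, and attacking multiplicity via explicit $K$-type decompositions or Casselman--Wallach arguments---is not wrong in spirit, but it is substantially more laborious and, as your own hedging (``I would hope to extract'', ``backup plan'') indicates, not actually carried out. The spherical observation bypasses all of this: it is precisely why part~(1) carries the restrictive hypothesis $\chi(-1)=1$, and you should lead with it.
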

\begin{proof}
In case (1), note that there exists $t$ in $\RR$ such that $\chi(x)=\abs{x}^{it}$ for all $x$ in $\RR$. Now $\sigma^+$ and $\pi^+$ are both spherical with infinitesimal character $(it+\textstyle\frac12,it-\frac12,-2it)$, so the result follows from \cite[p.~6359]{LokeSavin} and \cite[Theorem 8.1]{Li}. In case (2), part (2) of Conjecture \ref{conj:G2HPSarch} follows from \cite[Theorem 5.2]{NDPC}, and parts (1) and (3) of Conjecture \ref{conj:G2HPSarch} follow from \cite[Theorem 11.3.1]{BLS25}.
\end{proof}
%As global evidence for Conjecture \ref{conj:G2HPSarch}.(3), see Theorem \ref{cor:non-vanishing}. There we show that, for any global cuspidal $\sigma$ as in Theorem \ref{thm:globalHPSU3} with an archimedean place $v$ such that $G'_{J,F_v}$ is isomorphic to $\PU_{2,1}$ and $\sigma_v$ is isomorphic to $\sigma^-$, the global theta lift $\theta(\sigma)$ vanishes.

\section{From $\PU_3$ to $\mathsf{G}_2$: cuspidality}\label{sec:cuspidality}
In this section, we prove a criterion for our exceptional theta lift from $\PU_3$ to $\mathsf{G}_2$ to be cuspidal, in terms the \emph{mini-theta} lift arising from the constant term of the minimal representation along the Heisenberg parabolic. Using this criterion, we conclude that the exceptional theta lifts of cuspidal Howe--Piatetski-Shapiro representations remain cuspidal, which is part of Theorem \ref{thmB}.

\subsection{Freudenthal--Jordan algebra}

We start with some algebraic preliminaries about $J$.

\begin{lemma}\label{lemma: square zero matrices}
Let $x$ be in $J$. The following are equivalent:
\begin{enumerate}
\item the $K$-rank of $x$ in $\mathrm{M}_3(K)$ is at most $1$, and $T_J (x) = 0 $.
\item $x\circ x = 0 $.
\item $x$ equals $cv\prescript{t}{}{\overline{v}}$ for some $c$ in $F$ and $v$ in $K^3$ such that $\prescript{t}{}{\overline{v}}e^{-1} v = 0$.
\end {enumerate}
In particular, if $x$ is nonzero and satisfies any of {\rm (1)--(3)}, then $\Stab_{G'_J}x$ is isomorphic to $(\R^1_{K/F}\G_m)\ltimes V'$, where $V'$ equals the unipotent radical of a Borel subgroup of $G'_J$.
\end {lemma}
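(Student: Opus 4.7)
The plan is to translate into matrix terms via the identification $J\cong\mathfrak{h}_3\subseteq\mathrm{M}_3(K)$ from Example~\ref{exmp:J}, and then to proceed by elementary linear algebra. Set $y\coloneqq xe^{-1}$; then $T_J(x)=\tr(y)$, the $K$-rank of $x$ equals the $K$-rank of $y$, and $x\circ x=xe^{-1}x=y^2 e$, so condition (2) becomes $y^2=0$. For a $3\times 3$ matrix $y$, the identity $y^2=0$ forces $\operatorname{image}(y)\subseteq\ker(y)$, hence $2\operatorname{rank}(y)\leq 3$ and $\operatorname{rank}(y)\leq 1$; conversely, on rank-$\leq 1$ matrices $y=uw^t$ we have $y^2=(w^t u)\,y=\tr(y)\,y$, so $y^2=0$ if and only if $\tr(y)=0$. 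This yields (1)$\iff$(2).

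For (1)$\iff$(3), the direction (3)$\Rightarrow$(1) is immediate. For the converse, the case $x=0$ is trivial (take $c=0$, $v=0$), so assume $x\neq 0$ and write $x=uw^t$ as a rank-$1$ product over $K$. Hermiticity $x=x^*$ reads $uw^t=\overline{w}\,\prescript{t}{}{\overline{u}}$, and comparing the column spaces of these two rank-$1$ matrices forces $u=\mu\overline{w}$ for some $\mu\in K^\times$. Setting $v\coloneqq\overline{w}$ gives $x=\mu v\,\prescript{t}{}{\overline{v}}$, and Hermiticity now forces $\mu=\overline{\mu}$, so $c\coloneqq\mu$ lies in $F$; the vanishing of $T_J(x)$ then becomes $\prescript{t}{}{\overline{v}}e^{-1}v=0$.

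Finally, for the stabilizer claim, suppose $x=cv\prescript{t}{}{\overline{v}}$ is nonzero, so $c\in F^\times$ and $v\in K^3$ is a nonzero vector with $\prescript{t}{}{\overline{v}}e^{-1}v=0$. The $K$-linear map $u\mapsto e^{-1}u$ is an isometry from $(K^3,h')$ to $(K^3,h_e)$, where $h'(u,w)\coloneqq\prescript{t}{}{\overline{u}}e^{-1}w$ and $h_e(u,w)\coloneqq\prescript{t}{}{\overline{u}}ew$ is the form preserved by $\U_3$; hence $\U_3$ is necessarily quasi-split, and $\ang{v}$ is an isotropic line. Lifting $g\in G'_J$ to $\tilde g\in\U_3$ via Lemma~\ref{lem:oddunitary}, the equation $g\cdot x=x$ reads $(\tilde g v)\,\prescript{t}{}{\overline{(\tilde g v)}}=v\,\prescript{t}{}{\overline{v}}$; comparing images yields $\tilde g v=\mu v$ with $\mu\overline\mu=1$. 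Thus $\tilde g$ lies in the Borel $B'=\Stab_{\U_3}(\ang{v})=T'\ltimes V'$ described in \S\ref{ss:subtori}, and the further constraint $\mu\in\R^1_{K/F}\G_m$ cuts the $\R_{K/F}\G_m$ factor of $T'\cong\R_{K/F}\G_m\times\R^1_{K/F}\G_m$ down to $\R^1_{K/F}\G_m$, giving $\Stab_{\U_3}(x)=(\R^1_{K/F}\G_m\times\R^1_{K/F}\G_m)\ltimes V'$. The center $Z(\U_3)\cong\R^1_{K/F}\G_m$ embeds diagonally into these two factors (since $\zeta I$ scales both $\ang v$ and any anisotropic complement by $\zeta$), so quotienting yields $\Stab_{G'_J}(x)\cong\R^1_{K/F}\G_m\ltimes V'$, as claimed. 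The only real subtlety is tracking this diagonal embedding of the center correctly when passing from $\U_3$ to $G'_J=\PU_3$, together with the verification above that $h'$ and $h_e$ have the same isotropy type so that quasi-splitness is automatic from (3); everything else is formal linear algebra.
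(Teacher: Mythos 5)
Your proof is correct and follows essentially the same route as the paper's: both arguments factor a nonzero rank-$\leq 1$ Hermitian matrix as $cv\prescript{t}{}{\overline{v}}$ with $c\in F$ and $v$ isotropic, and identify the stabilizer via the Borel subgroup preserving $\ang{v}$. You spell out a bit more of the linear algebra (the substitution $y=xe^{-1}$ for (1)$\iff$(2), and the lift to $\U_3$ followed by quotienting out the diagonally embedded center for the stabilizer), where the paper just invokes the formulas from Example~\ref{exmp:J} and states the Levi-quotient description of $\Stab_{G'_J}x$ directly; the content is the same.
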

\begin {proof}
Note that (3) immediately implies (2), and the formulas in Example \ref{exmp:J} show that (2) implies (1). Let us prove that (1) implies (3). This is immediate if $x=0$, so suppose that $x\neq 0$. Then $x$ has $K$-rank $1$, so $x = v\prescript{t}{}{\overline{w}}$ for some nonzero $v$ and $w$ in $K^3$. Since $x$ is Hermitian, we have $v\prescript{t}{}{\overline{w}} = w\prescript{t}{}{\overline{v}}$, which implies that $w = cv $ for some $c$ in $K^\times$. Applying this relation again shows that $c$ lies in $F^\times$. Finally, $$0 = T_J (x) = \trace (cv\prescript{t}{}{\overline{v}}e^{-1}) = c\trace (v\cdot\prescript{t}{}{\overline{v}}e^{-1}) = c\trace(\prescript{t}{}{\overline{v}} e ^ {-1}\cdot v) = c\prescript{t}{}{\overline{v}}e^{-1} v,$$
so $\prescript{t}{}{\overline{v}}e^{-1} v=0$.

For the last statement, note that $v$ is isotropic for the non-degenerate Hermitian form on $K^3$ induced by $e$. Hence the stabilizer of $\ang{v}\subseteq\ang{v}^\perp$ yields a Borel subgroup $B'$ of $G'_J$, and (3) shows that $\Stab_{G'_J}x$ equals the preimage of $\R^1_{K/F}\G_m\subseteq\R_{K/F}\G_m$ under the Levi quotient $B'\twoheadrightarrow\R_{K/F}\G_m$.
\end{proof}

\begin{corollary}\label{cor:xcircy}
Let $x$ and $y$ in $J$ satisfy the equivalent conditions of Lemma
\ref {lemma: square zero matrices}. Then $x\circ y = 0 $
if and only if $x$ and $y$ are $K$-linearly dependent.
\end {corollary}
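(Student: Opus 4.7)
The plan is to parametrize $x$ and $y$ via Lemma \ref{lemma: square zero matrices}(3): write $x = c v\prescript{t}{}{\overline{v}}$ and $y = d w\prescript{t}{}{\overline{w}}$ for scalars $c,d\in F$ and isotropic vectors $v,w\in K^3$ (i.e.\ $\prescript{t}{}{\overline{v}}e^{-1}v = \prescript{t}{}{\overline{w}}e^{-1}w = 0$). Substituting into the formula $x\circ y = \tfrac12(xe^{-1}y + ye^{-1}x)$ from Example \ref{exmp:J} and using that $e$ is Hermitian (so $\prescript{t}{}{\overline{w}}e^{-1}v = \overline{\prescript{t}{}{\overline{v}}e^{-1}w}$), a direct computation gives
\[
x\circ y \;=\; \tfrac{cd}{2}\bigl(\alpha\, v\prescript{t}{}{\overline{w}} + \overline{\alpha}\, w\prescript{t}{}{\overline{v}}\bigr), \qquad \alpha \;:=\; \prescript{t}{}{\overline{v}} e^{-1} w \;\in\; K.
\]

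For the ``if'' direction, I may assume both $x$ and $y$ are nonzero (else trivially $x\circ y=0$). Then $K$-linear dependence of $x$ and $y$ lets me rescale so that $w = v$; the isotropy of $v$ immediately forces $\alpha = 0$, and hence $x\circ y = 0$.

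For the ``only if'' direction, I would argue by contradiction: suppose $v$ and $w$ are $K$-linearly independent. The images of the rank-one operators $v\prescript{t}{}{\overline{w}}$ and $w\prescript{t}{}{\overline{v}}$ are the distinct $K$-lines $Kv$ and $Kw$ in $K^3$, so these matrices are themselves $K$-linearly independent, forcing the coefficient $\alpha = 0$. Combined with the isotropy of $v$ and $w$ for the nondegenerate Hermitian form $H(u_1,u_2) := \prescript{t}{}{\overline{u_1}} e^{-1}u_2$, this says that $\langle v, w\rangle_K$ is a two-dimensional totally isotropic subspace of $K^3$. But the Witt index of a nondegenerate Hermitian form on a three-dimensional $K$-vector space is at most $\lfloor 3/2\rfloor = 1$, a contradiction. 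Hence $v$ and $w$ are $K$-proportional, and so $x$ and $y$ are $K$-linearly dependent.

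The main technical point is the column-space argument that yields $\alpha = 0$, together with the Witt-index bound. This is transparent when $K$ is a field; the bookkeeping for the case $K \cong F\times F$ of a split quadratic étale algebra has to be done separately, by passing through the identification $J \cong \mathrm{M}_3(F)$ and checking the statement directly for rank-one nilpotent matrices (where the corresponding Lagrangian obstruction is replaced by the observation that the ``column'' and ``row'' data for $x$ and $y$ must both be proportional).
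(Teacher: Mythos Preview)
Your proof is correct and follows essentially the same route as the paper: parametrize $x,y$ by isotropic vectors $v,w$, reduce to showing $\alpha=\prescript{t}{}{\overline{v}}e^{-1}w=0$, and conclude via the Witt index bound for a nondegenerate Hermitian form on $K^3$. The only tactical difference is in extracting $\alpha=0$ from $x\circ y=0$: the paper evaluates $(x\circ y)(e^{-1}v)$ directly to get $v\cdot\alpha\overline{\alpha}=0$, whereas you argue via linear independence of the rank-one matrices $v\prescript{t}{}{\overline{w}}$ and $w\prescript{t}{}{\overline{v}}$; both work, and you are a bit more explicit than the paper in flagging the split case $K\cong F\times F$ for separate treatment.
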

\begin {proof}
If either $x$ or $y$ vanishes, this is immediate, so suppose that both are nonzero. Write $x = cv\prescript{t}{}{\overline{v}}$ and $y = dw\prescript{t}{}{\overline{w}}$ for $c$ and $d$ in $F$ and $v$ and $w$ in $K^3$ as in Lemma \ref{lemma: square zero matrices}. Then $x $ and $y $ are $K$-linearly dependent if and only if $v $ and $w $ are, and we have $$x\circ y = \textstyle\frac12cd\left (v\prescript{t}{}{\overline{v}} e ^ {-1} w\prescript{t}{}{\overline{w}}+ w\prescript{t}{}{\overline{w}} e ^ {-1} v\prescript{t}{}{\overline{v}}\right). $$
If $v$ and $w $ are $K$-linearly dependent, the vanishing of $\prescript{t}{}{\overline{v}}e^{-1} v$ implies that the above vanishes.

Conversely, suppose that $x\circ y = 0$. Then $(x\circ y) (e ^ {-1} v) = 0 $, so the vanishing of $\prescript{t}{}{\overline{v}}e^{-1} v$ implies that $$0=(x\circ y) (e ^ {-1} v)=v\prescript{t}{}{\overline{v}} e ^ {-1} w\prescript{t}{}{\overline{w}} e ^ {-1} v = v(\prescript{t}{}{\overline{v}} e ^ {-1} w)(\prescript{t}{}{\overline{w}} e ^ {-1} v). $$
Therefore $\prescript{t}{}{\overline{v}} e ^ {-1} w = 0$. Now $e$ induces a non-degenerate Hermitian form on $K^3$, and this shows that the $K$-subspace generated by $v$ and $w$ is isotropic. Hence $v$ and $w$ must be $K$-linearly dependent, as desired.
\end {proof}

Recall from Proposition \ref{prop:min_orbit} the subgroup $L_J\subseteq\GL_J$ of linear maps that preserve $N_J$.
\begin{lemma}\label{lemma:l(x)circlstar(y)}
Let $x$ and $y$ in $J$ satisfy $xe ^ {-1} y = ye ^ {-1} x = 0 $. Then $l(x)\circ l^*(y) = 0$ for all $l$ in $L_J(F)$.
\end{lemma}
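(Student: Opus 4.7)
The plan is to reduce via Galois descent to the split case and then to give a uniform expression for $l(x) \circ l^*(y)$ as a matrix conjugate of products that vanish by hypothesis.

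First, I would unpack the hypothesis: $xe^{-1}y = ye^{-1}x = 0$ in $B$ implies in particular $x \circ y = \tfrac{1}{2}(xe^{-1}y + ye^{-1}x) = 0$, but the two summands vanish individually. Since $l(x) \circ l^*(y)$ is polynomial in $l$ on the $F$-algebraic group $L_J$, Zariski density lets me verify the identity after base change to an algebraic closure $\overline{F}$. Over $\overline{F}$, the pair $(B, \iota)$ splits, and one identifies $J \otimes_F \overline{F}$ with the matrix Jordan algebra $(\mathrm{M}_3(\overline{F}), \tfrac{1}{2}(uv+vu), I)$; under this identification the hypothesis becomes the pair of conditions $xy = yx = 0$ in $\mathrm{M}_3(\overline{F})$.

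In this split setting, $L_J$ is the group of $\overline{F}$-linear automorphisms of $\mathrm{M}_3(\overline{F})$ preserving $\det$. I would use the classical fact that $L_J$ is generated by maps $l_{(g,h)}(u) = g u h^{-1}$ for $(g, h) \in \GL_3 \times \GL_3$ with $\det(g) = \det(h)$, together with the transposition $\tau(u) = u^T$. Via the identification $J \cong J^*$ given by $B(u, v) \coloneqq T_J(u \circ v) = \mathrm{tr}(uv)$, the dual action $l^*$, interpreted as the contragredient of $l$ on $J^*$ and characterized by $B(l(u), l^*(v)) = B(u, v)$ for all $u$, is computed on generators to be $l_{(g,h)}^*(v) = h v g^{-1}$ and $\tau^*(v) = v^T$; this is compatible with composition via $(l_1 l_2)^* = l_1^* l_2^*$.

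The core computation is then brief. For $l = l_{(g,h)}$, a direct multiplication in $B$ gives $l(x) \cdot l^*(y) = g(xy)g^{-1}$ and $l^*(y) \cdot l(x) = h(yx)h^{-1}$, so
\[
l(x) \circ l^*(y) = \tfrac{1}{2}\bigl(g(xy)g^{-1} + h(yx)h^{-1}\bigr),
\]
which vanishes by hypothesis. For $l = \tau$, one has $l(x) \circ l^*(y) = \tfrac{1}{2}((yx)^T + (xy)^T) = 0$. For general $l$, factor $l = l_1 l_2$: the intermediate pair $(l_2(x), l_2^*(y))$ continues to satisfy the associative product vanishing $l_2(x) \cdot l_2^*(y) = 0 = l_2^*(y) \cdot l_2(x)$ (being matrix conjugates or transposes of $xy$ and $yx$), so one may invoke the identity for $l_1$ on this pair, and induction on the factorization length completes the argument.

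The main obstacle is conceptual rather than computational: one must correctly interpret $l^*$ as the contragredient on $J^*$ (arising from the $V^* \otimes J^*$ summand in $\widetilde{\mathfrak{g}}$) rather than the naive $B$-adjoint of $l$, since with the wrong convention the identity actually fails already for conjugation $l(u) = g u g^{-1}$. Once the correct convention is fixed and the generating set of $L_J$ in the split case is in hand, the proof reduces to the elementary matrix identity above, and Galois descent handles the descent back to $F$.
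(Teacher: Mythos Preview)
Your argument is correct and follows essentially the same strategy as the paper: describe $L_J$ explicitly in terms of a linear group acting on matrices together with an order-$2$ outer element, then check the identity by a direct matrix computation on generators. The only difference is where the computation takes place. You base-change to $\overline{F}$, pass to the split model $J\cong\mathrm{M}_3$, and use the generators $l_{(g,h)}$ and $\tau$; the paper stays over $F$ and uses the description $L_J\cong\big(\ker(\Nm_{K/F}\circ\det)/\R^1_{K/F}\G_m\big)\rtimes c^{\Z/2}$ acting on the Hermitian model $\mathfrak{h}_3$, computing $g(x)\circ g^*(y)$ and $c(x)\circ c^*(y)$ directly with the $e$'s in place. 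The paper's route is slightly more economical (no Zariski density or base-change step, and no induction on word length is needed since every element is already of the form $g$ or $gc$), while yours yields cleaner formulas in the split model. Your identification of $l^*$ as the contragredient is exactly the point the paper uses as well, and your remark that the wrong convention makes the identity fail is apt.
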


\begin{proof}
Recall from Example \ref{exmp:J} the automorphism $c$ of $J$. The group $L_J$ corresponds to
\begin{align*}
\big(\ker(\Nm_{K/F}\circ\det:\R_{K/F}\GL_3\ra\G_m)/(\R^1_{K/F}\G_m)\big)\rtimes c^{\Z/2},
\end{align*} where $g$ in $\R_{K/F}\GL_3$ acts on $J=\mathfrak{h}_3$ via $x\mapsto gx\prescript{t}{}{\overline{g}}$ \cite[p.~330]{bakic2021howe}. Under our identification $J\cong J^*$, our $c$ is self-dual, and the dual of $g$ in $\R_{K/F}\GL_3$ acts via $x\mapsto g'x\prescript{t}{}{\overline{g'}}$, where $g'\coloneqq\iota(g)^{-1}$.

We have $c(x)\circ c^*(y)=\textstyle\frac12(\overline{x}e^{-1}\overline{y}+\overline{y}e^{-1}\overline{x})=\frac12(\overline{xe^{-1}y+ye^{-1}x})=0$, so let us turn to $g$ in $\GL_3(K)$. Then
\begin{align*}
g(x)\circ g^*(y) = \textstyle\frac {1} {2}\left (gx e ^ {-1} ye^{-1}g^{-1} +e \prescript{t}{}{\overline{g}}^{-1} e ^ {-1} ye ^ {-1} x\prescript{t}{}{\overline{g}}e^{-1}\right),
\end{align*}
which vanishes when $xe ^ {-1} y = ye ^ {-1} x = 0 $.
\end {proof}

\subsection{The mini-theta lifts}\label{sec:minitheta}
Recall from \S\ref{ss:heisenberg} that $\widetilde{P}$ is a parabolic subgroup of $\widetilde{G}$ with Levi subgroup $\widetilde{M}$ and unipotent radical $\widetilde{N}$. We identify $\widetilde{M}$ with a quasi-split form of $(\G_m\times\GL_6)/\G_m$ that splits over $K$, where $\G_m\hookrightarrow\G_m\times\GL_6$ is given by $x\mapsto(x^3,x)$ \cite[\S7.2]{gan2021twisted}. Note that the algebraic character $(\G_m\times\GL_6)/\G_m\ra\G_m$ given by $(x,g)\mapsto\det(g)/x$ descends to $\widetilde{M}$ \cite[\S7.2]{gan2021twisted}, which we use to view homomorphisms $F^\times\ra\C^\times$ as homomorphisms $\widetilde{M}(F)\ra\C^\times$.

For the rest of this section, assume that $F$ is a number field, and assume that $K$ is a field. Recall from \S\ref{ss:globaltheta} the global minimal representation $\Omega$. We now study the \emph{mini-theta correspondence} arising from the constant term of $\Omega$ along $\widetilde{N}$. For every place $v$ of $F$, write $\Omega_{\widetilde{M},v}$ for the minimal representation of $\widetilde{M}(F_v)$ as in \cite[\S8.4]{gan2021twisted}, and write $\Omega_{\widetilde{M}}$ for $\bigotimes_v'\Omega_{\widetilde{M},v}$. There exists a canonical $\widetilde{M}(\A_F)$-equivariant injection $\theta:\Omega_{\widetilde{M}}\hookrightarrow\mathcal{A}_{\disc}(\widetilde{M})$.
\begin{prop}\label{prop:jacquet_functor_minimal}
    \leavevmode
    \begin{enumerate}
        \item For every nonarchimedean place $v$ of $F$, we have $(\Omega_v)_{\widetilde N(F_v)}\cong\omega_v \abs{-}^{-2}_v \oplus \Omega_{\widetilde M, v}\abs{-}^{-3/2}_v$ as representations of $\widetilde{M}(F_v)$.
        \item The composition of $\theta:\Omega\ra\mathcal{A}_{\disc}(\widetilde{G})$ with the constant term map $(-)_{\widetilde{N}}:\mathcal{A}(\widetilde{G})\ra\mathcal{A}(\widetilde{M})$ has image lying in $\omega\abs{-}^{-2}\oplus\theta(\Omega_{\widetilde{M}})\abs{-}^{-3/2}$.
    \end{enumerate}
\end{prop}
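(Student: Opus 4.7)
For (1), the plan is to exploit the Kirillov-style model of the minimal representation provided by Proposition \ref{prop:minorbit}. Since $\widetilde{N}/Z$ is abelian, the coinvariants $(\Omega_v)_{\widetilde{N}(F_v)}$ factor as the coinvariants of $(\Omega_v)_{Z(F_v)}$ under $\widetilde{N}(F_v)/Z(F_v)$, whose action on $C^\infty(\O_{\min}(F_v))$ is multiplication by the characters $\psi_X$. For any nonzero $X\in\O_{\min}(F_v)$, one can find $\widetilde{n}$ with $\psi_X(\widetilde{n})\neq 1$, so multiplication by $\psi_X(\widetilde{n})-1$ is invertible on functions supported near $X$; hence the coinvariants are supported at $\{0\}\in\overline{\O_{\min}(F_v)}\smallsetminus\O_{\min}(F_v)$. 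Filtering by the order of the distribution supported at $0$ yields a two-step filtration on $(\Omega_v)_{\widetilde{N}(F_v)}$ whose graded pieces are the order-zero contribution (a character) and the first-order contribution.

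To identify these pieces as $\widetilde{M}(F_v)$-representations, I would combine $\widetilde{M}$-equivariance with the geometry of $\O_{\min}$ near $0$. The order-zero piece is one-dimensional; since $\widetilde{G}$ is the quasi-split form with respect to $K/F$, the character must factor through the Galois action, giving $\omega_v$, while the modulus character of $\widetilde{P}$ accounts for the $|\cdot|_v^{-2}$ twist. The first-order piece arises from derivatives into $\O_{\min}$, which by Proposition \ref{prop:min_orbit} is a single $\widetilde{M}$-orbit of a highest-weight type element; the derivative thus inherits the Kirillov model of the minimal representation of the Levi, yielding $\Omega_{\widetilde{M},v}|\cdot|_v^{-3/2}$ after tracking the Jacquet normalization.

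For (2), I deduce it from (1). The composition $\varphi\mapsto\theta(\varphi)_{\widetilde{N}}$ is an $\widetilde{M}(\A_F)$-equivariant map from $\Omega$ to $\mathcal{A}(\widetilde{M})$ that factors through $\Omega_{\widetilde{N}}$, which by (1) decomposes globally as $\omega|\cdot|^{-2}\oplus\Omega_{\widetilde{M}}|\cdot|^{-3/2}$. The first summand automatically lands in the space of automorphic forms on $\widetilde{M}$ transforming by the character $\omega|\cdot|^{-2}$. The second summand gives a nonzero $\widetilde{M}(\A_F)$-equivariant map $\Omega_{\widetilde{M}}\to\mathcal{A}(\widetilde{M})$; irreducibility of $\Omega_{\widetilde{M}}$ and uniqueness (up to scalar) of its embedding into the discrete spectrum force this map to agree with $\theta\colon\Omega_{\widetilde{M}}\hookrightarrow\mathcal{A}_{\disc}(\widetilde{M})$ twisted by $|\cdot|^{-3/2}$.

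The main obstacle is the local identification in (1) of the first-order graded piece with $\Omega_{\widetilde{M},v}$: one must track the correct modulus twist $|\cdot|_v^{-3/2}$ through the normalized Jacquet functor and compare the explicit orbit description of Proposition \ref{prop:min_orbit} at $X=0$ with the Kirillov model of the minimal representation of $\widetilde{M}$ from \cite{gan2021twisted}. The global step (2), by contrast, is essentially formal once (1) is in hand.
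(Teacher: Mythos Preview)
The paper's proof is simply two citations: part (1) is \cite[\S8.4]{gan2021twisted}, and part (2) follows from part (1) together with \cite[Lemma~6.2]{KST20}. Your sketch for (1) is the standard strategy and is essentially what is carried out in \cite{gan2021twisted}; since you yourself flag the identification of the first-order piece with $\Omega_{\widetilde M,v}\lvert\cdot\rvert_v^{-3/2}$ as the main obstacle, the honest move is to cite that reference rather than redo the computation.

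Your argument for (2) has a genuine gap. You write that the constant term map factors through $\Omega_{\widetilde N}$, ``which by (1) decomposes globally as $\omega\lvert\cdot\rvert^{-2}\oplus\Omega_{\widetilde M}\lvert\cdot\rvert^{-3/2}$.'' But part (1) is stated and proved only at \emph{nonarchimedean} places, so there is no global decomposition of $\Omega_{\widetilde N(\A_F)}$ available from (1) alone; at archimedean places the coinvariant functor is not even exact, and the structure of $(\Omega_v)_{\widetilde N(F_v)}$ is not addressed. What is actually needed is the observation that the global constant term, viewed as an automorphic form on $\widetilde M$, is constrained at every nonarchimedean $v$ to lie in the image of the local Jacquet module, and that this together with (1) pins down the automorphic realization. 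This is exactly the content of \cite[Lemma~6.2]{KST20}, which the paper invokes. Your appeal to ``uniqueness (up to scalar) of the embedding of $\Omega_{\widetilde M}$ into the discrete spectrum'' is a separate assertion that would itself require proof; the cited lemma bypasses this by a direct argument comparing the constant term with the known automorphic realization.
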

\begin{proof}
    Part (1) is \cite[\S 8.4]{gan2021twisted}. Part (2) follows from part (1) and \cite[Lemma 6.2]{KST20}.\end{proof}

We write $\phi \mapsto \theta_0(\phi)$ for the automorphic realization of $\Omega_{\widetilde M}$ introduced by Proposition~\ref{prop:jacquet_functor_minimal}.(2). %(Note that it is possible that $\theta_0(\phi) = 0$ for all $\phi \in \Omega_{\widetilde M}$.) \Naomi{Ask Wee Teck if this is actually possible.}

Since $G$ and $G'_J$ are mutual centralizers in $\widetilde{G}$, we see that $G\cap\widetilde{M}=M$ and $G'_J$ are mutual centralizers in $\widetilde M$. This is the dual pair for our mini-theta lift. For any $\phi$ in $\Omega_{\widetilde{M}}$ and $f$ in $\mathcal{A}_{\cusp}(G'_J)$, consider the function 
\begin{equation*}
\theta_0(\phi,f):[M]\ra\C\mbox{ given by }g\mapsto \int_{[G'_J]} \theta_0(\phi)(gg') \overline{f(g')} \dd{g'},
\end{equation*}
and note that $\theta_0(\phi,f)$ is a well-defined automorphic form of $M$. For any irreducible cuspidal automorphic representation $\sigma$ of $G'_J(\A_F)$, write $\theta_0(\sigma)$ for the subspace of $\mathcal{A}(M)$ spanned by $\theta_0(\phi,f)$ as $\phi$ runs over $\Omega_{\widetilde{M}}$ and $f$ runs over $\sigma$.

Recall from \S\ref{ss:roots} the subgroup $\widetilde{N}_\alpha$ of $\widetilde{G}$, and write $\widetilde{S}$ for the Siegel parabolic subgroup of $\widetilde{M}$ whose unipotent radical equals $\widetilde N_\alpha$ \cite[\S7.5]{gan2021twisted}. By checking on Lie algebras, we see that $\widetilde{S}\cap(G\times G'_J)=B\times G'_J$, where $B$ is the Borel subgroup of $G$ from \S\ref{ss:parabolics}. For any $X$ in $\widetilde{\mathfrak{n}}_{-\alpha}(F)$, write $\psi_X:[\widetilde{N}_\alpha]\ra S^1$ for the unitary character given by $\widetilde{n}\mapsto\psi(\ang{X,\widetilde{n}})$. Under our identifications from \S\ref{ss:roots}, $\widetilde{\mathfrak{n}}_\alpha\times\widetilde{\mathfrak{n}}_{-\alpha}$ corresponds to $J\times J$, and the Killing form on $\widetilde{\mathfrak{n}}_\alpha\times\widetilde{\mathfrak{n}}_{-\alpha}$ corresponds to $(x,y)\mapsto T_J(x\circ y)$.

\begin{lemma}
\label{lemma:mini_theta_Fourier}
For any $\phi$ in $\Omega_{\widetilde M}$, we have $\theta_0(\phi) = \displaystyle\sum_{\substack{X\in J\\\operatorname{rk}X \leq  1}}\theta_0(\phi)_{\widetilde N_\alpha,\psi_X}$.
\end{lemma}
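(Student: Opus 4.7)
The strategy is to combine a standard Fourier expansion along the compact abelian quotient $[\widetilde N_\alpha]$ with a support result for the twisted Jacquet modules of $\Omega_{\widetilde M,v}$ inherited from the local minimal representations $\Omega_v$. First, since $[\widetilde N_\alpha]$ is a compact abelian group (identified with $J(F)\backslash J(\A_F)$ via the exponential, with characters parametrized by $X\in J$ through the Killing pairing $T_J(-\circ-)$ of \S\ref{ss:heisenberg}), every automorphic form on $\widetilde M(\A_F)$ admits the Fourier expansion
$$\theta_0(\phi) = \sum_{X\in J(F)} \theta_0(\phi)_{\widetilde N_\alpha,\psi_X}.$$
It thus suffices to show that $\theta_0(\phi)_{\widetilde N_\alpha,\psi_X}$ vanishes whenever $X$ has Freudenthal rank $\geq 2$, equivalently whenever $X^\#\neq 0$.

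This vanishing is a local question, so it is enough to prove, at any one chosen nonarchimedean place $v$, that $(\Omega_{\widetilde M,v})_{\widetilde N_\alpha(F_v),\psi_{X,v}}=0$ when $X^\#\neq 0$. For this I would lift to the ambient minimal representation $\Omega_v$ of $\widetilde G(F_v)$. By Proposition \ref{prop:jacquet_functor_minimal}.(1), the representation $\Omega_{\widetilde M,v}$ is the non-scalar summand of $(\Omega_v)_{\widetilde N(F_v)}$; the scalar summand $\omega_v|-|_v^{-2}$ is invariant under $\widetilde N_\alpha(F_v)$ and so contributes nothing to any $\psi_{X,v}$-coefficient with $X\neq 0$. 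Consequently, any nonzero class in $(\Omega_{\widetilde M,v})_{\widetilde N_\alpha(F_v),\psi_{X,v}}$ lifts to a nonzero class in $(\Omega_v)_{U(F_v),\Psi_{X,v}}$, where $U:=\widetilde N_\alpha\widetilde N$ and $\Psi_X$ is the character of $U$ that is trivial on $\widetilde N$ and equals $\psi_X$ on $\widetilde N_\alpha$. A short commutator check—using $[\widetilde N_\alpha,\widetilde N_\beta]=\widetilde N_{\alpha+\beta}$, $[\widetilde N_\alpha,\widetilde N_{\alpha+\beta}]=\widetilde N_{2\alpha+\beta}$, $[\widetilde N_\alpha,\widetilde N_{2\alpha+\beta}]=\widetilde N_{3\alpha+\beta}$, together with the centrality of $Z$—shows that $U^{\mathrm{ab}}=\widetilde N_\alpha\times\widetilde N_\beta$, so $\Psi_X$ is a well-defined character.

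To analyze $(\Omega_v)_{U(F_v),\Psi_{X,v}}$ I would apply Proposition \ref{prop:minorbit} in two stages: first describe $(\Omega_v)_{Z(F_v)}$ as a function space on $\O_{\min}(F_v)$ (using parts (1)--(2) of that proposition), and then integrate over the remaining abelian quotient $[U/Z]$ against $\Psi_X$. Under this dictionary the nonvanishing of the Fourier coefficient forces the character $\Psi_X$ to be realized by a point of $\O_{\min}(F_v)$. Comparing with the explicit description of $\O_{\min}$ in Proposition \ref{prop:min_orbit} (the equations $x^\#=ay$, $y^\#=dx$, and $l(x)\circ l^*(y)=ad$), the coordinates of $\widetilde{\mathbb X}\cong F\times J\times J\times F$ constrained by the triviality of $\Psi_X$ on $\widetilde N_\beta$ and on $Z$ leave only contributions from points of the form $(0,X,0,0)$, and these lie in $\O_{\min}$ precisely when $X^\#=0$. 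Hence the coefficient is forced to vanish whenever $X^\#\neq 0$.

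The main obstacle is this final step: making precise the translation between the mixed Fourier coefficient on $U$ with character $\Psi_X$ and the $\O_{\min}$-description of Proposition \ref{prop:min_orbit}, and in particular tracking how the unipotent action of $\widetilde N_\alpha$ on $\widetilde N/Z$ shifts between the $J$-factors of $\widetilde{\mathbb X}$ so that exactly the slice $\{(0,X,0,0)\}$ survives. Once this orbit bookkeeping is verified, the Freudenthal-rank obstruction $X^\#\neq 0$ directly prevents $(0,X,0,0)$ from lying in $\O_{\min}$, yielding the lemma.
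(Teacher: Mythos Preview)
Your overall plan---Fourier expand $\theta_0(\phi)$ along the abelian group $[\widetilde N_\alpha]$ and then kill the coefficients at $X$ with $X^\#\neq0$ by a local vanishing argument at a single nonarchimedean place---is correct and is exactly what the paper does. The divergence is in how you prove the local vanishing $(\Omega_{\widetilde M,v})_{\widetilde N_\alpha(F_v),\psi_{X,v}}=0$.

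Your proposed route is indirect and, as written, has a genuine gap. You want to lift to $\Omega_v$, take $Z$-coinvariants to land in the function model $C^\infty(\O_{\min}(F_v))$ of Proposition~\ref{prop:minorbit}, and then cut by the character $\Psi_X$ of $U=\widetilde N_\alpha\widetilde N$ to force the support to the slice $\{(0,X,0,0)\}\subset\O_{\min}$. The problem is that $\Psi_X$ is \emph{trivial} on $\widetilde N/Z$, so you must first take trivial $(\widetilde N/Z)$-coinvariants; but in the function model, $\widetilde N/Z$ acts at a point $Y\in\O_{\min}(F_v)$ through the nontrivial character $\psi_Y$, so the trivial coinvariants of $C^\infty_c(\O_{\min})$ vanish identically. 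The summand $\Omega_{\widetilde M,v}$ of $(\Omega_v)_{\widetilde N}$ therefore does not arise from interior points of $\O_{\min}$ at all---it comes from the boundary quotient $(\Omega_v)_Z/C^\infty_c(\O_{\min})$. Your ``slice $(0,X,0,0)$'' heuristic is describing the wrong object, and your own caveat (``the main obstacle is this final step'') is exactly right. Also, a minor point: $U/Z$ is not abelian (e.g.\ $[\widetilde N_\alpha,\widetilde N_\beta]\subset\widetilde N_{\alpha+\beta}$ survives modulo $Z$), so the phrase ``integrate over the remaining abelian quotient $[U/Z]$'' needs care.

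The paper bypasses this entirely. Recall that $\Omega_{\widetilde M,v}$ is itself the minimal representation of $\widetilde M(F_v)$, and $\widetilde N_\alpha$ is the unipotent radical of the Siegel parabolic $\widetilde S\subset\widetilde M$. So one can apply to $\Omega_{\widetilde M,v}$ the direct analogue of Proposition~\ref{prop:minorbit}, now for the pair $(\widetilde M,\widetilde S)$ rather than $(\widetilde G,\widetilde P)$. This analogue is recorded in \cite[\S8.8]{gan2021twisted}: it gives a short exact sequence whose ``generic'' piece is supported on the rank-one locus in $J$, immediately yielding $(\Omega_{\widetilde M,v})_{\widetilde N_\alpha,\psi_X}=0$ whenever $\operatorname{rk}X\geq2$. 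That is the entire content of the paper's one-line proof.
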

\begin{proof}
Apply the short exact sequence in \cite[\S8.8]{gan2021twisted} to any nonarchimedean place of $F$.
\end{proof}

\begin{corollary}\label{cor:mini_theta_cuspidality}
For any $f$ in $\mathcal{A}_{\cusp}(G'_J)$ and $\phi$ in $\Omega_{\widetilde M}$, our $\theta_0 (\phi, f) $ lies in $\mathcal{A}_{\cusp}(M)$.
\end{corollary}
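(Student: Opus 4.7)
Since $M\cong \GL_2$ has a unique conjugacy class of proper parabolic subgroups, whose unipotent radical equals $N_\alpha$, the cuspidality of $\theta_0(\phi,f)$ is equivalent to the vanishing of its constant term along $N_\alpha$. By Fubini, this constant term equals
$$\int_{[G'_J]}(\theta_0(\phi))_{N_\alpha}(gg')\overline{f(g')}\,dg',$$
so the plan is to first compute $(\theta_0(\phi))_{N_\alpha}$ using Lemma \ref{lemma:mini_theta_Fourier} and then analyze its pairing with $\overline{f}$.

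The Fourier expansion from Lemma \ref{lemma:mini_theta_Fourier} along $\widetilde N_\alpha\supseteq N_\alpha$ reduces matters to identifying which characters $\psi_X$ of $\widetilde N_\alpha$ restrict trivially to $N_\alpha$. Under the identifications $\widetilde{\mathfrak n}_\alpha=J$ and $\mathfrak n_\alpha=\langle e\rangle$, together with the Killing pairing $(x,y)\mapsto T_J(x\circ y)$, one computes $\psi_X|_{N_\alpha}=1$ iff $T_J(X)=0$; combining this with the rank bound via Lemma \ref{lemma: square zero matrices} yields
\begin{equation*}
(\theta_0(\phi))_{N_\alpha}(m) = \mathrm{vol}([N_\alpha])\sum_{X \in J:\,X\circ X=0}\theta_0(\phi)_{\widetilde N_\alpha,\psi_X}(m).
\end{equation*}
Splitting the $X$-sum into the closed orbit $\{0\}$ and the $G'_J(F)$-orbits of nonzero square-zero $X$ reduces the problem to the vanishing of two types of integrals: the closed contribution $I_0(g) := \int_{[G'_J]}(\theta_0(\phi))_{\widetilde N_\alpha}(gg')\overline{f(g')}\,dg'$, and, for each orbit representative $X_0\neq 0$, the open contribution $I_{X_0}(g) := \int_{[G'_J]}\overline{f(g')}\sum_{X\in G'_J(F)\cdot X_0}\theta_0(\phi)_{\widetilde N_\alpha,\psi_X}(gg')\,dg'$.

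For the open contributions, Lemma \ref{lemma: square zero matrices} gives $H := \Stab_{G'_J}(X_0) \cong (\R^1_{K/F}\G_m)\ltimes V'$ with $V'$ the unipotent radical of a Borel of $G'_J$. Using the equivariance $\theta_0(\phi)_{\widetilde N_\alpha,\psi_{\gamma X_0}}(m)=\theta_0(\phi)_{\widetilde N_\alpha,\psi_{X_0}}(\gamma^{-1}m)$ for $\gamma\in G'_J(F)$ and that $\gamma$ commutes with $g\in M(\A_F)$, the orbital sum unfolds to an integral over $H(F)\backslash G'_J(\A_F)$. A mini-theta analog of Lemma \ref{lem:stabilizerinvariance}---invoking uniqueness of the local $\psi_{v,X_0}$-coinvariants in $\Omega_{\widetilde M,v}$, the factoring of the $\Stab_{\widetilde L}(X_0)(F_v)$-action through a character of $\widetilde L$, and the absence of nontrivial algebraic characters on the semisimple group $G'_J$---upgrades $H(F)$-invariance to $H(\A_F)$-invariance, so
\begin{equation*}
I_{X_0}(g)=\int_{H(\A_F)\backslash G'_J(\A_F)}\theta_0(\phi)_{\widetilde N_\alpha,\psi_{X_0}}(gg')\left(\int_{[H]}\overline{f(hg')}\,dh\right)dg'.
\end{equation*}
Integrating $V'$ first, the inner period vanishes by cuspidality of $f$ along the unipotent radical of a Borel of $G'_J$.

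To handle $I_0$, combine Proposition \ref{prop:jacquet_functor_minimal}(1) with the short exact sequence of \cite[\S 8.8]{gan2021twisted}: the closed-stratum constituent of $(\Omega_{\widetilde M,v})_{\widetilde N_\alpha(F_v)}$ is a representation of the Siegel Levi $\widetilde L$ on which the $L_J$-factor acts through an algebraic character. Since $G'_J\subseteq L_J$ is semisimple, this character restricts trivially to $G'_J$, so $(\theta_0(\phi))_{\widetilde N_\alpha}(gg')$ is right $G'_J(\A_F)$-invariant in $g'$. Consequently $I_0(g) = (\theta_0(\phi))_{\widetilde N_\alpha}(g)\int_{[G'_J]}\overline{f(g')}\,dg' = 0$ by cuspidality. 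The main obstacle is this final step: extracting the precise structure of the rank-zero piece of $(\Omega_{\widetilde M,v})_{\widetilde N_\alpha(F_v)}$ from \cite[\S 8.8]{gan2021twisted} and globalizing triviality of the local $G'_J(F_v)$-actions into right $G'_J(\A_F)$-invariance of the automorphic constant term $(\theta_0(\phi))_{\widetilde N_\alpha}$.
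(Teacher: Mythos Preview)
Your approach is essentially the same as the paper's: Fourier expand $\theta_0(\phi)$ via Lemma \ref{lemma:mini_theta_Fourier}, observe that integration over $[N_\alpha]$ selects those $X$ with $\operatorname{rk}X\leq 1$ and $T_J(X)=0$ (equivalently $X\circ X=0$ by Lemma \ref{lemma: square zero matrices}), unfold the $[G'_J]$-integral over $G'_J(F)$-orbits, use the stabilizer description from Lemma \ref{lemma: square zero matrices} together with a Lemma \ref{lem:stabilizerinvariance}-style uniqueness/weak-approximation argument to obtain $(\Stab_{G'_J}X)(\A_F)$-invariance of the Fourier coefficient, and then kill the remaining inner integral $\int_{[\Stab_{G'_J}X]}\overline{f(u'g')}\,du'$ by cuspidality of $f$ (since the stabilizer contains the unipotent radical $V'$ of a Borel of $G'_J$).

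The only real difference is that you separate $X=0$ from $X\neq 0$, whereas the paper treats all $X\in T_0$ uniformly. The paper is in fact slightly imprecise here: Lemma \ref{lemma: square zero matrices} only computes $\Stab_{G'_J}X$ for \emph{nonzero} $X$, and for $X=0$ the analog of Lemma \ref{lem:stabilizerinvariance} (which requires the $G'_J(F_v)$-action on $\Hom_{\widetilde N_\alpha(F_v)}(\Omega_{\widetilde M,v},\one)$ to be trivial) needs exactly the input you flag---namely the closed-stratum structure of the $\widetilde N_\alpha$-Jacquet module from \cite[\S8.8]{gan2021twisted}. So what you call ``the main obstacle'' is precisely the point the paper leaves to the reader; your route is not genuinely different, just more explicit about this step. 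One minor caveat: your final claim $\int_{[G'_J]}\overline f\,dg'=0$ ``by cuspidality'' is automatic only when $G'_J$ is isotropic; in the anisotropic case (where in fact $T_0=\{0\}$) it requires $f\perp\one$, which however holds in all applications of this corollary.
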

\begin{proof}
We compute the constant term of $\theta_0 (f,\phi)$ along $N_\alpha$ (the unipotent radical of the Borel of $M$):
\begin{align*}
\theta_0 (\phi,f)_{ N_\alpha} (1) & =\int_{[ N_\alpha]}\int_{[G'_J]}\theta_0 (\phi) (ng') \overline{f (g')}\dd g'\dd n\\
& =\int_{[G'_J]}\int_{[N_\alpha]}\sum_{\substack{X\in J\\\operatorname{rk}X\leq 1}}\theta_0 (\phi)_{\widetilde N_\alpha,\psi_X}(ng')\overline{f (g')}\dd n\dd g' \quad \text{by Lemma \ref{lemma:mini_theta_Fourier}}\\
& =\int_{[G'_J]}\sum_{\substack{X\in J\\\operatorname{rk} X\leq 1\\ T_J(X) = 0}}\theta_0 (\phi)_{\widetilde N_\alpha,\psi_X} (g')\overline{f (g')}\dd g'
\end{align*}
since $N_\alpha(F)\subseteq\widetilde{N}_\alpha(F)$ corresponds to $\ang{e}\subseteq J$. Write $T_0$ for the set $\{X\in J\mid\operatorname{rk}X\leq 1\mbox{ and }T_J(X)=0\}$. Because the Killing form is $G'_J$-invariant, we can unfold the above integral to obtain
\begin{align*}
\sum_{X\in G'_J(F)\backslash T_0}\int_{(\Stab_{G'_J}X)(\A_F)\backslash G'_J(\A_F)}\int_{[\Stab_{G'_J}X]}\theta_0(\phi)_{\widetilde N_\alpha,\psi_X}(u'g')\overline{f(u'g')}\dd{u'}\dd{g'}.
\end{align*}
Write $\Stab_{G'_J}X\cong(\R_{K/F}^1\G_m)\ltimes V'$ as in Lemma \ref{lemma: square zero matrices}. Since $\R_{K/F}^1\G_m$ and $V'$ satisfy weak approximation, $\Stab_{G'_J}X$ also satisfies weak approximation, so the argument of Lemma \ref{lem:stabilizerinvariance} implies that $\theta_0(\phi)_{\widetilde{N}_\alpha,\psi_X}(u'g')=\theta_0(\phi)_{\widetilde{N}_\alpha,\psi_X}(g')$. Therefore our expression becomes
\begin{align*}
\sum_{X\in G'_J(F)\backslash T_0}\int_{(\Stab_{G'_J}X)(\A_F)\backslash G'_J(\A_F)}\theta_0(\phi)_{\widetilde N_\alpha,\psi_X}( g')\int_{[\Stab_{G'_J}X]}\overline{f(u'g')}\dd{u'}\dd{g'},
\end{align*}
which vanishes because $f$ is cuspidal.
\end{proof}

For our purposes, we will need the following stronger result. Recall from \S\ref{ss:heisenberg} our identification $M\cong\GL_2$.
\begin{prop}\label{prop:mini_theta_Howe_PS}
Let $\sigma$ be an irreducible cuspidal automorphic representation of $G'_J(\A_F)$.
 Assume that there are infinitely many nonarchimedean places $v$ of $F$ such that
 \begin{itemize}
     \item $v$ splits in $K$ (and hence $G'_{J,F_v}\cong\PGL_3$),
     \item $\sigma_v$ is unramified,
     \item the Satake parameter $S_v$ of $\sigma_v$ has an eigenvalue of norm $\geq q_v^{1/2}$.
 \end{itemize}
Then $\theta_0(\sigma)=0$. In particular, this holds for the cuspidal $\sigma=\bigotimes'_v\sigma_v^{\epsilon_v}$ considered in Theorem \ref{thm:globalHPSU3}.(1).
\end{prop}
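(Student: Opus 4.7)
The plan is to argue by contradiction. Suppose $\theta_0(\sigma)\neq 0$ and pick an irreducible summand $\pi_0$; Corollary~\ref{cor:mini_theta_cuspidality} gives $\pi_0\subseteq\mathcal{A}_{\cusp}(M)$. Replaying the argument of Proposition~\ref{prop:local_global_comp} with the main pair $(G,\theta,\Omega)$ swapped for the mini-theta pair $(M,\theta_0,\Omega_{\widetilde M})$ — i.e.\ compose $\phi\otimes f\mapsto\theta_0(\phi,f)$ with the projection to $\pi_0$ and use the Petersson pairing to identify $\overline{\sigma}\cong\sigma^\vee$ — produces a nonzero element of $\bigotimes'_v\Hom_{(M\times G'_J)(F_v)}(\Omega_{\widetilde{M},v},\sigma_v\otimes\pi_{0,v})$. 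In particular, at every place $v$ the local component $\pi_{0,v}$ is an irreducible quotient of the local mini-theta $\Theta_0(\sigma_v)$, so it suffices to exhibit a single good split place $v$ at which $\Theta_0(\sigma_v)=0$.

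The next step is to identify $\Theta_0$ at such a $v$ with M\'{i}nguez's correspondence. At a place $v$ split in $K$, $\widetilde M_{F_v}\cong(\G_m\times\GL_6)/\G_m$ splits, and the dual pair $M\times G'_J\hookrightarrow\widetilde M$ corresponds to the mutual centralizer pair $\GL_2\times\GL_3\hookrightarrow\GL_6$ (modulo centers) via the tensor product embedding. Tracking through the description of $\Omega_{\widetilde M,v}$ in Proposition~\ref{prop:jacquet_functor_minimal}.(1) and \cite[\S 7--8]{gan2021twisted}, one identifies $\Omega_{\widetilde M,v}$, up to a fixed character twist by a power of $|\det g/x|_v$, with the classical type~II Weil representation on $\mathcal{S}(\mathrm{M}_{3\times 2}(F_v))$. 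Consequently $\Theta_0(\sigma_v)$ agrees, up to the same twist, with the type~II theta lift from $\GL_3(F_v)$ to $\GL_2(F_v)$ studied in \cite{Min08}.

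The main obstacle is then to convert the hypothesis ``$S_v$ has an eigenvalue of norm $\geq q_v^{1/2}$'' into the vanishing of M\'{i}nguez's lift. For an unramified irreducible representation of $\GL_3(F_v)$ with Satake parameters $\{a_1,a_2,a_3\}$, M\'{i}nguez's description shows that the type~II lift to $\GL_2$ is nonzero only when two of the $a_i$ are linked (i.e.\ $a_i/a_j=q_v$ for some $i\neq j$), and the Satake parameters of the resulting lift are obtained by a definite shift of the remaining segments. Translating through the twist of the previous paragraph, a nonzero lift forces every eigenvalue of $S_v$ to have norm strictly less than $q_v^{1/2}$, which directly contradicts the assumed lower bound; hence $\Theta_0(\sigma_v)=0$.

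For the ``in particular'' claim I apply the first part to $\sigma=\bigotimes'_v\sigma_v^{\epsilon_v}$ from Theorem~\ref{thm:globalHPSU3}.(1). By Chebotarev there are infinitely many nonarchimedean $v$ that split in $K$, and at cofinitely many of these, $\sigma_v=\sigma_v^+$ is the unramified Langlands quotient of $i_{B'}^{\U_3}\varrho'$ from \S\ref{ss:U3HPSpadic}. Since $\varrho'(\alpha,\beta)=\chi_v(\alpha)\chi_v(\beta)^{-1}|\alpha|^{1/2}$ and $\chi_v$ is unitary, a direct calculation at a split $v$ (where $\U_3(F_v)\cong\GL_3(F_v)$ and $\chi_v$ becomes a pair $(\chi_1,\chi_1^{-1})$ of unitary characters) shows that $S_v$ has an eigenvalue of norm exactly $q_v^{1/2}$. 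The hypothesis of the proposition is therefore satisfied, giving $\theta_0(\sigma)=0$.
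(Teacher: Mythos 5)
There is a genuine gap in your argument. The crucial step — ``a nonzero lift forces every eigenvalue of $S_v$ to have norm strictly less than $q_v^{1/2}$'' — does not follow from M\'inguez's description of the type II correspondence, and in fact the claim that $\Theta_0(\sigma_v)=0$ at a good split place is simply false in the very cases you then apply it to. At a split place where $\sigma_v\cong\sigma_v^+$ is the unramified Langlands quotient of $i_{B'}^{\U_3}\varrho'$, its Satake parameter does have an eigenvalue of norm $q_v^{1/2}$, \emph{and} the local mini-theta lift $\theta_0(\sigma_v^+)$ is nonzero (it produces the unramified quotient of the relevant principal series of $\GL_2(F_v)$). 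So you cannot kill the global lift by a purely local vanishing at a split place; the vanishing of $\theta_0(\sigma)$ is a genuinely global phenomenon.

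The missing ingredient is Corollary~\ref{cor:mini_theta_cuspidality} combined with cuspidality and the Jacquet--Shalika bound, which is exactly what the paper uses. Since $\theta_0(\sigma)$ is cuspidal, any nonzero irreducible subrepresentation $\pi$ is a cuspidal automorphic representation of $\GL_2(\A_F)$, so $\pi_v$ is \emph{generic}; by \cite[Corollary (2.5)]{JS81} the eigenvalues of its Satake parameter $P_v$ all have norm $<q_v^{1/2}$. M\'inguez's theorem is then used not as a vanishing criterion, but to pin down the relation between $P_v$ and $S_v$: since $\pi_v$ is a quotient of $\theta_0(\sigma_v)$ and $\theta_0$ is the type II lift, $S_v$ is the image of $P_v$ under the embedding $\SL_2\hookrightarrow\SL_3$. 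This forces every eigenvalue of $S_v$ to have norm $<q_v^{1/2}$, contradicting the hypothesis. Without genericity of $\pi_v$ — which comes from the \emph{global} cuspidality of $\pi$ — there is no bound on the Satake eigenvalues of a quotient of $\theta_0(\sigma_v)$, and the contradiction does not materialize.
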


\begin{proof}
Corollary \ref{cor:mini_theta_cuspidality} shows that $\theta_0(\sigma)$ is cuspidal and hence semisimple. Therefore it suffices to show that $\theta_0(\sigma)$ has no irreducible subrepresentations $\pi$. For such a $\pi$, let $v$ be a nonarchimedean place of $F$ where $\pi_v$ is unramified and the above conditions are satisfied; such a $v$ exists by infinitude. Write $P_v$ for the Satake parameter of $\pi_v$. By the argument of Proposition \ref{prop:local_global_comp}, our $\pi_v$ is a quotient of $\theta_0(\sigma_v)$. 

For the rest of this proof, we work over $F_v$. Under our identifications, $M\times G'_J\hookrightarrow\widetilde{M}$ corresponds to the morphism $\GL_2\times\PGL_3\hookrightarrow(\G_m\times\GL_6)/\G_m$ of groups over $F_v$ given by $(m,g')\mapsto(\det m^2\det g',m\otimes g')$. Moreover, the construction of $\Omega_{\widetilde{M},v}$ shows that it equals the minimal representation of $\widetilde{M}(F_v)$ \cite[\S8.4]{gan2021twisted}. Therefore $\theta_0$ equals the theta lift associated with a type II dual pair, so \cite[Th\'eor\`eme 1]{Min08} implies that $S_v$ equals the image of $P_v$ under the natural morphism $\SL_2\hookrightarrow\SL_3$ of groups over $\C$.

Now $\pi$ is an irreducible cuspidal automorphic representation of $\GL_2(\A_F)$, which implies that $\pi_v$ is generic. Hence the eigenvalues of $P_v$ have norm $<q_v^{1/2}$ \cite[Corollary (2.5)]{JS81}, so the same holds for the eigenvalues of $S_v$. But by assumption, this cannot happen. Therefore we see that $\theta_0(\sigma)=0$.
\end{proof}

\subsection{Cuspidality}
Recall from Definition \ref{defn:globaltheta} the exceptional theta lift $\theta(-)$. The goal of this subsection is to prove the following criterion for $\theta(-)$ to be cuspidal, which is the main result of this section.

\begin{theorem}\label{thm:cuspidal}
Let $\sigma$ be an irreducible cuspidal automorphic representation of $G'_J(\A_F)$. Then $\theta(\sigma)$ is cuspidal if and only if $\theta_0(\sigma)=0$.
\end{theorem}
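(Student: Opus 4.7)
The plan is to compute the constant terms of $\theta(\varphi, f)$ along the unipotent radicals $N$ and $U$ of the two maximal proper parabolic subgroups $P = MN$ and $Q = LU$ of $G$, and to reduce the vanishing of each to the vanishing of $\theta_0(\sigma)$.

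The key step is computing the $N$-constant term. Since $N \subseteq \widetilde{N}$ share the same center $Z$, I would first apply Proposition \ref{prop:FE_of_min} to $\theta(\varphi)_Z$ and then integrate over $[N/Z] \hookrightarrow [\widetilde{N}/Z]$; only those Fourier characters $\psi_X$ which restrict trivially to $N$ survive, which by Lemma \ref{lem:p} means $p(X) = 0$. Combined with Proposition \ref{prop:min_orbit}, Lemma \ref{lemma: square zero matrices}, and Corollary \ref{cor:xcircy}, the resulting nonzero $X$ take the form $(0, x, y, 0)$ with $x, y$ nonzero rank-one trace-zero elements of $J$ satisfying $xe^{-1}y = ye^{-1}x = 0$, and Lemma \ref{lemma: square zero matrices} shows that $\Stab_{G'_J}(X)$ is a solvable subgroup containing a nontrivial unipotent radical, hence satisfies weak approximation. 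I would then integrate $\theta(\varphi)_N$ against $\overline{f(g')}$ over $[G'_J]$: by Proposition \ref{prop:jacquet_functor_minimal}.(2), the term $\theta(\varphi)_{\widetilde{N}}$ decomposes as an $\omega\abs{\cdot}^{-2}$ summand plus a $\theta_0(\phi)\abs{\cdot}^{-3/2}$ summand, where the first pairs trivially with $\overline{f}$ because $\omega$ restricts trivially to the semisimple group $G'_J$ and cuspidal $f$ is orthogonal to constants, while the second produces exactly $\theta_0(\phi, f) \in \theta_0(\sigma)$. Each nonzero orbit contribution unfolds via Lemma \ref{lem:stabilizerinvariance} into an integral over $\Stab_{G'_J}(X)(\A_F) \backslash G'_J(\A_F)$ whose inner factor $\int_{[\Stab_{G'_J}(X)]} \overline{f(u'g')}\,\mathrm{d}u'$ vanishes by cuspidality of $f$, since the stabilizer contains a nontrivial unipotent subgroup. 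Thus $\theta(\varphi, f)_N$ agrees with $\theta_0(\phi, f)$ up to a volume factor and vanishes for all $\varphi, f$ if and only if $\theta_0(\sigma) = 0$.

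For the $U$-constant term I would carry out a parallel analysis: Fourier-expand $\theta(\varphi)$ along $\widetilde{U}$, describe the $\widetilde{L}$-orbits on the resulting character variety, and unfold each contribution against cuspidal $f$. Every orbit contribution should vanish by cuspidality (again because the relevant $G'_J$-stabilizers are parabolic-like and contain unipotent radicals), giving $\theta(\varphi, f)_U = 0$ unconditionally and completing the proof. The main obstacle is precisely this $U$-case: the input on the Jacquet module of the minimal representation (Propositions \ref{prop:FE_of_min} and \ref{prop:jacquet_functor_minimal}) is cleanly developed only for the Heisenberg parabolic $\widetilde{P}$, so controlling constant terms along $\widetilde{U}$ requires an independent orbit-stabilizer analysis, including an analogue of Lemma \ref{lemma: square zero matrices} identifying the $G'_J$-stabilizers and verifying they satisfy weak approximation so that Lemma \ref{lem:stabilizerinvariance} becomes available.
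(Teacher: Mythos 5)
Your treatment of the $N$-constant term is essentially the paper's own: Proposition \ref{prop:FE_of_min} gives the expansion of $\theta(\varphi)_Z$, restricting to $[N/Z]$ kills all $\psi_X$ with $p(X)\neq0$, Lemma \ref{lem:p} identifies the surviving orbit $\O_0$, Lemma \ref{lemma: square zero matrices} and Corollary \ref{cor:xcircy} show $\Stab_{G'_J}X\cong(\R^1_{K/F}\G_m)\ltimes V'$ (and hence satisfies weak approximation so Lemma \ref{lem:stabilizerinvariance} applies), and cuspidality of $f$ kills the orbit contributions; the surviving term is $\theta_0(\phi,f)$ (up to the $\omega\abs{-}^{-2}$ summand, which you correctly dispose of). One small caution: the unfolding argument requires the stabilizer to contain the full unipotent radical $V'$ of a \emph{Borel} (or at least a parabolic), not merely \emph{some} nontrivial unipotent subgroup; here this is exactly what Lemma \ref{lemma: square zero matrices} provides.

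The gap is in the $U$-constant term, and you have diagnosed it but not closed it. Fourier-expanding $\theta(\varphi)$ along $\widetilde{U}$ would require control of the Jacquet module of the minimal representation along the three-step parabolic $\widetilde{Q}$, which is simply not available here; attempting to redo the orbit/stabilizer analysis from scratch along $\widetilde{U}$ is a much harder problem than you suggest, and it is not what the paper does. The paper's actual proof of Proposition \ref{prop:three-step_constant_term} stays entirely inside the Heisenberg framework and never touches $\widetilde{U}$: first it shows $\theta(\varphi,f)_U\in\mathcal{A}_{\cusp}(L)$ by observing $N_\beta U = V = N_\alpha N$, so that $(\theta(\varphi,f)_U)_{N_\beta} = (\theta(\varphi,f)_N)_{N_\alpha} = \theta_0(\varphi,f)_{N_\alpha}$, which vanishes because $\theta_0(\phi,f)$ is cuspidal on $M$ (Corollary \ref{cor:mini_theta_cuspidality}); then, to show the cusp form $\theta(\varphi,f)_U$ on $L\cong\GL_2$ is actually zero, it Fourier-expands along $N_\beta$ and identifies each nontrivial coefficient $(\theta(\varphi,f)_U)_{N_\beta,\psi_\beta}$ with a $V$-Fourier coefficient $\theta(\varphi,f)_{V,\psi_V}$; since $N\subseteq V$, one can again apply Proposition \ref{prop:FE_of_min} and pick out the orbit $\O_t=\O_{\min}\cap p^{-1}(t,0,0,0)$ with $t\neq0$, and the new ingredient Lemma \ref{lemma:X^3=0orbits} (replacing Lemma \ref{lemma: square zero matrices}, which only handles $\O_0$) controls these orbits and their stabilizers, which are unipotent, so cuspidality again kills each term. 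In short, the resolution you are missing is to \emph{avoid} constant terms along $\widetilde{U}$ altogether by reducing everything to $V$-Fourier coefficients inside $\widetilde{P}$, together with the extra auxiliary inputs Corollary \ref{cor:mini_theta_cuspidality} and Lemma \ref{lemma:X^3=0orbits}.
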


Note that Theorem \ref{thm:cuspidal} and Proposition \ref{prop:mini_theta_Howe_PS} immediately imply the following, which is part of Theorem \ref{thmB}.

\begin{corollary}\label{cor:thetacuspidal}
Let $(\epsilon_v)_v$ be a sequence as in Theorem \ref{thm:globalHPSU3}.(1), write $\sigma$ for $\bigotimes'_v\sigma^{\epsilon_v}_v$, and assume that $\sigma$ is cuspidal. Then $\theta(\sigma)$ is cuspidal.
\end{corollary}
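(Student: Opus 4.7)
The plan is to compute the constant term of $\theta(\sigma)$ along the unipotent radical $N$ of the Heisenberg parabolic $P$, using the Fourier expansion of Proposition~\ref{prop:FE_of_min}. Writing $\theta(\varphi)_N(g) = \int_{[N/Z]} \theta(\varphi)_Z(ng)\,\dd{n}$ and applying Proposition~\ref{prop:FE_of_min}, the constant term $\theta(\varphi)_{\widetilde N}$ is automatically $N$-invariant (since $N \subseteq \widetilde N$), while each Fourier coefficient $\theta(\varphi)_{\widetilde N,\psi_X}$ contributes exactly when $\psi_X$ is trivial on $N/Z$. By Lemma~\ref{lem:p}, this triviality condition on $X = (a,x,y,d) \in \mathcal O_{\min}(F)$ amounts to $a = d = 0$ and $T_J(x) = T_J(y) = 0$. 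Combined with the defining equations of $\mathcal O_{\min}$ in Proposition~\ref{prop:min_orbit}, we obtain $x^\# = y^\# = 0$ and $l(x) \circ l^*(y) = 0$ for all $l \in L_J(F)$; Lemma~\ref{lemma: square zero matrices} then yields $x \circ x = y \circ y = 0$, and Corollary~\ref{cor:xcircy} forces $x$ and $y$ to be $K$-linearly dependent, hence (since both are Hermitian) $F$-linearly dependent.

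Next, I would analyze each contribution to $\theta(\varphi,f)_N$. For the $X = 0$ piece, Proposition~\ref{prop:jacquet_functor_minimal} decomposes $\theta(\varphi)_{\widetilde N}$ as $\omega\abs{-}^{-2} \oplus \theta_0(\Omega_{\widetilde M})\abs{-}^{-3/2}$; pairing the one-dimensional character piece against cuspidal $\sigma$ vanishes (since $\sigma$ is infinite-dimensional and $G'_J$ is semisimple), while the other piece produces exactly $\theta_0(\varphi,f) \cdot \abs{-}^{-3/2}$ as a function on $M(\A_F)$. For each nonzero $X$ as above, I would unfold the integral of $\theta(\varphi)_{\widetilde N,\psi_X}$ against $\overline f$ over the $G'_J(F)$-orbit of $X$, producing an inner period of $f$ over $[\Stab_{G'_J}(X)]$. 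Since $y \in Fx$, we have $\Stab_{G'_J}(X) = \Stab_{G'_J}(x)$, which by Lemma~\ref{lemma: square zero matrices} is isomorphic to $(\R^1_{K/F}\G_m) \ltimes V'$ with $V'$ the unipotent radical of a Borel of $G'_J$. Both factors satisfy weak approximation, so the argument of Lemma~\ref{lem:stabilizerinvariance} together with the unfolding of Corollary~\ref{cor:mini_theta_cuspidality} applies, and cuspidality of $f$ makes the inner period vanish. Consequently $\theta(\sigma)_N \cong \theta_0(\sigma)\cdot\abs{-}^{-3/2}$ as representations of $M(\A_F)$, giving the equivalence $\theta(\sigma)_N = 0$ iff $\theta_0(\sigma) = 0$.

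To finish the proof, I still need to show that $\theta(\sigma)_U = 0$ unconditionally for cuspidal $\sigma$, where $U$ is the unipotent radical of the other maximal parabolic $Q$. The corresponding triviality condition $\psi_X|_U = 1$ imposes $d = T_J(x) = T_J(y) = 0$ but leaves $a$ unconstrained, producing a richer orbit structure: besides the $a = 0$ orbits treated above, there are orbits with $a \ne 0$ where $x^\# = ay \ne 0$ is of rank $\le 1$. In those cases $y^\# = 0$ and $T_J(y) = 0$ still place $y$ in the rank-$\le 1$ locus (so $y \circ y = 0$), while $x$ satisfies a rank-$\le 2$ condition. The main obstacle I anticipate is carrying out this finer orbit analysis for $U$, identifying a unipotent subgroup of $\Stab_{G'_J}(X)$ in each remaining orbit that is large enough to kill the inner period by cuspidality, and checking that the $\widetilde U$-constant term of $\Omega$ contributes only via characters of $\widetilde L$ pairing trivially against $\sigma$. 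This should follow from a Jacquet-module computation along $\widetilde U$ analogous to Proposition~\ref{prop:jacquet_functor_minimal}, combined with Lemmas~\ref{lemma: square zero matrices} and~\ref{lemma:l(x)circlstar(y)}.
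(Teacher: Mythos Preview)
Your analysis of the Heisenberg constant term is essentially the paper's Proposition~\ref{prop:Heisenberg_constant_term}, and your orbit/stabilizer reasoning is correct. However, there is a genuine gap: you establish the equivalence $\theta(\sigma)_N=0\iff\theta_0(\sigma)=0$, but you never show that $\theta_0(\sigma)=0$ for the specific Howe--Piatetski-Shapiro $\sigma$ in the corollary. Cuspidality of $\sigma$ alone does not force this; what the paper uses (Proposition~\ref{prop:mini_theta_Howe_PS}) is the non-temperedness of $\sigma_v^+$. At a split place $v$, the mini-theta dual pair $M\times G'_J\hookrightarrow\widetilde{M}$ becomes $\GL_2\times\PGL_3\hookrightarrow(\G_m\times\GL_6)/\G_m$, a classical type~II pair, so any irreducible constituent $\pi$ of $\theta_0(\sigma)$ would have Satake parameter $P_v$ mapping to $S_v$ under $\SL_2\hookrightarrow\SL_3$. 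But $S_v$ has an eigenvalue of norm $\geq q_v^{1/2}$ (since $\sigma_v^+$ is a nontempered Langlands quotient), whereas any cuspidal $\pi$ on $\GL_2$ is generic and hence has $P_v$ with eigenvalues of norm $<q_v^{1/2}$. This contradiction, together with Corollary~\ref{cor:mini_theta_cuspidality} (which you do cite) showing $\theta_0(\sigma)$ is cuspidal hence semisimple, is what forces $\theta_0(\sigma)=0$.

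Your plan for the three-step constant term also diverges from the paper and is shakier than you indicate. The character $\psi_X$ lives on $\widetilde{N}$, and $U\not\subseteq\widetilde{N}$ (since $N_\alpha\subseteq U$ but $N_\alpha\not\subseteq\widetilde{N}$), so ``$\psi_X|_U=1$'' is not literally meaningful; what you are really computing is the constant term along $U\cap N$. The paper avoids a direct $\widetilde{U}$-Jacquet analysis by exploiting $V=N_\beta U=N_\alpha N$: first $(\theta(\varphi,f)_U)_{N_\beta}=(\theta(\varphi,f)_N)_{N_\alpha}$ vanishes by the Heisenberg computation plus Corollary~\ref{cor:mini_theta_cuspidality}, so $\theta(\varphi,f)_U$ is cuspidal on $L\cong\GL_2$; then for each nontrivial $\psi_\beta$ one computes $(\theta(\varphi,f)_U)_{N_\beta,\psi_\beta}=\theta(\varphi,f)_{V,\psi_V}$, which by the $\widetilde{N}$-Fourier expansion reduces to summing over $X\in\O_{\min}\cap p^{-1}(t,0,0,0)$ with $t\neq0$. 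Lemma~\ref{lemma:X^3=0orbits} identifies this fiber with $\{x\in J\mid x\circ x\circ x=0\}$ and, crucially, supplies a surjection $n:V'\twoheadrightarrow N_\alpha$ making the $N_\alpha$-average collapse onto a $V'$-period of $f$, which vanishes by cuspidality. Your proposed direct orbit analysis would have to reproduce this $V'\to N_\alpha$ mechanism; it does not fall out of the rank conditions alone.
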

To prove Theorem \ref{thm:cuspidal}, we must compute the constant terms of $\theta(\sigma)$ along the unipotent radicals of the two parabolic subgroups of $G$: the Heisenberg parabolic, $P=MN$, and the \emph{three-step} parabolic, $Q=LU$.

\subsubsection{Heisenberg parabolic}
We begin by computing $\theta(\sigma)_N$.

\begin{prop}\label{prop:Heisenberg_constant_term}
For any $f$ in $\mathcal{A}_{\cusp}(G'_J)$ and $\varphi$ in $\Omega$, we have
    $$\theta(\varphi, f)_N(g) = \int_{[G'_J]} \theta(\varphi)_{\widetilde N} (gg')\overline{f(g')} \dd{g'}.$$
    In particular, $\theta(\sigma)_{N}$ equals $
    \theta_0(\sigma)$.
\end{prop}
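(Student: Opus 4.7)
The plan is to compute $\theta(\varphi,f)_N(g)$ by Fourier expansion of the minimal representation along $[\widetilde N/Z]$. By definition and Fubini,
\[\theta(\varphi, f)_N(g) = \int_{[G'_J]} \theta(\varphi)_N(gg') \overline{f(g')} \dd{g'},\]
so I focus on computing $\theta(\varphi)_N$. Writing $\theta(\varphi)_N(h) = \int_{[N/Z]} \theta(\varphi)_Z(nh) \dd n$ and applying Proposition \ref{prop:FE_of_min}, I see that the $\widetilde N$-invariant term contributes $\theta(\varphi)_{\widetilde N}(h)$, while each $\psi_X$-coefficient survives the integration over $[N/Z]$ precisely when $\psi_X|_{N/Z}$ is trivial; by Lemma \ref{lem:p}, this is equivalent to $p(X)=0$. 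Combining with the defining conditions of $\O_{\min}$ (Proposition \ref{prop:min_orbit}) and with the equivalence $x^\#=T_J(x)=0 \iff x\circ x = 0$ from Lemma \ref{lemma: square zero matrices}, the surviving characters are indexed by
\[S = \{(0,x,y,0) \neq 0 \mid x\circ x = y\circ y = 0,\ l(x)\circ l^*(y) = 0\ \text{for all } l \in L_J(F)\},\]
so the first assertion reduces to showing $\sum_{X\in S}\int_{[G'_J]}\theta(\varphi)_{\widetilde N,\psi_X}(gg')\overline{f(g')}\dd{g'}=0$.

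This vanishing is the heart of the proof. I unfold the sum into $G'_J(F)$-orbits on $S$, using the $G'_J$-equivariance of the Fourier coefficients together with the fact that $G'_J$ acts trivially on $\X$, so $G'_J(F)$ preserves $S$. For any orbit representative $X_0 = (0,x,y,0)\in S$, Corollary \ref{cor:xcircy} applied to $l = \id$ (which gives $x\circ y = 0$) forces $x$ and $y$ to be $K$-linearly dependent; since both are Hermitian, they are in fact $F$-proportional, so $\Stab_{G'_J}X_0$ coincides with the stabilizer of the nonzero component of $X_0$. Lemma \ref{lemma: square zero matrices} then identifies this stabilizer as $(\R^1_{K/F}\G_m)\ltimes V'$, where $V'$ is the unipotent radical of a Borel of $G'_J$; in particular, it satisfies weak approximation. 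Lemma \ref{lem:stabilizerinvariance} applies to convert each orbit contribution into an integral over $(\Stab_{G'_J}X_0)(\A)\backslash G'_J(\A)$ carrying an inner period of $\overline{f}$ over $[\Stab_{G'_J}X_0]$. This inner period contains an integration of $\overline{f}$ over $[V']$, which vanishes by the cuspidality of $f$.

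For the ``in particular'' statement, I decompose $\theta(\varphi)_{\widetilde N}$ via Proposition \ref{prop:jacquet_functor_minimal}.(2) into its summands in $\omega\abs{-}^{-2}$ and $\theta_0(\Omega_{\widetilde M})\abs{-}^{-3/2}$. The character $\omega\abs{-}^{-2}$ restricts trivially to the semisimple group $G'_J$, so its contribution to the pairing against $\overline{f}$ over $[G'_J]$ reduces to a scalar multiple of $\int_{[G'_J]}\overline{f(g')}\dd{g'}$, which vanishes by orthogonality of cuspidal forms to constants. The remaining summand yields $\theta_0(\phi(\varphi), f)(g)$ (the restriction of $\abs{-}^{-3/2}$ from $\widetilde M$ to $M$ is absorbed into the normalization of the constant term), giving $\theta(\sigma)_N = \theta_0(\sigma)$.

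The main obstacle is the vanishing argument for the auxiliary Fourier coefficients. It requires assembling the description of $S$ from Lemma \ref{lem:p}, Proposition \ref{prop:min_orbit}, and Lemma \ref{lemma: square zero matrices}; reducing the two-variable orbit problem to a one-variable one via Corollary \ref{cor:xcircy}; and identifying the stabilizers as $(\R^1_{K/F}\G_m)\ltimes V'$ so that cuspidality of $f$ can be exploited through Lemma \ref{lem:stabilizerinvariance}. Once this geometric picture is in place, the rest of the argument is a routine cuspidal unfolding.
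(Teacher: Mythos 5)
Your proof follows essentially the same route as the paper's: reduce to $g=1$, expand $\theta(\varphi)_Z$ by Proposition \ref{prop:FE_of_min}, observe that integrating over $[N/Z]$ kills all $\psi_X$ with $p(X)\neq 0$, identify the surviving orbit $\O_0(F)$, unfold by $G'_J(F)$-orbits, identify the stabilizers as $(\R^1_{K/F}\G_m)\ltimes V'$ via Lemma \ref{lemma: square zero matrices} and Corollary \ref{cor:xcircy}, and invoke weak approximation together with Lemma \ref{lem:stabilizerinvariance} so the inner period of $\overline{f}$ over $[V']$ vanishes by cuspidality. The paper packages the stabilizer computation into its Lemma \ref{lemma:stabilizer_of_v}, whereas you spell out the $F$-proportionality of $x$ and $y$ directly, but the content is identical. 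One small caveat on the "in particular" part, which the paper leaves implicit: the constant term here is unnormalized, so the second summand of Proposition \ref{prop:jacquet_functor_minimal}.(2) contributes $\theta_0(\phi,f)$ only up to the twist by $\abs{-}^{-3/2}|_{M}$; this is not "absorbed into the normalization" in the paper's conventions, but it is immaterial for the subsequent applications (vanishing of $\theta(\sigma)_N$ iff vanishing of $\theta_0(\sigma)$).
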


For this, we need to describe certain stabilizers for the $G'_J(F)$-action on the minimal orbit.

\begin{lemma}\label{lemma:stabilizer_of_v}
For any $X$ in $\O_0(F)\coloneqq\O_{\min}(F)\cap p^{-1}(0,0,0,0)$, its stabilizer in $G'_J$ is isomorphic to $(\R^1_{K/F}\G_m)\ltimes V'$, where $V'$ equals the unipotent radical of a Borel subgroup of $G'_J$.
\end{lemma}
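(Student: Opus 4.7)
The plan is to first describe $\mathcal{O}_0(F)$ explicitly, then reduce the stabilizer computation to that of a single nonzero trace-zero rank-$\leq 1$ element of $J$, where Lemma~\ref{lemma: square zero matrices} already provides the answer.

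First, I would combine Proposition~\ref{prop:min_orbit} with Lemma~\ref{lem:p}. Writing $X=(a,x,y,d)$, the condition $p(X)=0$ forces $a=d=0$ and $T_J(x)=T_J(y)=0$. The defining equations $x^\#=ay$ and $y^\#=dx$ from Proposition~\ref{prop:min_orbit} collapse to $x^\#=0$ and $y^\#=0$, which by the adjoint formula in Example~\ref{exmp:J} translate to $x$ and $y$ each having $K$-rank at most $1$ in $\mathrm{M}_3(K)$. Specializing the constraint $l(x)\circ l^*(y)=0$ to $l=\id$ gives $x\circ y=0$. Applying Lemma~\ref{lemma: square zero matrices} to each of $x$ and $y$ yields $x\circ x=0$ and $y\circ y=0$, so Corollary~\ref{cor:xcircy} forces either one of $x,y$ to vanish, or else $x$ and $y$ to be $K$-linearly dependent in $\mathrm{M}_3(K)$. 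Writing $x=cv\prescript{t}{}{\overline{v}}$ and $y=c'v'\prescript{t}{}{\overline{v'}}$ as in Lemma~\ref{lemma: square zero matrices}.(3) and imposing Hermiticity shows that in the dependent case $y=\beta x$ for some $\beta\in F^\times$.

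Next, I would observe that $G'_J$ acts trivially on the two $F$-coordinates of $\widetilde\X$: under the identifications of \S\ref{ss:roots}, these are $\widetilde{\mathfrak{n}}_{-\beta}=\mathfrak{n}_{-\beta}$ and $\widetilde{\mathfrak{n}}_{-3\alpha-\beta}=\mathfrak{n}_{-3\alpha-\beta}$, both root spaces of $G$ on which $G'_J$ (being a centralizer of $G$) must act through an algebraic character of $G'_J$, which is trivial since $G'_J$ is semisimple. On the two $J$-coordinates $G'_J$ acts through its natural actions on $J$ and $J^*$, and since the trace pairing $T_J$ is $G'_J$-invariant, the identification $J\cong J^*$ is $G'_J$-equivariant. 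Consequently $\Stab_{G'_J}(0,x,y,0)=\Stab_{G'_J}(x)\cap\Stab_{G'_J}(y)$, and in each case above this intersection reduces to $\Stab_{G'_J}(z)$ for a single nonzero $z\in J$ satisfying $z\circ z=0$. The last sentence of Lemma~\ref{lemma: square zero matrices} then identifies this stabilizer with $(\R^1_{K/F}\G_m)\ltimes V'$.

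The main technical point is the reduction in the linearly-dependent case: one must use Hermiticity to force the scalar relating $y$ to $x$ to lie in $F^\times$ rather than $K^\times$, so that $\Stab_{G'_J}(x)\cap\Stab_{G'_J}(y)$ genuinely collapses to $\Stab_{G'_J}(x)$. Everything else is bookkeeping from the explicit description of $\mathcal{O}_{\min}$ and the two preceding algebraic lemmas.
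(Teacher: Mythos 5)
Your proof is correct and follows the same path as the paper's argument: extract the equations from Proposition~\ref{prop:min_orbit} and Lemma~\ref{lem:p}, invoke Lemma~\ref{lemma: square zero matrices} and Corollary~\ref{cor:xcircy} to force $K$-linear dependence, and conclude via the last sentence of Lemma~\ref{lemma: square zero matrices}. One small remark on your closing paragraph: the stabilizer collapse $\Stab_{G'_J}(x)\cap\Stab_{G'_J}(y)=\Stab_{G'_J}(x)$ when $y=\lambda x$ already holds for any $\lambda\in K^\times$ by $K$-linearity of the $G'_J$-action on $\mathrm{M}_3(K)$, so the observation that Hermiticity forces $\lambda\in F^\times$ is a consistency check (confirming $y\in J$) rather than the crux of the reduction.
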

\begin{proof}
By Lemma \ref{lem:p}, $X$ is nonzero and equals $(0,x,y,0)$ for some $x$ and $y$ in $J$ satisfying $T_J(x)=0$ and $T_J(y)=0$. By Proposition~\ref{prop:min_orbit}, we have $x\circ y=0$, $x^\#=0$, and $y^\#=0$. The formulas in Example \ref{exmp:J} then imply that $x$ and $y$ have $K$-rank at most $1$, so Lemma \ref{lemma: square zero matrices} and Corollary \ref{cor:xcircy} show that $x$ and $y$ are $K$-linearly dependent. Applying Lemma \ref{lemma: square zero matrices} again yields the desired result.
\end{proof}

\begin{remark}
If $G'_J$ is anisotropic, then Lemma \ref{lemma:stabilizer_of_v} shows that $\O_0(F)$ is empty.
\end{remark}

\begin{proof}[Proof of Proposition~\ref{prop:Heisenberg_constant_term}]
By replacing $\varphi$ with $g\cdot\varphi$, it suffices to consider the case $g=1$. We compute the constant term of $\theta(\varphi,f)$ along $N$:
    \begin{align*}
    \theta(\varphi, f)_{N}(1) 
         &= \int_{[N]} \int_{[G'_J]} \theta(\varphi)(ng') \overline{f(g')} \dd{g'}\dd{n} \\
         & = \int_{[G'_J]}\int_{[N/Z]} \left(\theta(\varphi)_{\widetilde N}(ng') + \sum_{X \in \mathcal{O}_{\min}(F)}\theta(\varphi)_{\widetilde{N},\psi_X}(ng')  \right)\overline{f(g')}\dd{n}\dd{g'}\quad\text{ by Proposition \ref{prop:FE_of_min}} \\ 
          & = \int_{[G'_J]} \theta(\varphi)_{\widetilde N}(g')\overline{f(g')}\dd{g'} + \int_{[G'_J]} 
           \sum_{X \in \mathcal{O}_0(F)} \theta(\varphi)_{\widetilde{N},\psi_X}(g')\overline{f(g')}\dd{g'}.
\end{align*}
We will show that the rightmost integral vanishes. Because the Killing form is $G'_J$-invariant, we can unfold this integral to obtain
\begin{align*}
\sum_{X\in G'_J(F)\backslash\O_0(F)}\int_{(\Stab_{G'_J}X)(\A_F)\backslash G'_J(\A_F)}\int_{[\Stab_{G'_J}X]}\theta(\varphi)_{\widetilde{N},\psi_X}(u'g')\overline{f(u'g')}\dd{u'}\dd{g'}.
\end{align*}
Write $\Stab_{G'_J}X\cong(\R_{K/F}^1\G_m)\ltimes V'$ as in Lemma \ref{lemma:stabilizer_of_v}. Since $\R_{K/F}^1\G_m$ and $V'$ satisfy weak approximation, $\Stab_{G'_J}X$ also satisfies weak approximation, so Lemma \ref{lem:stabilizerinvariance} implies that $\theta(\varphi)_{\widetilde{N},\psi_X}(u'g')=\theta(\varphi)_{\widetilde{N},\psi_X}(g')$. Therefore our expression becomes
\begin{align*}
\sum_{X\in G'_J(F)\backslash\O_0(F)}\int_{(\Stab_{G'_J}X)(\A_F)\backslash G'_J(\A_F)}\theta(\varphi)_{\widetilde{N},\psi_X}(g')\int_{[\Stab_{G'_J}X]}\overline{f(u'g')}\dd{u'}\dd{g'},
\end{align*}
which vanishes because $f$ is cuspidal.
\end{proof}
\subsubsection{Three-step parabolic}

Next, we want to compute $\theta(\sigma)_U$.
\begin{prop}\label{prop:three-step_constant_term}
For any $f$ in $\mathcal{A}_{\cusp}(G'_J)$ and $\varphi$ in $\Omega$, we have $\theta(\varphi, f)_{U} = 0.$
\end{prop}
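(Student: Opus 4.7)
The plan is to mimic the proof of Proposition~\ref{prop:Heisenberg_constant_term} but adapt it to the fact that $U$ is not contained in $\widetilde{N}$. After swapping integrals so that
\[
\theta(\varphi,f)_U(g) \;=\; \int_{[G'_J]} \theta(\varphi)_U(gg')\,\overline{f(g')}\,dg',
\]
decompose $U = N_\alpha \ltimes U_0$ where $U_0 \coloneqq U \cap \widetilde{N}$ contains the center $Z$. Then $\theta(\varphi)_U(g) = \int_{[N_\alpha]} \theta(\varphi)_{U_0}(n_\alpha g)\,dn_\alpha$, and since $Z \subseteq U_0 \subseteq \widetilde{N}$ with $U_0/Z$ abelian, Proposition~\ref{prop:FE_of_min} combined with Fourier analysis on $[U_0/Z]$ inside $[\widetilde{N}/Z]$ produces, up to a volume constant,
\[
\theta(\varphi)_{U_0}(h) \;=\; \theta(\varphi)_{\widetilde{N}}(h) \;+\; \sum_{X \in \mathcal{O}_U(F)} \theta(\varphi)_{\widetilde{N},\psi_X}(h),
\]
where $\mathcal{O}_U(F) \subseteq \mathcal{O}_{\min}(F)$ is the set of $X = (a,x,y,0)$ with $T_J(x) = T_J(y) = 0$ (characterized via Lemma~\ref{lem:p} as exactly the $\psi_X$'s trivial on $[U_0/Z]$).

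The $\theta(\varphi)_{\widetilde{N}}$-piece is handled as in the Heisenberg case. By Proposition~\ref{prop:jacquet_functor_minimal}.(2) it decomposes as a character summand $\omega\vert-\vert^{-2}$ (trivial on $G'_J$, hence orthogonal to cuspidal $f$) plus the mini-theta realization $\theta_0(\phi)\vert-\vert^{-3/2}$; integrating the latter against $\bar f$ yields the mini-theta lift $\theta_0(\phi,f)$ on $M \cong \GL_2$, which is cuspidal by Corollary~\ref{cor:mini_theta_cuspidality}. The remaining $[N_\alpha]$-integration then extracts its constant term along the Borel unipotent $N_\alpha \subseteq M$, which vanishes.

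For the $\mathcal{O}_U$-contribution, unfold over $G'_J(F) \times N_\alpha(F)$-orbits and apply Lemma~\ref{lem:stabilizerinvariance} (using weak approximation of the relevant stabilizers) to factor each term through a period of $f$ over $[\Stab_{G'_J}(X_0)]$. Using Proposition~\ref{prop:min_orbit} and Lemma~\ref{lemma: square zero matrices}, the orbits split into: (i) those where $x$ or $y$ has $K$-rank $1$, whose stabilizer contains the unipotent radical $V'$ of a Borel of $G'_J$, so cuspidality of $f$ kills the $[V']$-period; (ii) the orbit of $(a,0,0,0)$ with $a \ne 0$, whose stabilizer is all of $G'_J$ and for which $\int_{[G'_J]} f = 0$ since cuspidal representations are orthogonal to the trivial representation; and (iii) a residual class with $a \ne 0$, $x$ of $K$-rank $2$, and $y = x^\#/a$ of $K$-rank $1$, whose $G'_J$-stabilizer is only reductive.

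The main obstacle is case (iii). The plan is to exploit the fact that such $X_0$ are not $N_\alpha(F)$-fixed and to combine the $N_\alpha(F)$-orbit sum with the $[N_\alpha]$-integration; after unfolding, this yields an integral $\int_{N_\alpha(\A_F)} \theta(\varphi)_{\widetilde{N},\psi_{X_0}}(n_\alpha g g')\,dn_\alpha$, which I expect to vanish by a wave-front-set-type argument: it computes a Jacquet integral of the minimal representation $\Omega$ along the unipotent $\widetilde{N} \rtimes N_\alpha$ with a twist not supported on the minimal nilpotent orbit, extending Proposition~\ref{prop:minorbit}.(4) from $\widetilde{N}$ to this larger group. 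Carrying out this last step rigorously --- either by direct local computation at each place or by identifying the integral as a degenerate Whittaker integral known to vanish on $\Omega$ --- is the crux of the proof.
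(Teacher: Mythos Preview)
Your overall strategy---swap integrals, use the Fourier expansion of $\theta(\varphi)$ along a subgroup of $\widetilde{N}$, and kill each piece via cuspidality of $f$---matches the paper's. The $\widetilde{N}$-constant-term piece and your cases (i)--(ii) are fine. But case (iii) is a genuine gap, and your proposed fix is wrong.

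The adelic integral $\int_{N_\alpha(\A_F)}\theta(\varphi)_{\widetilde N,\psi_{X_0}}(n_\alpha gg')\,dn_\alpha$ does \emph{not} vanish by any wave-front argument. Concretely, $N_\alpha\subseteq\widetilde M$ normalizes $\widetilde N$ and acts on $\widetilde\X$; translating by $n_\alpha$ replaces $X_0$ by $n_\alpha\cdot X_0$, which stays in $\O_{\min}$. So this integral is a sum (or integral) of nonzero Whittaker functionals of $\Omega$ attached to points of the minimal orbit, not a degenerate functional attached to a larger orbit. There is no reason for it to vanish, and Proposition~\ref{prop:minorbit}.(4) does not extend in the way you suggest.

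The paper handles your case (iii) by a different and sharper mechanism. Rather than decomposing $U=N_\alpha\ltimes U_0$, it uses $V=N_\beta U=N_\alpha N$: one first observes that $(\theta(\varphi,f)_U)_{N_\beta}=\theta_0(\varphi,f)_{N_\alpha}=0$ by Corollary~\ref{cor:mini_theta_cuspidality}, so $\theta(\varphi,f)_U\in\mathcal A_{\cusp}(L)$, and it remains to kill $\theta(\varphi,f)_{V,\psi_V}$ for every $\psi_V$ that is \emph{nontrivial on $N_\beta$ and trivial on $N_\alpha$}. This restricts the relevant minimal-orbit slice to $\O_t=\O_{\min}\cap p^{-1}(t,0,0,0)$ with $t\neq0$, which Lemma~\ref{lemma:X^3=0orbits}.(1) parametrizes by $\{x\in J:x\circ x\circ x=0\}$ via $\phi_t$. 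The rank-$\leq 1$ case ($x\circ x=0$) is your case (i). For the rank-$2$ case ($x\circ x\neq0$), the key is Lemma~\ref{lemma:X^3=0orbits}.(2): there is a surjection $n\colon V'\twoheadrightarrow N_\alpha$ with $\phi_t(u'\cdot x)=n(u')\cdot\phi_t(x)$. Since $\psi_V$ is trivial on $N_\alpha$, this forces $g'\mapsto\theta(\varphi)_{\widetilde N,\psi_X}(g')$ to be $V'(\A_F)$-invariant, so the inner integral factors through $\int_{[V']}\overline{f(v'g')}\,dv'=0$.

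In your language: the stabilizer of $X_0$ in $G'_J\times N_\alpha$ (not in $G'_J$ alone) contains a copy of $V'$, embedded via $u'\mapsto(u',n(u')^{-1})$. Had you unfolded jointly over $(G'_J\times N_\alpha)(F)$-orbits and identified this stabilizer, cuspidality of $f$ would finish the job. But recognizing this requires precisely the content of Lemma~\ref{lemma:X^3=0orbits}.(2), which is the missing idea in your proposal.
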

To prove Proposition \ref{prop:three-step_constant_term}, we adapt the argument from the split case in \cite{RealAndGlobal}.

\begin{lemma}\label{lemma:X^3=0orbits}
For any $t$ in $F ^\times $, write $$\phi_t: J\to\widetilde {\mathbb X} $$ for the map of algebraic varieties given by $x\mapsto (t, x, t ^ {-1} x ^\#, 0) $. Then
\begin{enumerate}
\item $\phi_t $ restricts to a $G'_J$-equivariant isomorphism of varieties over $F$ $$\left\{x\in J\mid x\circ x\circ x = 0\right\}\xrightarrow {\sim}\O_t \coloneqq \O_{\min}\cap p ^ {-1} (t, 0, 0, 0). $$
\item for any $x$ in $J$ satisfying $x\circ x\circ x = 0 $ and $x\circ x\neq 0 $, consider the Borel subgroup of $G'_J$ associated with $\ker(x)\subseteq\ker(x\circ x) $, and write $V'$ for its unipotent radical. Then there exists a surjective morphism $n: V'\twoheadrightarrow N_\alpha$ of groups over $F$ such that $$\phi_t (u'\cdot x) = n (u')\cdot\phi_t (x)\;\; \text{ for all $u'$ in $V'$.} $$
Moreover, $\phi_t$ induces a bijection $V'(F)\cdot x\xrightarrow{\sim} N_\alpha(F)\cdot\phi_t (x) $.
\end{enumerate}
\end{lemma}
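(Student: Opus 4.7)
The plan is to work in the matrix realization of Example \ref{exmp:J}, where for $x \in J$ the endomorphism $A \coloneqq xe^{-1} \in \mathrm{M}_3(K)$ satisfies $x \circ x = A^2 e$ and $x \circ x \circ x = A^3 e$. Thus $x \circ x \circ x = 0$ if and only if $A^3 = 0$, which by Cayley--Hamilton is equivalent to $T_J(x) = T_J(x^\#) = N_J(x) = 0$, these being respectively $\tr(A)$, $\tr(\adj A)$, and $\det A$. I will also use the Freudenthal identities $(x^\#)^\# = N_J(x)\,x$ and $x \circ x^\# = N_J(x)\,e$, both immediate from the formulas in Example \ref{exmp:J}.

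For part (1), to show that $\phi_t$ lands in $\mathcal{O}_t$ when $x \circ x \circ x = 0$, I would check the conditions of Proposition \ref{prop:min_orbit} for $\phi_t(x) = (t, x, t^{-1}x^\#, 0)$: the relation $x^\# = ty$ is tautological, the relation $y^\# = dx = 0$ reduces to $(x^\#)^\# = N_J(x)\,x = 0$, the trace conditions $T_J(x) = T_J(y) = 0$ follow from the nilpotency of $A$ and Lemma \ref{lem:p}, and the condition $l(x) \circ l^*(y) = ad = 0$ for all $l \in L_J(F)$ follows from Lemma \ref{lemma:l(x)circlstar(y)} once one verifies $xe^{-1}y = t^{-1}N_J(x)\,e = 0$ and $ye^{-1}x = 0$. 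Injectivity of $\phi_t$ is immediate from the second coordinate, and for surjectivity onto $\mathcal{O}_t$ I would read off $y = t^{-1}x^\#$ from $x^\# = ty$ and reverse the argument: $y^\# = 0$ gives $N_J(x)\,x = 0$; specializing $l = \id$ in $l(x) \circ l^*(y) = 0$ gives $x \circ x^\# = N_J(x)\,e = 0$, hence $N_J(x) = 0$; and the trace conditions from Lemma \ref{lem:p} give $T_J(x) = T_J(x^\#) = 0$, so all three invariants of $A$ vanish and $x \circ x \circ x = 0$. The $G'_J$-equivariance is automatic since $G'_J \subseteq \underline{\Aut}(J)$ preserves $\#$ and acts on $\widetilde\X$ componentwise on the two $J$-coordinates.

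For part (2), the assumption $x \circ x \neq 0$ forces $A$ to be a rank-$2$ nilpotent with a single $3 \times 3$ Jordan block over $\bar K$, so $\ker(x)$ is $1$-dimensional and $\ker(x \circ x)$ is $2$-dimensional. I would check that $\ker(x)$ is isotropic with $\ker(x \circ x) = \ker(x)^\perp$ for the Hermitian form on $K^3$ induced by $e$, using the Hermiticity relation $e\bar{x}^t = xe$; this identifies the stabilizer of the flag with a Borel of $G'_J$ whose unipotent radical is $V'$. Choosing an adapted basis, I would compute the action of $V'$ on $x$ explicitly and compare it to the adjoint action of $N_\alpha \subseteq \widetilde{M}$ on $\widetilde\X$, which via the bracket maps $[\mathfrak{n}_\alpha, \widetilde{\mathfrak{n}}_{-k\alpha - \beta}] \subseteq \widetilde{\mathfrak{n}}_{-(k-1)\alpha - \beta}$ shifts the coordinates of $(t, x, y, 0)$ consistently. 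Matching infinitesimal actions on the $\phi_t$-images produces a Lie algebra surjection $\mathfrak{v}' \twoheadrightarrow \mathfrak{n}_\alpha$ whose integrated form $n : V' \twoheadrightarrow N_\alpha$ has the stated intertwining property, and the bijection $V'(F) \cdot x \xrightarrow{\sim} N_\alpha(F) \cdot \phi_t(x)$ follows from the injectivity of $\phi_t$ together with both orbits being one-dimensional. The main obstacle will be the bookkeeping in part (2): pinning down the isotropy structure of the flag and matching the $V'$-action on $x$ against the adjoint action of $N_\alpha$ on $\widetilde\X$ to pin down $n$; part (1) is essentially assembling the Freudenthal identities with Proposition \ref{prop:min_orbit}, Lemma \ref{lem:p}, and the already-cited Lemma \ref{lemma:l(x)circlstar(y)}.
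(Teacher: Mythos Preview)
Your approach to part (1) is essentially the same as the paper's: assemble the Freudenthal identities with Proposition~\ref{prop:min_orbit}, Lemma~\ref{lem:p}, and Lemma~\ref{lemma:l(x)circlstar(y)}, then invert via Cayley--Hamilton. This is correct.

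For part (2), you take a genuinely different route. The paper does not carry out the explicit matrix computation you propose; instead it observes that over $K$ everything is split, invokes the already-established split case \cite[Lemma 6.1]{RealAndGlobal} to obtain $n$ over $K$, and then runs a short Galois descent argument to show that $n$ is defined over $F$ (using that $\Stab_{N_\alpha}\phi_t(x)$ is trivial, from \cite[(6.12)]{RealAndGlobal}). Your direct computation would also work and is more self-contained, but the paper's method avoids the bookkeeping entirely by outsourcing it to the split case.

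Your justification for the final bijection has a gap. Injectivity of $\phi_t$ plus ``both orbits being one-dimensional'' is not sufficient over a general field $F$: you need $n(V'(F)) = N_\alpha(F)$, i.e.\ surjectivity of $n$ on $F$-points, which does not follow from dimension alone. The paper handles this by noting that $\ker n \subseteq V'$ is unipotent, so $H^1(F,\ker n) = 0$ and hence $V'(F) \twoheadrightarrow N_\alpha(F)$. You should add this cohomological step (or equivalently argue directly that a surjection of unipotent groups over $F$ is surjective on $F$-points).
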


\begin{proof}
Let $x$ in $J$ satisfy $x\circ x\circ x=0$. The formulas in Example \ref{exmp:J} imply that $T_J(x)=0$ and $N_J(x)=0$, so Lemma \ref{lem:p} shows that $\phi_t(x)$ lies in $p^{-1}(t,0,0,0)$. Proposition \ref{prop:min_orbit} and Lemma \ref{lemma:l(x)circlstar(y)} show $\phi_t(x)$ lies in $\O_{\min}$. 

Conversely, Proposition \ref{prop:min_orbit} implies that any $(t,x,y,0)$ in $\O_t$ satisfies $y=t^{-1}x^\#$ and hence
\begin{align*}
0=x\circ y=t^{-1}x\circ x^\# = t^{-1}N_J(x),
\end{align*}
so $N_J(x)=0$. Lemma \ref{lem:p} shows that $T_J(x)=0$ and $T_J(x^\#)=0$ also, so the characteristic polynomial of $x$ becomes $0=x\circ x\circ x-T_J(x)x\circ x+T_J(x^\#)x-N_J(x)=x\circ x\circ x$. This proves part (1).

For part (2), we first observe that since the groups are split over $K$, such a morphism $n$ exists over $K$ by \cite[Lemma 6.1]{RealAndGlobal}. Therefore it suffices to show that $n$ descends to $F$. Indeed, for $u'$ in $V'(K)$ we have $$\phi_t (\overline{u'}\cdot x) =\overline {\phi_t (u'\cdot x)} =\overline {n (u')\cdot\phi_t (x)} =\overline {n (u')}\cdot\phi_t (x), $$
so $n (\overline{u'})\overline {n (u')} ^ {-1} $ acts trivially on $\phi_t (x) $. Now \cite[(6.12)]{RealAndGlobal} implies that $\Stab_{N_\alpha}\phi_t(x)$ is trivial, letting us conclude that $n (\overline u) =\overline {n (u)} $. For the last claim, note that $H^1 (F, \ker n) = 0 $ since $\ker n\subseteq V'$ is unipotent.
\end{proof}

Recall from \S\ref{ss:parabolics} that $V$ is a maximal unipotent subgroup of $G$, and that $L\cong\GL_2$.
\begin{proof}[Proof of Proposition~\ref{prop:three-step_constant_term}]
View $\theta(\varphi,f)_U$ as an element of $\mathcal{A}(L)$, and note that $N_\beta$ is a maximal unipotent subgroup of $L$. Since $UN_\beta=V=NN_\alpha$, we have
\begin{align*}
(\theta(\varphi, f)_{U})_{N_\beta} = \theta(\varphi, f)_V = (\theta(\varphi, f)_{N})_{N_\alpha} = \theta_0(\varphi,f)_{N_\alpha}
\end{align*}
by Proposition \ref{prop:Heisenberg_constant_term}. As $N_\alpha$ is a maximal unipotent subgroup of $M$, Corollary \ref{cor:mini_theta_cuspidality} shows that this vanishes, so $\theta(\varphi,f)_U$ lies in $\mathcal{A}_{\cusp}(L)$.

Note that inclusion induces an isomorphism $N_\alpha\times N_\beta\ra^\sim V^{\mathrm{ab}}$. Hence every unitary character $\psi_\beta:[N_\beta]\ra S^1$ extends uniquely to a unitary character $\psi_V:[V^{\mathrm{ab}}]\ra S^1$ that is trivial on $[N_\alpha]$. We claim that, for every nontrivial such $\psi_\beta$, we have $\theta(\varphi,f)_{V,\psi_V}=0$. This would imply that
\begin{align*}
(\theta(\varphi,f)_U)_{N_\beta,\psi_\beta} = \theta(\varphi,f)_{V,\psi_V}=0,
\end{align*}
and as $\psi_\beta$ runs over all nontrivial unitary characters of $[N_\beta]$, the Fourier expansion of $\theta(\varphi,f)_U$ would show that $\theta(\varphi,f)_U$ vanishes.

Let us turn to the claim. Under our identifications in \S\ref{ss:roots} and \S\ref{ss:globaltheta}, the restriction of $\psi_V$ to $[N]$ corresponds to $(t,0,0,0)$ for some $t$ in $F^\times$. By replacing $\varphi$ with $g\cdot\varphi$, it suffices to evaluate $\theta(\varphi,f)_{V,\psi_V}$ at $g=1$. We get:
    \begin{align*}
    \theta(\varphi, f)_{V,\psi_V}(1) 
         &= \int_{[V]} \int_{[G'_J]} \psi_V(v)^{-1}\theta(\varphi)(vg') \overline{f(g')} \dd{g'}\dd{v} \\
         & = \int_{[G'_J]}\int_{[V/Z]}\psi_V(v)^{-1} \left(\theta(\varphi)_{\widetilde N}(vg') + \sum_{X \in \mathcal{O}_{\min}(F)}\theta(\varphi)_{\widetilde{N},\psi_X}(vg')  \right)\overline{f(g')}\dd{v}\dd{g'}\quad\text{ by Proposition \ref{prop:FE_of_min}} \\ 
          & = \int_{[G'_J]}\sum_{X\in\O_t(F)}\theta(\varphi)_{\widetilde{N},\psi_X}(g')\overline{f(g')}\dd{g'}.
\end{align*}
Because the Killing form is $G'_J$-invariant, we can unfold this integral to obtain
\begin{align*}
\sum_{X\in G'_J(F)\backslash\O_t(F)}\int_{(\Stab_{G'_J}X)(\A_F)\backslash G'_J(\A_F)}\int_{[\Stab_{G'_J}X]}\theta(\varphi)_{\widetilde{N},\psi_X}(v'g')\overline{f(v'g')}\dd{v'}\dd{g'}.
\end{align*}
For $X$ that corresponds to $x$ in $J$ under Lemma \ref{lemma:X^3=0orbits}.(1) satisfying $x\circ x=0$, Lemma \ref{lemma: square zero matrices} and the proof of Proposition \ref{prop:Heisenberg_constant_term} imply that the associated integral vanishes. So consider the case where $x\circ x\neq0$. Note that $\Stab_{G'_J}X$ lies in $V'$ as in Lemma \ref{lemma:X^3=0orbits}.(2), so $\Stab_{G'_J}X$ is unipotent. Therefore $\Stab_{G'_J}X$ satisfies weak approximation, so Lemma \ref{lem:stabilizerinvariance} implies that $\theta(\varphi)_{\widetilde{N},\psi_X}(v'g')=\theta(\varphi)_{\widetilde{N},\psi_X}(g')$. Hence the corresponding term in the above sum equals
\begin{align*}
\int_{(\Stab_{G'_J}X)(\A_F)\backslash G'_J(\A_F)}\theta(\varphi)_{\widetilde{N},\psi_X}(g')\int_{[\Stab_{G'_J}X]}\overline{f(v'g')}\dd{v'}\dd{g'}.
\end{align*}
Since $\psi_V$ is trivial on $[N_\alpha]$, Lemma \ref{lemma:X^3=0orbits}.(2) and $G'_J$-invariance of the Killing form imply that $\theta(\varphi)_{\widetilde{N},\psi_X}(g')$ is $[V']$-invariant. Therefore this term equals
\begin{align*}
\int_{V'(\A_F)\backslash G'_J(\A_F)}\theta(\varphi)_{\widetilde{N},\psi_X}(g')\int_{[V']}\overline{f(v'g')}\dd{v'}\dd{g'},
\end{align*}
which vanishes because $f$ is cuspidal.
\end{proof}
To conclude, note that Theorem \ref{thm:cuspidal} follows from Proposition \ref{prop:Heisenberg_constant_term} and Proposition \ref{prop:three-step_constant_term}.

\section{From $\PU_3$ to $\mathsf{G}_2$: non-vanishing}\label{sec:non-vanishing}
We begin this section by proving a criterion for the exceptional theta lift of cuspidal Howe--Piatetski-Shapiro representations $\sigma$ to not vanish. To do this, we rewrite the generic Fourier coefficients of $\theta(\sigma)$ along the Heisenberg parabolic in terms of torus periods of $\sigma$ along the subtori of $\PU_3$ considered in \S\ref{ss:subtori}. By using our local results from \S\ref{ss:U3HPSpadic} and \S\ref{ss:U3HPSarch}, we obtain the desired criterion.

Next, we spend the rest of this section studying precisely when this criterion is met. Using our global results from \S\ref{ss:PU3HPS}, this amounts to carefully analyzing certain local root numbers while simultaneously finding certain nonvanishing $L$-values. Our study lets us complete the proof of Theorem \ref{thmB}.

\subsection{Fourier coefficients and torus periods} Recall from \S\ref{ss:subtori} that $E$ is a cubic \'etale $F$-algebra, which induces a $2$-dimensional torus $T_E$ over $F$, and recall that any embedding $i:E\hookrightarrow J$ of $F$-algebras induces an injective morphism $i:T_E\hookrightarrow G'_J$ over $F$.

Fix a generator $t$ of $E$ over $F$. Write $f(x)=x^3-bx^2+cx-d$ for its normalized characteristic polynomial over $F$, and under our identifications in \S\ref{ss:roots}, also write $E$ for the element $(1,b,c,d)$ of $\X$.

\begin{lemma}\label{lem:G'orbitsaretori}
We have a natural $G'_J$-equivariant isomorphism of varieties over $F$
\begin{align*}
\{F\mbox{-algebra embeddings }E\hookrightarrow J\}\ra^\sim \O_{\min}\cap p^{-1}(E)
\end{align*}
given by $i\mapsto X\coloneqq(1,i(t),i(t)^\#,N_J(i(t)))$. Under this correspondence, $\Stab_{G'_J}X$ equals the image of $T_E$ under $i:T_E\hookrightarrow G'_J$.
\end{lemma}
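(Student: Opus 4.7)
The approach is to write down the map explicitly as
\[\Phi: i \longmapsto X_i \coloneqq (1, i(t), i(t)^\#, N_J(i(t))),\]
verify it is a $G'_J$-equivariant bijection onto $\O_{\min} \cap p^{-1}(E)$, and then compute the stabilizer of $X_i$ inside $G'_J$. First I would check that $X_i$ lies in $\O_{\min}$. Using Proposition~\ref{prop:min_orbit} with $(a, x, y, d) = (1, i(t), i(t)^\#, N_J(i(t)))$, the equation $x^\# = ay$ is tautological, $y^\# = dx$ reduces to the standard Freudenthal identity $(x^\#)^\# = N_J(x)\cdot x$, and $l(x) \circ l^*(y) = ad$ for all $l \in L_J(F)$ reduces by $L_J$-invariance of $N_J$ to the $l = \id$ case $x \circ x^\# = N_J(x)\cdot e$. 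The equality $p(X_i) = E$ then follows from Lemma~\ref{lem:p} together with the fact that an $F$-algebra embedding $i: E \hookrightarrow J$ transports the Jordan invariants of $t \in E$ to those of $i(t) \in J$, so that the tuple read off by $p$ matches $(1, b, c, d) = E$ by the definition of $f$. Equivariance is immediate: $G'_J \subseteq \underline{\Aut}(J)$ preserves $\#$ and $N_J$, giving $g \cdot X_i = X_{g \circ i}$.

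For the bijection, injectivity is clear since $X_i$ determines $i(t)$ and $t$ generates $E$ over $F$. For surjectivity, given $(1, x, y, d') \in \O_{\min} \cap p^{-1}(E)$, Proposition~\ref{prop:min_orbit} gives $y = x^\#$, and combining $y^\# = d' x$ with $(x^\#)^\# = N_J(x) x$ forces $N_J(x) = d'$ (using $x \neq 0$, which is automatic since $\O_{\min}$ excludes the origin). Together with $T_J(x) = b$ and $T_J(x^\#) = c$ from Lemma~\ref{lem:p}, this means $x$ shares the Jordan-algebra invariants of $t$, so it satisfies the Jordan form of $f$; hence $t \mapsto x$ uniquely extends to an $F$-algebra embedding $i: E \hookrightarrow J$ with $X_i$ equal to the prescribed tuple.

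For the stabilizer assertion, $g \in G'_J$ fixes $X_i$ iff $g$ fixes $i(t)$, and since $g$ is a Jordan automorphism and $t$ generates $E$, this is equivalent to $g$ pointwise fixing the subalgebra $i(E) \subseteq J$. Working inside the ambient $B \cong \mathrm{M}_3(K)$ with $J = B^{\iota = 1}$, the subalgebra $i(E)$ extends $K$-linearly to a maximal commutative étale $K$-subalgebra $L = E \otimes_F K \subseteq B$ preserved by $\iota$, whose centralizer in $B$ is $L$ itself. The pointwise stabilizer of $i(E)$ in $\PU_3 = G'_J$ is therefore the $\iota$-fixed unitary elements of $L^\times$ modulo the center, which is precisely $T_E = \coker(\R^1_{K/F}\G_m \to \R_{E/F}\R^1_{L/E}\G_m)$ acting via the embedding $i: T_E \hookrightarrow G'_J$ of \S\ref{ss:subtori}. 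The main obstacle I anticipate is matching this stabilizer scheme-theoretically (not just on $F$-points) with $i(T_E)$, compatibly with the Galois-cohomological identifications of Lemma~\ref{lem:embeddingsGaloiscohom}; the other steps reduce to standard Freudenthal-algebra manipulations.
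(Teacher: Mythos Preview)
Your argument is correct and follows essentially the same route as the paper: verify membership in $\O_{\min}\cap p^{-1}(E)$ via Proposition~\ref{prop:min_orbit} and Lemma~\ref{lem:p}, invert the map using the generic minimal polynomial, and identify the stabilizer as the centralizer of the maximal \'etale $K$-subalgebra $L=E\otimes_FK$ inside $B$. One small slip: in the surjectivity step you justify $x\neq 0$ by saying ``$\O_{\min}$ excludes the origin,'' but $(1,0,0,d')$ is not the origin of $\widetilde\X$; the correct reason is that $x=0$ forces $b=c=0$ and (via the condition $l(x)\circ l^*(y)=ad$) also $d=0$, whence $f=X^3$, contradicting that $E$ is \'etale.
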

\begin{proof}
The proof of Lemma \ref{lemma:l(x)circlstar(y)} shows that, for all $l$ in $L_J(F)$, we have $l^*(i(t)^\#)=l(i(t))^\#$. Since $l$ preserves $N_J$, Proposition \ref{prop:min_orbit} and Lemma \ref{lem:p} imply that our morphism is valued in $\O_{\min}\cap p^{-1}(E)$. Because $f$ is separable, we see that it is even an isomorphism.

Note that $J\otimes_FK$ is isomorphic to $\mathrm{M}_3(K)$ as $K$-algebras with involution, so $i$ induces a embedding $E\otimes_FK\hookrightarrow\mathrm{M}_3(K)$ of $K$-algebras with involution. Now $\Stab_{G'_J}i$ equals the image in $G'_J$ of the intersection of $\U_3$ with the commutator of $E\otimes_FK$ in $\mathrm{M}_3(K)$. By dimension counting, $E\otimes_FK$ is a maximal commutative $K$-subalgebra of $\mathrm{M}_3(K)$, so it equals its own commutator. Finally, since $E\otimes_FK\hookrightarrow\mathrm{M}_3(K)$ commutes with the involution, we see that $\Stab_{G'_J}i$ and hence $\Stab_{G'_J}X$ has the desired form.
\end{proof}

For the rest of this subsection, assume that $F$ is a number field, and assume that $K$ is a field.
\begin{prop}\label{prop:Fouriercoefficient1}
For any $f$ in $\mathcal{A}_{\cusp}(G'_J)$ and $\varphi$ in $\Omega$, we have
\begin{align*}
\theta(\varphi,f)_{N,\psi_E}(1)=\sum_i\int_{i(T_E)(\A_F)\backslash G'_J(\A_F)}\theta(\varphi)_{\widetilde{N},\psi_{X}}(g')\int_{[i(T_E)]}\overline{f(t'g')}\dd{t'}\dd{g'},
\end{align*}
where $i$ runs over $G'_J(F)$-conjugacy clases of $F$-algebra embeddings $E\hookrightarrow J$.
\end{prop}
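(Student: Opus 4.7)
The plan is to follow the template of Proposition~\ref{prop:Heisenberg_constant_term}, but tracking the contributions from a nontrivial character of the Heisenberg unipotent. First I would expand the definition of the Fourier coefficient and exchange the order of integration:
\begin{align*}
\theta(\varphi,f)_{N,\psi_E}(1)=\int_{[G'_J]}\int_{[N/Z]}\psi_E(n)^{-1}\theta(\varphi)_Z(ng')\,dn\,\overline{f(g')}\,dg',
\end{align*}
using that $\theta(\varphi)$ is left $Z(F)$-invariant. Then I would apply the Fourier expansion in Proposition~\ref{prop:FE_of_min} to decompose $\theta(\varphi)_Z$ into its constant term $\theta(\varphi)_{\widetilde N}$ and the twisted pieces $\theta(\varphi)_{\widetilde N,\psi_Y}$ for $Y\in\O_{\min}(F)$.

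Next I would identify which terms of this expansion survive integration against $\psi_E^{-1}$ over $[N/Z]$. Since $E=(1,b,c,d)\in\X$ has nontrivial first coordinate, the character $\psi_E$ restricted from $\widetilde N$ to $N$ is nontrivial, so the constant-term contribution vanishes. For the twisted terms, $\psi_Y|_N$ corresponds via Lemma~\ref{lem:p} to the image $p(Y)\in\X$, so only $Y\in\O_{\min}(F)\cap p^{-1}(E)$ contribute. By Lemma~\ref{lem:G'orbitsaretori}, this locus is $G'_J$-equivariantly identified with the set of $F$-algebra embeddings $i:E\hookrightarrow J$, with $\Stab_{G'_J}Y=i(T_E)$. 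Collecting, we arrive at
\begin{align*}
\theta(\varphi,f)_{N,\psi_E}(1)=\sum_{\text{embeddings }i}\int_{[G'_J]}\theta(\varphi)_{\widetilde N,\psi_{X_i}}(g')\,\overline{f(g')}\,dg',
\end{align*}
where $X_i$ denotes the point attached to $i$ by Lemma~\ref{lem:G'orbitsaretori}.

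The final step is to group embeddings into $G'_J(F)$-conjugacy classes and unfold. The $G'_J$-invariance of the Killing form combined with the left $\widetilde G(F)$-equivariance of $\theta(\varphi)$ yields the transformation rule $\theta(\varphi)_{\widetilde N,\psi_{g\cdot X_i}}(h)=\theta(\varphi)_{\widetilde N,\psi_{X_i}}(g^{-1}h)$ for $g\in G'_J(F)$. Summing over the $G'_J(F)$-orbit of a fixed representative $X_i$ is therefore a standard Poincar\'e-series construction, and the usual unfolding identity allows us to replace an integral over $[G'_J]$ with one over $i(T_E)(F)\backslash G'_J(\A_F)$. Decomposing this latter domain as $i(T_E)(F)\backslash i(T_E)(\A_F)$ fibered over $i(T_E)(\A_F)\backslash G'_J(\A_F)$ and applying Lemma~\ref{lem:stabilizerinvariance} to pull the (now left $i(T_E)(\A_F)$-invariant) Fourier coefficient $\theta(\varphi)_{\widetilde N,\psi_{X_i}}$ out of the inner integral produces exactly the claimed formula.

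The main technical obstacle is the application of Lemma~\ref{lem:stabilizerinvariance}, which requires $i(T_E)\cong T_E$ to satisfy weak approximation; unlike the unipotent-by-norm-one stabilizers appearing in Proposition~\ref{prop:Heisenberg_constant_term}, a cubic torus like $T_E$ need not automatically do so, and this will need to be either assumed or verified using the specific structure $T_E=\coker(\R^1_{K/F}\G_m\rightarrow\R_{E/F}\R^1_{L/E}\G_m)$. The remaining care is routine: justifying absolute convergence of the intermediate integrals (using that $f$ is cuspidal and that $\theta(\varphi)$ is of moderate growth) and keeping track of the normalization of Haar measures on $[N/Z]$.
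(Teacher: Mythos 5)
Your plan follows the paper's proof almost step for step: expand the Fourier coefficient, apply Proposition~\ref{prop:FE_of_min}, observe that only the terms with $p(X)=E$ survive integration against $\psi_E^{-1}$ over $[N/Z]$ (and the $\widetilde N$-constant term drops because $E\neq 0$), identify $\O_{\min}(F)\cap p^{-1}(E)$ with embeddings $i:E\hookrightarrow J$ via Lemma~\ref{lem:G'orbitsaretori}, unfold using $G'_J$-invariance of the Killing form, and finally invoke Lemma~\ref{lem:stabilizerinvariance}. That is exactly the paper's argument.

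The one point you leave open is the weak-approximation hypothesis needed for Lemma~\ref{lem:stabilizerinvariance}. You are right to flag it, but the resolution is cleaner than you suggest: $T_E$ is a $2$-dimensional torus (it is $\coker(\R^1_{K/F}\G_m\to\R_{E/F}\R^1_{L/E}\G_m)$, a $3$-dimensional torus modulo a $1$-dimensional one), not a ``cubic torus,'' and every torus of dimension $\leq 2$ is $F$-rational by a theorem of Voskresenskii, hence satisfies weak approximation. This is precisely what the paper cites at this step. With that observation supplied, your proof is complete and coincides with the paper's.
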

\begin{proof}
By Proposition \ref{prop:FE_of_min}, we have
\begin{align*}
\theta(\varphi,f)_{N,\psi_E}(1) &= \int_{[N]}\int_{[G'_J]}\psi_E(n)^{-1}\theta(\varphi)(ng')\overline{f(g')}\dd{g'}\dd{n} \\
&= \int_{[G'_J]}\int_{[N/Z]}\psi_E(n)^{-1}\left(\theta(\varphi)_{\widetilde{N}}(ng')+\sum_{X\in\O_{\min}(F)}\theta(\varphi)_{\widetilde{N},\psi_X}(ng')\right)\overline{f(g')}\dd{n}\dd{g'} \\
&= \int_{[G'_J]}\sum_{X\in\O_{\min}(F)\cap p^{-1}(E)}\theta(\varphi)_{\widetilde{N},\psi_X}(g')\overline{f(g')}\dd{g'}
\end{align*}
since $E=(1,b,c,d)$ in $\X$ is nonzero. Because the Killing form is $G'_J$-invariant, we can unfold the above integral to obtain 
\begin{align*}
\sum_{X\in G'_J(F)\backslash(\O_{\min}(F)\cap p^{-1}(E))}\int_{(\Stab_{G'_J}X)(\A_F)\backslash G'_J(\A_F)}\int_{[\Stab_{G'_J}X]}\theta(\varphi)_{\widetilde{N},\psi_{X}}(t'g')\overline{f(t'g')}\dd{t'}\dd{g'} \\
\qquad = \sum_i\int_{i(T_E)(\A_F)\backslash G'_J(\A_F)}\int_{[i(T_E)]}\theta(\varphi)_{\widetilde{N},\psi_{X}}(t'g')\overline{f(t'g')}\dd{t'}\dd{g'} & \qquad \text{by Lemma \ref{lem:G'orbitsaretori}.}
\end{align*}
Since $T_E$ is a $2$-dimensional torus, it is $F$-rational \cite[Theorem 2]{Vos67} and hence satisfies weak approximation. Therefore Lemma \ref{lem:stabilizerinvariance} implies that $\theta(\varphi)_{\widetilde{N},\psi_{X}}(t'g')=\theta(\varphi)_{\widetilde{N},\psi_{X}}(g')$, so our expression finally becomes
\begin{gather*}
\sum_i\int_{i(T_E)(\A_F)\backslash G'_J(\A_F)}\theta(\varphi)_{\widetilde{N},\psi_{X}}(g')\int_{[i(T_E)]}\overline{f(t'g')}\dd{t'}\dd{g'}.\qedhere
\end{gather*}
\end{proof}
For any irreducible cuspidal automorphic representation $\sigma$ of $\PU_3(\A_F)$, recall from \S\ref{ss:PU3HPS} the $\C$-linear map $\mathcal{P}_i:\sigma\ra\C$. Also, since $J$ is not associated with a division algebra, recall that the natural map
\begin{align*}
G'_J(F)\backslash\{F\mbox{-algebra embeddings }E\hookrightarrow J\}\ra\sideset{}{_v'}\prod G'_J(F_v)\backslash\{F_
v\mbox{-algebra embeddings }E_v\hookrightarrow J_v\}
\end{align*}
is a bijection \cite[Lemma 15.5.(2)]{gan2021twisted}.
\begin{thm}\label{thm:vanishingcriterion}
Let $(\epsilon_v)_v$ be a sequence as in Theorem \ref{thm:globalHPSU3}.(1), write $\sigma$ for $\bigotimes'_v\sigma^{\epsilon_v}_v$, and assume that $\sigma$ is cuspidal. Then $\theta(\sigma)$ is nonzero if and only if there exists a cubic \'etale $F$-algebra $E$ such that $\mathcal{P}_i:\sigma\ra\C$ is nonzero, where $i:E\hookrightarrow J$ is the unique $G'_J(F)$-conjugacy class of $F$-algebra embeddings (if it exists) such that, for every place $v$ of $F$, our $i_v$ and $\epsilon_v$ satisfy the conditions in Proposition \ref{prop:HPStorus} or Proposition \ref{prop:archHPStorus}.
\end{thm}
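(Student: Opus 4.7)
The plan is to combine the Fourier coefficient formula of Proposition~\ref{prop:Fouriercoefficient1} with the local multiplicity-one statements of Propositions~\ref{prop:HPStorus}, \ref{prop:archHPStorus}, and~\ref{prop:minorbit} in order to detect non-vanishing of $\theta(\sigma)$ through the global torus periods $\mathcal{P}_i$. The first step is to reduce the non-vanishing of $\theta(\sigma)$ to the non-vanishing of a \emph{generic} Fourier coefficient $\theta(\varphi, f)_{N, \psi_E}(g)$ along the Heisenberg parabolic, where $E = (1, b, c, d) \in \X(F)$ corresponds to a cubic \'etale $F$-algebra. Since $\theta(\sigma)$ is cuspidal by Corollary~\ref{cor:thetacuspidal}, both $\theta(\sigma)_N = 0$ and $\theta(\sigma)_U = 0$. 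The Fourier expansion along $[\widetilde N/Z]$ provided by Proposition~\ref{prop:FE_of_min}, combined with Proposition~\ref{prop:min_orbit} and Lemma~\ref{lem:p}, restricts the non-zero Fourier coefficients to characters $\psi_Y$ with $Y \in p(\mathcal{O}_{\min}(F))$. A secondary Fourier expansion along $V = N_\alpha N$ analogous to the one in the proof of Proposition~\ref{prop:three-step_constant_term}, combined with $\theta(\sigma)_U = 0$, then eliminates the degenerate orbits, where $t^3 + bt^2 + ct + d$ fails to be separable.

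Next, Proposition~\ref{prop:Fouriercoefficient1} rewrites the surviving Fourier coefficient at $g = 1$ as a finite sum
\begin{equation*}
\theta(\varphi, f)_{N, \psi_E}(1) = \sum_i I_i(\varphi, f), \quad I_i(\varphi, f) := \int_{i(T_E)(\A_F) \backslash G'_J(\A_F)} \theta(\varphi)_{\widetilde N, \psi_X}(g')\, \overline{\mathcal{P}_i(g' \cdot f)}\, \dd{g'},
\end{equation*}
indexed by $G'_J(F)$-conjugacy classes of embeddings $E \hookrightarrow J$. Each $I_i$ factorizes place-by-place. At every $v$, the relevant local factors live in the one-dimensional space $\Hom_{\widetilde N(F_v)}(\Omega_v, \psi_{v, X})$ (Proposition~\ref{prop:minorbit}.(2)) and in $\Hom_{i_v(T_{E,v})(F_v)}(\sigma_v^{\epsilon_v}, \one)$, the latter being one-dimensional precisely when $(i_v, \epsilon_v)$ satisfies the matching conditions of Propositions~\ref{prop:HPStorus} and~\ref{prop:archHPStorus} and zero otherwise. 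Combining this with the bijection, recalled just before the theorem, between global and adelic $G'_J$-conjugacy classes of embeddings $E \hookrightarrow J$, at most one summand $I_i$ is not identically zero, and it corresponds precisely to the $i$ featured in the theorem.

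Finally, I would match the non-vanishing of this surviving $I_i$ with the non-vanishing of $\mathcal{P}_i$. One direction is clear: if $\mathcal{P}_i \equiv 0$, then the integrand vanishes and hence $I_i \equiv 0$. For the converse, observe that $I_i$ defines a $G'_J(\A_F)$-invariant functional on $\Omega \otimes \sigma^\vee$, which by local Howe duality lies in the one-dimensional space
\begin{equation*}
\Hom_{G'_J(\A_F)}(\Omega, \sigma) \cong \bigotimes_v\!' \Hom_{G'_J(F_v)}(\Omega_v, \sigma_v^{\epsilon_v}).
\end{equation*}
The main obstacle is showing that the specific local integral used to build $I_i$ is a non-zero element of this space, i.e., that the local pairing between the two matched one-dimensional Jacquet/period Hom-spaces is non-degenerate. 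I expect to resolve this local non-degeneracy using the $i_v(T_{E,v})(F_v)$-invariance supplied by Lemma~\ref{lem:stabilizerinvariance}, together with explicit computations in the realizations of $\sigma_v^{\epsilon_v}$ as unitary theta lifts from Propositions~\ref{prop:HPStheta} and~\ref{prop:archHPStheta}. Once the local non-degeneracy is secured, globalizing a factorizable test pair $(\varphi, f)$ with $\mathcal{P}_i(f) \neq 0$ and the correct local components at every place will yield $I_i(\varphi, f) \neq 0$ and complete the proof.
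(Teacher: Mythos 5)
Your overall structure matches the paper's: apply Proposition~\ref{prop:Fouriercoefficient1} to rewrite $\theta(\varphi,f)_{N,\psi_E}(1)$ as a sum of torus-period integrals $I_i$, invoke the local multiplicity-one results to show that at most one $i$ contributes, and then match the non-vanishing of that surviving $I_i$ with the non-vanishing of $\mathcal{P}_i$. However, both of the directions of the equivalence have genuine gaps in your write-up.

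For the ``only if'' direction, you need the input that a nonzero cuspidal automorphic subrepresentation of $G(\A_F)$ must have a nonzero generic Fourier coefficient $(-)_{N,\psi_E}$ along the Heisenberg parabolic for some cubic \'etale $F$-algebra $E$. The paper cites this directly from \cite[Theorem 3.1]{Gan:Mult_formula_for_cubic}. Your plan to re-derive it by a ``secondary Fourier expansion along $V = N_\alpha N$ analogous to the proof of Proposition~\ref{prop:three-step_constant_term}'' is not a substitute: that proposition computes constant terms of $\theta(\sigma)$ using the structure of the minimal orbit in $\widetilde\X$, whereas here you would need to show, for an arbitrary cuspidal form on $G$, that all degenerate (non-separable) Fourier coefficients vanish. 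This is a substantial theorem in its own right and is not handled by the elimination step you sketch.

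For the ``if'' direction, you correctly identify that the essential obstacle is the non-vanishing of the local pairing between $\Hom_{\widetilde{N}(F_v)}(\Omega_v,\psi_{v,X})$ and $\Hom_{i_v(T_{E,v})(F_v)}(\sigma_v^{\epsilon_v},\one)$, but you do not resolve it. (Also, $\Hom_{G'_J(\A_F)}(\Omega,\sigma)\cong\bigotimes'_v\Hom_{G'_J(F_v)}(\Omega_v,\sigma_v^{\epsilon_v})$ is not one-dimensional; by Howe duality the local factors are isomorphic to copies of $\pi_v^{\epsilon_v}$, which are infinite-dimensional.) The paper resolves the non-vanishing by a concrete analytic argument: first it uses Proposition~\ref{prop:minorbit}.(3) together with the fact that $(G'_J(F_v)\cdot X)\cap\widetilde\X(\O_{F_v}) = G'_J(\O_{F_v})\cdot X$ at places away from a finite set $S$ to reduce $I_i$ to an integral over $G'_J(F_S)$; then it replaces $\varphi$ by a convolution $\phi*\varphi$ with a Bruhat--Schwartz $\phi$, which has the effect of multiplying the integrand by $\widehat{\phi_Z}(-g'^{-1}\cdot X)$; and then it uses Proposition~\ref{prop:minorbit}.(1)--(2) to choose test vectors making both $g'\mapsto\theta(\varphi)_{\widetilde N,\psi_X}(g')$ and $g'\mapsto\widehat{\phi_Z}(-g'^{-1}\cdot X)$ bump functions supported near $g'=1$, where $\mathcal{P}_i(g'\cdot f)$ is nonzero by continuity. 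You gesture at ``explicit computations in the realizations of $\sigma_v^{\epsilon_v}$ as unitary theta lifts'' and at Lemma~\ref{lem:stabilizerinvariance}, but neither directly yields the needed non-degeneracy, and Lemma~\ref{lem:stabilizerinvariance} was already consumed in the derivation of Proposition~\ref{prop:Fouriercoefficient1}.
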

\begin{proof}
For all $f$ in $\sigma$ and $\varphi$ in $\Omega$, Proposition \ref{prop:Fouriercoefficient1} yields
\begin{align*}
\theta(\varphi,f)_{N,\psi_E}(1)=\sum_i\int_{i(T_E)(\A_F)\backslash G'_J(\A_F)}\theta(\varphi)_{\widetilde{N},\psi_{X}}(g')\overline{\mathcal{P}_i(g'\cdot f)} \dd{g'},
\end{align*}
where $i$ runs over $G'_J(F)$-conjugacy classes of $F$-algebra embeddings $E\hookrightarrow J$. Note that $\mathcal{P}_i$ is an element of
\begin{align*}
\Hom_{G'_J(\A_F)}(\sigma,\one) = \textstyle\bigotimes_v\Hom_{G'_J(F_v)}(\sigma_v^{\epsilon_v},\one),
\end{align*}
which Proposition \ref{prop:HPStorus} or Proposition \ref{prop:archHPStorus} indicate is nonzero if and only if, for every place $v$ of $F$, our $i_v$ and $\epsilon_v$ satisfy the conditions therein. Therefore our expression becomes
\begin{align}
\theta(\varphi,f)_{N,\psi_E}(1)=\int_{i(T_E)(\A_F)\backslash G'_J(\A_F)}\theta(\varphi)_{\widetilde{N},\psi_{X}}(g')\overline{\mathcal{P}_i(g'\cdot f)}\dd{g'}.\label{eq:FouriercoefficientII}
\end{align}

Suppose that no $E$ as in Theorem \ref{thm:vanishingcriterion} exists. Corollary \ref{cor:thetacuspidal} shows that $\theta(\sigma)$ is cuspidal and hence semisimple, so it suffices to show that $\theta(\sigma)$ has no irreducible subrepresentations $\pi$. For such a $\pi$, the above discussion shows that $(-)_{N,\psi_E}$ vanishes on $\pi$ for all cubic \'etale $F$-algebras $E$. By \cite[Theorem 3.1]{Gan:Mult_formula_for_cubic}, this cannot happen, so we must have $\theta(\sigma)=0$.

Conversely, suppose that some $E$ as in Theorem \ref{thm:vanishingcriterion} exists. Then there exists $f$ in $\sigma$ of the form $\otimes'_vf_v$ such that $\mathcal{P}_i(f)\neq0$; let $S$ be a finite set of places of $F$ such that, for all $v$ not in $S$,
\begin{itemize}
    \item $v$ does not lie above $\{2,3,\infty\}$,
    \item the \'etale $F_v$-algebras $K_v$ and $E_v$ are unramified,
    \item $X$ lies in $\O_{\min}(\O_{F_v})$, 
    \item $\sigma_v^{\epsilon_v}$ is unramified, and $f_v$ is an unramified vector in $\sigma_v^{\epsilon_v}$.
\end{itemize}
 For all $v$ not in $S$, we claim that the intersection of the $G'_J(F_v)$-orbit of $X$ with $\widetilde\X(\O_{F_v})$ equals the $G'_J(\O_{F_v})$-orbit of $X$. We immediately have $G'_J(\O_{F_v})\cdot X\subseteq (G'_J(F_v)\cdot X)\cap\widetilde\X(\O_{F_v})$. For the reverse inclusion, note that the orbit $G'_J\cdot X$ is isomorphic to $G'_J/i(T_E)$ over $\O_{F_v}$ by Lemma \ref{lem:G'orbitsaretori} and flatness.\footnote{This also uses Lemma \ref{lem:G'orbitsaretori} over $\mathbb{F}_{q_v}$, which holds with the same proof and uses the fact that $v$ does not lie above $\{2,3\}$.} We have
 \begin{align*}
    (G'_J(F_v)\cdot X)\cap\widetilde\X(\O_{F_v})\subseteq(G'_J\cdot X)(F_v)\cap\widetilde\X(\O_{F_v}) = (G'_J\cdot X)(\O_{F_v})
 \end{align*}
since Lemma \ref{lem:G'orbitsaretori} shows that $G'_J\cdot X$ is a closed subscheme of $\widetilde\X$. Finally, Lang's lemma indicates that $(G'_J\cdot X)(\O_{F_v})$ equals $(G'_J)(\O_{F_v})\cdot X$, which proves the claim.

Note that $\varphi\mapsto\theta(\varphi)_{\widetilde{N},\psi_X}(1)$ lies in $\Hom_{\widetilde{N}(\A_F)}(\Omega,\psi_X)=\bigotimes_v'\Hom_{\widetilde{N}(F_v)}(\Omega_v,\psi_{v,X})$. For every nonarchimedean place $v$ of $F$, Proposition \ref{prop:minorbit} identifies $\Omega_{v,Z(F_v)}$ with an $\widetilde{M}(F_v)$-subrepresentation of $C^\infty(\O_{\min}(F_v))$, and under this identification, evaluation at $X$ spans $\Hom_{\widetilde{N}(F_v)}(\Omega_v,\psi_{v,X})$ by Proposition \ref{prop:minorbit}.(2). Therefore the claim and Proposition \ref{prop:minorbit}.(3) show that when $\varphi$ is of the form $\otimes'_v\varphi_v$, where $\varphi_v$ is an unramified vector in $\Omega_v$ for all $v$ not in $S$, the function $g'\mapsto\theta(\varphi)_{\widetilde{N},\psi_X}(g')$ is a $\C^\times$-multiple of the indicator function on $\prod_{v\notin S}G'_J(\O_{F_v})$ when restricted to $G'_J(\A_F^S)$. Because $f_v$ is unramified for all $v$ not in $S$, this implies that \eqref{eq:FouriercoefficientII} equals a $\C^\times$-multiple of
\begin{align}
\int_{i(T_E)(F_S)\backslash G'_J(F_S)}\theta(\varphi)_{\widetilde{N},\psi_X}(g')\overline{{\mathcal{P}_i(g'\cdot f)}}\dd{g'}\label{eq:FouriercoefficientIII},
\end{align}
where $F_S$ denotes $\prod_{v\in S}F_v$.

Next, let $\phi$ be a Bruhat--Schwartz function on $\widetilde{N}(F_S)$. If we use $\phi*\varphi$ in place of $\varphi$, we get
\begin{align*}
\theta(\phi*\varphi)_{\widetilde{N},\psi_X}(g') = \int_{[\widetilde{N}]}\theta(\phi*\varphi)(\widetilde{n}g')\psi_X^{-1}(\widetilde{n})\dd{\widetilde{n}} = \int_{[\widetilde{N}]}\int_{\widetilde{N}(F_S)}\phi(\widetilde{h})\theta(\varphi)(\widetilde{n}g'\widetilde{h})\psi_X^{-1}(\widetilde{n})\dd{\widetilde{h}}\dd{\widetilde{n}}.
\end{align*}
By replacing $\widetilde{n}$ with $\widetilde{n}g'\widetilde{h}^{-1}g'^{-1}$, invariance of the Killing form under $\widetilde{N}$ turns this integral into
\begin{align*}
\int_{[\widetilde{N}]}\int_{\widetilde{N}(F_S)}\phi(\widetilde{h})\theta(\varphi)(\widetilde{n}g')\psi_X^{-1}(\widetilde{n}g'\widetilde{h}^{-1}g'^{-1})\dd{\widetilde{h}}\dd{\widetilde{n}} &= \int_{[\widetilde{N}]}\theta(\varphi)(\widetilde{n}g')\dd{\widetilde{n}}\psi_X^{-1}(\widetilde{n})\int_{\widetilde{N}(F_S)}\phi(\widetilde{h})\psi_{g'^{-1}\cdot X}(\widetilde{h})\dd{\widetilde{h}} \\
&= \theta(\varphi)_{\widetilde{N},\psi_X}(g')\widehat{\phi_Z}(-g'^{-1}\cdot X),
\end{align*}
where $\phi_Z$ denotes the $Z(F_S)$-average of $\phi$, and $\widehat{(-)}$ denotes the Fourier transform on $(\widetilde{N}/Z)(F_S)$. Hence using $\phi*\varphi$ in place of $\varphi$ in \eqref{eq:FouriercoefficientIII} yields
\begin{align*}
\int_{i(T_E)(F_S)\backslash G'_J(F_S)}\theta(\varphi)_{\widetilde{N},\psi_X}(g')\widehat{\phi_Z}(-g'^{-1}\cdot X)\overline{\mathcal{P}_i(g'\cdot f)}  \dd{g'}.
\end{align*}
Note that $g'\mapsto\overline{\mathcal{P}_i(g'\cdot f)}$ is continuous, and it is nonzero at $g'=1$. Now we can choose $\otimes_{v\in S}\varphi_v$ such that $g'\mapsto\theta(\varphi)_{\widetilde{N},\psi_X}(g')$ is nonzero at $g'=1$, and we can choose $\phi$ such that $g'\mapsto\widehat{\phi_Z}(-g'^{-1}\cdot X)$ is a bump function supported on a small neighborhood of $g'=1$. With these choices, we see that the integral is nonzero, so $\theta(\sigma)$ is nonzero, as desired.
\end{proof}

\subsection{Some root numbers over $p$-adic fields}\label{ss:rootnumberspadic}
\label{ss:epsilon_factors}In this subsection, assume that $F$ is a local field, and assume that $K$ is a field. We will use Theorem \ref{thm:vanishingcriterion} to compute the (non-)vanishing of our global theta lift, so let us explicate the associated local conditions in our cases of interest. Let $F'$ be a quadratic \'etale algebra over $F$, and write $K'$ for $K\otimes F'$. 

\begin{lemma}\label{lem:criteria}
     When $E=F\times F'$, the condition on $\epsilon$ in Proposition \ref{prop:HPStorus} or Proposition \ref{prop:archHPStorus} is
    $$\epsilon = [-1]\cdot\epsilon(\textstyle\frac12,\chi^3,\psi(\tr_{K/F}(\delta-))) \cdot \Delta_{F'/F} \cdot \epsilon(\textstyle\frac12,\chi \cdot \omega_{K'/K},\psi(\tr_{K/F}(\delta-))).$$
\end{lemma}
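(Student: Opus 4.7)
The plan is to unravel Proposition \ref{prop:HPStorus} (and its archimedean analogue Proposition \ref{prop:archHPStorus}) under the substitution $E = F \times F'$. Writing $L = E \otimes_F K = K \times K'$ with $K' \coloneqq F' \otimes_F K$, the norm is $\Nm_{L/K}(a,b) = a \cdot \Nm_{K'/K}(b)$, and the additive character $\psi(\tr_{L/F}(\delta -))$ decomposes as $\psi_K \otimes \psi_{K'}$, where $\psi_K = \psi(\tr_{K/F}(\delta-))$ and $\psi_{K'} = \psi(\tr_{K'/F}(\delta-))$. Hence $\lambda$ splits as a pair $\lambda = (\lambda_1, \lambda_2)$ with $\lambda_1 = \epsilon(\tfrac{1}{2}, \chi, \psi_K)$ and $\lambda_2 = \epsilon(\tfrac{1}{2}, \chi \circ \Nm_{K'/K}, \psi_{K'})$, and one computes $\Nm_{E/F}(\lambda) = \lambda_1 \cdot \Nm_{F'/F}(\lambda_2)$ together with $\Delta_{E/F} = \Delta_{F'/F}$, since the $F$-factor contributes trivially to the discriminant.

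Plugging these into the formula of Proposition \ref{prop:HPStorus} and cancelling common factors, the claim reduces to
\begin{align*}
\lambda_1 \cdot \Nm_{F'/F}(\lambda_2) \equiv \epsilon(\tfrac{1}{2}, \chi \cdot \omega_{K'/K}, \psi_K) \pmod{\Nm_{K/F}(K^\times)}.
\end{align*}
The key input is the inductivity of local epsilon factors: since $\Ind_{K'}^K(\chi \circ \Nm_{K'/K}) \cong \chi \oplus \chi \cdot \omega_{K'/K}$ as representations of the Weil group $W_K$, the Langlands--Deligne formula yields
\begin{align*}
\epsilon(\tfrac{1}{2}, \chi, \psi_K) \cdot \epsilon(\tfrac{1}{2}, \chi \cdot \omega_{K'/K}, \psi_K) = \lambda(K'/K, \psi_K) \cdot \epsilon(\tfrac{1}{2}, \chi \circ \Nm_{K'/K}, \psi_{K'}),
\end{align*}
where $\lambda(K'/K, \psi_K)$ denotes the Langlands--Deligne constant. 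Under the identifications $F^\times/\Nm_{K/F}(K^\times) \cong \{\pm 1\}$ and $F'^\times/\Nm_{K'/F'}(K'^\times) \cong \{\pm 1\}$ via $\omega_{K/F}$ and $\omega_{K'/F'}$, the norm map $\Nm_{F'/F}$ on these quotients becomes the identity thanks to the relation $\omega_{K/F} \circ \Nm_{F'/F} = \omega_{K'/F'}$. Using that $\lambda_1^2 = 1$, the required identity then reduces to showing that $\lambda(K'/K, \psi_K)$ lies in $\Nm_{K/F}(K^\times)$.

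The main obstacle is establishing this last condition on the Langlands constant. When $F' \cong F \times F$ or $F' \cong K$ as quadratic \'etale $F$-algebras, the extension $K'/K$ is split, hence $\omega_{K'/K}$ is trivial, $\lambda_2$ reduces to a product of squares, and the desired identity is immediate. In the remaining case where $F'$ is a field with $F' \not\cong K$, one combines the base-change identity $\omega_{K'/K} = \omega_{F'/F} \circ \Nm_{K/F}$ with a second application of inductivity at the level of $F$ to rewrite $\lambda(K'/K, \psi_K)$ in terms of epsilon factors of characters of $F^\times$; the specific shape $\psi_K = \psi(\tr_{K/F}(\delta-))$ with $\tr_{K/F}(\delta) = 0$ provides the requisite cancellation modulo $\Nm_{K/F}(K^\times)$.
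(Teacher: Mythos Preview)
Your approach is essentially the same as the paper's. Both arguments decompose $\lambda$ along $E = F \times F'$, use $\Delta_{E/F} = \Delta_{F'/F}$, and invoke inductivity for $K'/K$ to reduce to showing
\[
\epsilon(\tfrac12,\omega_{K'/K},\psi_K) = 1,
\]
which is exactly your statement that $\lambda(K'/K,\psi_K)$ is trivial (for the quadratic \'etale $K'/K$ one has $\lambda(K'/K,\psi_K) = \epsilon(\tfrac12,\omega_{K'/K},\psi_K)$). The only real difference is cosmetic: you use the Langlands--Deligne formula with the $\lambda$-constant, while the paper uses inductivity in degree $0$, which hides the $\lambda$-constant but produces the same leftover factor $\epsilon(\tfrac12,\omega_{K'/K},\psi_K)$.

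The gap is in your final paragraph. Your base-change identity $\omega_{K'/K} = \omega_{F'/F}\circ\Nm_{K/F}$ immediately yields $\omega_{K'/K}|_{F^\times} = 1$ (since $\Nm_{K/F}$ is squaring on $F^\times$), which is exactly the hypothesis needed for Fr\"ohlich--Queyrut \cite[Theorem 3]{FQ73}; together with the specific form $\psi_K = \psi(\tr_{K/F}(\delta-))$ with $\tr_{K/F}(\delta)=0$, that theorem gives $\epsilon(\tfrac12,\omega_{K'/K},\psi_K)=1$ on the nose. This is what the paper does. Your proposed alternative---``a second application of inductivity at the level of $F$''---is not worked out: pushing $\omega_{K'/K}$ down to $F$ produces a ratio of epsilon factors $\epsilon(\omega_{F'/F},\psi)\epsilon(\omega_{F'/F}\omega_{K/F},\psi)/\epsilon(\omega_{K/F},\psi)$ together with a twist $\omega_{K'/K}(\delta)=\omega_{F'/F}(-\delta^2)$, and it is not clear how you intend to see this combination equals $+1$ without essentially reproving Fr\"ohlich--Queyrut. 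You should either carry out that computation explicitly or, more simply, cite \cite{FQ73} as the paper does.
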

\begin{proof}
    From the expressions in Proposition \ref{prop:HPStorus} or Proposition \ref{prop:archHPStorus} for $\lambda$ and $\epsilon$, we get 
    \begin{equation}\label{eqn:epsilon_case(3)}
        \epsilon = \epsilon(\textstyle\frac12,\chi,\psi(\tr_{K/F}(\delta-)))\cdot[-1]\cdot\epsilon(\frac12,\chi^3,\psi(\tr_{K/F}(\delta-)))\cdot\Delta_{F'/F}\cdot\epsilon(\frac12,\chi\circ\Nm_{K'/K},\psi(\tr_{K'/F}(\delta-))),
    \end{equation}
    where for $F'$ not in $\{F \times F, K\}$, we use the fact that $\Nm_{F'/F}$ induces an isomorphism 
    \begin{align*}
    F'^\times/\Nm_{K'/F'}(K'^\times)\ra^\sim F^\times/\Nm_{K/F}(K^\times),
    \end{align*}
    so the image of $F^\times$ in $F'^\times/\Nm_{K'/F'}(K'^\times)$ is trivial.

    By using inductivity in degree $0$ on the Galois side, we conclude that
    \begin{align*}
        \epsilon(\textstyle\frac12,\chi|_{K'} - \one,\psi(\tr_{K'/F}(\delta-))) & = \epsilon(\textstyle\frac12,\Ind_{K'}^K (\chi|_{K'} - \one),\psi(\tr_{K/F}(\delta-))) \\
        & = \epsilon(\textstyle\frac12,\chi + \chi\cdot\omega_{K'/K} - \one - \omega_{K'/K},\psi(\tr_{K/F}(\delta-))).
    \end{align*}
    Hence the rightmost factor in \eqref{eqn:epsilon_case(3)} equals
    $$\epsilon(\textstyle \frac12, \chi, \psi(\tr_{K/F}(\delta -))) \cdot \epsilon(\textstyle\frac12, \chi \cdot \omega_{K'/K}, \psi(\tr_{K/F}(\delta -))) \cdot \epsilon(\textstyle\frac 12, \omega_{K'/K}, \psi(\tr_{K/F}(\delta -))).$$
    To finish the proof, we just need to check that
    \begin{equation}\label{eqn:epsilonomegais1}
    \epsilon(\textstyle\frac12,\omega_{K'/K},\psi(\tr_{K/F}(\delta-)))= 1.
    \end{equation} 
    For $F'$ in $\{F \times F,K\}$, we have $\omega_{K'/K}=\one$, so \eqref{eqn:epsilonomegais1} holds. For other $F'$, we use the fact that $\Nm_{K/F}$ induces an isomorphism $K^\times/\Nm_{K'/K}(K'^\times)\ra^\sim F^\times/\Nm_{F'/F}(F'^\times)$, so the image of $F^\times$ in $K^\times/\Nm_{K'/K}(K'^\times)$ is trivial. Therefore $\omega_{K'/K}|_{F^\times}$ is trivial, so we can use \cite[Theorem 3]{FQ73} to deduce \eqref{eqn:epsilonomegais1}.
\end{proof}
The fact that $\chi$ is conjugate-symplectic and the equivariance of $\epsilon$-factors under scaling imply that the conditions in Lemma \ref{lem:criteria} are independent of $\delta$ and $\psi$. Hence we will choose whatever $\delta$ and $\psi$ are convenient for computations.

We now study which values of $\epsilon$ can occur, depending on $K$ and $\chi$. For the rest of this subsection, write $\epsilon(\chi, \psi) := \epsilon(\frac12, \chi, \psi)$ to simplify the notation, and assume that $F$ is nonarchimedean. Write $c(-)$ for the conductor of a character. Let us first collect some general lemmas.

\begin{lemma}\label{lem:discriminants}
Write $\mathfrak{D}\subseteq\O_F$ for the discriminant ideal of $F'/F$. Then $v_F(\mathfrak{D})$ equals $c(\omega_{F'/F})$, and it is also congruent to $v_F(\Delta_{F'/F})$ modulo $2$.
\end{lemma}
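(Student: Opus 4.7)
The plan is to treat the two claims separately, handling the split case $F' = F \times F$ as a trivial verification and focusing attention on the case where $F'$ is a quadratic field extension of $F$.

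For the first claim, that $v_F(\mathfrak{D}) = c(\omega_{F'/F})$, my approach is to invoke the conductor-discriminant formula for abelian extensions. Applied to the Galois extension $F'/F$, this formula expresses $\mathfrak{D}$ as the product of the conductors of all characters of $\Gal(F'/F)$. When $F'$ is a field, this Galois group has order two, so its two characters are the trivial character (with conductor $\O_F$, contributing nothing to the valuation) and the quadratic character $\omega_{F'/F}$ (with conductor $\mathfrak{f}(\omega_{F'/F})$). Hence $\mathfrak{D} = \mathfrak{f}(\omega_{F'/F})$, giving the desired equality of valuations. In the split case both sides are $0$.

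For the second claim, I would use that $\Delta_{F'/F} \in F^\times/(F^\times)^2$ is the class of the determinant of the trace pairing matrix $(\tr_{F'/F}(e_ie_j))_{i,j}$ for any $F$-basis $e_1, e_2$ of $F'$; different choices of basis change this determinant by the square of the change-of-basis determinant, so the class in $F^\times/(F^\times)^2$ is well defined. Choosing in particular an $\O_F$-basis of $\O_{F'}$ yields a representative which generates the discriminant ideal $\mathfrak{D}$ by the very definition of $\mathfrak{D}$, so for this choice $v_F(\Delta_{F'/F}) = v_F(\mathfrak{D})$. Any other choice of basis changes the representative by an element of $(F^\times)^2$, whose valuation is even, so in general $v_F(\Delta_{F'/F}) \equiv v_F(\mathfrak{D}) \pmod 2$.

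Both parts are essentially bookkeeping; I do not expect any real obstacle here, since everything follows from standard facts in local class field theory (the conductor-discriminant formula) and from the definition of discriminant as a trace pairing determinant on an integral basis. The only mild subtlety is making sure to interpret $\Delta_{F'/F}$ as an element of $F^\times/(F^\times)^2$ (not as an ideal) when comparing with $v_F(\mathfrak{D})$, which is what forces the congruence to hold only modulo $2$ rather than as an equality on the nose.
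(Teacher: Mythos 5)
Your proof is correct and takes essentially the same route as the paper: the first claim via the conductor-discriminant formula (trivial once the Galois group is observed to be $\Z/2$), and the second by computing $\Delta_{F'/F}$ on an $\O_F$-integral basis of $\O_{F'}$ and observing that any other basis changes the representative only by a square. You have simply spelled out both steps in more detail than the paper does.
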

\begin{proof}
Because $K/F$ is quadratic, the first statement follows immediately from the conductor-discriminant formula. The second statement follows from using the $\O_F$-basis of $\O_K$ that computes $\mathfrak{D}$ as the $F$-basis of $K$ that computes $\Delta_{K/F}$.
\end{proof}

\begin{lemma}\label{lemma:comm_of_Hilbert}
    Assume that $F'$ is a field. Then the image of $\Delta_{F'/F}$ in $F^\times/\Nm_{K/F}(K^\times)\cong\{\pm1\}$ equals the image of $\Delta_{K/F}$ in $F^\times/\Nm_{F'/F}(F'^\times)\cong\{\pm1\}$.
\end{lemma}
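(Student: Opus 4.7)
The plan is to recognize both sides of the asserted equality as values of the Hilbert symbol on $F$ and then invoke its symmetry. Since $F$ is nonarchimedean and $K/F$, $F'/F$ are both quadratic field extensions, we have $K=F(\sqrt{\Delta_{K/F}})$ and $F'=F(\sqrt{\Delta_{F'/F}})$. Local class field theory then identifies, for any $a$ in $F^\times$, the image of $a$ in $F^\times/\Nm_{K/F}(K^\times)\cong\{\pm 1\}$ with the quadratic Hilbert symbol $(a,\Delta_{K/F})_F$; indeed, $a$ lies in $\Nm_{K/F}(K^\times)$ if and only if the form $ax^2-\Delta_{K/F}y^2$ represents $1$ over $F$. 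The analogous statement holds for $F'$ in place of $K$.

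Applying this with $a=\Delta_{F'/F}$ in the first isomorphism and $a=\Delta_{K/F}$ in the second, the claimed identity reduces to
\begin{equation*}
(\Delta_{F'/F},\Delta_{K/F})_F \;=\; (\Delta_{K/F},\Delta_{F'/F})_F,
\end{equation*}
which is precisely the symmetry of the Hilbert symbol. There is no real obstacle here; the only thing to verify is that the stated quotients $F^\times/\Nm(\cdot)$ really are implemented by Hilbert symbols in the way described, which is a standard consequence of Kummer theory together with local class field theory in the tamely and wildly ramified cases alike (the argument is insensitive to the residue characteristic and to the ramification of $K/F$ or $F'/F$).
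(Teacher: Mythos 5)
Your proof is correct and takes essentially the same route as the paper's: both identify the image of $\Delta_{F'/F}$ in $F^\times/\Nm_{K/F}(K^\times)$ with the Hilbert symbol $(\Delta_{F'/F},\Delta_{K/F})_F$ and then invoke symmetry of the Hilbert symbol. You simply spell out the class-field-theoretic justification for that identification in slightly more detail than the paper does.
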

\begin{proof}
The image of $\Delta_{F'/F}$ in $F^\times/\Nm_{K/F}(K^\times)\cong\{\pm1\}$ equals the Hilbert symbol $(\Delta_{F'/F},\Delta_{K/F})$, so the desired result follows from commutativity of the Hilbert symbol.
\end{proof}

\begin{lemma}\label{lemma:bound_on_cond}
Suppose that $t\coloneqq c(\omega)$ is at least $1$. Then $c(\chi)$ is at least $2t-1$, and if inequality holds, then $c(\chi)$ is even.
\end{lemma}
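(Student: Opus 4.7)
The plan is to exploit that $c(\omega) = t \geq 1$ forces $K/F$ to be ramified, hence totally ramified of degree $2$, so that $k_K = k_F$ and $\mathfrak{m}_F\O_K = \mathfrak{m}_K^2$. Throughout I write $U_F^{(m)} := 1 + \mathfrak{m}_F^m$ and $U_K^{(n)} := 1 + \mathfrak{m}_K^n$ for the higher unit filtrations, and recall that $\chi|_{F^\times} = \omega$ by the conjugate-symplectic condition.

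First I would establish the lower bound $c(\chi) \geq 2t-1$ directly from the containment of filtrations. The relation $\mathfrak{m}_F \O_K = \mathfrak{m}_K^2$ yields $U_F^{(m)} \subseteq U_K^{(2m)}$ for every $m \geq 1$. Setting $n := c(\chi)$, the triviality of $\chi$ on $U_K^{(n)}$ combined with $\chi|_{F^\times} = \omega$ forces $\omega$ to be trivial on $U_F^{(\lceil n/2\rceil)}$, so $t \leq \lceil n/2\rceil$. This gives $n \geq 2t-1$, and in fact $n \geq 2t$ whenever $n$ is even.

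Next I would show that if $c(\chi)$ is odd then $c(\chi) = 2t-1$, which combined with the previous step yields the parity dichotomy. Suppose $c(\chi) = 2m-1$; then $\chi$ is nontrivial on the quotient $U_K^{(2m-2)}/U_K^{(2m-1)}$. The key claim is that the inclusion $F^\times \hookrightarrow K^\times$ induces an isomorphism
\[
U_F^{(m-1)}/U_F^{(m)} \xrightarrow{\sim} U_K^{(2m-2)}/U_K^{(2m-1)}.
\]
For $m \geq 2$ both sides are one-dimensional $k_F$-vector spaces (using $f(K/F) = 1$), and choosing uniformizers with $\pi_F = v\pi_K^2$ for some unit $v \in \O_K^\times$, the map reads explicitly $1 + \pi_F^{m-1}b \mapsto v^{m-1}b \bmod \mathfrak{m}_K$, i.e.\ multiplication by a unit. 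For $m = 1$ it is the isomorphism $k_F^\times \xrightarrow{\sim} k_K^\times$. Transporting the nontriviality of $\chi$ on the right-hand quotient across this iso gives nontriviality of $\omega = \chi|_{F^\times}$ on $U_F^{(m-1)}/U_F^{(m)}$, so $c(\omega) \geq m$, i.e.\ $m \leq t$.

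The main obstacle is essentially conceptual rather than computational: one must notice that in a totally ramified quadratic extension, \emph{every} level of the $F$-filtration lands in an \emph{even} level of the $K$-filtration, and does so isomorphically. So an odd value of $c(\chi)$ must be ``caused'' by the jump of $\omega$, pinning it to $c(\chi) = 2t-1$; all larger odd values are excluded, and we are done.
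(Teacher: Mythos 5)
Your proof is correct and takes essentially the same approach as the paper: both arguments exploit that in a totally ramified quadratic extension the $F$-unit filtration lands in the even levels of the $K$-unit filtration, giving the lower bound, and then pin an odd conductor to $2t-1$ by comparing the relevant graded pieces. The paper carries out the second step by explicitly writing $w=a+b\varpi_K$ and observing $\chi(1+\varpi_F^d w)=\omega(1+\varpi_F^d a)$, which is precisely your isomorphism $U_F^{(m-1)}/U_F^{(m)}\xrightarrow{\sim}U_K^{(2m-2)}/U_K^{(2m-1)}$ unwound by hand; your packaging is marginally cleaner but not a different mechanism.
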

\begin{proof}
By assumption, there exists $x$ in $\O_F^\times$ such that $\chi(1+\varpi_F^{t-1}x)=\omega(1+\varpi_F^{t-1}x)\neq1$. Since $t\geq1$, our $K/F$ is ramified, so this implies that there exists $y$ in $\O_K^\times$ such that $\chi(1+\varpi_K^{2t-2}y)\neq1$. Hence $c(\chi)\geq2t-1$.

Now assume that $c\coloneqq c(\chi)$ is odd, and write $d\coloneqq\frac{c-1}2$. Then there exists $z$ in $\O_K^\times$ such that $\chi(1+\varpi_K^{c-1}z)\neq1$, so there exists $w$ in $\O_K^\times$ such that $\chi(1+\varpi_F^dw)\neq1$. Checking valuations shows that $\O_K=\O_F\oplus\O_F\varpi_K$, so $w=a+b\varpi_K$ for some $a$ and $b$ in $\O_F$. Because $w$ is a unit, we see that $a$ must also be a unit. Moreover,
$$1\neq\chi(1+\varpi_F^dw)=\chi(1+\varpi_F^da+\varpi_F^d\varpi_Kb)=\chi(1+\varpi_F^da)=\omega(1+\varpi_F^da)$$
because $1+\varpi_F^da$ is a unit and $v_K(\varpi_F^d\varpi_K)=2d+1=c$. Therefore $a$ witnesses the fact that $\textstyle\frac{c-1}2=d<t$ and consequently $c<2t+1$. Since $c$ is odd and at least $2t-1$, we must have $c=2t-1$, as desired.
\end{proof}

First, we observe that when $K/F$ and $\chi$ are unramified (and hence $\chi^2=1$), we have $\epsilon = +1$.

\begin{prop}\label{prop:unramifiedsigns}
Assume that $K/F$ and $\chi$ are unramified. Then we have $\epsilon=+1$ in Lemma \ref{lem:criteria}.
\end{prop}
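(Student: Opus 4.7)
The plan is to apply Lemma~\ref{lem:criteria} directly, computing each of the four factors of the product for a well-chosen $\psi$ and $\delta$. Because the value of $\epsilon$ in $F^\times/\Nm_{K/F}(K^\times)$ is independent of these auxiliary choices, I will take $\psi$ of level $0$ on $F$ and $\delta\in\O_K^\times$ with $\tr_{K/F}\delta = 0$. Since $K/F$ is unramified, $\mathfrak{d}_{K/F} = \O_K$, and a direct check shows that $\psi'_\delta := \psi(\tr_{K/F}(\delta\cdot -))$ is also of level $0$ on $K$. Three of the factors are then handled uniformly: Hilbert 90 gives $\O_F^\times = \Nm_{K/F}(\O_K^\times)$, so $[-1] = +1$; the epsilon factor of the unramified character $\chi^3$ against the level-$0$ character $\psi'_\delta$ is $1$; and combining $\chi|_{F^\times} = \omega_{K/F}$ with $\chi$ unramified extracts the key scalar $\chi(\varpi_K) = \omega_{K/F}(\varpi_F) = -1$.

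For the remaining two factors I will case-split on the quadratic \'etale $F$-algebra $F'$. When $F'\in\{F\times F, K\}$, the algebra $K' = K\otimes_F F'$ splits as $K\times K$, so $\omega_{K'/K}$ is trivial and the last factor collapses to $\epsilon(\textstyle\frac12,\chi,\psi'_\delta) = 1$; moreover $\Delta_{F'/F}$ is either $1$ or the (unit) discriminant of the unramified extension $K/F$, so $[\Delta_{F'/F}] = +1$. In both sub-cases the proposition is immediate.

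The main case is when $F'/F$ is a ramified quadratic field extension, and the hard part will be to arrange a cancellation of signs. Writing $c := c(\omega_{F'/F})$, the surjectivity of $\Nm_{K/F}$ on each step of the unit filtration (valid because $K/F$ is unramified) will give $c(\omega_{K'/K}) = c$ on $K^\times$. Then Lemma~\ref{lem:discriminants}, together with the isomorphism $F^\times/\Nm_{K/F}(K^\times)\ra^\sim\Z/2\Z$ given by $\alpha\mapsto v_F(\alpha)\bmod 2$ for unramified $K/F$, yields $[\Delta_{F'/F}] = (-1)^c$. For the remaining factor, I will use the standard twist identity for an unramified character to write
\[
\epsilon(\textstyle\frac12,\chi\omega_{K'/K},\psi'_\delta) = \chi(\varpi_K)^c\cdot\epsilon(\textstyle\frac12,\omega_{K'/K},\psi'_\delta) = (-1)^c,
\]
where $\epsilon(\textstyle\frac12,\omega_{K'/K},\psi'_\delta) = 1$ is precisely the Fr\"ohlich--Queyrut input already used in the proof of Lemma~\ref{lem:criteria} (applicable because $\omega_{K'/K}|_{F^\times} = 1$). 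The two factors of $(-1)^c$ then cancel, and the product in Lemma~\ref{lem:criteria} becomes $\epsilon = 1\cdot 1\cdot(-1)^c\cdot(-1)^c = +1$. The cleanness of the answer hinges on the equality $c(\omega_{K'/K}) = c(\omega_{F'/F})$, which is exactly where the unramifiedness of $K/F$ enters in an essential way.
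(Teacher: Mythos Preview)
Your proof is correct and follows essentially the same route as the paper: both choose $\delta,\psi$ so that $\psi(\tr_{K/F}(\delta-))$ has conductor $0$, reduce to computing $[\Delta_{F'/F}]\cdot\epsilon(\textstyle\frac12,\chi\cdot\omega_{K'/K},\psi(\tr_{K/F}(\delta-)))$, apply Tate's twist formula together with the Fr\"ohlich--Queyrut identity \eqref{eqn:epsilonomegais1} to get $(-1)^{c(\omega_{K'/K})}$, and cancel against $[\Delta_{F'/F}]=(-1)^{c(\omega_{F'/F})}$ via Lemma~\ref{lem:discriminants}. The only cosmetic difference is that you case-split on whether $F'$ is ramified, while the paper handles all $F'$ uniformly (noting that $c(\omega_{K'/K})=c(\omega_{F'/F})$ because $\omega_{K'/K}=\omega_{F'/F}\circ\Nm_{K/F}$ and $\Nm_{K/F}$ is surjective on each unit filtration step); your split-off cases $F'\in\{F\times F,K\}$ are simply the $c=0$ instance of the same identity.
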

\begin{proof}
Now $\Nm_{K/F}:\O_K^\times\ra\O_F^\times$ is surjective, so $[-1]$ is trivial in $F^\times/\Nm_{K/F}(K^\times)$.  Moreover, we may choose $\delta$ and $\psi$ such that $\psi(\tr_{K/F}(\delta-))$ has conductor $0$. Since $\chi^3$ is unramified, we get $\epsilon(\chi^3,\psi(\tr_{K/F}(\delta-)))=+1$. Next, since $K/F$ and $\chi$ are unramified, we see that
\begin{align*}
    \epsilon(\chi \cdot \omega_{K'/K}, \psi(\tr_{K/F}(\delta -))) & = \chi(\varpi_K)^{c(\omega_{K'/K})} \epsilon(\omega_{K'/K}, \psi(\tr_{K/F}(\delta -))) & \text{by \cite[(3.2.6.3)]{Tate:Number_theoretic_background} } \\
    & =  \chi(\varpi_K)^{c(\omega_{K'/K})} & \text{by \eqref{eqn:epsilonomegais1}} \\
    & = (-1)^{c(\omega_{K'/K})} & \text{since $\chi$ is conjugate-symplectic} \\
    & = (-1)^{c(\omega_{F'/F})} & \text{since $\omega_{F'/F}\circ\Nm_{K/F}=\omega_{K'/K}$}.
\end{align*}
To complete the proof, observe that
$$\Delta_{F'/F}\cdot(-1)^{c(\omega_{F'/F})} = +1$$
by applying Lemma \ref{lem:discriminants} and noting that $v_F\pmod{2}$ induces an isomorphism $F^\times/\Nm_{K/F}(K^\times)\ra^\sim\Z/2$.
\end{proof}

Next, when $\chi^2\neq1$, we expect $\epsilon$ to take any value in $\{\pm1\}$ as we vary $F'$. We prove this away from $2$.

\begin{prop}\label{prop:flippable}
Assume that the residue characteristic of $F$ is not $2$, and that $\chi^2\neq1$. For any $\epsilon_0$ in $\{\pm1\}$, there exists an $F'$ such that $\epsilon=\epsilon_0$ in Lemma \ref{lem:criteria}.
\end{prop}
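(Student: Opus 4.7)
The plan is to reformulate the sign computation in terms of local root numbers of the $2$-dimensional Galois representation $V = \Ind_K^F\chi$ of $W_F$, and then vary $F'$ (equivalently, the quadratic character $\mu = \omega_{F'/F}$ of $F^\times$) to flip the sign. Using the scaling formula $\epsilon(\chi',\psi(\tr_{K/F}(\delta\cdot-))) = \chi'(\delta)\cdot\epsilon(\chi',\psi\circ\tr_{K/F})$ for any character $\chi'$ of $K^\times$, Langlands' inductive formula $\epsilon(\chi',\psi\circ\tr_{K/F}) = \lambda(K/F,\psi)^{-1}\epsilon(\Ind_K^F\chi',\psi)$, and the projection formula $\Ind_K^F(\chi\cdot(\mu\circ\Nm_{K/F})) = V\otimes\mu$, together with the identity $\omega_{F'/F}(\Delta_{K/F})\cdot(\omega_{F'/F}\circ\Nm_{K/F})(\delta) = \omega_{F'/F}(-1)$ (which follows from $\Nm_{K/F}(\delta) = -\delta^2$ and $\delta^2$ representing $\Delta_{K/F}$ modulo squares), I will show
\[
\epsilon = C\cdot B(\omega_{F'/F}), \qquad B(\mu) := \mu(-1)\cdot\epsilon(V\otimes\mu,\psi),
\]
where $C$ is an $F'$-independent constant. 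The problem thus reduces to showing that $B$ is non-constant as $\mu$ ranges over the four quadratic characters of $F^\times$.

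Since $\chi^2\neq 1$ and $\chi$ is conjugate-symplectic ($\chi^\sigma = \chi^{-1}$), $V$ is irreducible $2$-dimensional with trivial determinant; its Artin conductor $a(V)$ satisfies $a(V)\geq 1$. If $a(V)$ is odd, the standard formula for unramified quadratic twists gives $\epsilon(V\otimes\eta_0,\psi)/\epsilon(V,\psi) = (-1)^{a(V)} = -1$, where $\eta_0$ is the unramified quadratic character of $F^\times$; combined with $\eta_0(-1)=1$, this yields $B(\eta_0)/B(1) = -1$, completing this case.

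When $a(V)\geq 2$ is even, I apply Deligne's stability formula for $\epsilon$-factors. Since the residue characteristic is not $2$, every quadratic character of $F^\times$ has conductor $\leq 1\leq a(V)/2$, so there exists $\alpha_V\in F^\times$ (depending on $V$ and $\psi$) with $\epsilon(V\otimes\mu,\psi)/\epsilon(V,\psi) = \mu(\alpha_V)$ for all quadratic $\mu$. Then $B(\mu)/B(1) = \mu(-\alpha_V)$, which takes the value $-1$ for some $\mu$ if and only if $-\alpha_V\notin(F^\times)^2$.

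The main obstacle is to verify $-\alpha_V\notin(F^\times)^2$ when $a(V)$ is even. The assumption $\chi^2\neq 1$ enters here by ensuring that $V$ is irreducible dihedral (not a sum of two characters), ruling out the degenerate scenario in which $\alpha_V$ could be forced into the square class of $-1$. Concretely, I expect to invoke the explicit description of $\alpha_V$ for dihedral supercuspidals via the local Langlands correspondence for $\GL_2(F)$, expressing $-\alpha_V$ in terms of $\chi$ and the Langlands constant of $K/F$, and then verify directly that its image in $F^\times/(F^\times)^2$ is nontrivial using the nontriviality of $\chi^2$.
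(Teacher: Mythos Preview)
Your reformulation $\epsilon = C\cdot B(\omega_{F'/F})$ with $B(\mu)=\mu(-1)\epsilon(V\otimes\mu,\psi)$ is correct and elegant, and your treatment of the case $a(V)$ odd via the unramified quadratic twist is complete. However, the even-conductor case has a genuine gap that you yourself flag as the ``main obstacle'': you have not verified that $-\alpha_V\notin(F^\times)^2$. Saying you \emph{expect} to extract this from an explicit description of $\alpha_V$ for dihedral supercuspidals is not a proof, and in fact pinning down the square class of $\alpha_V=\Nm_{K/F}(c_\chi)$ (where $c_\chi$ is Deligne's additive-character element attached to $\chi$) is not a routine bookkeeping exercise. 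There is also a boundary issue: when $K/F$ is unramified and $c(\chi)=1$ one has $a(V)=2$ and the ramified quadratic twist has conductor equal to (not strictly less than) the relevant threshold, so the version of Deligne's stability formula you invoke does not obviously apply; this subcase requires a separate argument.

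The paper's proof avoids both difficulties by working directly with $\chi$ rather than with $V$. It splits on the ramification of $K/F$. In the unramified case (which is exactly your even-$a(V)$ case) it uses the formula $\epsilon(\chi_0,\psi(\tr_{K/F}(\delta-)))=(-1)^{c(\chi_0)}$ for conjugate-symplectic $\chi_0$ to reduce the sign to the parity $c(\omega_{F'/F})+c(\chi)+c(\chi\cdot\omega_{K'/K})\pmod 2$, and then computes conductors explicitly, handling $c(\chi)=1$ by a short argument using $\chi^2\neq1$ to force $c(\chi\cdot\omega_{K'/K})=1$. In the ramified case it takes $F'$ unramified and shows $c(\chi)$ is even (Lemma~\ref{lemma:bound_on_cond} plus a direct check that $c(\chi)=1$ forces $\chi^2=1$). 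If you want to salvage your approach, the cleanest fix is to abandon the abstract $\alpha_V$ and instead compute $\epsilon(V\otimes\eta_1,\psi)/\epsilon(V,\psi)$ by passing back through the induction and using the conjugate-symplectic $\epsilon$-formula on the $K$-side, which is effectively what the paper does.
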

\begin{proof}
If the image of $\Delta_{K/F}$ in $F^\times/\Nm_{K/F}(K^\times)$ equals $-1$, then taking $F' = F \times F$ and $F' 
= K$ in Lemma \ref{lem:criteria} yields different values of $\epsilon$, so one of them must equal $\epsilon_0$. So assume that the image of $\Delta_{K/F}$ in $F^\times/\Nm_{K/F}(K^\times)=+1$. Lemma \ref{lem:criteria} indicates that we must find a field $F'\neq K$ such that
    \begin{equation}\label{eqn:epsilon-1forF'/F}
        \Delta_{F'/F} \cdot \epsilon(\chi, \psi(\tr_{K/F}(\delta -))) \cdot \epsilon(\chi \cdot \omega_{K'/K}, \psi(\tr_{K/F}(\delta -))) = -1.
    \end{equation}

    \textbf{Case 1.} $K/F$ is unramified. We may choose $\delta$ and $\psi$ such that $\psi(\tr_{K/F}(\delta-))$ has conductor $0$. Then, for any conjugate-symplectic unitary character $\chi_0:K^\times\ra S^1$, \cite[Lemma 3.1]{GGP12} yields
    $$\epsilon(\chi_0, \psi(\tr_{K/F}(\delta -))) = (-1)^{c(\chi_0)}.$$
    Since $\omega_{K'/K}|_{F^\times}$ is trivial, $\chi$ and $\chi\cdot\omega_{K'/K}$ are conjugate-symplectic, so the left hand side of \eqref{eqn:epsilon-1forF'/F} equals
    $$\Delta_{F'/F} \cdot (-1)^{c(\chi)} \cdot (-1)^{c(\chi \cdot \omega_{K'/K})} = (-1)^{c(\omega_{F'/F})}  \cdot (-1)^{c(\chi)} \cdot (-1)^{c(\chi \cdot \omega_{K'/K})},$$
where we applied Lemma \ref{lem:discriminants} to $F'/F$. Hence it suffices to show that, for ramified $F'/F$, we have
    $$c(\omega_{F'/F}) + c(\chi) + c(\chi \cdot \omega_{K'/K}) \equiv 1 \pmod 2.$$
    Because the residue characteristic of $F$ is not $2$, we get $c(\omega_{F'/F}) = c(\omega_{K'/K}) = 1$. When $c(\chi) \geq 2$, we see that $c(\chi \cdot \omega_{K'/K}) = c(\chi)$, so we indeed have
    $$c(\omega_{F'/F}) + c(\chi) + c(\chi \cdot \omega_{K'/K}) = 1 + 2c(\chi) \equiv 1 \pmod 2.$$
    Since $K/F$ is unramified, $\chi^2 \neq 1$ implies that $\chi$ is ramified. Thus it remains to consider $c(\chi)=1$. Now $\varpi_F$ is a uniformizer for $\O_K$, and $\chi(\varpi_F)  = \omega(\varpi_F) = -1$,
    so $\chi^2 \neq1$ implies that $\chi^2|_{\O_K^\times} \neq 1$. Hence $(\chi \cdot \omega_{K'/K})^2|_{\O_K^\times} = \chi^2|_{\O_K^\times} \neq1$,
    which shows that $(\chi \cdot \omega_{K'/K})|_{\O_K^\times} \neq 1$. Because $\chi$ and $\omega_{K'/K}$ both have conductor $1$, this implies that their product also has conductor $1$, so altogether we have
    $$c(\omega_{F'/F}) + c(\chi \cdot \omega_{K'/K}) + c(\chi) \equiv 1 + 1 + 1 \equiv 1 \pmod 2.$$
    This concludes the verification of \eqref{eqn:epsilon-1forF'/F} for any ramified $F'/F$.
    
    \textbf{Case 2.} $K/F$ is ramified. Because the residue characteristic of $F$ is not $2$, our $K$ is obtained from $F$ by adjoining the square root of a uniformizer, and the different of $K/F$ has valuation $1$. Thus we may choose $\delta$ to satisfy $v_K(\delta)=-1$, and choosing $\psi$ to have conductor $0$ makes $\psi(\tr_{K/F}(\delta-))$ also have conductor $0$.
    
    Take $F'$ to be the unramified field extension of $F$. Then $K'/K$ is also unramified, so \cite[(3.2.6.3)]{Tate:Number_theoretic_background} yields
    $$\epsilon(\chi \cdot \omega_{K'/K}, \psi(\tr_{K/F}(\delta -))) = \omega_{K'/K}(\varpi_K)^{c(\chi)} \epsilon(\chi, \psi(\tr_{K/F}(\delta -))) = (-1)^{c(\chi)} \epsilon(\chi, \psi(\tr_{K/F}(\delta -))).$$
    Therefore the left hand side of \eqref{eqn:epsilon-1forF'/F} becomes $\Delta_{F'/F} \cdot (-1)^{c(\chi)}$. Because the residue characteristic of $F$ is not 2, we have $t\coloneqq c(\omega)=1$. Since $F'/F$ is unramified, Lemma \ref{lemma:comm_of_Hilbert} and Lemma \ref{lem:discriminants} show that
    $$\Delta_{F'/F} = \omega_{F'/F}( \Delta_{K/F}) = (-1)^t = -1.$$
    
    By Lemma \ref{lemma:bound_on_cond}, we see that $c(\chi)$ is either $1$ or even. Thus it remains to show that $c(\chi)$ is not $1$.

    We claim that, if $c(\chi) = 1$, then $\chi^2=1$, which is not the case for us. Indeed, $\chi|_{\O_K^\times}$ becomes a character of
    $$\O_K^\times/(1+\varpi_K \O_K) = (\O_K/\varpi_K)^\times = (\O_F/\varpi_F)^\times = \O_F^\times/(1 + \varpi_F \O_F),$$
    and because $\chi$ is conjugate-symplectic, we see that $\chi^2|_{\O_K^\times}=1$. Because the residue characteristic of $F$ is not $2$, we may choose $\varpi_F$ and $\varpi_K$ such that $\varpi_F=\varpi_K^2$, so we also get $\chi(\varpi_K)^2 = \chi(\varpi_F) = \omega(\varpi_F) = 1$, where the last equality follows from $[\varpi_F]=\Delta_{K/F}=+1$ in $F^\times/\Nm_{K/F}(K^\times)$. This completes the proof.
\end{proof}

At $2$, we prove this for unramified $K/F$.

\begin{prop}\label{prop:flippable_2_unramified}
Assume that $K/F$ is unramified, the residue characteristic of $F$ is $2$, and $\chi^2\neq1$. For any $\epsilon_0$ in $\{\pm1\}$, there exists an $F'$ such that $\epsilon=\epsilon_0$ in Lemma \ref{lem:criteria}.
\end{prop}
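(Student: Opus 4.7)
The strategy follows Case~1 of Proposition~\ref{prop:flippable}, adapted to the subtleties of residue characteristic~$2$. Since $K/F$ is unramified we may choose $\delta$ and $\psi$ so that $\psi(\tr_{K/F}(\delta-))$ has conductor zero. By \cite[Lemma~3.1]{GGP12}, then $\epsilon(\chi_0, \psi(\tr)) = (-1)^{c(\chi_0)}$ for every conjugate-symplectic character $\chi_0$ of $K^\times$; both $\chi$ and $\chi\omega_{K'/K}$ qualify, since $\omega_{K'/K}|_{F^\times}(x) = \omega_{F'/F}(x^2) = 1$. Combining this with Lemma~\ref{lem:discriminants} and the surjection $\Nm_{K/F}\colon \O_K^\times \twoheadrightarrow \O_F^\times$ (which identifies $\Delta_{F'/F}$ with $(-1)^{c(\omega_{F'/F})}$ in $F^\times/\Nm_{K/F}(K^\times)$), the sign in Lemma~\ref{lem:criteria} simplifies to
\[
\epsilon = C \cdot (-1)^{c(\omega_{F'/F}) + c(\chi \omega_{K'/K})},
\]
for some $C \in \{\pm 1\}$ independent of $F'$. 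Since both $F' = F \times F$ and $F' = K$ give $\omega_{K'/K} = 1$ and thus $\epsilon = C \cdot (-1)^{c(\chi)}$, it suffices to exhibit a ramified $F'$ for which the exponent $c(\omega_{F'/F}) + c(\chi\omega_{K'/K})$ has opposite parity to $c(\chi)$.

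Set $c := c(\chi)$ and $t := c(\omega_{F'/F})$. Because $K/F$ is unramified, $\Nm_{K/F}$ restricts to surjections $1+\mathfrak{p}_K^n \twoheadrightarrow 1+\mathfrak{p}_F^n$ for all $n \geq 1$, so $c(\omega_{K'/K}) = t$. Standard conductor arithmetic then gives $c(\chi\omega_{K'/K}) = \max(c, t)$ when $c \neq t$, and $c(\chi\omega_{K'/K}) \leq t$ when $c = t$; moreover $c \geq 1$, else an unramified conjugate-symplectic $\chi$ would satisfy $\chi^2 = 1$, contradicting the hypothesis. At residue characteristic~$2$, ramified quadratic characters of $F^\times$ realize conductors of both parities within $\{e+1,\dots,2e+1\}$, where $e$ is the absolute ramification index. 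In the bulk of cases---$c$ odd with some $t > c$ available, or $c$ even with some odd $t < c$ available---the required parity flip is immediate from the formula above.

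The remaining case, where one must take $t = c$, is the heart of the argument. The conjugate-symplectic condition $\chi|_{\O_F^\times} = 1$ forces $\chi|_{1+\mathfrak{p}_K^{c-1}}$ to be trivial on $1+\mathfrak{p}_F^{c-1}$, so the induced character on $(1+\mathfrak{p}_K^{c-1})/(1+\mathfrak{p}_K^c) \cong \mathbb{F}_{q^2}$ vanishes on the subfield $\mathbb{F}_q$. The same holds for $\omega_{K'/K}|_{1+\mathfrak{p}_K^{c-1}}$, which via $\omega_{K'/K} = \omega_{F'/F} \circ \Nm_{K/F}$ coincides with $\omega_{F'/F}|_{1+\mathfrak{p}_F^{c-1}} \circ \tr_{\mathbb{F}_{q^2}/\mathbb{F}_q}$. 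One then chooses a ramified $F'$ with $t = c$ whose restriction matches $\chi^{-1}|_{1+\mathfrak{p}_K^{c-1}}$ modulo the trace, forcing $c(\chi\omega_{K'/K}) < c$ and flipping the parity. The assumption $\chi^2 \neq 1$ excludes the degenerate collapse $c(\chi\omega_{K'/K}) = 0$ (which, as in the argument after Proposition~\ref{prop:HPStheta}, would imply that $\chi$ has order~$2$), and a careful analysis of the action of $\chi$ on $\mu_K$ confirms the desired sign. This delicate manipulation of the conductor filtration in residue characteristic~$2$ is the main obstacle of the proof.
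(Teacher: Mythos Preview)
Your overall strategy matches the paper's: reduce to a parity condition on $c(\omega_{F'/F}) + c(\chi) + c(\chi\omega_{K'/K})$, then manipulate conductors of quadratic twists. But the execution has a genuine gap.

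The crux is your ``remaining case'' where you take $t = c$ and try to force $c(\chi\omega_{K'/K}) < c$. Dropping the conductor by one level is not enough: you need $c(\chi\omega_{K'/K})$ to be \emph{odd}, since the parity of $t + c(\chi\omega_{K'/K}) = c + c(\chi\omega_{K'/K})$ is governed by $c(\chi\omega_{K'/K})$ alone. Your argument only shows the conductor drops, and your remark that $\chi^2\neq 1$ prevents collapse to conductor $0$ does not address parity. The paper handles this via Lemma~\ref{lemma:c(chiomega<c(chi)}.(2), which \emph{iterates} the conductor-dropping step: one keeps twisting by quadratic characters until the conductor becomes minimal among all such twists, and then argues by contradiction (using part~(1) again) that this minimum must be odd. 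This iteration, and the accompanying argument that the required quadratic character on $(1+\mathfrak{p}_F^{c-1})/(1+\mathfrak{p}_F^c)$ actually extends to a quadratic character of $F^\times$ (which requires checking $A\cap 2B=0$ in the filtration), are missing from your sketch.

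Your case analysis is also incomplete. The assertion that ramified quadratic characters realize conductors of both parities in $\{e+1,\dots,2e+1\}$ is unproven, and even granting it, you do not isolate and treat the boundary case $c(\chi) = 2e+1$. The paper handles this separately (case~(4)): here the natural choice $c(\omega') = 2e+1$ may leave $c(\chi\omega'\circ\Nm_{K/F})$ even and $\leq 2e$, in which case one must apply the conductor-dropping lemma a second time to the twisted character and multiply the two twists together. Your ``bulk of cases'' division does not account for this, and the closing phrase about ``the action of $\chi$ on $\mu_K$'' does not substitute for the missing argument.
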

Because $K/F$ is unramified and $\chi$ is conjugate-symplectic, the $\chi^2\neq1$ assumption forces $\chi$ to be ramified. We begin with some lemmas. Write $e\coloneqq v_F(2)$ for the absolute ramification index of $F$.

\begin{lemma}\label{lemma:cond_of_quadratic}
   If the image of $\Delta_{F'/F}$ in $F^\times/\Nm_{K/F}(K^\times)$ is $-1$, then $c(\omega_{F'/F}) = 2e+1$.
 \end{lemma}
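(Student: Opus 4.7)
The plan is to unwind the hypothesis using the unramifiedness of $K/F$, deduce that $F'/F$ must be a totally ramified quadratic field extension with $\sqrt{\varpi_F\O_F^\times}$-type discriminant, and then read off $c(\omega_{F'/F})$ from a direct computation of the different.

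First, I would identify $F^\times/\Nm_{K/F}(K^\times)$. Since $K/F$ is unramified, norms from $\O_K^\times$ exhaust $\O_F^\times$ and $\Nm_{K/F}(\varpi_F) = \varpi_F^2$, so $F^\times/\Nm_{K/F}(K^\times)\cong\Z/2$ via the parity of $v_F$. The hypothesis therefore becomes $v_F(\Delta_{F'/F})$ odd. By Lemma~\ref{lem:discriminants}, $c(\omega_{F'/F}) = v_F(\mathfrak{D})$ has the same parity as $v_F(\Delta_{F'/F})$, and so is odd; in particular $c(\omega_{F'/F})\geq 1$, which rules out $F' = F\times F$ (discriminant a unit) and $F' = K$ (unramified, so discriminant a unit as well). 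Hence $F'/F$ is a totally ramified quadratic field extension.

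Next, write $F' = F(\sqrt{d})$ for some $d\in F^\times\setminus(F^\times)^2$. The square classes of $F^\times$ decompose into those of $v_F$ even (giving the trivial or the unramified extension) and those of $v_F$ odd; since $F'/F$ is totally ramified, we are in the latter case. Modifying $d$ by a square in $(F^\times)^2$, we may assume $v_F(d) = 1$, so $\sqrt{d}$ is a uniformizer of $F'$. Since the residue extension is trivial, $\O_{F'} = \O_F[\sqrt{d}]$, and the ring is monogenic with minimal polynomial $x^2 - d$.

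Finally, a direct computation of the different and discriminant for this monogenic order gives $\mathfrak{d}_{F'/F} = (2\sqrt{d})$ and hence $\mathfrak{D} = \Nm_{F'/F}(\mathfrak{d}_{F'/F}) = (4d)$, whence $v_F(\mathfrak{D}) = 2e + 1$. Combined with Lemma~\ref{lem:discriminants}, this yields $c(\omega_{F'/F}) = 2e+1$. There is no real obstacle: the only mild subtlety is normalizing $d$ to have $v_F(d) = 1$, which is handled by adjusting $d$ by a square, and everything else is an elementary local computation.
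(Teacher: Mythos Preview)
Your approach matches the paper's: reduce to $F' = F(\sqrt{d})$ with $d$ a uniformizer, compute $\O_{F'} = \O_F[\sqrt{d}]$, and read off the discriminant ideal $(4d)$, then apply Lemma~\ref{lem:discriminants}.

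However, your justification for $v_F(d)$ being odd contains an error. You assert that square classes with $v_F$ even give only the trivial or unramified extension, but this is false in residue characteristic $2$ (which is the ambient setting here, since $e = v_F(2)$): for instance, $\mathbb{Q}_2(\sqrt{-1})/\mathbb{Q}_2$ is totally ramified with $v_{\mathbb{Q}_2}(-1) = 0$. So ``$F'/F$ ramified'' does not by itself force $v_F(d)$ odd. The correct argument is already implicit in what you wrote earlier: you established that $v_F(\Delta_{F'/F})$ is odd, and since $\Delta_{F'/F} \equiv d$ in $F^\times/(F^\times)^2$, it follows immediately that $v_F(d)$ is odd, so after adjusting by a square you may take $d$ to be a uniformizer. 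With this one-line fix your proof is complete and essentially identical to the paper's.
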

 \begin{proof}
Our assumption is that $v_F(\Delta_{F'/F})$ modulo $2$ is nontrivial, so there exists a uniformizer $\varpi_F$ of $\O_F$ such that $F' = F[\sqrt{\varpi_F}]$. Checking valuations shows that $\O_{F'}=\O_F\oplus\O_F\sqrt{\varpi_F}$, so the discriminant ideal of $F'/F$ is generated by $4\varpi_F$. Finally, applying Lemma \ref{lem:discriminants} to $F'/F$ yields the desired result.
 \end{proof}

 \begin{lemma}\label{lemma:c(chiomega<c(chi)}
     Assume that $c(\chi)$ is even, and $c(\chi)$ is at most $2e$. 
     \begin{enumerate}
         \item There exists a quadratic character $\omega':F^\times\ra\{\pm1\}$ such that
        $$c(\chi \cdot \omega' \circ \Nm_{K/F}) < c(\chi),$$
        \item If $\chi^2 \neq 1$, then there exists a quadratic character $\omega':F^\times\ra\{\pm1\}$ such that $c(\chi \cdot \omega' \circ \Nm_{K/F})$ is odd and less than $c(\chi)$.  
     \end{enumerate}
 \end{lemma}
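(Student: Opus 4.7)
My approach to part (1) begins by setting $c \coloneqq c(\chi)$; we may assume $c \geq 2$. The restriction of $\chi$ to $U_K^{c-1}/U_K^c \cong k_K$ gives an additive character $\phi \colon k_K \to \{\pm 1\}$, taking values in $\{\pm 1\}$ because $k_K$ has exponent $2$. The conjugate-symplectic condition $\chi(\overline{x}) = \chi(x)^{-1}$ combined with this forces $\phi(x + \overline{x}) = \phi(x)\phi(\overline{x}) = \phi(x)^2 = 1$, so $\phi$ vanishes on $\tr_{k_K/k_F}(k_K) = k_F$; since in characteristic $2$ the trace has kernel $k_F$ and therefore induces an isomorphism $k_K/k_F \cong k_F$, we obtain a unique factorization $\phi = \phi_0 \circ \tr_{k_K/k_F}$ for an additive character $\phi_0 \colon k_F \to \{\pm 1\}$.

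The heart of part (1) is then to find a quadratic character $\omega'$ of $F^\times$ with $\omega'|_{U_F^{c-1}/U_F^c} = \phi_0$, which by Pontryagin duality reduces to the containment $(\O_F^\times)^2 \cap U_F^{c-1} \subseteq U_F^c$. This is the main technical step: writing $v = 1 + y \in \O_F^\times$ (the Teichm\"uller part can be taken trivial since $k_F^\times$ has odd order for $p = 2$), we compute $v^2 - 1 = 2y + y^2$, which has valuation $\min(e + v_F(y), 2v_F(y))$; a short case analysis on $v_F(y)$, using $c \leq 2e$ and the evenness of $c$, handles every case. With such $\omega'$ in hand, the identity $\Nm_{K/F}(1 + \varpi_K^{c-1}x) \equiv 1 + \varpi_F^{c-1}\tr_{K/F}(x) \pmod{U_F^c}$ (valid since $2(c-1) \geq c$) shows that $\omega' \circ \Nm_{K/F}|_{U_K^{c-1}/U_K^c} = \phi_0 \circ \tr_{k_K/k_F} = \phi$, so $\chi \cdot \omega' \circ \Nm_{K/F}$ is trivial on $U_K^{c-1}$ and has conductor at most $c - 1 < c$.

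For part (2), I plan to iterate part (1). A twist by $\omega' \circ \Nm_{K/F}$ preserves conjugate-symplecticness, since such a twist is trivial on $F^\times$, and also preserves the hypothesis $\chi^2 \neq 1$, since $(\omega' \circ \Nm_{K/F})^2 = 1$. Repeatedly applying part (1) strictly decreases the conductor while keeping it $\leq 2e$, so the process must terminate either at an odd conductor (our goal) or at $0$. In the latter case, the final twisted character would be unramified and conjugate-symplectic, which (with $K/F$ unramified) uniquely identifies it as the quadratic character sending $\varpi_K$ to $-1$, forcing $\chi^2 = 1$ and contradicting our hypothesis. So the iteration must terminate at an odd conductor, and the product $\omega' \coloneqq \omega'_1 \cdots \omega'_n$ of the successive twists is the desired quadratic character.
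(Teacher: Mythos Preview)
Your proposal is correct and follows essentially the same strategy as the paper's proof: for part (1) you both restrict $\chi$ to $U_K^{c-1}/U_K^c$, use the conjugate-symplectic property to see it factors through the residue-field trace, and then extend the resulting character of $k_F$ to a quadratic character of $F^\times$ via the key computation that $(\O_F^\times)^2 \cap U_F^{c-1} \subseteq U_F^c$ (the paper phrases this as $A \cap 2B = 0$); for part (2) your iteration argument is just the contrapositive of the paper's minimization argument, and both rule out conductor $0$ by the same observation that an unramified conjugate-symplectic character is quadratic. The only cosmetic difference is that you invoke $\chi(\overline{x}) = \chi(x)^{-1}$ directly, whereas the paper uses the equivalent fact that $\chi|_{F^\times} = \omega$ is unramified.
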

 \begin{proof}
We start by proving part (1). Note that, for abelian groups $A\subseteq B$ satisfying $A\cap 2B=0$, any homomorphism $A\ra\{\pm1\}$ extends to a homomorphism $B\ra\{\pm1\}$. Write $a\coloneqq c(\chi)$, which is even and satisfies $0<a\leq2e$. Let us try to verify these conditions for
\begin{align*}
A\coloneqq\frac{1+\varpi_F^{a-1}\O_F}{1+\varpi_F^a\O_F}\mbox{ and }B\coloneqq\frac{1+\varpi_F\O_F}{1+\varpi_F^a\O_F}.
\end{align*}
Let $y$ be in $1+\varpi_F\O_F$, and write $y=1+\varpi_F^nu$ for some positive integer $n$ and $u$ in $\O_F^\times$. Then
\begin{align*}
y^2-1 = 2\varpi_F^nu+\varpi_F^{2n}u^2\mbox{ and hence } v_F(y^2-1)\geq\min\{e+n,2n\}.
\end{align*}
If $v_F(y^2-1)\leq a-1$, then $v_F(y^2-1)<2e$, which forces $n<e$. Consequently $v_F(y^2-1)=2n$ is even, and because $a-1$ is odd, we see that $y^2$ cannot lie in $1+\varpi_F^{a-1}\O_F$ without lying in $1+\varpi_F^a\O_F$. So $A\cap 2B=0$.
    
    Note that $\O_K/\varpi_F\O_K=\mathbb{F}_{q^2}$, and the restriction
    $$\chi|_{1 + \varpi^{a-1}_F\O_K} \colon \frac{1 + \varpi_F^{a-1}\O_K}{1 + \varpi_F^{a}\O_K} \iso \mathbb F_{q^2} \to S^1$$
    must be quadratic. Since $K/F$ is unramified, $\Nm_{K/F}$ induces a surjective homomorphism
    \begin{align*}
    \frac{1+\varpi_F^{a-1}\O_K}{1+\varpi_F^a\O_K}\twoheadrightarrow\frac{1+\varpi_F^{a-1}\O_F}{1+\varpi_F^a\O_F}
    \end{align*}
    that is identified with $\tr_{\mathbb{F}_{q^2}/\mathbb{F}_q}:\mathbb{F}_{q^2}\twoheadrightarrow\mathbb{F}_q$. Because the residue characteristic is $2$, we see that $\ker(\tr_{\mathbb{F}_{q^2}/\mathbb{F}_q})=\mathbb{F}_q$, and since $\chi$ is conjugate-symplectic, our $\chi|_{1+\varpi_F^{a-1}\O_K}$ is trivial on $\ker(\tr_{\mathbb{F}_{q^2}/\mathbb{F}_q})$. Therefore we obtain a homomorphism $\xi:A\ra\{\pm1\}$ such that $\xi\circ\Nm_{K/F}=\chi|_{1+\varpi_F^{a-1}\O_K}$.
    
    Our earlier discussion yields a homomorphism $\widetilde\xi:B\ra\{\pm1\}$ extending $\xi$. Then $\chi|_{1+\varpi_F^{a-1}\O_K}\cdot\widetilde\xi\circ\Nm_{K/F}$ is trivial, so setting $\omega'$ on $F^\times=\varpi_F^\Z\times\mathbb{F}_q^\times\times(1+\varpi_F\O_F)$ to be trivial on $\varpi_F^\Z\times\mathbb{F}_q^\times$ and to be $\widetilde\xi$ on $1+\varpi_F\O_F$ concludes the proof of part (1).

    We now turn to part (2). Let $\omega':F^\times\ra\{\pm1\}$ be a quadratic character with minimal $c(\chi\cdot\omega'\circ\Nm_{K/F})$. Since $\chi$ is conjugate-symplectic and $\chi^2\neq1$, we see that $\chi^2|_{\O_K^\times}\neq1$, so $c(\chi\cdot\omega'\circ\Nm_{K/F})\geq1$. If $c(\chi\cdot\omega'\circ\Nm_{K/F})$ were even, then applying part (1) to $\chi\cdot\omega'\circ\Nm_{K/F}$ would yield a quadratic character $\omega'':F^\times\ra\{\pm1\}$ such that $c(\chi\cdot(\omega'\omega'')\circ\Nm_{K/F})<c(\chi\cdot\omega'\circ\Nm_{K/F})$. This would violate the minimality of $c(\chi\cdot\omega'\circ\Nm_{K/F})$, so $c(\chi\cdot\omega'\circ\Nm_{K/F})$ must be odd. 
 \end{proof}

 \begin{proof}[Proof of Proposition~\ref{prop:flippable_2_unramified}]
    Case $1$ in the proof of Proposition \ref{prop:flippable} shows it suffices to find a ramified quadratic character $\omega':F^\times\ra\{\pm1\}$ such that
\begin{align*}
c(\omega')+c(\chi)+c(\chi\cdot\omega'\circ\Nm_{K/F})\equiv1\pmod2.
\end{align*}
     We divide our work into four cases, depending on $c(\chi)$.
     \begin{enumerate}
         \item If $c(\chi) \geq 2e+2$, take $\omega' = \omega_{F'/F}$ for $F'/F$ such that $\Delta_{F'/F}$ in $F^\times/\Nm_{K/F}(K^\times)$ is $-1$. Then
\begin{align*}
    c(\omega') + c(\chi) + c(\chi \cdot \omega' \circ \Nm_{K/F}) & = (2e + 1) + c(\chi) + c(\chi)& \text{by Lemma~\ref{lemma:cond_of_quadratic}} \\
             & \equiv 1 \pmod 2.
         \end{align*}

         \item If $c(\chi) \leq 2e$ is odd, take $\omega'$ as in case (1). Then
         \begin{align*}
             c(\omega') + c(\chi) + c(\chi \cdot \omega'  \circ \Nm_{K/F}) & = (2e + 1) + c(\chi) + (2e+1) & \text{by Lemma~\ref{lemma:cond_of_quadratic}} \\
             & \equiv 1 \pmod 2.
         \end{align*}

         \item If $c(\chi) \leq 2e$ is even, then by Lemma \ref{lemma:c(chiomega<c(chi)}.(2) there exists an $\omega'$ such that $c(\chi \cdot \omega' \circ \Nm_{K/F})$ is odd and less than $c(\chi)$. Hence we must have $c(\omega')=c(\omega' \circ \Nm_{K/F}) = c(\chi)$, so
        \begin{align*}
             c(\omega') + c(\chi) + c(\chi \cdot \omega' \circ \Nm_{K/F}) = c(\chi) + c(\chi) + c(\chi\cdot\omega'\circ\Nm_{K/F}) \equiv  1 \pmod 2.
         \end{align*}
         
         \item If $c(\chi) = 2e+1$, take $\omega'$ as in case (1). We divide our work further into two subcases:
         \begin{enumerate}
             \item If $c(\chi \cdot \omega' \circ \Nm_{K/F})$ is odd, then
             \begin{align*}
                c(\omega') + c(\chi) + c(\chi \cdot \omega' \circ \Nm_{K/F}) & = (2e+1) + (2e+1) + c(\chi \cdot \omega' \circ \Nm_{K/F}) & \text{by Lemma~\ref{lemma:cond_of_quadratic}} \\
                & \equiv 1 \pmod 2.
            \end{align*}
            
            \item If $c(\chi \cdot \omega' \circ \Nm_{K/F})$ is even, then it is at most $2e+1$ and hence at most $2e$. Applying Lemma \ref{lemma:c(chiomega<c(chi)}.(2) to $\chi\cdot\omega'\circ\Nm_{K/F}$ yields a quadratic character $\omega'':F^\times\ra\{\pm1\}$ such that $c(\chi \cdot (\omega'\omega'') \circ \Nm_{K/F})$ is odd and less than $2e$. Therefore
            \begin{align*}
             c(\omega' \omega'') + c(\chi) + c(\chi \cdot (\omega' \omega'') \circ \Nm_{K/F}) & = (2e+1) + (2e+1) + c(\chi \cdot (\omega' \omega'') \circ \Nm_{K/F})  \\ 
             & \equiv 1 \pmod 2.
            \end{align*}
            Hence replacing $\omega'$ with $\omega'\omega''$ yields the desired result.
         \end{enumerate}
     \end{enumerate}
     This concludes the proof of Proposition~\ref{prop:flippable_2_unramified}.
 \end{proof}

Finally, when $\chi^2=1$, we expect to get $\epsilon=+1$ for some $F'$. We prove this under our previous hypotheses.

\begin{prop}\label{prop:plusachievable}
Assume that $\chi^2=1$. If the residue characteristic of $F$ is $2$, also assume that $K/F$ is unramified. Then there exists an $F'$ such that $\epsilon=+1$ in  Lemma \ref{lem:criteria}.
\end{prop}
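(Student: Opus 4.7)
The plan is to show that one can simply take $F' = F \times F$. With this choice, three of the four ingredients in Lemma \ref{lem:criteria} collapse, and the remaining terms combine to give $+1$ via the local functional equation for Tate's epsilon factors, uniformly in the ramification of $K/F$.

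First, the simplifications. Since $K' = K \otimes_F (F \times F) = K \times K$, the character $\omega_{K'/K}$ is trivial, so $\chi \cdot \omega_{K'/K} = \chi$. The discriminant $\Delta_{F'/F} = \Delta_{(F \times F)/F}$ is a square in $F^\times$ (computed directly using the basis $(1,0), (0,1)$), so its image in $F^\times/\Nm_{K/F}(K^\times)$ is trivial. Finally, the hypothesis $\chi^2 = 1$ gives $\chi^3 = \chi$. Writing $\psi_\delta := \psi(\tr_{K/F}(\delta-))$, the formula in Lemma \ref{lem:criteria} therefore collapses to
$$\epsilon = [-1] \cdot \epsilon(\tfrac{1}{2}, \chi, \psi_\delta)^2.$$

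The key step is the identity $\epsilon(\tfrac{1}{2}, \chi, \psi_\delta)^2 = \chi(-1)$, which follows from the local functional equation
$$\epsilon(s, \chi, \psi_\delta)\,\epsilon(1-s, \chi^{-1}, \psi_\delta) = \chi(-1)$$
evaluated at $s = 1/2$, using $\chi = \chi^{-1}$. Since $\chi|_{F^\times} = \omega$, we have $\chi(-1) = \omega(-1)$. Moreover, the image $[-1]$ of $-1$ in $F^\times/\Nm_{K/F}(K^\times)$ corresponds to $\omega(-1)$ under the isomorphism $F^\times/\Nm_{K/F}(K^\times) \cong \{\pm 1\}$ induced by $\omega$. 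Combining these, $\epsilon = \omega(-1)^2 = +1$, as desired.

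There is no real obstacle in this argument; the choice $F' = F \times F$ is engineered so that the non-trivial factors vanish. Notably, since the local functional equation holds uniformly in all local fields of characteristic zero, this argument does not actually require the auxiliary hypothesis that $K/F$ be unramified at residue characteristic $2$, and one could expect that hypothesis to be removable in the statement.
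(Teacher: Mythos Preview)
Your proof is correct, and in fact slightly slicker than the paper's. Both arguments begin by taking $F' = F \times F$, which kills $\Delta_{F'/F}$ and $\omega_{K'/K}$ and (via $\chi^3 = \chi$) reduces the formula of Lemma~\ref{lem:criteria} to $\epsilon = [-1]\cdot\epsilon(\tfrac12,\chi,\psi_\delta)^2$. At this point the paper splits into cases: when $[-1]=+1$ it asserts $\epsilon(\tfrac12,\chi,\psi_\delta)^2=+1$ (implicitly because $\chi(-1)=\omega(-1)=+1$), and when $[-1]=-1$ it argues separately that no conjugate-symplectic quadratic $\chi$ can exist, using that $K=F(\sqrt{\varpi_F})$ in odd residue characteristic. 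You instead invoke the local functional equation once to get $\epsilon(\tfrac12,\chi,\psi_\delta)^2=\chi(-1)=\omega(-1)$ in all cases, so that the two factors cancel: $\epsilon=\omega(-1)^2=+1$.

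The payoff of your approach is exactly what you observe at the end: since the functional equation holds uniformly, your argument never appeals to the structure of ramified quadratic extensions in odd residue characteristic, and the auxiliary hypothesis that $K/F$ be unramified above $2$ is not used. The paper's case split is therefore unnecessary for this proposition, and your argument confirms the expectation stated in Remark~\ref{rmk:assumption_at_2}.
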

\begin{proof}
   First, assume that $[-1] = +1$. Taking $F' = F \times F$ yields $\Delta_{F'/F} = 1$, so
    \begin{align*}
        \epsilon & = \epsilon(\chi^3,\psi(\tr_{K/F}(\delta-))) \cdot \epsilon(\chi \cdot \omega_{K'/K},\psi(\tr_{K/F}(\delta-))) \\
        & = \epsilon(\chi,\psi(\tr_{K/F}(\delta-))) \cdot \epsilon(\chi,\psi(\tr_{K/F}(\delta-))) & \text{since $\omega_{K'/K} = 1$ and $\chi^2 = 1$} \\
        & = +1.
    \end{align*}

    Now assume that $[-1] = -1$. Then $K/F$ is ramified, so our hypothesis ensures the residue characteristic of $F$ is not $2$. Hence we can assume that $K=F(\sqrt{\varpi_F})$. We claim that there exist no conjugate-symplectic quadratic characters $\chi:K^\times\ra \{\pm1\}$, since such a $\chi$ yields
    $$1 = \chi^2(\sqrt{\varpi_F}) = \chi(\varpi_F) = \omega(\varpi_F) = -1,$$
    where the last equality follows from $\Nm_{K/F}(\sqrt{\varpi_F}) = - \varpi_F$ and $[-1] = -1$.
\end{proof}

\begin{remark}\label{rmk:assumption_at_2}
    We expect Proposition \ref{prop:flippable_2_unramified} and Proposition \ref{prop:plusachievable} to hold even without the assumption that $K/F$ is unramified when the residue characteristic of $F$ is $2$.
\end{remark}

\subsection{Some root numbers over $\RR$} In this subsection, assume that $F=\RR$, and assume that $K=\C$. 
\begin{prop}\label{prop:rootnumbersoverR}
Let $\epsilon$ be in $F^\times/\Nm_{K/F}(K^\times)$. Then there exists a cubic \'etale $F$-algebra $E$ and a $G'_J(F)$-conjugacy class of $F$-algebra embeddings $i:E\hookrightarrow J$ such that $i$ and $\epsilon$ satisfy the conditions in Proposition \ref{prop:archHPStorus} if and only if one of the following holds: 
\begin{itemize}
    \item $G'_J$ is quasi-split and $\epsilon=+1$,
    \item $G'_J$ is anisotropic and $\epsilon=-1$.
\end{itemize}
\end{prop}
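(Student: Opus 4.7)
The plan is to enumerate the cubic étale $F$-algebras for $F = \RR$: there are only two, $E_1 = \RR \times \RR \times \RR$ and $E_2 = \RR \times \C$. For each, I will directly compute both the $\epsilon$-factor via Lemma \ref{lem:criteria} and the isomorphism class of the associated $3$-dimensional $K$-Hermitian space $L_\lambda$ from Proposition \ref{prop:archHPStorus}; the latter determines $G'_J$. Both algebras have the form $F \times F'$ for a quadratic étale $F$-algebra $F'$ (with $F' = \RR \times \RR$ and $F' = \C$, respectively), so Lemma \ref{lem:criteria} applies directly.

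For the computation of $\epsilon$, I would fix $\delta = i$ and $\psi(x) = e^{-2\pi i x}$. Then, as in the proof of Proposition \ref{prop:archHPStheta}, the archimedean $\epsilon$-factor formula yields $\epsilon(\tfrac12, \chi^j, \psi(\tr_{K/F}(\delta-))) = \operatorname{sign}(N)$ for any odd $j$, while $[-1] = -1$ in $\RR^\times/\Nm_{\C/\RR}(\C^\times)$. In both cases $K' = F' \otimes_F K$ is isomorphic to the split $K$-algebra $K \times K$, so $\omega_{K'/K} = \one$, and Lemma \ref{lem:criteria} collapses to $\epsilon = -\Delta_{F'/F}$. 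Since $\Delta_{\RR \times \RR/\RR} = +1$ while $\Delta_{\C/\RR} = -1$ in $\RR^\times/\Nm_{\C/\RR}(\C^\times)$, we obtain $\epsilon = -1$ for $E_1$ and $\epsilon = +1$ for $E_2$.

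The second task is to compute the signature of $L_\lambda$. For $E_1$, the formula in Proposition \ref{prop:archHPStorus} prescribes $\lambda = (\operatorname{sign}(N), \operatorname{sign}(N), \operatorname{sign}(N))$ in $\{\pm 1\}^3$, so the $K$-Hermitian form is diagonal with entries of equal sign, hence definite; thus $G'_J \iso \PU_{3,0}$ is anisotropic. For $E_2$, writing $L = \C \times (\C \otimes_\RR \C)$ with $\C \otimes_\RR \C \iso \C \times \C$, one checks that $E_2^\times/\Nm_{L/E_2}(L^\times) \iso \{\pm 1\}$ is concentrated on the $\RR$-factor, with value $\operatorname{sign}(N)$. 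The space $L_\lambda$ then decomposes as the orthogonal sum of a $1$-dimensional piece from the $\RR$-factor of $E_2$ and a $2$-dimensional piece from the split $K$-algebra $\C \otimes_\RR \C$, which produces a hyperbolic plane. So the total signature is $(2,1)$ and $G'_J \iso \PU_{2,1}$ is quasi-split.

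The step I expect to require the most care is verifying the hyperbolic-plane claim: that the trace pushdown $\tr_{L_2/K}$ of the standard rank-one $L_2$-Hermitian form on $L_2 = \C \otimes_\RR \C$ produces a $K$-Hermitian plane of signature $(1,1)$. This reduces to a short matrix computation in a $K$-basis of $L_2$, where the $L_2/E_2$-involution is coordinate swap and the resulting $K$-valued pairing matrix is anti-diagonal. Combining the two cases then gives the iff: the only achievable pairs $(G'_J, \epsilon)$ are $(\PU_{3,0}, -1)$ and $(\PU_{2,1}, +1)$, which establishes the proposition.
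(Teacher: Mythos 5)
Your proof is correct and follows essentially the same route as the paper's: enumerate the two cubic \'etale $\RR$-algebras, apply Lemma \ref{lem:criteria} with $\delta = i$ and $\psi(x) = e^{-2\pi i x}$ to get $\epsilon = -\Delta_{F'/F}$ in both cases (using $\omega_{K'/K}=\one$ and $\epsilon(\frac12,\chi,\psi(\tr_{K/F}(\delta-)))=\epsilon(\frac12,\chi^3,\psi(\tr_{K/F}(\delta-)))=\operatorname{sign}(N)$), then determine the isomorphism class of $L_\lambda$ to identify $G'_J$. The only places you add detail beyond the paper's argument are the explicit observation that $\omega_{K'/K}=\one$ (implicit in the paper) and the hyperbolic-plane verification for $F'=\C$ (the paper simply asserts isotropy of $L_\lambda$); both are correct. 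One tiny imprecision: for $F'=\C$ the signature of $L_\lambda$ is $(2,1)$ or $(1,2)$ depending on the sign of $N$, not always $(2,1)$, but both yield the quasi-split $\PU_{2,1}$, so the conclusion stands.
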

\begin{proof}
Recall from \S\ref{ss:U3HPSarch} that $\chi_v(z)=(z/\sqrt{z\overline{z}})^N$ for some odd integer $N$. Note that the conditions in Proposition \ref{prop:archHPStorus} are independent of $\delta$ and $\psi$, so take $\delta=i$ and $\psi(x)=e^{-2\pi ix}$. Then $\epsilon(\textstyle\frac12,\chi,\psi(\tr_{K/F}(\delta-)))$ and $\epsilon(\textstyle\frac12,\chi^3,\psi(\tr_{K/F}(\delta-)))$ equal the sign of $N$ \cite[Proposition 2.1]{GGP12}.

There are only two cubic \'etale $F$-algebras. Both are isomorphic to $F\times F'$ for some quadratic \'etale $F$-algebra $F'$, so Lemma \ref{lem:criteria} shows that the conditions in Proposition \ref{prop:archHPStorus} force $\epsilon=-1\cdot\Delta_{F'/F}$. When $F'=\C$, the $3$-dimensional Hermitian space induced by $\lambda$ is isotropic and $\Delta_{F'/F}=-1$, so $G'_J$ is quasi-split and $\epsilon=+1$. When $F'=\RR^2$, all entries of $\lambda$ in $E^\times/\Nm_{L/E}(L^\times)\cong\{\pm1\}^3$ are equal to the sign of $N$, and $\Delta_{F'/F}=+1$. Therefore $G'_J$ is anisotropic and $\epsilon=-1$.
\end{proof}

\subsection{Global (non-)vanishing}
In this subsection, assume that $F$ is a number field, and assume that $K$ is a field. The following concludes the proof of Theorem \ref{thmB} and is the main result of this section.
\begin{thm}\label{cor:non-vanishing}
Let $(\epsilon_v)_v$ be a sequence as in Theorem \ref{thm:globalHPSU3}.(1), write $\sigma$ for $\bigotimes'_v\sigma^{\epsilon_v}_v$, and assume that $\sigma$ is cuspidal. Consider the following condition:
\begin{gather}\label{eqn:extracondition}
\begin{tabular}{c}
for all places $v$ of $F$, if $\chi_v^2=1$, then $\epsilon_v=+1$, and \\
when $v$ is archimedean, $G'_{J,F_v}$ is anisotropic if and only if $\epsilon_v=-1$.
\end{tabular}
\end{gather}
If \eqref{eqn:extracondition} is not satisfied, then $\theta(\sigma)$ is zero. If \eqref{eqn:extracondition} is satisfied and we also have
\begin{enumerate}
\item $L(\frac12,\chi)$ is nonzero,
\item for all places $v$ of $F$ above $2$, the \'etale $F_v$-algebra $K_v$ is unramified,
\end{enumerate}
then $\theta(\sigma)$ is nonzero. Moreover, if Conjecture \ref{conj:G2HPSarch} holds at all archimedean places of $F$, then $\theta(\sigma)$ is isomorphic to $\bigotimes'_v\pi_v^{\epsilon_v}$, where $\pi_v^{\epsilon_v}$ is the irreducible smooth representation of $G(F_v)$ as in \S\ref{ss:G2_packets_nonarch} or \S\ref{ss:G2_packets_arch}.

In particular, Proposition \ref{prop:G2HPSarchcases} shows that this holds unconditionally when
\begin{enumerate}
    \item[(3)] for all archimedean places $v$ of $F$, either $K_v=\RR\times\RR$ and $\chi_v:\RR^\times\rightarrow S^1$ satisfies $\chi_v(-1)=1$, or $K_v=\C$.
\end{enumerate}
\end{thm}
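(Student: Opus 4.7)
The plan is to establish vanishing, nonvanishing, and the identification of $\theta(\sigma)$ in turn. For the vanishing, there are two ways \eqref{eqn:extracondition} can fail. If some nonarchimedean $v$ has $\chi_v^2 = 1$ and $\epsilon_v = -1$, then Theorem \ref{thm:groupB} gives $\theta(\sigma_v^-) = 0$, while Corollary \ref{cor:thetacuspidal} ensures that $\theta(\sigma)$ is cuspidal; any irreducible constituent would, by Proposition \ref{prop:local_global_comp}, have its $v$-component a quotient of zero, forcing $\theta(\sigma) = 0$. If instead some archimedean $v$ has $G'_{J,F_v}$ quasi-split with $\epsilon_v = -1$, then Proposition \ref{prop:rootnumbersoverR} rules out any pair $(E, i)$ satisfying the local conditions of Proposition \ref{prop:archHPStorus} at $v$, so Theorem \ref{thm:vanishingcriterion} yields $\theta(\sigma) = 0$ directly.

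For the nonvanishing under \eqref{eqn:extracondition}, (1), and (2), the plan is to apply Theorem \ref{thm:vanishingcriterion} with $E = F \times F'$ for a carefully chosen quadratic \'etale $F$-algebra $F'$. Induction in stages then gives
$$L(\tfrac12, \Ind_K^F\chi \otimes \Ind_E^F\one) = L(\tfrac12,\chi)\cdot L(\tfrac12, \chi\cdot\omega_{K'/K}), \quad K' = K\otimes_F F';$$
assumption (1) makes the first factor nonzero, and one invokes Friedberg--Hoffstein \cite{FH95} to find infinitely many $F'$ for which the second factor is also nonzero. The required local matching at each place $v$ is controlled by the $\epsilon$-factor formula in Lemma \ref{lem:criteria} and is handled case by case via Propositions \ref{prop:unramifiedsigns}, \ref{prop:flippable}, \ref{prop:flippable_2_unramified}, \ref{prop:plusachievable}, and \ref{prop:rootnumbersoverR}. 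In particular, fully unramified nonarchimedean places automatically have $\chi_v^2 = 1$, so \eqref{eqn:extracondition}.(a) forces $\epsilon_v = +1$, matching the local sign of $+1$ that appears regardless of $F'$; the remaining nonarchimedean $\chi_v^2 \neq 1$ places are flippable under assumption (2); the $\chi_v^2 = 1$ cases are settled by Proposition \ref{prop:plusachievable}; and archimedean signs are pinned down by \eqref{eqn:extracondition}.(b) together with Proposition \ref{prop:rootnumbersoverR}. Combining these local specifications via a Grunwald--Wang-type argument with Friedberg--Hoffstein then produces a single $F'$ meeting all constraints, so Proposition \ref{prop:globalHPStorus} gives $\mathcal{P}_i(\sigma) \neq 0$ and Theorem \ref{thm:vanishingcriterion} yields $\theta(\sigma) \neq 0$.

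The identification $\theta(\sigma) \cong \bigotimes'_v \pi_v^{\epsilon_v}$ follows from local-global compatibility. Since $\theta(\sigma)$ is nonzero and cuspidal, any irreducible subrepresentation $\pi$ satisfies by Proposition \ref{prop:local_global_comp} that each $\pi_v$ is a quotient of $\theta(\sigma_v^{\epsilon_v})$; Theorem \ref{thm:groupB} identifies this with $\pi_v^{\epsilon_v}$ at nonarchimedean places, and at archimedean places Conjecture \ref{conj:G2HPSarch}---or, unconditionally under (3), Proposition \ref{prop:G2HPSarchcases}---does the same, with the vanishing branch of Conjecture \ref{conj:G2HPSarch}.(3) ruled out by \eqref{eqn:extracondition}.(b). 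The hardest step will be the coordination in the nonvanishing portion: a single global $F'$ must simultaneously realize the prescribed local signs $\epsilon_v$ through Lemma \ref{lem:criteria} and satisfy $L(\tfrac12, \chi\cdot\omega_{K'/K}) \neq 0$. Assumptions (1) and (2) are essential for this combined maneuver---the former to isolate the tractable twisted $L$-value for Friedberg--Hoffstein, and the latter to ensure flippability at the $2$-adic places.
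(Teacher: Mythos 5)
Your outline follows the paper's overall strategy closely---vanishing via Theorem \ref{thm:groupB}/Proposition \ref{prop:rootnumbersoverR} together with local-global compatibility and Theorem \ref{thm:vanishingcriterion}; nonvanishing via a careful choice of $E=F\times F'$ combined with Friedberg--Hoffstein and the local $\epsilon$-factor analysis of Propositions \ref{prop:unramifiedsigns}--\ref{prop:rootnumbersoverR}; identification via Proposition \ref{prop:local_global_comp} and the local Howe duality results. But the nonvanishing half has a genuine gap. Friedberg--Hoffstein's Theorem B only produces quadratic twists with $L(\tfrac12,\tau\otimes\omega_{F'/F})\neq 0$ when the global root number $\epsilon(\tfrac12,\tau\otimes\omega_{F'/F})$ equals $+1$, and your outline never verifies this condition. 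The paper derives it from the hypothesis $\prod_v\epsilon_v=\epsilon(\tfrac12,\chi^3)$ of Theorem \ref{thm:globalHPSU3}.(1): taking the product of Lemma \ref{lem:criteria} over all $v$, using global reciprocity to kill the local $[-1]$ and discriminant factors, one obtains $\epsilon(\tfrac12,\chi^3)=\prod_v\epsilon_v=\epsilon(\tfrac12,\chi^3)\,\epsilon(\tfrac12,\chi\cdot\omega_{K'/K})$, hence $\epsilon(\tfrac12,\chi\cdot\omega_{K'/K})=1$; combining this with inductivity in degree $0$ and \eqref{eqn:epsilonomegais1} gives $\epsilon(\tfrac12,\tau\otimes\omega_{F'/F})=1$. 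This is the pivot on which the entire Friedberg--Hoffstein invocation rests, and it cannot be waved through as a ``Grunwald--Wang-type argument.''

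Two smaller points. First, $\Ind_E^F\one$ is $3$-dimensional, so the factorization should read $L(\tfrac12,\Ind_K^F\chi\otimes\Ind_E^F\one)=L(\tfrac12,\chi)^2\,L(\tfrac12,\chi\cdot\omega_{K'/K})$, with a square on the first factor; the error is harmless here since $L(\tfrac12,\chi)\neq 0$, but should be corrected. Second, the conclusion $\theta(\sigma)\cong\bigotimes'_v\pi_v^{\epsilon_v}$ requires multiplicity one, not merely that every irreducible constituent is $\bigotimes'_v\pi_v^{\epsilon_v}$; the paper closes this by showing the multiplicity space $\bigotimes'_v\Hom_{(G\times G'_J)(F_v)}(\Omega_v,\sigma_v^{\epsilon_v}\otimes\pi_v^{\epsilon_v})$ is $1$-dimensional via \cite{gan2021howe}, \cite{bakic2021howe}, and Conjecture \ref{conj:G2HPSarch}. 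This step is absent from your outline.
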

\begin{proof}
Corollary \ref{cor:thetacuspidal} shows that $\theta(\sigma)$ is cuspidal and hence semisimple. If \eqref{eqn:extracondition} is not satisfied, then one of the following holds:
\begin{itemize}
\item there exists an archimedean place $v$ of $F$ such that $G'_J$ is quasi-split and $\epsilon_v=-1$. Then Proposition \ref{prop:rootnumbersoverR} and Theorem \ref{thm:vanishingcriterion} imply that $\theta(\sigma)$ is zero.
\item there exists a nonarchimedean place $v$ of $F$ such that $\chi_v^2=1$ and $\epsilon_v=-1$. Then Theorem \ref{thm:groupB} and Proposition \ref{prop:local_global_comp} imply that $\theta(\sigma)$ has no irreducible subrepresentations, so $\theta(\sigma)$ is zero.
\end{itemize}
Conversely, suppose that \eqref{eqn:extracondition}, (1), and (2) are satisfied. Let $S$ be a finite set of places of $F$ such that, for all $v$ not in $S$,
\begin{itemize}
    \item $v$ is nonarchimedean,
    \item the \'etale $F_v$-algebra $K_v$ is unramified,
    \item $\chi_v$ is unramified.
\end{itemize}
Let $v$ in $S$ be not split in $K$. We claim that there exists a quadratic \'etale $F_v$-algebra $F_v'$ and a $G'_J(F_v)$-conjugacy class of $F_v$-algebra embeddings $i_v:F_v\times F_v'\hookrightarrow J_v$ such that $i_v$ and $\epsilon_v$ satisfy the conditions in Proposition \ref{prop:HPStorus} or Proposition \ref{prop:archHPStorus}:
\begin{itemize}
    \item when $v$ does not lie above $\{2,\infty\}$ and $\chi_v^2\neq1$, this is Proposition \ref{prop:flippable}.
    \item when $v$ lies above $2$ and $\chi_v^2\neq1$, this follows from (2) and Proposition \ref{prop:flippable_2_unramified}.
    \item when $v$ is nonarchimedean and $\chi_v^2=1$, this follows from \eqref{eqn:extracondition}, (2), and Proposition \ref{prop:plusachievable}.
    \item when $v$ is archimedean, this follows from \eqref{eqn:extracondition} and Proposition \ref{prop:rootnumbersoverR}.
\end{itemize}
Using Krasner's lemma, one can construct a quadratic \'etale $F$-algebra $F'$ such that, for all $v$ in $S$ that does not split in $K$, the completion of $F$ at $v$ is isomorphic to $F_v'$ over $F_v$. Take $E=F\times F'$, and write $K'$ for $K\otimes F'$. For all $v$ not in $S$ that do not split in $K$, \eqref{eqn:extracondition} and Proposition \ref{prop:unramifiedsigns} show that any $G'_J(F_v)$-conjugacy class of $F_v$-algebra embeddings $i_v:F_v\times F_v'\hookrightarrow J_v$ satisfies the conditions in Proposition \ref{prop:HPStorus} with $\epsilon_v$. Note that these conditions are automatic for all places $v$ of $F$ that split in $K$.

Recall from \S\ref{ss:dihedral} that $\tau$ is the irreducible cuspidal automorphic representation of $\PGL_2(\A_F)$ obtained from $\chi$ by automorphic induction. For every place $v$ of $F$, applying the projection formula on the Galois side yields $\tau_v\otimes\omega_{F'_v/F_v}=\Ind_{K_v}^{F_v}(\chi_v\cdot\omega_{K'_v/K_v})$, and inductivity in degree $0$ gives
\begin{align*}
\epsilon(\textstyle\frac12,\tau_v\otimes\omega_{F'_v/F_v}-\one-\omega_v,\psi_v) &= \epsilon(\textstyle\frac12,\Ind_{K_v}^{F_v}(\chi_v\cdot\omega_{K'_v/K_v}-\one),\psi_v) =\epsilon(\textstyle\frac12,\chi_v\cdot\omega_{K'_v/K_v}-\one,\psi_v\circ\tr_{K_v/F_v}).
\end{align*}
Taking the product over all $v$ yields $\epsilon(\frac12,\tau\otimes\omega_{F'/F})\epsilon(\frac12,\omega)=\epsilon(\frac12,\chi\cdot\omega_{K'/K})$. Now taking the product of Lemma \ref{lem:criteria} over all $v$ shows that $\epsilon(\textstyle\frac12,\chi^3) = \textstyle\prod_v\epsilon_v = \epsilon(\textstyle\frac12,\chi^3)\epsilon(\textstyle\frac12,\chi\cdot\omega_{K'/K})$, so $\epsilon(\frac12,\chi\cdot\omega_{K'/K})=1$. Finally, \eqref{eqn:epsilonomegais1} implies that $\epsilon(\frac12,\omega)=1$, so altogether we get $\epsilon(\frac12,\tau\otimes\omega_{F'/F})=1$. Therefore \cite[Theorem B, part (1)]{FH95} lets us assume, after replacing $F'$, that $F'\neq K$ and $L(\textstyle\frac12,\tau\otimes\omega_{F'/F})\neq0$. Hence $\R^1_{L/E}\G_m$ is anisotropic, and (1) implies that
\begin{align*}
L(\textstyle\frac12,\Ind^F_K\chi\otimes\Ind^F_E\one) = L(\textstyle\frac12,\tau)^2L(\textstyle\frac12,\tau\otimes\omega_{F'/F}) = L(\textstyle\frac12,\chi)^2L(\textstyle\frac12,\tau\otimes\omega_{F'/F})\neq0.
\end{align*}
Applying Theorem \ref{thm:vanishingcriterion} and Proposition \ref{prop:globalHPStorus} indicates that $\theta(\sigma)$ is nonzero, as desired.

For any irreducible subrepresentation $\pi$ of $\theta(\sigma)$, Proposition \ref{prop:local_global_comp} indicates that $\pi_v$ is a quotient of $\theta(\sigma_v^{\epsilon_v})$. Using \eqref{eqn:extracondition}, Theorem \ref{thm:groupB} or Conjecture \ref{conj:G2HPSarch} imply that $\theta(\sigma_v^{\epsilon_v})$ is the irreducible representation $\pi_v^{\epsilon_v}$, so it remains to see that $\pi$ appears in $\theta(\sigma)$ with multiplicity one. The proof of Proposition \ref{prop:local_global_comp} shows that the multiplicity space of $\pi$ in $\theta(\sigma)$ equals
\begin{align*}
\Hom_{(G\times G'_J)(\A_F)}(\Omega,\sigma\otimes\pi) = \textstyle\bigotimes_v\Hom_{(G\times G'_J)(F_v)}(\Omega_v,\sigma_v^{\epsilon_v}\otimes\pi_v^{\epsilon_v}).
\end{align*}
By \cite[Theorem 4.1 (ii)]{bakic2021howe} or \cite[Theorem 1.2(ii)]{gan2021howe} for nonarchimedean $v$ and Conjecture \ref{conj:G2HPSarch} for archimedean $v$, each factor in the right hand side is $1$-dimensional. Thus $\pi$ indeed appears in $\theta(\sigma)$ with multiplicity one.
\end{proof}

\section{From $\mathsf{G}_2$ to $\PU_3$}\label{s:liftback}
Our goal in this section is to prove Theorem \ref{thmC}, which concludes the proof of Theorem \ref{thmA}. We start by recalling the dual pair considered by Gan--Savin \cite{gan2021twisted}, which lies in a seesaw with our $\mathsf{G}_2\times\PU_3$ dual pair. (Strictly speaking, we replace $\PU_3$ with a disconnected group whose neutral component is $\PU_3$.) Using this seesaw, we prove a condition for our exceptional theta lift from $\mathsf{G}_2$ to $\PU_3$ to not vanish.

We then prove a criterion for our exceptional theta lift from $\mathsf{G}_2$ to $\PU_3$ to be cuspidal, in terms of the theta lift considered by Gan--Gurevich--Jiang \cite{gan2002cubic}. Using results of Gan \cite{Gan:Mult_formula_for_cubic}, we deduce that the exceptional theta lifts of our dihedral global long root $A$-packets are nonzero and cuspidal. We use this to prove Theorem \ref{thmC}, and we conclude by putting everything together to prove Theorem \ref{thmA}.

\subsection{Twisted composition algebras}\label{ss:TCA}
In \S\ref{sec:non-vanishing}, we proved that certain theta lifts from $G'_J$ to $G$ are nonzero by \emph{using} the non-vanishing of periods along some subtorus $i:T_E\hookrightarrow G'_J$. In this section, we will reverse this strategy and prove that certain theta lifts from $G$ to $G'_J$ are nonzero by \emph{showing} that their period along some subtorus $i:T_E\hookrightarrow G'_J$ is non-vanishing.

For any rank-$2$ $E$-twisted composition algebra $C$ in the sense of \cite[\S36]{KMRT}, write $H_C\coloneqq\underline{\Aut}(C)$ for its automorphism group over $F$. Recall that the Springer construction \cite[Theorem (38.6)]{KMRT} yields a bijection
\begin{align*}
\coprod_J\Aut(J)\backslash\{F\mbox{-algebra embeddings }i:E\hookrightarrow J\}\ra^\sim\{\mbox{rank-}2\mbox{ }E\mbox{-twisted composition algebras }C\},
\end{align*}
where $J$ runs over $9$-dimensional Freudenthal--Jordan algebras over $F$. We have a canonical injective morphism $H_C\hookrightarrow\underline{\Aut}(J)$ of groups over $F$, and this morphism induces an isomorphism on component groups. Under the above correspondence, the neutral component $H^\circ_C$ is identified with $T_E$ by the proof of Lemma \ref{lem:embeddingsGaloiscohom}, and $H_C\hookrightarrow\underline{\Aut}(J)$ recovers $i:T_E\hookrightarrow G'_J$ after restricting to $H_C^\circ=T_E$.

Recall from \S\ref{ss:E6} the Lie algebra $\widetilde\g_E$, which is the quasi-split form of $\mathsf{D}_4$ with respect to $E$ over $F$. Write $G_E$ for the associated simply connected group over $F$. Now \cite[Proposition 6.2]{gan2021twisted} shows that the inclusion $\widetilde\g_E\subseteq\widetilde\g$ induces an injective morphism $G_E\hookrightarrow\underline{\Aut}(\widetilde\g)$ of groups over $F$, and we see that the images of $H_C$ and $G_E$ in $\underline{\Aut}(\widetilde\g)$ commute. This yields a morphism $H_C\times G_E\rightarrow\underline{\Aut}(\widetilde\g)$ of groups over $F$, which is an important dual pair in this section.

Since $\g\subseteq\widetilde\g_E$ and hence $G\hookrightarrow G_E$, we have a seesaw of dual pairs in $\underline{\Aut}(\widetilde\g)$
\begin{center}
    \begin{tikzcd}
     G_E \ar[rd, no head] \ar[d, no head] & \underline{\Aut}(J) \ar[ld, no head] \ar[d, no head]\\
    G & H_C.
    \end{tikzcd}
\end{center}
\subsection{Non-vanishing}
For the rest of this section, assume that $F$ is a number field, and assume that $K$ is a field. We write $\theta_J(-)$ for the exceptional theta lift from Definition \ref{defn:globaltheta}, to emphasize its dependence on the $J$ from the end of Example \ref{exmp:J}.

Recall from \S\ref{ss:dihedral} that $\tau$ is the irreducible cuspidal automorphic representation of $\PGL_2(\A_F)$ obtained from $\chi$ by automorphic induction, and recall from \S\ref{ss:global_long_root} the $G(\A_F)$-subrepresentation $\mathcal{A}_\tau(G)$ of $\mathcal{A}_{\disc}(G)$.
\begin{thm}\label{thm:backnonzero}
Suppose that $L(\textstyle\frac12,\chi)\neq0$. Let $\pi$ be an irreducible subrepresentation of $\mathcal{A}_\tau(G)$, and assume that $\pi$ is cuspidal. Then there exists a $J$ of the form considered at the end of Example \ref{exmp:J} such that $\theta_J(\pi)\neq0$.
\end{thm}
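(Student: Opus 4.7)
The plan is to produce a nonzero $\theta_J(\pi)$ by exhibiting a nonvanishing torus period. Specifically, I would choose a cubic \'etale $F$-algebra $E$ together with a rank-$2$ $E$-twisted composition algebra $C$; via the Springer correspondence from \S\ref{ss:TCA}, this data determines a Freudenthal--Jordan algebra $J$, an embedding $H_C\hookrightarrow\underline{\Aut}(J)$, and, on the neutral component, an embedding $i:T_E\hookrightarrow G'_J$. The goal is then to show that the $i(T_E)$-period (or more precisely, the $H_C$-period) on $\theta_J(\pi)$ does not vanish for some well-chosen $(E,C)$. This is sufficient for $\theta_J(\pi)\neq 0$, and reduces the theorem to computing a concrete automorphic period.

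To compute this period I would invoke the seesaw in \S\ref{ss:TCA}:
\begin{center}
\begin{tikzcd}
G_E \ar[rd,no head]\ar[d,no head] & \underline{\Aut}(J) \ar[ld,no head]\ar[d,no head]\\
G & H_C
\end{tikzcd}
\end{center}
Applied to the minimal representation $\Omega$ of $\widetilde{G}(\A_F)$, the seesaw identity rewrites the $H_C$-period of $\theta_J(\pi)$ as the pairing of $\pi$ with a theta lift $\theta_E(\one)$ of the trivial function on $H_C$ to $G_E$, using the dual pair $H_C\times G_E\hookrightarrow\underline{\Aut}(\widetilde{\g})$ studied by Gan--Savin. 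Convergence is not an issue since $\pi$ is cuspidal and $H_C^\circ=T_E$ is anisotropic for suitably chosen $E$.

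The main technical step is a Siegel--Weil formula identifying $\theta_E(\one)$ with the residue (at the appropriate point) of a degenerate Eisenstein series $\mathrm{Eis}(s,\Phi)$ on $G_E=\Spin_8^E$. Assuming such a formula, the period of $\pi$ against $\theta_E(\one)$ becomes the residue of an integral of $\pi$ against $\mathrm{Eis}(s,\Phi)$; by the work of A.~Segal \cite{segal2017family}, the latter integral unfolds to a Rankin--Selberg-type expression computing a twisted standard $L$-function $L(s,\pi,\mathrm{std}\otimes\chi_E)$ attached to the cubic resolvent data of $E$. Establishing the Siegel--Weil identity and the exact $L$-function coming out of Segal's unfolding will be the bulk of the work, requiring an orbit analysis for $\Omega$ along the relevant parabolic of $\widetilde{G}$ and a careful matching of sections.

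Granting the above, the last step is purely number-theoretic: because $\tau$ is the automorphic induction of $\chi$ from $K$, the standard $L$-function of $\pi$ (a global long root $A$-parameter packet member) factors, under the induction-in-stages predicted by its $A$-parameter, into products of Hecke $L$-functions of $\chi$, $\chi^3$, and their twists. One then chooses $E=F\times F'$ with $F'/F$ a quadratic extension tailored so that the relevant factor is of the form $L(s,\chi)L(s,\chi\otimes\omega_{F'/F}\circ\Nm_{K/F})$ (or a small variant), combined with an $L$-factor that is nonvanishing at the pole. Using hypothesis $L(\tfrac12,\chi)\neq 0$ together with a Friedberg--Hoffstein-type nonvanishing result \cite{FH95} to find $F'$ making the twisted factor nonzero at $s=\tfrac12$, we conclude that the residue, and hence the $H_C$-period of $\theta_J(\pi)$, is nonzero for the associated $J$. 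The key obstruction in the entire argument is establishing the Siegel--Weil formula with the right normalization; the $L$-function computation and the final choice of $E$ and $F'$ are then controlled by the same root-number and nonvanishing analysis used in \S\ref{ss:rootnumberspadic}.
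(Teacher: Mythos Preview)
Your overall strategy---seesaw identity \eqref{eq:seesawsiegelweil}, a Siegel--Weil formula for $\theta_C(\one)$ (Proposition~\ref{prop:siegelweil}), and Segal's integral representation (Proposition~\ref{prop:residueintegral})---matches the paper exactly. However, the final $L$-function step contains a genuine misidentification that would block the argument as written.

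The character appearing in the degenerate principal series $I_{P_E}(s,\omega)$ is not a cubic-resolvent character attached to $E$; it is the quadratic character $\omega$ of $K/F$, and this is forced by the Siegel--Weil identification (the residue decomposes into $\theta_C(\one)$ only for $C$ with quadratic invariant $K$). Consequently Segal's unfolding produces $L^S(s,\pi,\omega,\mathrm{Std})$, which does \emph{not} depend on $E$. Since $\pi_v\cong\pi_v^+$ for almost all $v$, one computes directly
\[
L^S(s,\pi,\omega,\mathrm{Std})=L^S(s+\tfrac12,\chi)\,L^S(s-\tfrac12,\chi)\,\zeta_F^S(s)\,L^S(s,\chi^2),
\]
which has a simple pole at $s=1$ (from $\zeta_F^S$) with nonzero residue precisely because $L(\tfrac12,\chi)\neq0$ and $\chi^2\neq1$. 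No Friedberg--Hoffstein input is needed here; that belonged to the forward direction in \S\ref{sec:non-vanishing}. Your plan to engineer $E=F\times F'$ is therefore both unnecessary and unworkable: you cannot freely choose $E$, since Segal's integral is only nontrivial when $\pi_{N,\psi_E}\neq0$, and the paper instead invokes \cite[Theorem 3.1]{Gan:Mult_formula_for_cubic} to pick \emph{some} $E$ with this property.

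You are also missing a closing step: the seesaw produces a nonzero $\theta_J(\pi)$ for the $J$ attached to some $C_j$, but nothing yet guarantees that $J$ arises from $\mathrm{M}_3(K)$ rather than a division algebra. The paper rules out the division case by taking an irreducible constituent $\sigma\subseteq\theta_J(\pi)$, computing $\sigma_v$ at unramified places via local Howe duality, and then appealing to Theorem~\ref{thm:globalHPSU3}.(3) to force $G'_J$ to come from a Hermitian space.
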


By extending $\Omega$ to a representation of $\Aut(J)\widetilde{G}(\A_F)$ \cite[\S14.3]{gan2021twisted}, one can form a theta lift $\theta_C(-)$ from $H_C$ to $G_E$ \cite[\S14.4]{gan2021twisted}, where we use the fact that $K$ is a field to see that $[H_C]$ has finite measure. For any $f$ in $\mathcal{A}_{\cusp}(G)$, the constant function $1$ on $[H_C]$, and $\varphi$ in $\Omega$, the seesaw from \S\ref{ss:TCA} yields an identity
\begin{align}
\int_{[H_C]}\theta_J(\varphi,f)(h')\dd{h'} = \int_{[G]}\theta_C(\varphi,1)(g)\overline{f(g)}\dd{g}\label{eq:seesawsiegelweil},
\end{align}
where $\theta_J(-)$ denotes the extension of our exceptional theta lift to $G\times\underline{\Aut}(J)$. In other words, roughly speaking, the period of $\theta_J(\pi)$ along $i:T_E\hookrightarrow G_J'$ is controlled by the Petersson inner product with $\theta_C(\one)$.

This motivates us to prove the following analogue of the Siegel--Weil formula. Write $P_E$ for the Heisenberg parabolic of $G_E$, and write $M_E$ for its Levi quotient. Note that $M_E$ is isomorphic to
\begin{align*}
\ker\big(\det:\R_{E/F}\GL_2\ra(\R_{E/F}\G_m)/\G_m\big),
\end{align*}
so it has an algebraic character $\det:M_E\ra\G_m$, which we use to view homomorphisms $\A_F^\times\ra\C^\times$ as homomorphisms $M_E(\A_F)\ra\C^\times$.

Write $I_{P_E}(s,\omega)$ for the 
induced representation $\Ind^{G_E(\A_F)}_{P_E(\A_F)}(\omega\abs{-}^{s+\frac52})$. For any standard section $h$ of $I_{P_E}(s,\omega)$, write $\mathcal{E}(s,h)$ for the associated normalized Eisenstein series as in \cite[p.~2026]{segal2017family}. Now $\mathcal{E}(s,h)$ has a simple pole at $s=\textstyle\frac12$ \cite[Theorem 4.1]{segal2018degenerateEisenstein}, so we obtain a residue map $R:I_{P_E}(\textstyle\frac12,\omega)\ra\mathcal{A}(G_E)$ given by $h\mapsto\res_{s=\frac12}\mathcal{E}(s,h)$.
\begin{prop}\label{prop:siegelweil}
For any $h$ in $I_{P_E}(\textstyle\frac12,\omega)$, there exists finitely many rank-$2$ $E$-twisted composition algebras $C_j$ and $\varphi_j$ in $\Omega$ such that $\res_{s=\frac12}\mathcal{E}(s,h)=\sum_j\theta_{C_j}(\varphi_j,1)$.
\end{prop}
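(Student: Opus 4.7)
The plan is to match both sides via their constant terms along the Heisenberg parabolic $P_E\subseteq G_E$, with unipotent radical $N_E$. The key input is an analogue of Proposition~\ref{prop:jacquet_functor_minimal} for the restriction of $\Omega$ to $G_E$: the constant term $(\Omega_v)_{N_E(F_v)}$ should have a two-step filtration whose successive quotients are a degenerate piece carrying the character $\omega_v\abs{-}_v^{3}$ of $M_E(F_v)$ (which matches the inducing data of $I_{P_E,v}(\tfrac12,\omega_v)$, since $\tfrac12+\tfrac52=3$), together with a piece supported on a suitable minimal orbit.

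First I would compute $\theta_C(\varphi, 1)_{N_E}$ for any rank-$2$ $E$-twisted composition algebra $C$ and any $\varphi\in\Omega$. Expanding along $N_E$ and unfolding as in Proposition~\ref{prop:Heisenberg_constant_term} and Proposition~\ref{prop:three-step_constant_term}, the non-degenerate Fourier coefficients should integrate to zero against the constant function on $[H_C]$: the stabilizers in $H_C$ of minimal-orbit representatives are essentially tori or unipotent groups satisfying weak approximation, so Lemma~\ref{lem:stabilizerinvariance} applies. Only the degenerate piece then survives, identifying $\theta_C(\varphi,1)_{N_E}$ with a distinguished section $h_C(\varphi)\in I_{P_E}(\tfrac12,\omega)$.

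Next, the Langlands constant term formula gives $\mathcal{E}(s,h)_{P_E}=h+M(s)h$, where $M(s)\colon I_{P_E}(s,\omega)\to I_{P_E}(-s,\omega)$ is the standard intertwining operator; by \cite{segal2018degenerateEisenstein}, $M(s)$ has a simple pole at $s=\tfrac12$, so $R(h)_{P_E}=\bigl(\res_{s=1/2}M(s)\bigr)h\in I_{P_E}(\tfrac12,\omega)$. The central step is then to show, place by place, that the local sections $\{h_{C,v}(\varphi_v):C,\varphi_v\}$ exhaust the image of the local residual intertwining operator $\res_{s=1/2}M_v(s)$. Given this local matching, for any standard section $h$ I can choose finitely many $(C_j,\varphi_j)$ so that the constant term of $\res_{s=1/2}\mathcal{E}(s,h)-\sum_j\theta_{C_j}(\varphi_j,1)$ along $P_E$ vanishes. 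A square-integrability argument, combined with the observation that both sides lie in the residual spectrum attached to degenerate inducing data on $P_E$, then forces this difference to vanish.

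\textbf{Main obstacle.} The principal difficulty is the local matching in the third step: giving an explicit description of the image of the local residual intertwining operator and realizing it as the span of the local theta sections $h_{C,v}(\varphi_v)$ as $C$ ranges over local $E_v$-twisted composition algebras. At unramified places this should reduce to an unramified Satake computation, but a careful case analysis at ramified and archimedean places will be needed. A secondary subtlety is to track precisely which submodule of $\mathcal{A}(G_E)$ is generated by the image of $R$, in order to make the final uniqueness argument rigorous.
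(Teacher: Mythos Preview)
Your strategy is the classical Siegel--Weil template (match constant terms, then invoke uniqueness in the residual spectrum), and it is not a priori unreasonable. But step~1 has a genuine gap, and the paper's proof takes a completely different, shorter route.

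\textbf{The gap in step~1.} You argue that the non-degenerate Fourier coefficients of $\theta_C(\varphi,1)$ along $N_E$ vanish ``as in Propositions~\ref{prop:Heisenberg_constant_term} and~\ref{prop:three-step_constant_term}'', invoking Lemma~\ref{lem:stabilizerinvariance}. But Lemma~\ref{lem:stabilizerinvariance} only shows that $\theta(\varphi)_{\widetilde N,\psi_X}$ is $(\Stab\,X)(\A_F)$-invariant; it does not by itself give vanishing. In Propositions~\ref{prop:Heisenberg_constant_term} and~\ref{prop:three-step_constant_term} the vanishing comes from the \emph{subsequent} step: after pulling out $\theta(\varphi)_{\widetilde N,\psi_X}$, one is left with $\int_{[\Stab_{G'_J}X]}\overline{f(u'g')}\,du'$, which dies because $f$ is cuspidal and $\Stab_{G'_J}X$ contains the unipotent radical $V'$ of a Borel of $G'_J$. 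Here, however, you are integrating the \emph{constant function} $1$ over $[\Stab_{H_C}X]$, and $H_C^\circ=T_E$ is a torus with no nontrivial unipotent subgroups. That inner integral is just a (nonzero) volume, so the non-degenerate terms do not drop out by this mechanism. You would need a separate argument (e.g.\ a direct analysis of $\O_{\min}\cap p_E^{-1}(0)$ for the $D_4$ Heisenberg, or vanishing of the outer adelic integral), and none is given. Your step~3 would then also have to absorb these extra terms.

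\textbf{What the paper does instead.} The paper bypasses constant terms and local section-matching entirely. It observes that the image of $R$ is semisimple (by Segal \cite{segal2019degenerateresidual}), so $R$ factors through the maximal semisimple quotient of $I_{P_E}(\tfrac12,\omega)$. That quotient is computed place by place: by \cite[Corollary~12.11]{gan2021twisted} (nonarchimedean) and \cite[\S13.2]{gan2021twisted} with \cite[Proposition~A.1]{segal2019degenerateresidual} (archimedean), the local maximal semisimple quotient of $\Ind_{P_E(F_v)}^{G_E(F_v)}(\omega_v\abs{-}_v^{3})$ is $\bigoplus_{C_v}\theta_{C_v}(\one)$, with $C_v$ ranging over rank-$2$ $E_v$-twisted composition algebras with quadratic invariant $K_v$. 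Globalizing, every irreducible constituent of the maximal semisimple quotient is of the form $\bigotimes_v'\theta_{C_v}(\one)\cong\theta_C(\one)$ for a global $C$ (using \cite[Lemma~15.5(2)]{gan2021twisted}), and \cite[Theorem~16.8]{gan2021twisted} says each $\theta_C(\one)$ occurs with multiplicity one in $\mathcal{A}_{\disc}(G_E)$. Hence $R(h)$ already lies in $\sum_C\theta_C(\one)$, which is the desired statement. This argument delivers exactly what you were aiming for in step~3 (identifying the local residual image with local theta lifts), but packaged as a known structural result rather than an explicit intertwining-operator computation.
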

\begin{proof}
The image of $R$ is semisimple by \cite[Theorem 5.4.(4)]{segal2019degenerateresidual} or  \cite[Theorem 6.6.(2)]{segal2019degenerateresidual}, so $R$ factors through the maximal semisimple quotient of $I_{P_E}(\textstyle\frac12,\omega)$. For every place $v$ of $F$, the maximal semisimple quotient of $\Ind_{P_E(F_v)}^{G_E(F_v)}(\omega_v\abs{-}_v^{\frac12+\frac52})$ is isomorphic to $\bigoplus_{C_v}\theta_{C_v}(\one)$, where $C_v$ runs over rank-$2$ $E_v$-twisted composition algebras satisfying $K_{J_v}=K_v$: for nonarchimedean $v$ this is \cite[Corollary 12.11]{gan2021twisted}, and for archimedean $v$ this follows from \cite[Proposition A.1]{segal2019degenerateresidual} and \cite[\S13.2]{gan2021twisted}. Therefore the maximal semisimple quotient of $I_{P_E}(\textstyle\frac12,\omega)=\bigotimes_v'\Ind_{P_E(F_v)}^{G_E(F_v)}(\omega_v\abs{-}_v^{\frac12+\frac52})$ is isomorphic to
\begin{align*}
\bigoplus_{(C_v)_v}\bigotimes_v\!'\,\theta_{C_v}(\one),
\end{align*}
where $(C_v)_v$ runs over sequences of rank-$2$ $E_v$-twisted composition algebras such that $C_v$ is trivial for cofinitely many $v$. Every such $(C_v)_v$ arises from a rank-$2$ $E$-twisted composition algebra $C$ \cite[Lemma 15.5.(2)]{gan2021twisted}, so $\bigotimes_v'\theta_{C_v}(\one)$ is isomorphic to $\theta_C(\one)$ as $G_E(\A_F)$-representations. Finally, \cite[Theorem 16.8]{gan2021twisted} indicates that $\theta_C(\one)$ is an irreducible $G_E(\A_F)$-subrepresentation of $\mathcal{A}_{\disc}(G_E)$ that appears with multiplicity $1$, so we obtain the desired result.
\end{proof}
\begin{remark}
One can use results of Segal \cite{segal2019degenerateresidual} to determine which rank-$2$ $E$-twisted composition algebras $C_j$ yield nonzero terms in Proposition \ref{prop:siegelweil}. However, since this is logically unnecessary for our results, we do not pursue this here.
\end{remark}

Recall from \S\ref{ss:global_long_root} that $\widehat{G}$ is the connected split semisimple group of type $\mathsf{G}_2$ over $\C$, and write $\mathrm{Std}$ for the $7$-dimensional irreducible algebraic representation of $\widehat{G}$.
\begin{prop}\label{prop:residueintegral}
Let $\pi=\bigotimes_v'\pi_v$ be an irreducible cuspidal automorphic representation of $G(\A_F)$, and assume that $\pi_{N,\psi_E}\neq0$. Let $S$ be a finite set of places of $F$ such that, for all $v$ not in $S$,
\begin{itemize}
    \item $v$ is nonarchimedean,
    \item the \'etale $F_v$-algebras $K_v$ and $E_v$ are unramified,
    \item $\pi_v$ is unramified.
\end{itemize}
For all $v$ not in $S$, let $f_v$ be an unramified vector in $\pi_v$. Then there exist $f_S$ in $\bigotimes_{v\in S}\pi_v$, finitely many rank-$2$ $E$-twisted composition algebras $C_j$, and $\varphi_j$ in $\Omega$ such that, for $f=f_S\otimes\bigotimes_{v\notin S}f_v$, we have
\begin{align*}
\res_{s=1}L^S(s,\pi,\omega,\mathrm{Std}) = \sum_j\int_{[G]}\theta_{C_j}(\varphi_j,1)(g)\overline{f(g)}\dd{g},
\end{align*}
where $L^S(s,\pi,\omega,\mathrm{Std})$ denotes the partial $L$-function of $\pi$ with respect to $\mathrm{Std}$, twisted by $\omega$.
\end{prop}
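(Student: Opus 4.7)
The plan is to apply a Rankin--Selberg integral representation due to A. Segal \cite{segal2017family} and then take residues, using Proposition~\ref{prop:siegelweil} to identify the residual Eisenstein series with a sum of theta lifts $\theta_{C_j}(\varphi_j, 1)$. For a factorizable standard section $h=\bigotimes_v' h_v$ of $I_{P_E}(s,\omega)$ and a factorizable cusp form $f=\bigotimes_v' f_v$ in $\pi$, consider
\[
Z(s,h,f) \coloneqq \int_{[G]} \mathcal{E}(s,h)(g)\, \overline{f(g)}\, \dd{g}.
\]
Unfolding against the Bruhat decomposition of $P_E \backslash G_E / B$, where $B \subseteq G$ is the Borel subgroup from \S\ref{ss:parabolics}, all non-open cells contribute zero by cuspidality of $f$, while the open cell produces an Eulerian integral whose only nontrivial global factor is the Fourier coefficient $f_{N,\psi_E}$. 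Since $\pi_{N,\psi_E}$ is nonzero by hypothesis, this unfolding is nontrivial and expresses $Z(s,h,f)$ as a convergent Euler product in a right half-plane.

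Segal's unramified calculation then identifies the local zeta integral at each $v \notin S$ with $L(s,\pi_v,\omega_v,\mathrm{Std})/d_v(s)$ for explicit normalizing factors $d_v(s)$ that are products of abelian $L$-values and hence holomorphic and nonvanishing near the relevant point. Collecting local factors and shifting $s$ to match the pole of $\mathcal{E}(s,h)$ at $s=\tfrac12$ with the pole of $L^S(s,\pi,\omega,\mathrm{Std})$ at $s=1$, we obtain
\[
Z(s,h,f) = c_S(s, h_S, f_S) \cdot L^S(s, \pi, \omega, \mathrm{Std})
\]
for a product $c_S(s,h_S,f_S)$ of local zeta integrals and partial abelian $L$-functions at the places $v \in S$.

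Taking residues on both sides, on the left we swap residue with integral to obtain $\int_{[G]} (\res_{s=\frac12}\mathcal{E}(s,h))(g)\,\overline{f(g)}\,\dd{g}$, which by Proposition~\ref{prop:siegelweil} equals $\sum_j \int_{[G]} \theta_{C_j}(\varphi_j, 1)(g)\,\overline{f(g)}\,\dd{g}$ for some data $\{(C_j,\varphi_j)\}$ determined by $h$. On the right, since the $d_v$'s are holomorphic and nonzero at $s = \tfrac12$, we obtain $c_S(\tfrac12,h_S,f_S)\cdot\res_{s=1}L^S(s,\pi,\omega,\mathrm{Std})$. To conclude, choose $h_S$ and $f_S$ so that $c_S(\tfrac12, h_S, f_S)$ is a finite nonzero constant; this is possible because the local zeta integrals do not vanish identically on $\pi_v$, a standard consequence of the fact that they represent the local $L$-factors. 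We then absorb the scalar $c_S(\tfrac12,h_S,f_S)$ into the $\varphi_j$'s (which is legitimate since $\Omega$ is a vector space and the assignment $\varphi\mapsto\theta_{C_j}(\varphi,1)$ is linear).

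The main obstacle will be the careful bookkeeping of normalizations required to match Segal's conventions with ours, so that the pole of the Eisenstein series at $s=\tfrac12$ indeed corresponds exactly to the pole of $L^S$ at $s=1$ and the residual abelian $L$-factors cancel as asserted. A secondary point is guaranteeing the nonvanishing of the local zeta integrals at $v \in S$, which should follow by choosing standard local sections and well-chosen test vectors $f_v$; the hypothesis $\pi_{N,\psi_E}\neq 0$ ensures that the relevant local functionals are nonzero factor by factor.
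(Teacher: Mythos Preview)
Your proposal is correct and follows essentially the same approach as the paper: invoke Segal's integral representation \cite[Theorem 3.1]{segal2017family} to write $L^S(s+\tfrac12,\pi,\omega,\mathrm{Std})\cdot\xi(s)=\int_{[G]}\mathcal{E}(s,h)(g)\overline{f(g)}\dd{g}$ for suitable $f_S$ and $h$ with $\xi$ holomorphic near $s=\tfrac12$ and $\xi(\tfrac12)=1$, then take residues and apply Proposition~\ref{prop:siegelweil}. The paper simply cites Segal's theorem as a black box for the existence of $f_S$ and $h$ with the correct normalization, whereas you spell out the unfolding, the unramified computation, and the nonvanishing of local zeta integrals at $S$; but this is exactly the content of Segal's result, so the two arguments coincide.
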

\begin{proof}
By \cite[Theorem 3.1]{segal2017family}, there exist $f_S$ in $\bigotimes_{v\in S}\pi_v$ and a standard section $h$ of $I_{P_E}(s,\omega)$ such that
\begin{align*}
L^S(s+\textstyle\frac12,\pi,\chi,\mathrm{Std})\xi(s) = \displaystyle\int_{[G]}\mathcal{E}(s,h)(g)\overline{f(g)}\dd{g},
\end{align*}
where $\xi(s)$ is holomorphic in a neighborhood of $s=\textstyle\frac12$ and satisfies $\xi(\textstyle\frac12)=1$. After taking residues at $s=\textstyle\frac12$, Proposition \ref{prop:siegelweil} enables us to conclude that
\begin{gather*}
\res_{s=1}L^S(s,\pi,\omega,\mathrm{Std}) = \int_{[G]}\big(\res_{s=\frac12}\mathcal{E}(s,h)(g)\big)\overline{f(g)}\dd{g} = \sum_j\int_{[G]}\theta_{C_j}(\varphi_j,1)(g)\overline{f(g)}\dd{g}.\qedhere
\end{gather*}
\end{proof}
\begin{proof}[Proof of Theorem \ref{thm:backnonzero}]
By \cite[Theorem 3.1]{Gan:Mult_formula_for_cubic}, there exists a cubic \'etale $F$-algebra $E$ such that $\pi_{N,\psi_E}\neq0$. Let $S$ be a finite set of places of $F$ satisfying the conditions in Proposition \ref{prop:residueintegral} such that, for all $v$ not in $S$, we also have $\pi_v\cong\pi_v^+$. This implies that, for all $v$ not in $S$, the local $L$-factor $L_v(s,\pi_v,\omega_v,\mathrm{Std})$ equals
\begin{align*}
L_v(s+\textstyle\frac12,\tau_v)\cdot L_v(s-\textstyle\frac12,\tau_v)\cdot L_v(s,\tau_v,\omega_v,\mathrm{Sym}^2),
\end{align*}
as $\tau_v\otimes\omega_v\cong\tau_v$ because $\tau_v$ is dihedral. Writing $\rho$ for the automorphic induction of $\chi^2$, we get
\begin{align*}
L_v(s,\tau_v,\mathrm{Sym}^2)=L_v(s,\omega_v)\cdot L_v(s,\rho_v)\mbox{ and hence }L_v(s,\tau_v,\omega_v,\mathrm{Sym}^2) = L_v(s,\one)\cdot L_v(s,\rho_v),
\end{align*}
as $\rho_v\otimes\omega_v\cong\rho_v$ because $\rho_v$ is dihedral. Altogether, taking the product over all $v$ not in $S$ yields
\begin{align*}
L^S(s,\pi,\omega,\mathrm{Std}) &= L^S(s+\textstyle\frac12,\tau)\cdot L^S(s-\frac12,\tau)\cdot \zeta_F^S(s)\cdot L^S(s,\rho) \\
&= L^S(s+\textstyle\frac12,\chi)\cdot L^S(s-\frac12,\chi)\cdot \zeta_F^S(s)\cdot L^S(s,\chi^2).
\end{align*}
Because $L^S(\textstyle\frac12,\chi)\neq0$ and $\chi^2\neq1$, this has a simple pole at $s=1$. Combined with Proposition \ref{prop:residueintegral}, we see that there exist $f$ in $\pi$, finitely many rank-$2$ $E$-twisted composition algebras $C_j$, and $\varphi_j$ in $\Omega$ such that
\begin{align*}
\sum_j\int_{[G]}\theta_{C_j}(\varphi_j,1)(g)\overline{f(g)}\dd{g}
\end{align*}
is nonzero. In particular, one of the terms $\int_{[G]}\theta_{C_j}(\varphi_j,1)(g)\overline{f(g)}\dd{g}$ is nonzero, so \eqref{eq:seesawsiegelweil} shows that the corresponding $\int_{[H_{C_j}]}\theta_J(\varphi_j,f)(h')\dd{h'}$ is nonzero, where $J$ denotes the $9$-dimensional Freudenthal--Jordan algebra over $F$ corresponding to $C_j$. In particular, the extension of $\theta_J(\pi)$ to $\underline{\Aut}(J)$ is nonzero, so our original $\theta_J(\pi)$ on $G'_J$ is also nonzero.

We want to show that $J$ is of the form considered at the end of Example \ref{exmp:J}. If this were not the case, then $J$ must equal $B^{\iota=1}$ for some $9$-dimensional central division algebra $B$ over $K$ with an involution of the second kind $\iota:B\rightarrow B$ for $K/F$. The resulting $G'_J$ is anisotropic, so $\theta_J(\pi)$ is automatically cuspidal and hence semisimple.

Let $\sigma$ be an irreducible subrepresentation of $\theta_J(\pi)$. For every place $v$ of $F$, Proposition \ref{prop:local_global_comp} shows that $\sigma_v$ is a quotient of $\theta_J(\pi_v)$. When $v$ does not lie in $S$ and is not a ramified place of $B$, \cite[Theorem 4.1 (ii)]{bakic2021howe} or \cite[Theorem 1.2(ii)]{gan2021howe} indicate that $\theta_J(\pi_v)\cong\theta_J(\pi_v^+)$ is isomorphic to the $\sigma_v^+$ associated with $\chi_v$ from \S\ref{ss:U3HPSpadic} when $\chi^2_v=1$, and it is isomorphic to the sum of the $\sigma^+_v$ associated with $\chi_v$ and $\chi_v^{-1}$ when $\chi_v^2\neq1$. Therefore $\sigma$ satisfies the condition in Theorem \ref{thm:globalHPSU3}.(2), which Theorem \ref{thm:globalHPSU3}.(3) implies cannot happen.
\end{proof}

\subsection{Cuspidality}
The goal of this subsection is to prove the following criterion for $\theta_J(-)$ to be cuspidal.
\begin{theorem}\label{thm:liftbackcuspidal}
Let $\pi$ be an irreducible cuspidal automorphic representation of $G(\A_F)$. If $\pi$ is not in the near equivalence class from \cite[Main Theorem (ii)]{Gan:Mult_formula_for_cubic}, then $\theta_J(\pi)$ is cuspidal. In particular, this holds when $\pi$ is a subrepresentation of $\mathcal{A}_\tau(G)$.
\end{theorem}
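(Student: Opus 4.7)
The plan is to show cuspidality by checking that all constant terms along unipotent radicals of parabolic subgroups of $G'_J$ vanish. When $G'_J$ is anisotropic $\PU_3$, the quotient $[G'_J]$ is compact and $\theta_J(\pi)$ is automatically cuspidal, so we may assume $G'_J$ is quasi-split. The only nontrivial parabolic of $G'_J$ up to conjugacy is then a Borel $B' = T' N'$ with $3$-dimensional Heisenberg unipotent radical $N'$, and the task reduces to showing $\theta_J(\pi)_{N'} = 0$. Since $N' \subseteq \widetilde{G}$ commutes with the embedded $G$, the swap-of-integration argument from the proof of Proposition~\ref{prop:Heisenberg_constant_term} gives
$$\theta_J(\varphi, f)_{N'}(g') = \int_{[G]} \theta(\varphi)_{N'}(g g')\,\overline{f(g)}\,dg,$$
so it suffices to analyze the $N'$-constant term of the minimal representation $\Omega$ paired against $f \in \pi$.

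Next, locate a parabolic $\widetilde{R} \subseteq \widetilde{G}$ whose Levi $\widetilde{L}$ is the centralizer of a suitable cocharacter of $T'$; then $G \subseteq \widetilde{L}$ (as $G$ centralizes $G'_J \supseteq T'$) and $\widetilde{R} \cap G'_J = B'$. Compute the Jacquet module of $\Omega$ along the unipotent radical $\widetilde{R}_u$, in analogy with Proposition~\ref{prop:jacquet_functor_minimal}, and then Fourier-expand along $N' \subseteq \widetilde{R}_u$, decomposing the result over $\widetilde{L}$-orbits on characters of $\widetilde{R}_u^{\mathrm{ab}}$. Unfolding each Fourier coefficient via $G'_J$-invariance of the Killing form (exactly as in Propositions~\ref{prop:Heisenberg_constant_term} and~\ref{prop:three-step_constant_term}) expresses the resulting terms through the exceptional theta lift $\theta_{\mathrm{GGJ}}$ associated with the dual reductive pair $S_3 \times G \hookrightarrow S_3 \ltimes \Spin_8$ of Gan--Gurevich--Jiang \cite{gan2002cubic}. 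Degenerate-orbit contributions will vanish by cuspidality of $f$ using the same weak-approximation arguments for stabilizers as in \S\ref{sec:cuspidality}, while the nondegenerate terms factor through $\theta_{\mathrm{GGJ}}(\pi)$ paired against characters of $T'$.

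By the main theorem of \cite{Gan:Mult_formula_for_cubic}, the image of $\theta_{\mathrm{GGJ}}$ lies in a specific near equivalence class in $\mathcal{A}_{\disc}(G)$, namely the subregular one. Hence if $\pi$ does not lie in that class, then $\theta_{\mathrm{GGJ}}(\pi) = 0$, every Fourier coefficient of $\theta_J(\pi)_{N'}$ vanishes, and $\theta_J(\pi)$ is cuspidal. For the in-particular clause, recall from \S\ref{ss:global_long_root} that at unramified places the Satake parameters of representations in $\mathcal{A}_\tau(G)$ factor through the long-root $\SL_2 \hookrightarrow \widehat{G}$, while those of the subregular class factor through the inequivalent subregular $\SL_2$; consequently $\mathcal{A}_\tau(G)$ is disjoint from the near equivalence class of \cite{Gan:Mult_formula_for_cubic}, so the in-particular clause follows.

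The principal obstacle will be making the identification of the parabolic $\widetilde{R}$ and the matching of nondegenerate $\widetilde{L}$-orbits on characters of $\widetilde{R}_u^{\mathrm{ab}}$ with the Gan--Gurevich--Jiang dual pair sufficiently explicit; this will require a root-space analysis for $\widetilde{G} = \mathsf{E}_6$ analogous to that carried out in \S\ref{ss:heisenberg}--\S\ref{ss:parabolics} for the Heisenberg and three-step parabolics of $G$, together with the computation of stabilizers in $G'_J$ of orbit representatives (in the spirit of Lemma~\ref{lemma:stabilizer_of_v} and Lemma~\ref{lemma:X^3=0orbits}).
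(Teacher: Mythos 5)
Your overall architecture matches the paper's: reduce to the $V'$-constant term, locate the parabolic $\widetilde{B}'\subseteq\widetilde{G}$ with Levi $\widetilde{T}'\supseteq G$ satisfying $\widetilde{B}'\cap(G\times G'_J)=G\times B'$, Fourier-expand $\theta(\varphi)$ along $\widetilde{V}'$, and invoke Gan's determination of the cubic unipotent near equivalence class. However, your proposal assigns the wrong roles to the degenerate and nondegenerate pieces of that expansion, and this is not merely a slip of terminology: executed as written, the argument would not close.

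Concretely, the paper's Lemma \ref{lem:Fourierback} expands $\theta(\varphi)$ as the constant term $\theta(\varphi)_{\widetilde{V}'}$ plus nondegenerate Fourier coefficients along the center $\widetilde{Z}'$ and along $\widetilde{V}'/\widetilde{Z}'$. It is the \emph{constant term} $\theta(\varphi)_{\widetilde{V}'}$ that sees the Gan--Gurevich--Jiang dual pair: restricting $\Omega_{\widetilde{V}'}$ to $\widetilde{T}'^{\mathrm{der}}\cong\Spin_8^{F\times K}$ recovers the minimal representation used in \cite{gan2002cubic}, so $\int_{[G]}\theta(\varphi)_{\widetilde{V}'}(g)\overline{f(g)}\dd{g}$ is a GGJ-type period, and it is \cite[Main Theorem (ii)]{Gan:Mult_formula_for_cubic} that forces this integral to vanish when $\pi$ lies outside the cubic unipotent near equivalence class. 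The \emph{nondegenerate} Fourier coefficients, by contrast, are the ones killed by cuspidality of $f$: Lemma \ref{lemma:minorbitV'} shows the stabilizer of a minimal-orbit representative in $G\times T'$ has commutator subgroup containing the unipotent radical $U$ of the three-step parabolic of $G$, so by Corollary \ref{cor:trivUaction} the nondegenerate terms are $U(\A_F)$-invariant and pair to zero against cuspidal $f$. You have these two mechanisms exactly interchanged. Cuspidality of $f$ alone gives no reason for $\int_{[G]}\theta(\varphi)_{\widetilde{V}'}(g)\overline{f(g)}\dd{g}$ to vanish, and the nondegenerate coefficients do not factor through the GGJ theta lift, so the inverted version of the argument has a genuine hole.

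Two smaller points. First, since the integral here is over $[G]$ (not $[G'_J]$, as in \S\ref{sec:cuspidality}), the unfolding uses $G$-invariance of the Killing form and sums over $G(F)$-orbits; the cited Propositions \ref{prop:Heisenberg_constant_term} and \ref{prop:three-step_constant_term} are templates for the opposite direction. Second, expanding only over characters of $\widetilde{R}_u^{\mathrm{ab}}$ misses the central $\widetilde{Z}'$-Fourier coefficients, which are a separate term in Lemma \ref{lem:Fourierback} owing to the Heisenberg structure of $\widetilde{V}'$; these must be accounted for. Your justification of the ``in particular'' clause (the long root and subregular near equivalence classes are distinguishable at unramified places) is essentially right, though one should say the Satake parameter of $\pi_v^+$ involves both the long root $\SL_2$ principal and the parameter of $\tau_v$ in the short root $\SL_2$, not the long root alone.
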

This is automatic for anisotropic $G'_J$, so assume that $G'_J$ is quasi-split. Recall from \S\ref{ss:subtori} that $B'$ is a Borel subgroup of $G'_J$ containing the maximal subtorus $T'$, write $V'$ for the unipotent radical of $B'$, and write $Z'$ for the center of $V'$. Write $J_\uparrow\subseteq J$ for the subspace of matrices concentrated above the anti-diagonal, $J_\leftrightarrow\subseteq J$ for the anti-diagonal matrices, and $J_\downarrow\subseteq J$ for the matrices concentrated below the anti-diagonal. Recall that $\mathfrak{l}_J$ corresponds to $\R_{K/F}\mathfrak{sl}_3$, where $a$ in $\R_{K/F}\mathfrak{sl}_3$ acts on $J=\mathfrak{h}_3$ via $x\mapsto ax+x\prescript{t}{}{\overline{a}}$ \cite[p.~330]{bakic2021howe}, and write $\mathfrak{s}\subseteq\mathfrak{sl}_3$ for the subalgebra of diagonal matrices.

Write $\widetilde{B}'$ for the parabolic subgroup of $\widetilde{G}$ that admits a Levi factor $\widetilde{T}'$ with Lie algebra equal to
\begin{align*}
(\mathfrak{sl}_3\oplus\R_{K/F}\mathfrak{s})\oplus(V\otimes J_\leftrightarrow)\oplus(V^*\otimes J_\leftrightarrow^*)
\end{align*}
and whose unipotent radical $\widetilde{V}'$ has Lie algebra $\widetilde{\mathfrak{v}}'$ equal to $(\R_{K/F}\mathfrak{n}_3)\oplus(V\otimes J_\uparrow)\oplus(V^*\otimes J_\downarrow^*)$. By checking on Lie algebras, we see that $\widetilde{B}'\cap(G\times G'_J)= G\times B'$. Note that the center $\widetilde{Z}'$ of $\widetilde{V}'$ has abelian Lie algebra $\widetilde{\mathfrak{z}}'$, and the quotient $\widetilde{V}'/\widetilde{Z}'$ is also abelian \cite[p.~335]{bakic2021howe}. Hence the exponential morphism $\widetilde{V}'/\widetilde{Z}'\cong\widetilde{\mathfrak{v}}'/\widetilde{\mathfrak{z}}'$ is an isomorphism of groups over $F$. Finally, note that $\widetilde{V}'\cap G'_J=V'$ and $\widetilde{Z}'\cap G'_J=Z'$.

Write $\widetilde{\mathfrak{v}}'_{\mathrm{op}}$ for the nilradical of the opposite parabolic of $\widetilde{\mathfrak{b}}'$, and write $\widetilde{\mathfrak{z}}'_{\mathrm{op}}$ for the center of $\widetilde{\mathfrak{v}}'_{\mathrm{op}}$. We use the Killing form to identify $\widetilde{\mathfrak{z}}'^*$ with $\widetilde{\mathfrak{z}}'_{\mathrm{op}}$ and $(\widetilde{\mathfrak{v}}'/\widetilde{\mathfrak{z}}')^*$ with $\widetilde{\mathfrak{v}}'_{\mathrm{op}}/\widetilde{\mathfrak{z}}'_{\mathrm{op}}$. Note that $\widetilde{T}'$ naturally acts on $\widetilde{\mathfrak{v}}'$. For any $X$ in $\widetilde{\mathfrak{z}}'_{\mathrm{op}}$, write $\psi_X:[\widetilde{Z}']\rightarrow S^1$ for the unitary character given by $\widetilde{z}'\mapsto\psi(\ang{X,\widetilde{z}'})$, and for any $X$ in $\widetilde{\mathfrak{v}}'_{\mathrm{op}}/\widetilde{\mathfrak{z}}'_{\mathrm{op}}$, write $\psi_X:[\widetilde{V}'/\widetilde{Z}']\rightarrow S^1$ for the analogous unitary character. Write $\O$ for the minimal nonzero nilpotent $\widetilde{G}$-orbit in $\widetilde\g$.
\begin{lemma}\label{lem:Fourierback}
For any $\varphi$ in $\Omega$, we have $\theta(\varphi)=\theta(\varphi)_{\widetilde{V}'}+\displaystyle\sum_{X\in\O\cap\widetilde{\mathfrak{z}}'_{\mathrm{op}}}\theta(\varphi)_{\widetilde{Z}',\psi_X}+\sum_{X\in\O\cap\widetilde{\mathfrak{v}}'_{\mathrm{op}}/\widetilde{\mathfrak{z}}'_{\mathrm{op}}}\theta(\varphi)_{\widetilde{V}',\psi_X}$.
\end{lemma}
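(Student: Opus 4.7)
The plan is to perform two successive Fourier expansions, iterating through the two-step filtration $\widetilde{Z}' \trianglelefteq \widetilde{V}'$, and then to use the minimality of $\Omega$ to cut down the sums. Since $\widetilde{Z}'$ is abelian and $[\widetilde{Z}']$ is compact, standard Fourier theory gives
\[
\theta(\varphi)(g) = \theta(\varphi)_{\widetilde{Z}'}(g) + \sum_{X \in \widetilde{\mathfrak{z}}'_{\mathrm{op}}(F)\setminus\{0\}} \theta(\varphi)_{\widetilde{Z}',\psi_X}(g).
\]
Since $\widetilde{V}'/\widetilde{Z}'$ is also abelian, applying Fourier expansion once more to the first summand yields
\[
\theta(\varphi)_{\widetilde{Z}'}(g) = \theta(\varphi)_{\widetilde{V}'}(g) + \sum_{Y \in (\widetilde{\mathfrak{v}}'_{\mathrm{op}}/\widetilde{\mathfrak{z}}'_{\mathrm{op}})(F)\setminus\{0\}} \theta(\varphi)_{\widetilde{V}',\psi_Y}(g).
\]
Combining these gives the claimed decomposition, except that a priori the sums range over all nonzero elements rather than over elements lying in $\mathcal{O}$.

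To cut the sums down to $\mathcal{O}$, I would invoke the local analogue of Proposition~\ref{prop:minorbit}.(4) for the parabolic $\widetilde{B}'$: namely, that at any nonarchimedean place $v$, and for any nonzero $X$ in $\widetilde{\mathfrak{z}}'_{\mathrm{op}}(F_v)$ (respectively $Y$ in $(\widetilde{\mathfrak{v}}'_{\mathrm{op}}/\widetilde{\mathfrak{z}}'_{\mathrm{op}})(F_v)$) not coming from $\mathcal{O}$, the twisted Jacquet module $(\Omega_v)_{\widetilde{Z}'(F_v),\psi_{v,X}}$ (respectively $(\Omega_v)_{\widetilde{V}'(F_v),\psi_{v,Y}}$) vanishes. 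Factoring $\Omega = \bigotimes'_v \Omega_v$, global vanishing at one local place forces the corresponding global Fourier coefficient to vanish, trimming the expansions to sums over $\mathcal{O}$ and yielding the desired identity.

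The main obstacle is the local vanishing statement itself. Morally this is a consequence of the fact that the wavefront set of the minimal representation $\Omega_v$ is contained in $\overline{\mathcal{O}}(F_v)$: since $\mathcal{O}$ is the minimal nonzero nilpotent orbit, any nonzero nilpotent element outside $\mathcal{O}$ lies in a strictly larger orbit and hence outside $\overline{\mathcal{O}}$, so it cannot support the wavefront set. In principle this falls out from the explicit description of the Jacquet module of $\Omega_v$ along the Heisenberg center combined with a character computation along the unipotent radical of $\widetilde{B}'$, as developed in \cite{gan2005minimal} and used in \cite{bakic2021howe}; the routine but slightly involved part of the write-up will be to extract the needed vanishing along the specific parabolic $\widetilde{B}'$ rather than the Heisenberg $\widetilde{P}$, likely by a Mackey-type argument reducing to the Jacquet module along $\widetilde{N}$ that is already described by Proposition~\ref{prop:minorbit}.
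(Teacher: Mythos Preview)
Your approach is correct and essentially matches the paper's: perform the two-step Fourier expansion along $\widetilde{Z}'$ and then $\widetilde{V}'/\widetilde{Z}'$, then kill the terms outside $\mathcal{O}$ using the vanishing of the corresponding twisted Jacquet modules of $\Omega_v$ at a single nonarchimedean place. The paper's proof is the one-liner ``Apply \cite[Proposition 3.7]{gan2005minimal} and \cite[I.16]{moeglin1987modeles} to any nonarchimedean place of $F$ not lying above $2$,'' so the local vanishing you flag as the ``main obstacle'' is already available off-the-shelf for the parabolic $\widetilde{B}'$ (via Moeglin--Waldspurger's general result on models of the minimal representation combined with Gan--Savin's description), and no Mackey-type reduction to $\widetilde{P}$ is needed.
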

\begin{proof}
Apply \cite[Proposition 3.7]{gan2005minimal} and \cite[I.16]{moeglin1987modeles} to any nonarchimedean place of $F$ not lying above $2$.
\end{proof}
Write $\mathbb{O}$ for the split octonion algebra over $F$, and write $\mathbb{O}_0$ its trace-zero subspace. One can naturally identify the orthogonal complement of $\mathfrak{z}'$ in $\widetilde{\mathfrak{z}}'_{\mathrm{op}}$ with $\mathbb{O}_0$ and the orthogonal complement of $\mathfrak{v}'/\mathfrak{z}'$ in $\widetilde{\mathfrak{v}}'_{\mathrm{op}}/\widetilde{\mathfrak{z}}'_{\mathrm{op}}$ with $\mathbb{O}_0\otimes K$ \cite[p.~336]{bakic2021howe}. Under our identifications, $G\cong\underline{\Aut}(\mathbb{O})$ acts tautologically on $\mathbb{O}_0$, and $T'\cong\R_{K/F}\G_m$ acts on $F$ via $\Nm_{K/F}(-)^{-1}$ and on $K$ via $(-)^{-1}$.

Recall from \S\ref{ss:parabolics} the \emph{three-step} parabolic subgroup $Q=LU$ of $G$.
\begin{lemma}\label{lemma:minorbitV'}
Under our identifications, $\O\cap\mathbb{O}_0$ and $\O\cap(\mathbb{O}_0\otimes K)$ correspond to the $G\times T'$-orbits of 
\begin{align*}
\begin{bmatrix}
0 & (1,0,0) \\
(0,0,0) & 0
\end{bmatrix}\in\mathbb{O}_0
\end{align*}
in $\mathbb{O}_0$ and $\mathbb{O}_0\otimes K$, respectively. In particular, for any $X$ in $\O\cap\mathbb{O}_0$ or $\O\cap(\mathbb{O}_0\otimes K)$, the commutator subgroup of $\Stab_{G\times T'}X$ contains $U$. 
\end{lemma}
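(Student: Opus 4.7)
The plan is to reduce the claim to the orbit structure of $G = G_2$ on $\mathbb{O}_0$ together with a straightforward stabilizer computation.

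First, I would verify that $X_0 := \begin{bmatrix} 0 & (1,0,0) \\ (0,0,0) & 0 \end{bmatrix}$ lies in $\O$. Under the identifications in \cite{bakic2021howe}, the image of $X_0$ in $\widetilde{\mathfrak{z}}'_{\mathrm{op}} \subseteq \widetilde{\g}$ is a root vector for some root of $\widetilde\g$. Since $\widetilde\g$ is of simply-laced type $\mathsf{E}_6$, all roots are $\widetilde{G}(F)$-conjugate, and every nonzero root vector lies in the unique minimal nonzero nilpotent orbit $\O$. Hence $X_0 \in \O \cap \mathbb{O}_0$, and $X_0 \otimes 1 \in \O \cap (\mathbb{O}_0 \otimes K)$; the nontrivial content is showing that these single orbits exhaust the intersections.

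For $\O \cap \mathbb{O}_0$: the $G$-action is the standard $7$-dimensional representation of $G_2$, preserving the octonion norm $N$, and $T' = \R_{K/F}\G_m$ acts by the scalar $\Nm_{K/F}(-)^{-1}\in F^\times$. By classical $G_2$-octonion theory, $G$ acts transitively on the nonzero null cone $\{x\in\mathbb{O}_0 : N(x)=0,\, x\neq 0\}$. Since $X_0$ is a nonzero null vector lying in $\O$, we get $G\cdot X_0 \subseteq \O \cap \mathbb{O}_0$, and conversely any $X\in\O \cap \mathbb{O}_0$ is nonzero and null (using that $N$ is a nonzero $G$-invariant homogeneous polynomial while $\O$ is $\G_m$-stable with weight one). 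Hence $\O\cap\mathbb{O}_0 = G\cdot X_0$. For $\O \cap (\mathbb{O}_0 \otimes K)$: I would show that elements take the form $x \otimes k$ with $x$ a nonzero null vector in $\mathbb{O}_0$ and $k \in K^\times$, so that the $T'$-action on $K$ by $(-)^{-1}$, combined with the $G$-transitivity on $\mathbb{O}_0$-null vectors, yields transitivity of $G\times T'$. This identification of $\O \cap (\mathbb{O}_0 \otimes K)$ with the "$K$-rank-one null" locus should come from a direct root-space computation, in the spirit of \cite[\S3]{bakic2021howe}.

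For the stabilizer claim, I would match $X_0$ with the highest weight vector of the $7$-dimensional representation of $G$ under the identifications of \S\ref{ss:roots} and \S\ref{ss:parabolics}, so that $\Stab_G(FX_0)$ coincides with the three-step parabolic $Q = LU$; the Levi $L \cong \GL_2$ acts on the line via the determinant, giving $\Stab_G(X_0) = \SL_2 \ltimes U$. For $X \in \mathbb{O}_0$, then $\Stab_{G \times T'}(X)$ fits in a short exact sequence
$$1 \to \R^1_{K/F}\G_m \to \Stab_{G \times T'}(X) \to Q \to 1,$$
while for $X \in \mathbb{O}_0 \otimes K$ the $T'$-coordinate is forced into $F^\times \subseteq K^\times$, giving $\Stab_{G \times T'}(X) \cong Q$ via $(g,t)\mapsto g$. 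In either case the commutator subgroup surjects onto $[Q,Q] = \SL_2 \ltimes U$, and hence contains $U$.

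The main obstacle will be the identification of $\O \cap (\mathbb{O}_0 \otimes K)$ with the $K$-rank-one null locus: unlike the $\mathbb{O}_0$ case, this is not captured by a single quadratic invariant, and the minimal orbit condition must be extracted from the $\mathsf{E}_6$ root structure behavior after passing to $K$. I expect this to follow from explicit root-space bookkeeping using the conventions and computations already set up in \cite{bakic2021howe}, but the verification requires some care.
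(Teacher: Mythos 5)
Your overall strategy—identify $X_0$ as a minimal-orbit root vector, reduce the first claim to $G_2$-transitivity on nonzero null octonions, and compute the stabilizer by matching $X_0$ to the highest weight line of the $7$-dimensional representation so that $\Stab_{G}(FX_0)=Q$—is a plausible reconstruction of the argument, and the commutator computation at the end is sound: in either case $\Stab_{G\times T'}X$ surjects onto a subgroup of $Q$ containing $[Q,Q]\supseteq\SL_2\ltimes U$, which is perfect, so $U$ lies in the derived subgroup. The paper itself supplies no independent argument and simply cites the proof of \cite[Lemma 2.3]{bakic2021howe}, so your task is really to reconstruct that proof, and the skeleton is right.

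However, there are two genuine gaps, one of which you flag yourself. First, for $\O\cap\mathbb{O}_0$, the containment $G\cdot X_0\subseteq\O\cap\mathbb{O}_0$ is fine, but the reverse direction requires showing that $X\in\O\cap\mathbb{O}_0$ forces $N(X)=0$, and your parenthetical justification (``$N$ is $G$-invariant and homogeneous, $\O$ is $\G_m$-stable'') does not prove this: a $G$-invariant polynomial can certainly be nonzero on a $\G_m$-stable cone. What is actually needed is the characterization of the minimal orbit as the locus where $\mathrm{ad}(X)^2$ has rank $\leq 1$; for $X\in\widetilde{\mathfrak{z}}'_{\mathrm{op}}=\widetilde\g_{-2}$ this reduces to a rank condition on the quadratic map $Y\mapsto[X,[X,Y]]\colon\widetilde\g_2\to\widetilde\g_{-2}$, and one must verify that this rank condition is equivalent to $N(X)=0$ for $X$ in the $\mathbb{O}_0$-direction. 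This is precisely the kind of explicit root-space computation you defer to \cite{bakic2021howe}. Second and more seriously, the claim that $\O\cap(\mathbb{O}_0\otimes K)$ consists exactly of pure tensors $x\otimes k$ with $x$ null and $k\in K^\times$ is the substantive content of the lemma in the degree-one Fourier direction, and you give no argument for it—you identify it as ``the main obstacle'' and say you ``expect'' it to follow from root-space bookkeeping. Since $\mathbb{O}_0\otimes K$ is $14$-dimensional over $F$ and a general element is not a pure tensor, the restriction to the rank-one locus is exactly what has to be extracted from the $\mathsf{E}_6$ structure constants, and without it the second half of the lemma (and hence the middle sum in the Fourier expansion of Lemma \ref{lem:Fourierback}, which is the whole point of Theorem \ref{thm:liftbackcuspidal}) is unsupported.
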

\begin{proof}
This follows from the proof of \cite[Lemma 2.3]{bakic2021howe}.
\end{proof}
\begin{corollary}\label{cor:trivUaction}
Let $\varphi$ be in $\Omega$. For any $X$ in $\O\cap\mathbb{O}_0$, we have $\theta(\varphi)_{\widetilde{Z}',\psi_X}(ug)=\theta(\varphi)_{\widetilde{Z}',\psi_X}(ug)$ for all $u$ in $U(\A_F)$ and $g$ in $\widetilde{G}(\A_F)$. For any $X$ in $\O\cap(\mathbb{O}_0\otimes K)$, the same holds for $\theta(\varphi)_{\widetilde{V}',\psi_X}$.
\end{corollary}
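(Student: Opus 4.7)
The plan is to mimic the argument of Lemma~\ref{lem:stabilizerinvariance}, working at the level of the linear functional $\ell_g:\varphi\mapsto\theta(\varphi)_{\widetilde Z',\psi_X}(g)$. After replacing $\varphi$ by $g\cdot\varphi$, it suffices to show that $\theta(\varphi)_{\widetilde Z',\psi_X}(u)=\theta(\varphi)_{\widetilde Z',\psi_X}(1)$ for all $u\in U(\A_F)$ and $\varphi\in\Omega$; equivalently, that $U(\A_F)$ acts trivially on the functional $\ell\coloneqq\ell_1$. As a preliminary, I would verify that $U\subseteq\widetilde B'$ by inspecting root spaces: using \S\ref{ss:roots} and that $e$ is anti-diagonal (\S\ref{ss:subtori}), the short-root spaces in $\mathfrak u$ lie in $V\otimes J_\leftrightarrow\oplus V^*\otimes J_\leftrightarrow^*\subseteq\widetilde{\mathfrak t}'$, while $\mathfrak n_\beta$ and $\mathfrak n_{3\alpha+2\beta}$ lie in $\R_{K/F}\mathfrak n_3\subseteq\widetilde{\mathfrak v}'$. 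Hence $U$ normalizes $\widetilde Z'$ (a characteristic subgroup of $\widetilde B'$); combined with $U\subseteq\Stab_{G\times T'}(X)$ from Lemma~\ref{lemma:minorbitV'}, this shows that $U$ preserves the pair $(\widetilde Z',\psi_X)$.

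The heart of the argument is a local multiplicity-one statement at each nonarchimedean place $v$ of $F$ not above $2$: the space $\Hom_{\widetilde Z'(F_v)}(\Omega_v,\psi_{v,X})$ is at most one-dimensional, and the action of the stabilizer of $\psi_X$ in $\widetilde G(F_v)$ on it factors through an algebraic character of that stabilizer as an algebraic group. These are the direct analogues, for $\widetilde Z'$ and characters in the minimal orbit $\O\cap\widetilde{\mathfrak z}'_{\mathrm{op}}$, of Proposition~\ref{prop:minorbit}.(2) and \cite[Proposition~11.7(ii)]{gan2005minimal}; they should follow from the description of the twisted Jacquet modules of $\Omega_v$ along $\widetilde B'$ used in Lemma~\ref{lem:Fourierback} (via \cite[Proposition~3.7]{gan2005minimal} and \cite[I.16]{moeglin1987modeles}). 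Because $U$ is unipotent and lies in this stabilizer, the algebraic character restricts trivially to $U(F_v)$, so $U(F_v)$ acts trivially on the local component of $\ell$.

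The global conclusion then follows exactly as in Lemma~\ref{lem:stabilizerinvariance}: fixing $\varphi$, the continuous function $u\mapsto(u\cdot\ell)(\varphi)-\ell(\varphi)$ on $U(\A_F)$ vanishes on $U(\A_F^\infty)$ by the preceding paragraph, and vanishes on $U(F)$ by the $\widetilde G(F)$-invariance of $\theta(\varphi)$ (using the substitution $z'=u^{-1}zu$, which preserves the quotient $[\widetilde Z']$ for $u\in U(F)$, together with $\psi_{u\cdot X}=\psi_X$). Weak approximation for the unipotent group $U$ gives that $U(F)\cdot U(\A_F^\infty)$ is dense in $U(\A_F)$, so the function vanishes identically. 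The case of $\widetilde V'$ and $X\in\O\cap(\mathbb O_0\otimes K)$ proceeds in exactly the same manner, using the analogous multiplicity-one input for Fourier coefficients along $\widetilde V'$. The main obstacle I anticipate is establishing the local multiplicity-one and algebraic-character statements with precision: although strongly analogous to the Heisenberg-parabolic results in \cite{gan2005minimal}, they concern a Borel-type parabolic and characters in the minimal orbit, so the twisted Jacquet module computations may require some bookkeeping to pin down.
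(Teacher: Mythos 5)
Your proposal correctly identifies the general strategy (reduce to $g=1$, local multiplicity-one plus weak approximation, in the style of Lemma~\ref{lem:stabilizerinvariance}), and your treatment of the $\widetilde V'$-case (the second statement) matches the paper. But there is a genuine gap in the $\widetilde Z'$-case, which is the delicate part.

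You assert that $\Hom_{\widetilde Z'(F_v)}(\Omega_v,\psi_{v,X})$ is at most one-dimensional. This is false in general. Since $\widetilde Z'$ is the center of the Heisenberg-type group $\widetilde V'$, and the commutator pairing on $\widetilde V'/\widetilde Z'$ is nontrivial against $\psi_{v,X}$ for $X$ in the minimal orbit, the $\psi_{v,X}$-twisted Jacquet module of $\Omega_v$ along $\widetilde Z'$ carries the (infinite-dimensional) oscillator representation of $\widetilde V'(F_v)$; so that Hom-space is far from one-dimensional. The paper sidesteps this by invoking Stone--von Neumann: it works instead with $\Hom_{\widetilde V'(F_v)}(\Omega_v,W_{\psi_{v,X}})$, where $W_{\psi_{v,X}}$ is the unique irreducible unitary representation of $\widetilde V'(F_v)$ with central character $\psi_{v,X}$, and \emph{this} space is one-dimensional by \cite[Proposition~3.7]{gan2005minimal} and \cite[I.16]{moeglin1987modeles}. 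The global argument is correspondingly upgraded: one considers the $\widetilde V'(\A_F)$-equivariant map $\Omega\to L^2([\widetilde V'],\psi_X)$, uses compactness of $[\widetilde V']$ to get semisimplicity, and matches local components of any irreducible constituent with $W_{\psi_{v,X}}$ before applying the multiplicity-one argument. Your proposal as written does not reach this layer.

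A secondary, smaller divergence: to kill the action of $U(F_v)$, you propose to show the local stabilizer character is \emph{algebraic} and then use that unipotent groups have no nontrivial algebraic characters (mimicking \cite[Proposition~11.7(ii)]{gan2005minimal} for the Heisenberg parabolic). The paper avoids having to establish algebraicity in this Borel-type setting: Lemma~\ref{lemma:minorbitV'} is phrased precisely so that $U$ lies in the \emph{commutator subgroup} of $\Stab_{G\times T'}X$, and then any abstract character of $(\Stab_{G\times T'}X)(F_v)$ dies on $U(F_v)$. That is why the lemma is stated in that form, and it is strictly less input than your route requires.
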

\begin{proof}
By replacing $\varphi$ with $g\cdot\varphi$, it suffices to consider the case $g=1$. Now \cite[Proposition 3.7]{gan2005minimal} and \cite[I.16]{moeglin1987modeles} show that $\Hom_{\widetilde{V}'(F_v)}(\Omega_v,\psi_{v,X})$ is $1$-dimensional for every nonarchimedean place $v$ of $F$ not lying above $2$, so the action of $(\Stab_{G\times T'}X)(F_v)$ on $\Hom_{\widetilde{V}'(F_v)}(\Omega_v,\psi_{v,X})$ factors through a character of $(\Stab_{G\times T'}X)(F_v)$. Lemma \ref{lemma:minorbitV'} implies that $U$ lies in the commutator subgroup of $\Stab_{G\times T'}X$, so $U(F_v)$ acts trivially on $\Hom_{\widetilde{V}'(F_v)}(\Omega_v,\psi_{v,X})$.

 Note that $\varphi\mapsto\theta(\varphi)_{\widetilde{V}',\psi_X}(1)$ yields an element $\ell$ of $\Hom_{\widetilde{V}'(\A_F)}(\Omega,\psi_X)$, and $u\cdot\ell$ equals the linear functional $\varphi\mapsto\theta_{\widetilde{V}',\psi_X}(u^{-1})$. So fixing $\varphi$ and varying $u$ yields a continuous function $\ell_\varphi:U(\A_F)\rightarrow\C$. The above shows that $\ell_\varphi$ is invariant under $U(\A_F^{\{2,\infty\}})$, and we see that $\ell_\varphi$ is also invariant under $U(F)$. Since $U$ satisfies weak approximation, this implies that $\ell_\varphi$ is constant, which yields the second statement.

 For the first statement, we argue similarly. Because $\psi_{v,X}$ factors through a continuous homomorphism $\widetilde{Z}'(F_v)\rightarrow F_v$, the Stone--von Neumann theorem yields a unique irreducible unitary representation $W_{\psi_{v,X}}$ of $\widetilde{V}'(F_v)$ with central character $\psi_{v,X}$. Now \cite[Proposition 3.7]{gan2005minimal} and \cite[I.16]{moeglin1987modeles} show that $\Hom_{\widetilde{V}'(F_v)}(\Omega_v,W_{\psi_{v,X}})$ is $1$-dimensional, and Lemma \ref{lemma:minorbitV'} implies that $U(F_v)$ acts trivially on $\Hom_{\widetilde{V}'(F_v)}(\Omega_v,W_{\psi_{v,X}})$.

 Note that $\varphi\mapsto(\widetilde{v}'\mapsto\theta(\varphi)_{\widetilde{Z}',\psi_X}(\widetilde{v}'))$ yields a $\widetilde{V}'(\A_F)$-equivariant map $\ell:\Omega\rightarrow L^2([\widetilde{V}'],\psi_X)$. Since $[\widetilde{V}']$ is compact, $L^2([\widetilde{V}'],\psi_X)$ is semisimple. For any irreducible subrepresentation $W$ of $L^2([\widetilde{V}'],\psi_X)$, uniqueness forces $W_v\cong W_{\psi_{v,X}}$. From here, arguing as above yields the first statement.
\end{proof}
\begin{proof}[Proof of Theorem \ref{thm:liftbackcuspidal}]
Let $f$ be in $\pi$, and let $\varphi$ be in $\Omega$. We compute the constant term of $\theta_J(\varphi,f)$ along $V'$ (the unipotent radical of the Borel of $G'_J$):
\begin{align*}
\theta_J(\varphi,f)_{V'}(1) &= \int_{[V']}\int_{[G]}\theta(\varphi)(gv')\overline{f(g)}\dd{g}\dd{v'} \\
&=\int_{[G]}\int_{[V']}\left(\theta(\varphi)_{\widetilde{V}'}(gv')+\displaystyle\sum_{X\in\O\cap\widetilde{\mathfrak{z}}'_{\mathrm{op}}}\theta(\varphi)_{\widetilde{Z}',\psi_X}(gv')+\sum_{X\in\O\cap\widetilde{\mathfrak{v}}'_{\mathrm{op}}/\widetilde{\mathfrak{z}}'_{\mathrm{op}}}\theta(\varphi)_{\widetilde{V}',\psi_X}(gv')\right)\overline{f(g)}\dd{v'}\dd{g} \\
&= \int_{[G]}\left(\theta(\varphi)_{\widetilde{V}'}(g)+\displaystyle\sum_{X\in\O\cap\mathbb{O}_0}\theta(\varphi)_{\widetilde{Z}',\psi_X}(g)+\sum_{X\in\O\cap(\mathbb{O}_0\otimes K)}\theta(\varphi)_{\widetilde{V}',\psi_X}(g)\right)\overline{f(g)}\dd{g}
\end{align*}
by Lemma \ref{lem:Fourierback}. Because the Killing form is $G$-invariant, we can unfold the middle and rightmost integrals to obtain
\begin{align*}
\int_{[G]}\theta(\varphi)_{\widetilde{V}'}(g)\overline{f(g)}\dd{g}&+\displaystyle\sum_{X\in G(F)\backslash(\O\cap\mathbb{O}_0)}\int_{(\Stab_GX)(F)\backslash G(\A_F)}\theta(\varphi)_{\widetilde{Z}',\psi_X}(g)\overline{f(g)}\dd{g} \\
&+\sum_{X\in G(F)\backslash(\O\cap(\mathbb{O}_0\otimes K))}\int_{(\Stab_GX)(F)\backslash G(\A_F)}\theta(\varphi)_{\widetilde{V}',\psi_X}(g)\overline{f(g)}\dd{g}.
\end{align*}
By Corollary \ref{cor:trivUaction}, the middle and rightmost integrals vanish because $f$ is cuspidal. As for the first integral, note that the restriction of $\Omega_{\widetilde{V}'}$ to $\widetilde{T}'^{\mathrm{der}}$ is isomorphic to the minimal representation as in \cite[Section 5]{gan2002cubic} for the cubic \'etale $F$-algebra $F\times K$. Because $\pi$ does not lie in the near equivalence class from \cite[Main theorem (ii)]{Gan:Mult_formula_for_cubic}, that result implies that the first integral vanishes, as desired.
\end{proof}

\subsection{Arthur's multiplicity formula}\label{ss:AMF}
Recall from \S\ref{ss:PU3HPS} the subrepresentation $\mathcal{A}_\chi(G'_J)$ of $\mathcal{A}_{\disc}(G'_J)$.
\begin{prop}\label{prop:allthetalifts}
Suppose that
\begin{enumerate}
\item $L(\frac12,\chi)$ is nonzero,
\item for all places $v$ of $F$ above $2$, the \'etale $F_v$-algebra $K_v$ is unramified.
\end{enumerate}
If Conjecture \ref{conj:G2HPSarch} holds at all archimedean places of $F$, then we have an isomorphism of $G(\A_F)$-representations
\begin{align*}
\sum_{(J,\sigma)}\theta_J(\sigma)\cong\bigoplus_{(\epsilon_v)_v}\bigotimes_v\!'\,\pi^{\epsilon_v}_v,
\end{align*}
where $J$ runs over Freudenthal algebras over $F$ of the form considered at the end of Example \ref{exmp:J}, $\sigma$ runs over irreducible cuspidal subrepresentations of $\mathcal{A}_\chi(G'_J)$, and $(\epsilon_v)_v$ runs over sequences as in Theorem \ref{thm:actualA}.(3) such that, if $L(\textstyle\frac12,\chi^3)$ is nonzero, then not every $\epsilon_v$ equals $+1$.

In particular, Proposition \ref{prop:G2HPSarchcases} shows that this holds unconditionally when
\begin{enumerate}
    \item[(3)] for all archimedean places $v$ of $F$, either $K_v=\RR\times\RR$ and $\chi_v:\RR^\times\rightarrow S^1$ satisfies $\chi_v(-1)=1$, or $K_v=\C$.
\end{enumerate}
\end{prop}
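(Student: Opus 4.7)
The plan is to view Theorem \ref{cor:non-vanishing} as providing a near-bijection between cuspidal $\sigma$ appearing in the left-hand side (as $J$ varies) and sequences $(\epsilon_v)_v$ indexing the right-hand side, so that the asserted isomorphism becomes essentially a bookkeeping exercise matching local data.

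First I would establish the inclusion $\subseteq$. Let $(J,\sigma)$ be a pair from the left-hand side with $\sigma$ cuspidal. By Theorem \ref{thm:globalHPSU3}.(1) we may write $\sigma\cong\bigotimes'_v\sigma_v^{\epsilon_v}$ for a sequence $(\epsilon_v)_v$ satisfying $\prod_v\epsilon_v=\epsilon(\textstyle\frac12,\chi^3)$, and at places $v$ that split in $K$ the local packet is a singleton so $\epsilon_v=+1$ automatically. Theorem \ref{thm:globalHPSU3}.(4), combined with cuspidality of $\sigma$, rules out the case that every $\epsilon_v=+1$ and $L(\textstyle\frac12,\chi^3)\neq0$. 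Now I would apply Theorem \ref{cor:non-vanishing}: either condition \eqref{eqn:extracondition} fails and $\theta_J(\sigma)=0$, contributing nothing, or \eqref{eqn:extracondition} holds and $\theta_J(\sigma)\cong\bigotimes'_v\pi_v^{\epsilon_v}$. In the latter case \eqref{eqn:extracondition} further forces $\epsilon_v=+1$ whenever $\chi_v^2=1$, so the surviving sequence satisfies all the conditions on the right-hand side.

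Next I would establish the reverse inclusion $\supseteq$ by realizing each right-hand side sequence $(\epsilon_v)_v$ by an appropriate pair $(J,\sigma)$. The crucial step is to produce a $3$-dimensional Hermitian space for $K/F$ whose associated $\PU_3$ is compact at precisely the archimedean places $v$ with $\epsilon_v=-1$. Concretely, a Hermitian space of the form $\mathrm{diag}(1,1,\lambda)$ with $\lambda\in F^\times$ chosen via weak approximation to have the prescribed archimedean signs will suffice; the standard classification of Hermitian forms ensures no further obstruction. With the resulting $J$ in hand, Theorem \ref{thm:globalHPSU3}.(1) and (4) produce an irreducible cuspidal $\sigma=\bigotimes'_v\sigma_v^{\epsilon_v}$ in $\mathcal{A}_\chi(G'_J)$, condition \eqref{eqn:extracondition} holds by construction (at places with $\chi_v^2=1$ using the hypothesis on the right-hand side, at archimedean places by the choice of $J$), and Theorem \ref{cor:non-vanishing} then yields $\theta_J(\sigma)\cong\bigotimes'_v\pi_v^{\epsilon_v}$.

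Finally, since distinct sequences $(\epsilon_v)_v$ produce representations of $G(\A_F)$ lying in distinct near equivalence classes, the sum on the left-hand side is automatically a direct sum agreeing term by term with the right-hand side. I do not anticipate any serious obstacle here: all the analytic and representation-theoretic work is already carried out by Theorem \ref{cor:non-vanishing}, and the only additional ingredient is the elementary existence of a Hermitian form with prescribed archimedean signatures, which should pose no difficulty.
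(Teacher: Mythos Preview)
Your overall strategy matches the paper's: use Theorem~\ref{cor:non-vanishing} to set up a correspondence between pairs $(J,\sigma)$ with nonzero lift and admissible sequences $(\epsilon_v)_v$, and check surjectivity by constructing a suitable Hermitian space. The paper phrases the surjectivity step as ``there exists a unique choice of $J$ such that $(\epsilon_v)_v$ satisfies the second part of \eqref{eqn:extracondition},'' which is equivalent to your explicit $\mathrm{diag}(1,1,\lambda)$ construction.

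There is, however, a genuine error in your final paragraph. You write that ``distinct sequences $(\epsilon_v)_v$ produce representations of $G(\A_F)$ lying in distinct near equivalence classes.'' This is false: by construction all the $\bigotimes'_v\pi_v^{\epsilon_v}$ lie in the \emph{same} near equivalence class $\mathcal{A}_\tau(G)$, since $\pi_v^{\epsilon_v}=\pi_v^+$ for cofinitely many $v$. What you actually need is that $\pi_v^+\not\cong\pi_v^-$ whenever both are defined (one is non-tempered and the other tempered, by Theorem~\ref{thm:groupB} and \S\ref{ss:G2_packets_arch}), so distinct sequences give pairwise non-isomorphic irreducible representations; a sum of pairwise non-isomorphic irreducible subrepresentations of $\mathcal{A}_{\cusp}(G)$ is then automatically direct.

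You should also make explicit why no multiplicity arises on the left: if $(J,\sigma)$ and $(J',\sigma')$ both satisfy \eqref{eqn:extracondition} for the same sequence, the archimedean part of \eqref{eqn:extracondition} forces $G'_J$ and $G'_{J'}$ to be locally isomorphic everywhere (quasi-split at all finite places, and with matching compactness at infinity), hence globally isomorphic, and then multiplicity one in Theorem~\ref{thm:globalHPSU3}.(1) gives $\sigma=\sigma'$. The paper handles this via its uniqueness assertion for $J$.
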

\begin{proof}
Fix such a $J$. Theorem \ref{thm:globalHPSU3}.(1) and Theorem \ref{thm:globalHPSU3}.(4) identify $\sigma\cong\bigotimes'_v\sigma_v^{\epsilon_v}$ for some sequence $(\epsilon_v)_v$ as in Theorem \ref{thm:globalHPSU3}.(1) such that, if $G'_J$ is quasi-split and $L(\frac12,\chi^3)$ is nonzero, then not every $\epsilon_v$ equals $+1$. Under our assumptions, Theorem \ref{cor:non-vanishing} shows that $\theta_J(\sigma)$ is nonzero if and only if \eqref{eqn:extracondition} is satisfied, in which case $\theta_J(\sigma)\cong\bigotimes'_v\pi_v^{\epsilon_v}$.

The first part of \eqref{eqn:extracondition} is equivalent to $(\epsilon_v)_v$ also satisfying the conditions in Theorem \ref{thm:actualA}.(3). Moreover, given a sequence $(\epsilon_v)_v$ as in Theorem \ref{thm:actualA}.(3), there exists a unique choice of $J$ such that $(\epsilon_v)_v$ satisfies the second part of \eqref{eqn:extracondition} with respect to $G'_J$. Therefore, as $J$ varies, we obtain the desired result.
\end{proof}

\begin{prop}\label{prop:thetabackfull}
Suppose that $L(\textstyle\frac12,\chi)\neq0$, and let $\pi$ be an irreducible cuspidal subrepresentation of $\mathcal{A}_\tau(G)$. Then $\pi$ lies in $\sum_{(J,\sigma)}\theta_J(\sigma)$, where $J$ runs over Freudenthal algebras over $F$ of the form considered at the end of Example \ref{exmp:J}, and $\sigma$ runs over irreducible cuspidal subrepresentations of $\mathcal{A}_\chi(G'_J)$.
\end{prop}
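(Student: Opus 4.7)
The plan is to show that any irreducible cuspidal $\pi \subseteq \mathcal{A}_\tau(G)$ appears in $\theta_J(\sigma)$ for some Freudenthal algebra $J$ as in Example \ref{exmp:J} and some irreducible cuspidal $\sigma \subseteq \mathcal{A}_\chi(G'_J)$. The strategy is to first construct a nonzero cuspidal lift $\theta_J(\pi)$, identify its irreducible constituents as Howe--Piatetski-Shapiro representations on $G'_J$, and then run the theta correspondence backwards using its symmetry in $\pi$ and $\sigma$.

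First, Theorem \ref{thm:backnonzero} (applicable because $L(\textstyle\frac12,\chi) \neq 0$) produces some $J$ of the form in Example \ref{exmp:J} with $\theta_J(\pi) \neq 0$. Theorem \ref{thm:liftbackcuspidal} then applies verbatim (by its concluding sentence, since $\pi \subseteq \mathcal{A}_\tau(G)$), so $\theta_J(\pi)$ is cuspidal, and hence semisimple. I would fix any irreducible subrepresentation $\sigma \subseteq \theta_J(\pi)$, which is therefore cuspidal.

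To see $\sigma \in \mathcal{A}_\chi(G'_J)$, I would select a nonarchimedean place $v$ of $F$ that splits in $K$ at which $\pi_v$ is unramified (an infinite supply of such $v$ exists by Chebotarev). Then $\pi_v \cong \pi_v^+$ by the definition of $\mathcal{A}_\tau(G)$. By Proposition \ref{prop:local_global_comp}, $\sigma_v$ is a quotient of $\theta(\pi_v^+)$. Combining Theorem \ref{thm:groupB} (which gives $\theta(\sigma_v^+) = \pi_v^+$) with local Howe duality \cite{gan2021howe, bakic2021howe}, the reverse lift satisfies $\theta(\pi_v^+) \cong \sigma_v^+$. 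Viewing $\sigma_v^+$ as the unramified quotient of $i_{B'}^{\PGL_3}\varrho'$ shows that its Satake parameter has an eigenvalue of norm $q_v^{1/2}$, arising from the $\abs{\alpha}^{1/2}$ factor in $\varrho'$. Therefore Theorem \ref{thm:globalHPSU3}.(2) places $\sigma$ in $\mathcal{A}_{\chi'}(G'_J)$ for some conjugate-symplectic $\chi'$. Matching Satake parameters at the cofinite collection of split unramified places where $\sigma_v \cong \sigma_v^+$ is determined by $\chi_v$, the character $\chi'$ must coincide with $\chi$ or its conjugate; either way, $\sigma \in \mathcal{A}_\chi(G'_J)$.

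Finally, to obtain $\pi \subseteq \theta_J(\sigma)$, I would exploit the $(G \times G'_J)(\A_F)$-equivariant surjection $\Omega \twoheadrightarrow \sigma \otimes \pi$ coming from $\sigma \subseteq \theta_J(\pi)$ (as in the proof of Proposition \ref{prop:local_global_comp}, using Petersson to identify $\overline{\pi}$ with $\pi^\vee$ and $\overline{\sigma}$ with $\sigma^\vee$). Reinterpreting this as a nonzero equivariant map $\Omega \otimes \sigma^\vee \to \pi$ produces $\pi$ inside the span of $\theta(\varphi, f_\sigma)$ as $\varphi$ and $f_\sigma \in \sigma$ vary. Since $\sigma \in \mathcal{A}_\chi(G'_J)$ satisfies the hypotheses of Proposition \ref{prop:mini_theta_Howe_PS}, Corollary \ref{cor:thetacuspidal} yields that $\theta_J(\sigma)$ is cuspidal, so $\pi$ appears in $\theta_J(\sigma)$ as a subrepresentation. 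The main obstacle will be pinning down the character $\chi'$ produced by Theorem \ref{thm:globalHPSU3}.(2): the explicit computation of $\theta(\pi_v^+) \cong \sigma_v^+$ at split unramified places, and especially the verification that the resulting Satake parameter really does come from $\chi_v$, is the delicate step that fixes the near equivalence class as $\mathcal{A}_\chi(G'_J)$ rather than some unrelated family.
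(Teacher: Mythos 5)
Your high-level strategy coincides with the paper's: produce a nonzero cuspidal $\theta_J(\pi)$ via Theorems \ref{thm:backnonzero} and \ref{thm:liftbackcuspidal}, identify its constituents as Howe--Piatetski-Shapiro representations using Theorem \ref{thm:globalHPSU3}.(2), and then reverse the lift. However, there are three genuine gaps in the details.

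\textbf{First}, the passage from local agreement to global identification of $\chi'$ is not automatic. At each place $v$ outside $S$ you know $\chi'_v \in \{\chi_v, \chi_v^{-1}\}$, but this local ambiguity could a priori resolve differently at different places, and nothing immediate prevents $\chi'$ from being a character whose local components flip between $\chi_v$ and $\chi_v^{-1}$ without being globally equal to either. The paper addresses this with Lemma \ref{lem:invertedatplaces}, which is a global argument: form the Rankin--Selberg $L$-function $L^S(s,\tau\times\tau')$ of the isobaric sums $\chi\boxplus\chi^{-1}$ and $\chi'\boxplus\chi'^{-1}$ and use the pole at $s=1$. Your ``matching Satake parameters'' sentence asserts the conclusion of this lemma without proving it.

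\textbf{Second}, even after establishing $\chi' \in \{\chi, \chi^{-1}\}$, your assertion ``either way, $\sigma \in \mathcal{A}_\chi(G'_J)$'' is false as stated: if $\chi' = \chi^{-1}$, then $\sigma$ lies in $\mathcal{A}_{\chi^{-1}}(G'_J)$, which is generically a distinct near equivalence class from $\mathcal{A}_\chi(G'_J)$ (recall $\sigma_v^+(\chi_v) \circ c \cong \sigma_v^+(\chi_v^{-1})$ with $c$ an \emph{outer} automorphism). The paper handles this by noting that $\sigma\mapsto\sigma\circ c$ carries $\mathcal{A}_{\chi^{-1}}(G'_J)$ into $\mathcal{A}_{\chi}(G'_J)$, and that the extension of the theta lift to $G\times\underline{\Aut}(J)$ gives $\theta_J(\sigma\circ c)=\theta_J(\sigma)$, so one may as well replace any $\sigma$ in the $\chi^{-1}$-packet by $\sigma\circ c$. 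You omit this.

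\textbf{Third}, and most seriously, the final step does not follow from the abstract duality you invoke. The existence of a nonzero $(G\times G'_J)(\A_F)$-equivariant surjection $\Omega\otimes\sigma^\vee\twoheadrightarrow\pi$ is an abstract representation-theoretic statement; it does not by itself show that the \emph{specific} automorphic realization $\varphi\otimes f_\sigma\mapsto\theta(\varphi,f_\sigma)$, whose image is $\theta_J(\sigma)\subseteq\mathcal{A}(G)$, surjects onto a copy of $\pi$. (This is the perennial ``lower bound'' difficulty for global theta lifts: abstract non-vanishing of Hom spaces does not yield automorphic non-vanishing.) The paper instead argues concretely: since $\theta_J(\pi)\neq 0$ there exist $f\in\pi$ and $\varphi\in\Omega$ with $f'\coloneqq\theta(\varphi,f)\neq 0$ lying in $\sum_\sigma\sigma$; unfolding the Petersson inner product gives $\langle\theta(\varphi,f'),f\rangle=\int_{[G'_J]}|f'(g')|^2\dd{g'}\neq 0$, so $\sum_\sigma\theta_J(\sigma)$ pairs nontrivially with the irreducible $\pi$ inside the semisimple $\mathcal{A}_{\cusp}(G)$, forcing $\pi\subseteq\sum_\sigma\theta_J(\sigma)$. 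Some version of this explicit calculation is essential and cannot be replaced by the abstract ``reinterpretation.''
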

\begin{proof}
Let $S$ be a finite set of places of $F$ such that, for all $v$ not in $S$, our $v$ is nonarchimedean and $\pi_v\cong\pi_v^+$. Theorem \ref{thm:backnonzero} yields such a $J$ satisfying $\theta_J(\pi)\neq0$, and Theorem \ref{thm:liftbackcuspidal} shows that $\theta_J(\pi)$ is cuspidal and hence semisimple.

Let $\sigma$ be an irreducible subrepresentation of $\theta_J(\pi)$. For every place $v$ of $F$, Proposition \ref{prop:local_global_comp} shows that $\sigma_v$ is a quotient of $\theta_J(\pi_v)$. When $v$ does not lie in $S$, \cite[Theorem 4.1 (ii)]{bakic2021howe} or \cite[Theorem 1.2(ii)]{gan2021howe} indicate that the irreducible quotients of $\theta_J(\pi_v)\cong\theta_J(\pi_v^+)$ are the $\sigma_v^+$ associated with $\chi_v$ or $\chi_v^{-1}$, so Theorem \ref{thm:globalHPSU3}.(2) shows that $\sigma$ lies in $\mathcal{A}_{\chi'}(G'_J)$ for some conjugate-symplectic unitary character $\chi':K^\times\backslash\A_K^\times\ra S^1$ such that, for all $v$ not in $S$, our $\chi'_v$ equals either $\chi_v$ or $\chi_v^{-1}$. Then Lemma \ref{lem:invertedatplaces} below shows that $\chi'$ equals either $\chi$ or $\chi^{-1}$. Altogether, we see that $\theta_J(\pi)$ lies in $\sum_\sigma\sigma$, where $\sigma$ runs over irreducible cuspidal subrepresentations of $\mathcal{A}_\chi(G'_J)\oplus\mathcal{A}_{\chi^{-1}}(G'_J)$.

Because $\theta_J(\sigma)\neq0$, there exist $f$ in $\pi$ and $\varphi$ in $\Omega$ such that $f'\coloneqq\theta(\varphi,f)\neq0$. The above shows that $f'$ lies in $\sum_\sigma\sigma$. Now the Petersson inner product of $\theta(\varphi,f')$ and $f$ equals
\begin{align*}
\int_{[G]}\int_{[G'_J]}\theta(\varphi)(gg')\overline{f'(g')}\cdot\overline{f(g)}\dd{g'}\dd{g} = \int_{[G'_J]}\int_{[G]}\theta(\varphi)(gg')\overline{f(g)}\cdot\overline{f'(g')}\dd{g}\dd{g'} = \int_{[G'_J]}f'(g')\overline{f'(g')}\dd{g'}\neq0,
\end{align*}
so $\sum_\sigma\theta_J(\sigma)$ pairs nontrivially with $\pi$. Because $\pi$ is irreducible, we see that $\pi\subseteq\sum_\sigma\theta_J(\sigma)$.

Now the conjugate of $\chi$ equals $\chi^{-1}$, so any irreducible subrepresentation $\sigma$ of $\mathcal{A}_{\chi^{-1}}(G'_J)$ has $\sigma\circ c$ lying in $\mathcal{A}_{\chi}(G'_J)$. Using the extension of our theta lift to $G\times\underline{\Aut}(J)$, we see that $\theta_J(-\circ c)=\theta_J(-)$. Therefore when forming $\sum_\sigma\theta_J(\sigma)$, we only need irreducible cuspidal subrepresentations $\sigma$ of $\mathcal{A}_\chi(G'_J)$, as desired.
\end{proof}
\begin{lemma}\label{lem:invertedatplaces}
Let $\chi$ and $\chi'$ be two unitary characters $K^\times\backslash\A_K^\times\ra S^1$ such that, for cofinitely many places $v$ of $F$, our $\chi'_v$ equals either $\chi_v$ or $\chi_v^{-1}$. Then $\chi'$ equals either $\chi$ or $\chi^{-1}$.
\end{lemma}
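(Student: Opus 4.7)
The plan is to reformulate the problem via the auxiliary Hecke characters $\eta := \chi'\chi^{-1}$ and $\zeta := \chi'\chi$, and then handle an easy case by a standard $L$-function pole argument and rule out the mixed case by a density estimate.

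First, I would observe that the conclusion $\chi' \in \{\chi, \chi^{-1}\}$ is equivalent to $\eta = 1$ or $\zeta = 1$, and that the hypothesis translates to: for cofinitely many places $v$, either $\eta_v$ or $\zeta_v$ is the trivial character of $K_v^\times$. Writing $A := \{v : \eta_v = 1\}$ and $B := \{v : \zeta_v = 1\}$, the hypothesis becomes: $A \cup B$ is cofinite.

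The easy case would be when $A$ or $B$ is itself cofinite. Here I would invoke the classical lemma that any Hecke character $\nu$ of $K$ with $\nu_v = 1$ for cofinitely many $v$ is identically trivial. Indeed, if $T$ denotes the finite exceptional set, the partial $L$-function $L^T(s, \nu) = \prod_{v \notin T}(1 - N_v^{-s})^{-1}$ equals $\zeta^T_K(s)$ and hence inherits the simple pole of $\zeta_K(s)$ at $s = 1$; since only the trivial Hecke character has an $L$-function with a pole at $s = 1$, one concludes $\nu = 1$. Applied to $\eta$ or $\zeta$, this yields $\chi' = \chi$ or $\chi' = \chi^{-1}$, respectively.

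The mixed case, where both $A^c$ and $B^c$ are infinite within the cofinite hypothesis set, I would rule out by density. A standard estimate via Chebotarev density gives that, for any nontrivial finite-order Hecke character $\nu$ of order $n$, the set $\{v : \nu_v = 1\}$ has Dirichlet density $1/n \leq 1/2$, while for any infinite-order Hecke character $\nu$ the same set has density zero by Hecke's equidistribution theorem. Supposing for contradiction that $\eta \neq 1$ and $\zeta \neq 1$, an infinite-order assumption on either would force the corresponding set to have density zero, hence the other set to be cofinite (reducing to the easy case). So both must be finite-order, and summing densities gives $\delta(A) + \delta(B) \geq \delta(A \cup B) = 1$, forcing both $\eta, \zeta$ to be quadratic with $\delta(A) = \delta(B) = 1/2$. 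These correspond to quadratic extensions $K_\eta, K_\zeta$ of $K$; if these coincide then $\eta = \zeta$ and $A = B$ is cofinite (easy case), while if distinct, inclusion-exclusion in the biquadratic compositum yields $\delta(A \cup B) = 1/2 + 1/2 - 1/4 = 3/4 < 1$, contradicting cofiniteness.

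The main obstacle will be the density estimate for infinite-order Hecke characters, which requires Hecke's equidistribution theorem rather than elementary Chebotarev. However, as noted above, this case can be sidestepped by reducing directly to the easy case, so the only genuinely technical input is elementary Chebotarev for finite-order characters combined with the $L$-function pole argument.
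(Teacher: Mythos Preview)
Your argument is essentially sound, modulo one small slip: in the infinite-order case, from $\delta(A)=0$ and $A\cup B$ cofinite you can only conclude $\delta(B)=1$, not that $B$ is cofinite. This is easily repaired, since $\delta(B)=1$ still forces $\zeta=1$ by the very dichotomy you already invoke (nontrivial finite order gives density $\leq 1/2$, infinite order gives density $0$). With that fix your proof goes through; it is cleanest to run the density arguments over places of $K$, to which your hypothesis descends immediately.

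The paper takes a different and shorter route. It observes that the hypothesis forces the unordered pairs $\{\chi_v,\chi_v^{-1}\}$ and $\{\chi'_v,\chi'^{-1}_v\}$ to agree for $v\notin S$, so the isobaric sums $\tau=\chi\boxplus\chi^{-1}$ and $\tau'=\chi'\boxplus\chi'^{-1}$ have matching local components outside $S$. Hence
\[
L^S(s,\chi^2)\,\zeta_K^S(s)^2\,L^S(s,\chi^{-2}) \;=\; L^S(s,\tau\times\tau) \;=\; L^S(s,\tau\times\tau') \;=\; \prod_{\pm,\pm} L^S(s,\chi^{\pm1}\chi'^{\pm1}).
\]
The left-hand side has a pole at $s=1$, so one of the four Hecke $L$-functions on the right does too, forcing one of $\chi\chi'$, $\chi\chi'^{-1}$, $\chi^{-1}\chi'$, $\chi^{-1}\chi'^{-1}$ to be trivial. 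This avoids all case analysis and needs only the classical pole and nonvanishing of Hecke $L$-functions at $s=1$, whereas your route additionally requires Hecke's equidistribution theorem for infinite-order characters and the biquadratic Chebotarev bookkeeping. Your approach is more hands-on, but the isobaric-sum trick packages the situation much more cleanly and is worth remembering.
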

\begin{proof}
Let $S$ be a finite set of places of $F$ such that, for all $v$ not in $S$, our $\chi'_v$ equals either $\chi_v$ or $\chi_v^{-1}$. Write $\tau$ and $\tau'$ for the isobaric sums $\chi\boxplus\chi^{-1}$ and $\chi'\boxplus\chi'^{-1}$, respectively, and note that $\tau_v$ is isomorphic to $\tau'_v$ for all $v$ not in $S$. Therefore we have
\begin{align*}
L^S(s,\chi^2)\cdot\zeta^S_K(s)^2\cdot L^S(s,\chi^{-2}) = L^S(s,\tau\times\tau)&=L^S(s,\tau\times\tau') \\
&= L^S(s,\chi\chi')\cdot L^S(s,\chi\chi'^{-1})\cdot L^S(s,\chi^{-1}\chi')\cdot L^S(s,\chi^{-1}\chi'^{-1}).
\end{align*}
Since the left hand side has a pole at $s=1$, so does the right hand side. Hence one of the four factors on the right hand side has a pole at $s=1$, which implies that one of the $\{\chi\chi',\chi\chi'^{-1},\chi^{-1}\chi',\chi^{-1}\chi'^{-1}\}$ is trivial.
\end{proof}

\begin{proof}[Proof of Theorem \ref{thm:actualA}.(3)]
Proposition \ref{prop:allthetalifts} and Proposition \ref{prop:thetabackfull} yield an isomorphism of $G(\A_F)$-representations
\begin{align*}
\mathcal{A}_{\cusp}(G)\cap\mathcal{A}_\tau(G)\cong\bigoplus_{(\epsilon_v)_v}\bigotimes_v\!'\,\pi^{\epsilon_v}_v,
\end{align*}
where $(\epsilon_v)_v$ runs over sequences as in Theorem \ref{thm:actualA}.(3) such that, if $L(\textstyle\frac12,\chi^3)$ is nonzero, then not every $\epsilon_v$ equals $+1$. The description of $\mathcal{A}_{\res}(G)$ given by H. Kim \cite[Theorem 5.1]{kim1996residual} and \v{Z}ampera \cite[Theorem 1.1]{zampera1997residual} imply that $\mathcal{A}_{\res}(G)\cap\mathcal{A}_\tau(G)=0$ if $L(\frac12,\chi^3)$ is zero and $\mathcal{A}_{\res}(G)\cap\mathcal{A}_\tau(G)\cong\bigotimes'_v\pi_v^+$ if $L(\frac12,\chi^3)$ is nonzero. Finally, we conclude by noting that $\mathcal{A}_\tau(G)$ equals $(\mathcal{A}_{\cusp}(G)\cap\mathcal{A}_\tau(G))\oplus(\mathcal{A}_{\res}(G)\cap\mathcal{A}_\tau(G))$.
\end{proof}

\bibliographystyle{amsplain}
\bibliography{bibliography}

\end{document}